\documentclass[12pt]{article}
\usepackage[export]{adjustbox}
\usepackage{graphicx}
\usepackage{amsfonts}
\usepackage{amscd}
\usepackage{indentfirst}
\usepackage{amssymb,amsthm, amsgen, amstext, amsbsy, amsopn,yfonts}
\usepackage{xcolor}
\usepackage{amsmath}
\usepackage{mathrsfs}
\usepackage[hyphens]{url}
%\usepackage[color,notref,notcite]{showkeys}
%\definecolor{refkey}{rgb}{0,0,1}
%\definecolor{labelkey}{rgb}{0,1,0}
%\renewcommand*\showkeyslabelformat[1]{%
%\begin{minipage}[t]{2\marginparwidth}
 % \raggedright\normalfont\small\url{#1}
%\end{minipage}}

%\showlabels[\color{green}]

%\usepackage{epsfig}
%\usepackage{epic,eepic}

\newtheorem{thm}{Theorem}[section]
\newtheorem{lemma}[thm]{Lemma}
\newtheorem{prop}[thm]{Proposition}
\newtheorem{cor}[thm]{Corollary}
\newtheorem{defin}[thm]{Definition}
\newtheorem{rem}[thm]{Remark}
\newtheorem{exam}[thm]{Example}

\newcommand{\I}{{\mathbb{I}}}

\newcommand{\J}{{\mathbb{J}}}

\newcommand{\N}{{\mathbb{N}}}

\newcommand{\R}{{\mathbb{R}}}
\newcommand{\SP}{{\mathbb{S}}}

\newcommand{\Z}{{\mathbb{Z}}}

\newcommand{\cA}{{\mathcal{A}}}

\newcommand{\cF}{{\mathcal{F}}}
\newcommand{\cG}{{\mathcal{G}}}

\newcommand{\cI}{{\mathcal{I}}}
\newcommand{\cJ}{{\mathcal{J}}}
\newcommand{\cK}{{\mathcal{K}}}

\newcommand{\cM}{{\mathcal{M}}}

\newcommand{\cR}{{\mathcal{R}}}
\newcommand{\cS}{{\mathcal{S}}}

\newcommand{\hM}{{\widehat{M}}}

\newcommand{\hX}{{\widehat{X}}}
\newcommand{\hY}{{\widehat{Y}}}

\newcommand{\hl}{{\widehat{l}}}

\newcommand{\bL}{{\overline{L}}}

\newcommand{\hxi}{{\hat{\xi}}}

\newcommand{\hlambda}{{\widehat{\lambda}}}

\newcommand{\hDelta}{{\widehat{\Delta}}}
\newcommand{\hLambda}{{\widehat{\Lambda}}}

\newcommand{\hSigma}{{\widehat{\Sigma}}}

\newcommand{\tg}{{\widetilde{g}}}
\newcommand{\tih}{{\widetilde{h}}}
\newcommand{\tp}{{\widetilde{p}}}

\newcommand{\tF}{{\widetilde{F}}}
\newcommand{\tG}{{\widetilde{G}}}
\newcommand{\tH}{{\widetilde{H}}}

\newcommand{\tM}{{\widetilde{M}}}
\newcommand{\tN}{{\widetilde{N}}}

\newcommand{\tX}{{\widetilde{X}}}
\newcommand{\tY}{{\widetilde{Y}}}

\newcommand{\lmin}{{\it l_{min}}}

\newcommand{\graph}{{\it graph\,}}

\newcommand{\Image}{{\rm Im\,}}

\newcommand{\Ker}{{\rm Ker\,}}

\newcommand{\Qed}{\ \hfill \qedsymbol \bigskip}

\newcommand{\Sh}{{\it Sh}}
\newcommand{\spec}{{\it spec}}

\newcommand{\supp}{{\it supp\,}}

\newcommand{\Op}{{\it Op}}

\newcommand{\sgrad}{{\it sgrad\,}}

\begin{document}

\title{{Legendrian persistence modules and dynamics}}

\author{\textsc Michael Entov$^{1}$,\ Leonid Polterovich$^{2}$}

\footnotetext[1]{Partially supported by the Israel Science Foundation grant 1715/18.}

\footnotetext[2]{ Partially supported by  the Israel Science Foundation grant 1102/20.}

\date{\today}

\maketitle

\begin{abstract}
{We relate the machinery of persistence modules to the Legendrian contact homology theory and to Poisson bracket invariants, and use it to show the existence of connecting trajectories of contact and symplectic Hamiltonian flows.}
\end{abstract}

\bigskip
\begin{small}
\centerline{{\it  To Claude Viterbo on the occasion of his 60th birthday}}
\end{small}

\tableofcontents

\section{Introduction and main results}
\label{sec-introduction}

\subsection{Interlinking}
\label{subsec-interlinking}

In the present paper we discuss a new facet of a method, introduced in \cite{BEP,EP-tetragons},
of finding orbits of Hamiltonian systems connecting a pair of disjoint subsets $(X_0,X_1)$ in the phases space. The method manifests a dynamical phenomenon called {\it interlinking}, which involves
a quadruple of subsets $(X_0,X_1,Y_0,Y_1)$ in a symplectic manifold $(M,\omega)$ satisfying
$X_0 \cap X_1 = Y_0 \cap Y_1 =\emptyset$. A Hamiltonian $H:M\times\SP^1 \to \R$
{\it separates} $Y_0,Y_1$ if
\begin{equation}\label{eq-sep-vsp}
\Delta(H,Y_0,Y_1):= \inf_{Y_1\times\SP^1} H - \sup_{Y_0\times\SP^1}H > 0.
\end{equation}

Let $\mu>0$. According to the definition in \cite{EP-tetragons},
$(Y_0,Y_1)$  {\it $\mu$-interlinks} (respectively, {\it autonomously $\mu$-interlinks}) $(X_0,X_1)$, if
for every Hamiltonian (respectively, every autonomous Hamiltonian) $H$ separating $Y_0,Y_1$ and generating a flow $\{ \phi_t\}_{t\in\R}$ on $M$,
there exist $t_0\in \R$, $x\in M$ and a positive $T \leq \mu/\Delta$, so that $\phi_{t_0} x\in X_0$ and $\phi_{t_0+T} x \in X_1$.
The piece of the trajectory $\{\phi_t x\}$, $t \in [t_0,t_0+T]$ is called a {\it chord of time-length $T$} connecting $X_0$ and $X_1$.

The pair $(Y_0,Y_1)$  {\it interlinks} (respectively, {\it autonomously interlinks}) the pair $(X_0,X_1)$ if it $\mu$-interlinks (respectively, autonomously
$\mu$-interlinks) it for some $\mu>0$.

\bigskip
\noindent
{\bf IMPORTANT REMARK:} In this paper we will consider only autonomous interlinking and for bre\-vi\-ty will omit the word ``autonomous". {\bf Thus, ``interlinking" further on in this paper is the same as ``autonomous interlinking" in \cite{EP-tetragons}.}
\bigskip

We will focus on quadruples of a special form lying in the symplectization $S\Sigma$ of
a contact manifold $(\Sigma,\xi)$, and its fillings. Let $\lambda$ be a contact form on $\Sigma$.
Recall that $S\Sigma$ is $\Sigma \times \R_+ (s)$ equipped with the symplectic form $\omega=d(s\lambda)$. We set $Y_0 = \{s=s_-\}$ and $Y_1= \{s=s_+\}$ for some $0<s_-\leq s_+$, and take $X_0$ and $X_1$
to be disjoint Lagrangian cobordisms whose boundaries project to Legendrian submanifolds
in $\Sigma$. We mainly concentrate on the case of cylindrical cobordisms
\begin{equation}\label{eq-cyl-vsp}
X_i= \Lambda_i \times [s_-,s_+]
\end{equation}
for some disjoint Legendrians $\Lambda_0,\Lambda_1$.  The main result of the present paper provides a sufficient condition for interlinking in terms of contact topology of this pair of Legendrians.

A very rough sketch of the method of \cite{BEP,EP-tetragons} is as follows. By using
Poisson bracket invariants coming from function theory on symplectic manifolds, one deduces the desired interlinking from obstructions to some special deformations of the symplectic form $\omega$ constant near $Z:= X_0\cup X_1\cup Y_0 \cup Y_1$. The constraints on such deformations are often provided by pseudo-holomorphic curves with the boundary on $Z$. The main new idea of the present paper is to extract such curves from filtered Legendrian contact homology of  $\Lambda_0$ and $\Lambda_1$. Thus our main point is to deduce ``dynamical interlinking" from ``contact-topological interlinking".

The realization of this idea requires tools from the relative symplectic field theory (RSFT)
\cite{EGH} combined with the theory of persistence modules which originated in topological data analysis \cite{ELZ,ZC} and brought to symplectic topology in \cite{PS}. RSFT associates to a (non-de\-ge\-ne\-rate) pair formed by a Legendrian submanifold and a contact form on the manifold an algebraic object, called the Legendrian contact homology, and to an exact Lagrangian cobordism between Legendrian submanifolds a morphism between the corresponding Legendrian contact homologies. In the simplest setting (say, where a contact manifold is the contactization of an exact symplectic manifold) {the enhancement of the RSFT using the action filtration} gives rise to the {\it filtered relative symplectic field theory (FRSFT)}. Namely, the Legendrian contact homology can be viewed as a persistence module formed by vector spaces (over $\Z_2$), while an exact Lagrangian cobordism between Legendrian submanifolds defines a morphism between the corresponding persistence modules,
see papers \cite{DR-S2,DR-2020} by Dimitroglou Rizell and M.~Sullivan. In the present paper we rely on some special instances of this theory only leaving more general results for a forthcoming manuscript \cite{EP-big}.

In fact, theory of persistence modules enables us, in certain situations, to detect
chords of contact flows even in the absence of interlinking, provided the
contact Hamiltonians have a sufficiently small oscillation in the uniform norm. This {\it robustness} of the existence mechanism for contact chords with respect to $C^0$-small perturbations of contact Hamiltonians is a feature of our approach.

Let us mention also that in a special case when the cobordisms are cylindrical as in \eqref{eq-cyl-vsp} and the Hamiltonian $H: S\Sigma \to \R$ is $\R_+$-homogeneous and positive, interlinking reduces to existence of Reeb chords of {\it a modified} contact form $\lambda/H$ connecting Legendrian submanifolds $\Lambda_0$ and $\Lambda_1$. Existence of such a chord with
a good upper bound on its time-length would follow from a Legendrian contact homology
theory valid for arbitrary contact forms. At the moment such a theory is still under construction, though it sounds likely that Pardon's work \cite{Pardon} on foundations of absolute contact homologies will be eventually extended to the relative (i.e. Legendrian) case. Meanwhile, our results yield existence of Reeb chords for arbitrary forms on certain contact manifolds. Applications of these results include, in particular, contact-topological methods in non-equilibrium thermodynamics \cite{EP-thermodyn}.

\subsection{The pool of contact manifolds}\label{subsec-pool}
In the present paper we focus on contact manifolds of the form $\Sigma = P\times \R (z)$,
where $(P^{2n},d\vartheta)$, $n\in\Z_{> 0}$, is an exact symplectic manifold with a symplectic form $d\vartheta$ and bounded geometry at infinity (see \cite{ALP}, \cite{EG} for the definition of this notion) and the contact structure on $\Sigma$ is defined by the contact form $\lambda = dz + \vartheta$. We will call such contact manifolds {\it nice}: to the best of our knowledge, this is the largest class of contact manifolds for which the details of the Legendrian contact homology theory have been worked out rigorously {in the published literature}. Note that the Reeb vector field $R$ of $\lambda$ is $\partial/\partial z$ and its flow has no periodic orbits.

A specific example of a nice contact manifold is the 1-jet space $J^1 Q = T^* Q (p,q)\times \R (z)$ of a closed manifold $Q$ equipped with the contact form $dz \pm pdq$. The forms
corresponding to different choices of the sign are related by the involution $p \to -p$.
We freely use both forms depending on the context, hoping that this will not cause a confusion.
Note that a neighborhood of the zero section in $J^1Q$  provides the universal
model for a neighborhood of any Legendrian copy of $Q$ in an arbitrary contact manifold \cite[Example 2.5.11]{Geiges}.

Write $ST^* \R^n$ for the space of co-oriented contact elements of $\R^n$, identified
with the unit cotangent sphere bundle of $\R^n$ with respect to the Euclidean metric.
There exists a contactomorphism
\begin{equation}\label{eq-hodograph}
(J^1 \SP^{n-1}, dz-pdq) \to (ST^* \R^n, pdq)
\end{equation}
(known as ``the hodograph map", see e.g. \cite[pp.48-49]{Arnold-top-invs-of-plane-curves-and-caustics})
identifying the standard contact forms on both spaces: it sends a point $(p,q,z)\in J^1 \SP^{n-1}$ to the unit  cotangent vector $q$ in the cotangent space of $zq+p\in\R^n$. (Here $p,q,z$ are local Darboux vector-coordinates on $J^1 \SP^{n-1}$).

\subsection{Sample applications: chords of symplectic Hamiltonians}
\label{subsec-sample-appls-sympl-hams}

Before discussing our results in a general setting, let us give a few basic definitions and present a sample of {dynamical applications. In Remark~\ref{rem-case-n-equals-1-history} below we discuss the relation between these applications and the dynamical results in our previous work.

A (time-dependent) Hamiltonian on a symplectic manifold is called {\it complete} if its Hamiltonian flow is defined for all times. Similarly, a (time-dependent) contact Hamiltonian (with respect to a contact form) on a contact manifold is called {\it complete} if its contact flow is defined for all times.

We extend the definition of a chord of a symplectic Hamiltonian (see Section~\ref{subsec-interlinking}) to the contact setting as follows:
given a (time-dependent) contact Hamiltonian $h$ (with respect to a contact form $\lambda$) on a contact manifold $\Sigma$ and two-disjoint subsets $Z_0, Z_1$ of $\Sigma$, a {\it chord
of $h$ from $Z_0$ to $Z_1$ of time-length $T>0$ (with respect to $\lambda$)} is a trajectory of
the contact flow of $h$ (with respect to $\lambda$) that passes through $Z_0$ at a time $t_0$ and
through $Z_1$ at the time $t_0+T$. If $h>0$, such a chord is a Reeb chord of the contact form $\lambda/h$ of the same time-length. Given a chord $a$, we write $|a|$ for its time-length.

\medskip

Consider $\R^{2n} (p,q) = \R^n (p) \times \R^n (q)$ with the standard symplectic form $dp\wedge dq$. We view $R^{2n}$ as the symplectization of the contact manifold
$ST^*\R^n$ filled by the zero section $\{p=0\}$. Let $|\cdot |$ denote the Euclidean norm on $\R^n (q)$.

Let $0<s_-<s_+$. Let $x_0, x_1\in\R^n (q)$, $x_0\neq x_1$. If $n=1$, assume that $x_0 < x_1$.

Define $X_0,X_1,Y_0,Y_1\subset \R^{2n}$ as follows.

If $n>1$, set
\[
X_0: = \{\ (p,x_0)\in\R^{2n}\ |\ s_-\leq |p|\leq s_+\ \},
\]
\[
X_1:= \{\ (p,x_1)\in\R^{2n}\ |\ s_-\leq |p|\leq s_+\ \},
\]
\[
Y_0:= \{\ |p|=s_-\ \},
\]
\[
Y_1:= \{\ |p|=s_+\ \}.
\]

If $n=1$, set
\[
X_0: = \{\ (p,x_0)\in\R^{2n}\ |\ s_-\leq p\leq s_+\ \},
\]
\[
X_1:= \{\ (p,x_1)\in\R^{2n}\ |\ s_-\leq p\leq s_+\ \},
\]
\[
Y_0:= \{\ (s_-,q)\in\R^{2n}\ |\ x_0 \leq q\leq x_1\ \},
\]
\[
Y_1:= \{\ (s_+,q)\in\R^{2n}\ |\ x_0\leq q\leq x_1\ \}.
\]

Let $H: \R^{2n}\times \SP^1\to\R$ be a complete
Hamiltonian.

Set:
\[
c_{min} := \min_{X_0\times \SP^1} H,\ c_{max} := \max_{X_0\times \SP^1} H.
\]

\begin{thm}
\label{thm-main-R2n}
\

\noindent
A. Assume $\Delta(H;X_1,X_0) >0$ and the following conditions are satisfied:

\begin{equation}
\label{eqn-H-is-time-independent}
H\ \textrm{is time-independent},
\end{equation}

\begin{equation}
\label{eqn-H-geq-0-on-X0}
H|_{X_0}\geq 0,
\end{equation}

\begin{equation}
\label{eqn-intersection-of-supp-H-with-layer-is-compact}
\supp H\cap \{ s_-\leq |p|\leq s_+\}\ \textrm{is compact}.
\end{equation}

Then there exists a chord of $H$ from $Y_0$
to
$Y_1$
of time-length bounded from above by $\displaystyle \frac{|x_0-x_1| (s_+-s_-)}{\Delta(H;X_1,X_0)}$.

In the case $n=1$ the claim holds even without assuming \eqref{eqn-H-is-time-independent}, \eqref{eqn-H-geq-0-on-X0}, \eqref{eqn-intersection-of-supp-H-with-layer-is-compact}.

\bigskip
\noindent
B.
Assume $\Delta(H;Y_0,Y_1) =: \Delta >0$ and for some $0<e<1/2$ the following condition is satisfied:

\begin{equation}
\label{eqn-upper-bound-on-partial-H-partial-t}
\sup_{c_{min} - e\Delta \leq H\leq c_{max} + e\Delta} \left|\frac{\partial H}{\partial t}\right| < \frac{(1-2e)e\Delta^2}{(s_+ - s_-)|x_0-x_1|}.
\end{equation}

Then there exists a chord of $H$ from $X_0$
to
$X_1$ of time-length bounded from above by $\displaystyle \frac{|x_0-x_1| (s_+-s_-)}{(1-2e)\Delta(H;Y_0,Y_1)}$.

In particular, if $H$ is time-independent (and thus \eqref{eqn-upper-bound-on-partial-H-partial-t} holds for all $0<e<1/2$), then the
time-length of the chord is $\displaystyle \leq\frac{|x_0-x_1| (s_+-s_-)}{\Delta(H;Y_0,Y_1)}$.

In the case $n=1$ the claim holds even without assuming \eqref{eqn-upper-bound-on-partial-H-partial-t}.
\end{thm}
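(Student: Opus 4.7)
The plan is to extract Theorem~\ref{thm-main-R2n} from the general interlinking framework developed later in the paper by feeding in the contact-topological data of the Legendrian pair $(\Lambda_0,\Lambda_1)$ of cotangent fibers in $ST^*\R^n$ over the points $x_0,x_1$. Part~A will follow from the statement that $(X_0,X_1)$ $\mu$-interlinks $(Y_0,Y_1)$, and Part~B from the statement that $(Y_0,Y_1)$ $\mu$-interlinks $(X_0,X_1)$, with $\mu=|x_0-x_1|(s_+-s_-)$ in both cases; the time-length bounds in the theorem are then exactly $\mu/\Delta$, as required by the definition of $\mu$-interlinking in Section~\ref{subsec-interlinking}. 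The first step is to verify the geometric setup: identify $\R^{2n}\setminus\{p=0\}$ with the symplectization of $(ST^*\R^n,pdq)$, so that $Y_0,Y_1$ become the contact-type slices $\{s=s_\pm\}$ and $X_0,X_1$ become cylindrical Lagrangian cobordisms over $\Lambda_0,\Lambda_1$ as in \eqref{eq-cyl-vsp}.

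The contact-topological input is the filtered Legendrian contact homology of $(\Lambda_0,\Lambda_1)$, viewed as a persistence module over $\Z_2$. Via the hodograph map \eqref{eq-hodograph}, $ST^*\R^n$ is a nice contact manifold in the sense of Section~\ref{subsec-pool}, and Reeb chords from $\Lambda_0$ to $\Lambda_1$ are in bijection with unit-speed Euclidean segments from $x_0$ to $x_1$; there is exactly one such chord, with action $|x_0-x_1|$. This produces a distinguished generator of Legendrian contact homology at filtration level $|x_0-x_1|$, giving a distinguished bar in the associated persistence module. The cylindrical cobordisms $X_0,X_1$, of $s$-width $s_+-s_-$, then induce via the functoriality of \cite{DR-S2,DR-2020} a morphism of persistence modules with shift equal to $s_+-s_-$. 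Plugging this algebraic datum into the main interlinking criterion of the paper yields the two interlinking relations above, with $\mu$ equal to the product of the bar endpoint and the cobordism shift.

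The hypotheses \eqref{eqn-H-is-time-independent}--\eqref{eqn-intersection-of-supp-H-with-layer-is-compact} of Part~A are precisely those needed to run the relative symplectic field theory argument for noncompact cylindrical cobordisms inside a symplectization: autonomy and compact support tame the Hamiltonian flow near the ends of $X_0,X_1$, and nonnegativity on $X_0$ keeps the relevant cobordisms exact with a primitive compatible with the Hamiltonian deformation. The refinement in Part~B relies on the persistence-module robustness principle of \cite{PS}: a bar of length $\Delta$ survives a time-dependent perturbation whose $C^0$-oscillation, integrated over the relevant time interval $\mu/((1-2e)\Delta)$, stays below $e\Delta$ at each end. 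A direct rearrangement of this inequality, combined with the restriction to the sublevel window $c_{min}-e\Delta\leq H\leq c_{max}+e\Delta$ where the bar is active, produces exactly the threshold \eqref{eqn-upper-bound-on-partial-H-partial-t}. For $n=1$, the sets $X_i,Y_i$ are compact rectangles and the Legendrians are pairs of points, so the compact version of the interlinking criterion from \cite{BEP,EP-tetragons} applies directly, with no sign, compact-support, autonomy, or robustness hypotheses needed.

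The main technical obstacle in carrying out this plan is to make the dictionary between the symplectization filtration on $\R^{2n}$ (encoded in the coordinate $s=|p|$) and the action filtration on Legendrian contact homology of $(\Lambda_0,\Lambda_1)$ fully quantitative, and in particular to confirm that the morphism induced by the cylindrical cobordisms $X_0,X_1$ shifts the persistence filtration by exactly $s_+-s_-$. Once this compatibility is established, the rest is a bookkeeping application of the general interlinking criterion combined with the standard robustness estimate for persistence modules.
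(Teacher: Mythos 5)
Your overall strategy -- compute the filtered Legendrian contact homology of the two cotangent fibers over $x_0,x_1$, feed the resulting bar data into the paper's general machinery relating $l_{min}$, trivial cylindrical cobordisms, the Poisson bracket invariant and Fathi's dynamical Urysohn lemma -- is the same route the paper takes (Propositions~\ref{prop-lmin-for-union-of-two-cotangent-fibers-of-Rn}, \ref{prop-hlmin-for-union-of-two-cotangent-fibers-of-Rn} plus Corollaries~\ref{cor-chord-of-autonomous-H-in-N}, \ref{cor-chord-of-non-autonomous-H-in-N}). However, there are two genuine gaps. First, your treatment of the time-dependent case of Part~B is not a proof: you invoke a ``robustness of bars'' principle in the style of \cite{PS}, but there is no bar of length $\Delta(H;Y_0,Y_1)$ in the LCH persistence module to be stable -- the persistence module here depends only on $(\Lambda_0,\Lambda_1,\lambda)$, not on $H$. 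The actual mechanism producing the threshold \eqref{eqn-upper-bound-on-partial-H-partial-t} is the stabilization trick: one passes to $\tM_E=M\times(-E,E)\times\SP^1$, replaces $H$ by the autonomous Hamiltonian $r+H(x,\tau)$ cut off in $r$ (Proposition~\ref{prop-chords-of-time-dep-Ham-s}), and this forces one to bound $pb^+$ for the stabilized quadruple, which in turn requires computing $\hl_{min,\infty}$ for the stabilized Legendrians $\hLambda_0,\hLambda_1\subset\Sigma\times T^*\SP^1$ (stable homological bondedness, Theorem~\ref{thm-lower-bound-on-hpb-via-pers-modules} and Proposition~\ref{prop-hlmin-for-union-of-two-cotangent-fibers-of-Rn}). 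This extra LCH computation on the stabilized contact manifold is a necessary ingredient that your proposal omits entirely, and without it the constant in \eqref{eqn-upper-bound-on-partial-H-partial-t} and the bound $\frac{|x_0-x_1|(s_+-s_-)}{(1-2e)\Delta}$ cannot be obtained.

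Second, your account of Part~A is off in substance. The hypotheses \eqref{eqn-H-is-time-independent}--\eqref{eqn-intersection-of-supp-H-with-layer-is-compact} are not there to ``keep the cobordisms exact'' or to tame the flow near the ends of $X_0,X_1$; they are exactly what is needed so that (a suitable normalization of) $-H$ itself can serve as the compactly supported test function $F$ in the invariant $pb^+_{M,comp}$, whence Proposition~\ref{prop-pb4+-modified-chords}\,II and Fathi's theorem produce a chord of $\sgrad H$ from $Y_0$ to $Y_1$; time-independence is needed because the Fathi machinery applies to autonomous vector fields. In particular Part~A is \emph{not} the interlinking statement ``$(X_0,X_1)$ $\mu$-interlinks $(Y_0,Y_1)$'' -- it is strictly weaker (cf.\ Remark~\ref{rem-int}), so your opening reduction overstates what is proved. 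A smaller but related bookkeeping error: the morphism induced by the trivial cobordism is the \emph{multiplicative} shift by $s_+/s_-$ (Proposition~\ref{prop-Phi-L-trivial-cobordisms}), not an additive shift by $s_+-s_-$; the factor $(s_+-s_-)$ only appears through the gap/deformation estimate in the proof of Theorem~\ref{thm-lower-bound-on-pb-via-pers-modules}, which is precisely the quantitative dictionary you flag as the main obstacle but do not supply.
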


\medskip\noindent
For the proof see Section~\ref{sec-the-unit-cotangent-bundle-of-Rn}.
\begin{rem}\label{rem-int}{\rm
In the case of autonomous Hamiltonians,
part B of Theorem~\ref{thm-main-R2n} implies that $(Y_0,Y_1)$ $\mu$-interlinks $(X_0,X_1)$, where
$\mu= |x_0-x_1|(s_+ - s_-)$. Part A of Theorem~\ref{thm-main-R2n} does not imply that $(Y_0,Y_1)$ $\mu$-interlinks $(X_0,X_1)$ but is a somewhat weaker claim.

In the proofs of the two parts we use two slightly different Poisson bracket invariants -- the tools used for detecting interlinking in Section~\ref{subsec-pb-invariant} below. The appearance of the same constant $\mu= |x_0-x_1|(s_+ - s_-)$ in both parts of Theorem~\ref{thm-main-R2n} is due to the fact that the proofs of both claims are based on the same lower bound on both Poisson bracket invariants, coming from areas of certain pseudo-holomorphic curves used in the theory of Legendrian contact homology.
In fact, one can think of $\mu$ as the area of a (pseudo-holomorphic) quadrilateral whose edges lie in $X_0,X_1,Y_0,Y_1$.
}
\end{rem}

\begin{rem}
\label{rem-case-n-equals-1-history}
{\rm
In the case $n=1$ the claim of Theorem~\ref{thm-main-R2n} follows rather directly
from the results in \cite{EP-tetragons} (cf. \cite[Thm. 1.20]{BEP}) -- see the proof of Theorem~\ref{thm-main-R2n} in Section~\ref{sec-the-unit-cotangent-bundle-of-Rn}.

In the case $n>1$ we do not know any way to obtain Theorem~\ref{thm-main-R2n} from the results in \cite{EP-tetragons}. (Basically, the obstacle is
the non-existence of a positive Legendrian isotopy between unit cotangent spheres at different points in the unit cotangent bundle of $\R^n$ --
see \cite{Colin-Ferrand-Pushkar}, \cite{Chernov-Nemirovski}).

Let us note that the existence of chords as in Theorem~\ref{thm-main-R2n} for similarly defined sets in the cotangent bundle of the torus and compactly supported Hamiltonians can be proved along the lines of the proof of \cite[Thm. 1.13]{BEP} using symplectic quasi-states, with the upper bound on the time-length
of the chord depending on the size of the support of the Hamiltonian. The methods of this paper do not allow us to treat this case because
the foundations of the Legendrian contact homology have not been worked out rigorously yet for the relevant setting.
}
\end{rem}

\begin{rem}
\label{rem-mech-Hamiltonian-with-bdd-potential}
{\rm
{For (time-independent) mechanical Hamiltonians $H$ the existence of Hamiltonian chords of $H$ from $X_0$ to $X_1$, as in part B of Theorem~\ref{thm-main-R2n},}
can be obtained by the classical Maupertius's least action principle (see e.g. \cite[p.247]{Arnold-MathMeth}).

Namely, assume that $H$ is a complete mechanical Hamiltonian of the form $H(p,q) = |p|^2/2 + U(q)$, where
$0\leq \sup_{\R^n} |U| < +\infty$. Let $x_0,x_1\in\R^n$, $0<s_- < s_+$ and assume that for some
$C> \sup_{\R^n} |U|$ the level set $\{ H=C\}$ intersects the sets $X_0$, $X_1$.

Consider the Riemannian metric $\tg$ on $\R^n$ of the form $\tg=\sqrt{C-U(q)}g$, where $g$ is the Euclidean metric (the metric $\tg$ is called the Jacobi metric). It is not hard to verify that the metric $\tg$ is complete and therefore, by the Hopf-Rinow theorem, there exists a minimal geodesic of $\tg$ from $x_0$ to $x_1$. By Maupertius's least action principle, the lift of the geodesic to the level set $\{ H=C\}$ of $H$ in $T^* \R^n = \R^{2n}$ is a Hamiltonian chord of $H$ from $X_0$ to $X_1$.
}
\end{rem}

\subsection{Sample applications: contact dynamics}
\label{subsec-sample-appls-contact-dynamics}
Let us present sample applications to contact dynamics.

\subsubsection{Contact interlinking} \label{susubsec-ci}
Let $(\Sigma,\xi)$ be a contact manifold equipped with a contact form $\lambda$.
An ordered pair $(\Lambda_0,\Lambda_1)$ of disjoint Legendrian submanifolds $\Lambda_0,\Lambda_1 \subset \Sigma$ is called {\it $\mu$-interlinked} if there exists a constant $\mu= \mu(\Lambda_0,\Lambda_1,\lambda) >0$ such that every bounded strictly positive contact Hamiltonian $h$ on $\Sigma$ with $h \geq c >0$ possesses an orbit  of time-length $\leq \mu/c$ starting at $\Lambda_0$ and arriving at $\Lambda_1$. The pair $(\Lambda_0,\Lambda_1)$ is called {\it interlinked} if it is $\mu$-interlinked for some $\mu>0$. The pair $(\Lambda_0,\Lambda_1)$ is called {\it robustly interlinked},
if every pair $(\Lambda'_0, \Lambda'_1)$ of Legendrians
obtained from $(\Lambda_0,\Lambda_1)$ by a sufficiently $C^1$-small Legendrian isotopy is interlinked.

Write $R_t$ for the Reeb flow of $\lambda$. Given two Legendrian submanifolds $\Lambda_0,\Lambda_1 \subset \Sigma$, a Reeb chord $R_t x$, $t \in [0,\tau]$ with $x \in \Lambda_0$ and $y:= R_\tau x \in \Lambda_1$ is called {\it non-degenerate} if
\begin{equation}\label{eq-nondeg}
D_xR_\tau(T_x\Lambda_0) \oplus T_y \Lambda_1 = \xi_y\;,
\end{equation}
where $\xi_y$ stands for the contact hyperplane at $y$.

Let $\Sigma$ be the jet space $J^1 Q = T^* Q (p,q)\times \R (z)$ of a closed
manifold $Q$ equipped with the contact form $dz - pdq$. Let $R= \partial/\partial z$ be the Reeb vector field of $\lambda$. Let $\Lambda_0=\{p=0,z=0\}$ be the zero section.

\begin{thm}
\label{thm-1-jet-space-0-section-and-its-Reeb-shift-homol-bonded}
\begin{itemize}
\item[{(i)}] Let $\psi$ be a positive function on $Q$, and let $\Lambda_1:= \{z=\psi(q), p=\psi'(q)\}$ be the graph of its 1-jet. Then the pair $(\Lambda_0, \Lambda_1)$ is robustly interlinked.
\item[{(ii)}]  Assume that $\Lambda_1 \subset \Sigma=J^1 Q$ is a Legendrian submanifold
Legendrian isotopic to $\Lambda_0$,
with
the
  following property: there is
a
  unique chord of the Reeb flow $R_t$ starting on $\Lambda_0$ and ending on $\Lambda_1$, and this chord is non-degenerate. Then the pair $(\Lambda_0,\Lambda_1)$ is interlinked.
    \end{itemize}
\end{thm}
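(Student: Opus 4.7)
Both parts will reduce to producing an infinite bar in the persistence module $V=\{V(t)\}_{t>0}$ associated to the filtered Legendrian contact homology of the mixed-chord subcomplex of $\Lambda_0\cup\Lambda_1$, and then feeding that bar into the persistence-to-dynamics mechanism of this paper (the contact-interlinking theorem to be established in the later sections). The mechanism takes as input an infinite bar with left endpoint $a>0$ and outputs $\mu$-interlinking of the pair with $\mu$ a linear function of $a$.

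For part (i), write $\Lambda_1=j^1\psi$ with $\psi>0$ Morse. The Reeb field $\partial/\partial z$ produces mixed chords from $\Lambda_0$ to $\Lambda_1$ in bijection with critical points $q_c$ of $\psi$, of action $\psi(q_c)$. For $\psi$ sufficiently $C^2$-small, the mixed-chord differential agrees with the Morse differential of $\psi$ on $Q$, so $\varinjlim_{t\to\infty}V(t)\cong H_*(Q;\Z_2)$, and the global minimum $q_{\min}$ contributes an index-zero cycle representing the class of a point in $H_0(Q;\Z_2)$. This yields an infinite bar $[\psi(q_{\min}),\infty)$ in $V$. The case of an arbitrary positive $\psi$ reduces to the small case by the Legendrian isotopy $\psi_s=s\psi$, $s\in(0,1]$: bars of $V$ move continuously with $s$, and since $\psi_s>0$ throughout, no bar endpoint can cross $0$; hence an infinite bar survives, with left endpoint $\leq \min\psi$. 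Robustness under $C^1$-small Legendrian isotopy of $(\Lambda_0,\Lambda_1)$ follows from the stability theorem for persistence modules combined with the stability of filtered LCH under Legendrian isotopy (in the spirit of Dimitroglou Rizell--Sullivan): a $C^1$-small isotopy induces an $\epsilon$-interleaving, which shifts the left endpoint of the infinite bar by at most $\epsilon$ but cannot destroy it.

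For part (ii), by hypothesis the mixed-chord complex has a single generator $a$ of action $\tau=|a|$, non-degenerate. With a unique generator the LCH differential vanishes on it for degree/type reasons, so
\[
V(t)=\begin{cases} 0, & t\leq \tau,\\ \Z_2, & t>\tau, \end{cases}
\]
and all structure maps $V(t)\to V(t')$ for $\tau<t\leq t'$ are isomorphisms. Thus $V$ is a single infinite bar $[\tau,\infty)$, and the persistence-to-dynamics mechanism yields interlinking with $\mu$ proportional to $\tau$.

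The main obstacle will be the technical matching in part (i) between filtered LCH and Morse theory on $Q$ for all positive Morse $\psi$ (not just the small regime), together with the tracking of the infinite bar along the deformation $\psi_s$; once the LCH persistence module is identified as a functor compatible with Legendrian isotopy and action-filtration stability on nice contact manifolds $P\times\R$, both parts follow by applying the interlinking machinery developed in the subsequent sections of the paper.
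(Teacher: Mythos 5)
Your part (i) is essentially the paper's argument: identify the mixed-chord complex with a Morse/Floer complex on $Q$ in a model case (in the paper, $\Lambda_1$ a Reeb shift of the zero section perturbed by a $C^\infty$-small Morse function, Proposition~\ref{prop-lmin-for-J1Q}), conclude that the barcode has infinite bars, and transfer to a general positive $\psi$ and to $C^1$-small perturbations by invariance of (filtered) LCH under Legendrian isotopy of the pair, then feed the infinite bar into Corollary~\ref{cor-homol-bonded-autonom-contact-dynamics-general-case}. One small caveat: your phrase ``bars move continuously with $s$ and no endpoint can cross $0$, hence an infinite bar survives'' is not by itself the right mechanism (an infinite bar could a priori become finite without any endpoint crossing $0$); what actually preserves the infinite bar is the invariance of $V_\infty(\Lambda_0,\Lambda_1,\lambda)$ (equivalently of homological bondedness) under Legendrian isotopies, which is also what the paper invokes. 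Since for interlinking only the existence of an infinite bar matters, this is a presentational rather than substantive issue.

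Part (ii) has a genuine gap. You claim that ``the mixed-chord complex has a single generator $a$'' and hence that $V(\Lambda_0,\Lambda_1,\lambda)$ is a single infinite bar $(|a|,\infty)$. But the $01$-subspace $\cA(\Lambda_0,\Lambda_1,\lambda)$ is not spanned by $a$ alone: while there are no $00$-chords ($\Lambda_0$ is the zero section) and $a$ is the unique $01$-chord, the Legendrian $\Lambda_1$ may have arbitrarily many pure $11$-chords and there may be many $10$-chords, so the $01$-subspace contains all composable words of the form $aB_1aB_2\cdots B_ka$ and $aB_1\cdots B_kaC$ with $B_i$ $10$-composable and $C$ $11$-composable. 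Closedness of $a$ is indeed immediate from the action filtration, but the real issue is exactness: one could have $a=d(aC)=a\cdot dC$ for a pure monomial $C$ with $dC=1$, in which case the class of $a$ dies and there is no infinite bar at all. Excluding this is precisely where the hypothesis that $\Lambda_1$ is Legendrian isotopic to $\Lambda_0$ enters: it forces $V_\infty(\Lambda_1,\lambda)\neq 0$, hence no element $x$ of the $11$-subalgebra satisfies $dx=1$ (otherwise every closed $y$ would be exact, $y=d(yx)$), and a monomial-type analysis then shows no $01$-composable $z$ can satisfy $dz=a$; this is the content of Proposition~\ref{prop-lmin-for-J1Q-1} and Lemma~\ref{lem-no-b-with db-equal-1} in the paper. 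Your proposal never uses the isotopy hypothesis, which is the tell-tale sign of the gap: without some nonvanishing assumption on the LCH of $\Lambda_1$ (e.g.\ for a loose $\Lambda_1$ with acyclic Chekanov--Eliashberg algebra) the argument, and indeed the unqualified statement, breaks down -- this is exactly the subtlety behind the correction of the original version of part (ii) acknowledged in the paper.
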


\medskip\noindent The proof is given in Section~\ref{sec-applications-to-contact-dynamics}.

\begin{rem}
{\rm
The assumption in part (ii) of Theorem~\ref{thm-1-jet-space-0-section-and-its-Reeb-shift-homol-bonded} that $\Lambda_1$ is Legendrian isotopic to $\Lambda_0$ can be considerably weakened. The actual assumption that we need for the proof is that the Legendrian contact homology of $\Lambda_1$ is not zero -- see Section~\ref{sec-the-1-jet-space}.
}
\end{rem}

\subsubsection{Beyond interlinking} \label{subsec-twochords}
Let $\psi$ be a positive function on $Q$, and let $\Lambda := \{z=\psi(q), p=\psi'(q)\}$ be the graph of its 1-jet in $T^*Q \times \R$. Let us note that for every critical
point $q$ of $\psi$ there is a Reeb chord of the time length $\psi(q)$ from the zero
section $\Lambda_0$ to $\Lambda$. As we have seen in Theorem~\ref{thm-1-jet-space-0-section-and-its-Reeb-shift-homol-bonded}(i), the pair $(\Lambda_0,\Lambda)$ is interlinked. Note that the order matters: the pair $(\Lambda,\Lambda_0)$ is not interlinked -- indeed, there is no Reeb chord from $\Lambda$ to $\Lambda_0$.
Assume now
that $K$ is Legendrian isotopic to $ \Lambda $ outside $\Lambda_0$, and that
there exist exactly two Reeb chords $A,a$ starting on $ K $ and ending on $\Lambda_0$. Assume further that both chords are non-degenerate, and their time lengths $|A|,|a|$ satisfy
\begin{equation}
\label{eq-twochords}
0 < |A|-|a| < |b|,
\end{equation}
for every Reeb chord $b$ starting and ending on $\Lambda_0 \sqcup  K $.
The next result states that for contact Hamiltonians with a {\it sufficiently small oscillation} (i.e., for small perturbations of the {\it constant} contact Hamiltonian  $1$),
one can establish existence of a chord even in the absence of interlinking.

\begin{figure}[ht]
\begin{minipage}[c][1\width]{
	   0.6\textwidth}
\includegraphics[width=1.2\textwidth,left]{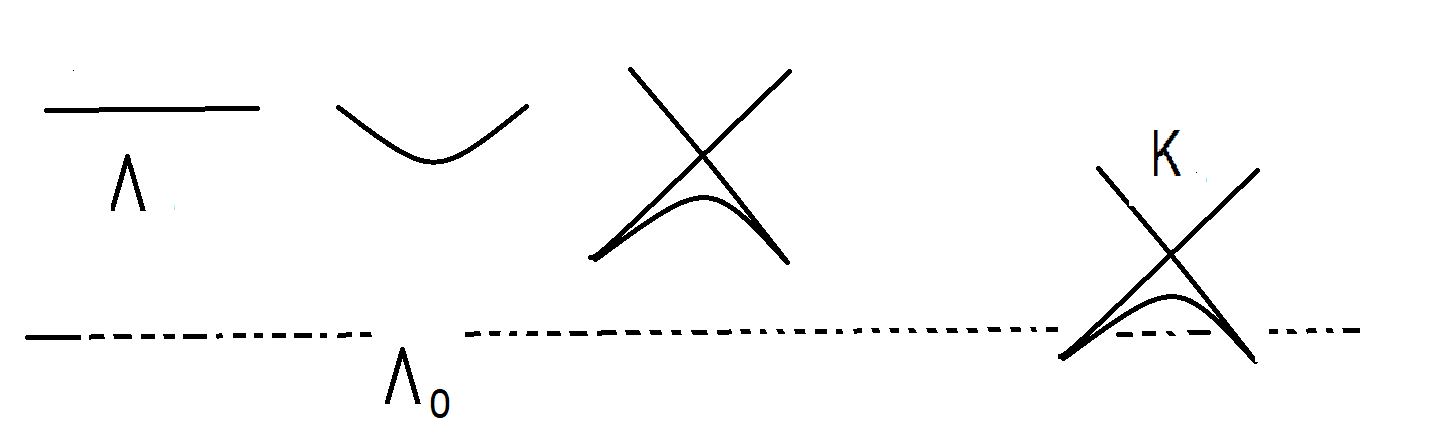}
\end{minipage}
\hfill
\begin{minipage}[c][1\width]{
	   0.8\textwidth}
\includegraphics[width=0.7\textwidth,center]{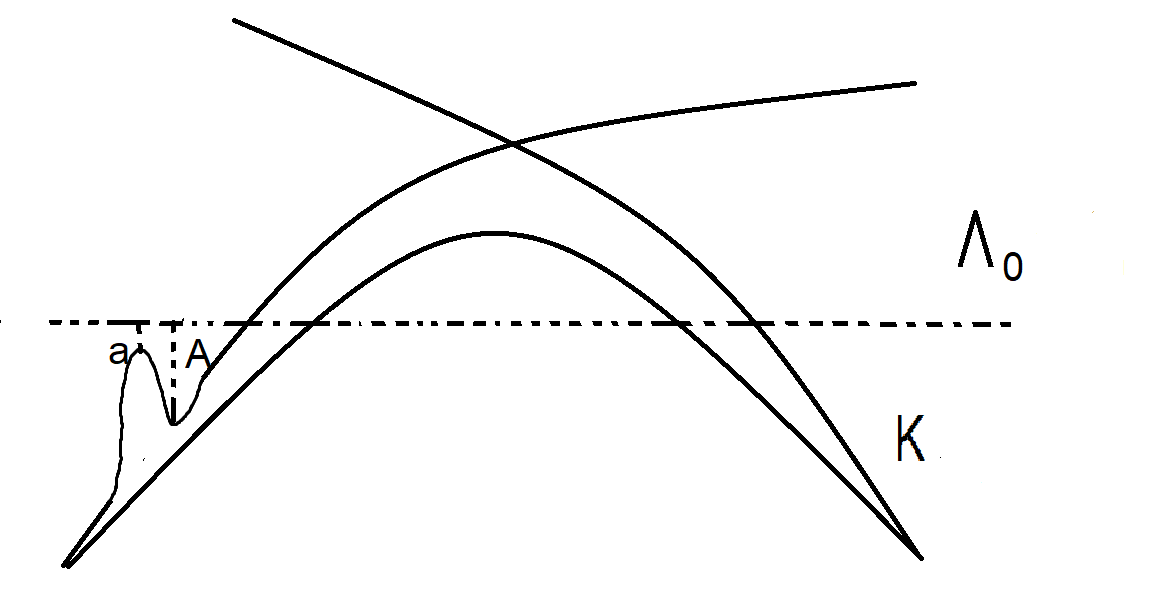}
%\centering
\end{minipage}
\caption{Isotopy from $\Lambda$ to $K$ (left); $K$ zoomed in (right)}
\end{figure}

\begin{thm}\label{thm-twochords} Let $h$ be any positive bounded contact Hamiltonian on $\Sigma$ with $c:= \inf_\Sigma h \leq h \leq \sup_\Sigma h =:C$. If
\begin{equation} \label{eq-constraint}
\frac{C}{c} < \frac{|A|}{|a|}\;,
\end{equation}
then there is a chord of $h$ starting on $K$ and ending on $\Lambda_0$ of time length
$\leq |a|(|A|-|a|)/(|A|c - |a|C)$.
\end{thm}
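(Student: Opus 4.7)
The plan is to argue by contradiction using the filtered Legendrian contact homology (FRSFT) of $K\sqcup\Lambda_0$ viewed as a persistence module.

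\emph{Step 1: Barcode analysis.} Equip the filtered Legendrian contact homology of $K\sqcup\Lambda_0$ with the action filtration and restrict attention to the piece $V$ generated by mixed Reeb chords from $K$ to $\Lambda_0$. By hypothesis its only generators are the non-degenerate chords $A,a$ at filtration levels $|A|,|a|$. The gap condition $0<|A|-|a|<|b|$ for every Reeb chord $b$ of $\Lambda_0\sqcup K$ rules out any non-trivial contribution of pure $K$-$K$ (or mixed) words to the differential between $A$ and $a$: such a holomorphic disk would carry a non-empty word of Reeb chords of total action at most $|A|-|a|$, and no such chords exist. Together with non-degeneracy this pins down the barcode of the mixed part of $V$ as a single finite bar of length exactly $|A|-|a|$, supported on $[|a|,|A|)$.

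\emph{Step 2: Lagrangian suspension and the filtered cobordism map.} Suppose, for contradiction, that no chord of $h$ from $K$ to $\Lambda_0$ exists of time-length $\leq T_0:=|a|(|A|-|a|)/(|A|c-|a|C)$. Then $\phi_h^t(K)\cap\Lambda_0=\emptyset$ for all $t\in[0,T_0]$, and the Lagrangian suspension of $\{\phi_h^t(K)\}_{t\in[0,T_0]}$ in the symplectization $S\Sigma=\Sigma\times\R_+$, completed by the trivial cylinder over $\Lambda_0$, yields an exact Lagrangian cobordism $L$ from $K\sqcup\Lambda_0$ to $K_{T_0}\sqcup\Lambda_0$, with $K_{T_0}:=\phi_h^{T_0}(K)$, whose interior is disjoint from $\Lambda_0\times\R_+$. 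By the filtered cobordism functoriality of Dimitroglou Rizell--Sullivan, $L$ induces a morphism $\Phi_L$ between the corresponding mixed persistence modules whose action shift is governed by the primitive of $s\lambda$ along $L$. A direct action computation on the suspension, using $c\leq h\leq C$, gives filtration shifts bounded linearly in $T_0$: the image of the generator corresponding to $A$ cannot drop in filtration by more than $CT_0$ nor by less than $cT_0$, and symmetrically for $a$.

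\emph{Step 3: Contradiction.} Applying the same construction to the reverse isotopy gives a companion morphism $\Phi_{\bar L}$ in the opposite direction with symmetric estimates, and one obtains that the image of the finite bar $[|a|,|A|)$ under $\Phi_L$ must fit inside the barcode of the target persistence module. Tracking the bounds of Step 2 through this constraint, the image endpoints satisfy $|a|-CT_0$ (lower) and $|A|-cT_0$ (upper), and the persistence of a bar of length $|A|-|a|$ forces $T_0(|A|c-|a|C)\geq|a|(|A|-|a|)$. With the hypothesis $C/c<|A|/|a|$ guaranteeing $|A|c-|a|C>0$, this is equivalent to $T_0\geq|a|(|A|-|a|)/(|A|c-|a|C)$, contradicting the choice of $T_0$. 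Hence a chord of $h$ from $K$ to $\Lambda_0$ of time-length at most $T_0$ must exist.

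\emph{Main obstacle.} The principal difficulty lies in Step 1: rigorously identifying the mixed barcode of $V$ as a single bar of length exactly $|A|-|a|$. The gap hypothesis is tailored precisely to block the only mechanism --- decoration by non-empty words of Reeb chords --- through which additional holomorphic disks could deform this picture, but verifying that higher structural constants in the mixed Chekanov--Eliashberg DGA respect the argument requires care. Step 2 is more mechanical, but needs a judicious choice of primitive for $s\lambda$ on the Lagrangian suspension so that the $C^0$-oscillation bounds $c\leq h\leq C$ translate directly into the two-sided shift estimates used in Step 3.
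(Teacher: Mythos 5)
Your Step 1 already overstates what the hypotheses give. The $01$-subspace is spanned by all monomials $ua,uA$ with $u$ a word of pure chords, so the mixed barcode has bars above level $|A|$ as well, and the gap condition \eqref{eq-twochords-1} only forces $\partial A\in\{0,a\}$; it does not exclude $\partial A=0$. (The paper handles that case not via the gap condition but via the (non)vanishing of $V_\infty$ for the two orderings of the pair, which comes from invariance under Legendrian isotopy together with the $1$-jet computation of Proposition~\ref{prop-lmin-for-J1Q}; this is exactly Proposition~\ref{prop-twochords}.) What is true, and all that is needed, is that the bar with the smallest left end starts at $|a|$ and has multiplicative length at least $|A|/|a|$, i.e.\ $l_{min,s}(K,\Lambda_0,\lambda)=|a|$ for $s<|A|/|a|$; your ``single finite bar of length exactly $|A|-|a|$'' is neither justified nor needed. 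These are repairable inaccuracies (you also never perturb to make the pure chords non-degenerate, as the paper does).

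The genuine gap is in Steps 2--3. First, absence of chords of $h$ up to time $T_0$ puts no constraint on the target module $V(\phi^{T_0}_h(K),\Lambda_0,\lambda)$ --- its bars record \emph{Reeb} chords of the isotoped pair, not chords of $h$ --- so ``the image bar must fit inside the target barcode'' produces no contradiction by itself; you would need to identify a composition of your map with its reverse with a shift of the identity on the source (an interleaving), which you only gesture at. Second, the two-sided filtration estimates ``shift between $cT_0$ and $CT_0$'' are not established, and for suspension cobordisms the action shift involves the conformal factor of the contact isotopy, not just $\int h$. Third, and decisively, the inequality $T_0(|A|c-|a|C)\ge |a|(|A|-|a|)$ is asserted, not derived: it does not follow from the endpoint bounds $|a|-CT_0$ and $|A|-cT_0$ by any algebra (those give only trivial or different constraints, e.g.\ $T_0\le |a|/C$), and a straightforward additive interleaving of the kind you describe would yield an oscillation-type bound that degenerates as $h$ approaches a constant, whereas the claimed bound correctly tends to $|a|/c$ when $h\equiv c$. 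The multiplicative structure of the constant comes, in the paper, from a different mechanism: one keeps both Legendrian boundaries fixed, deforms the \emph{trivial} cobordism $(K\sqcup\Lambda_0)\times[s_-,s_+]$ by a Moser-type isotopy built from a pair $(F,G)$, compares the resulting cobordism map with the multiplicative shift (Propositions~\ref{prop-Phi-L-trivial-cobordisms} and~\ref{prop-Phi-L-two-part-Lagr-isotopy}) to bound $pb^+_M$ from below (Theorem~\ref{thm-lower-bound-on-pb-via-pers-modules}), converts positivity of $pb^+$ into a chord of the homogeneous lift $H=sh$ on the symplectization via Fathi's Theorem~\ref{thm-fathi}, and finally optimizes $s_+\in(C/c,|A|/|a|)$ in Theorem~\ref{thm-Reeb-chords-pos-Ham-gen-case}.A to land exactly on $|a|(|A|-|a|)/(|A|c-|a|C)$. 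As written, your Steps 2--3 do not substitute for this chain.
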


\medskip
\noindent
The proof is given in
Section~\ref{sec-the-1-jet-space}.
  For an example of a Legendrian submanifold $ K \subset T^*S^1 \times \R$ satisfying the assumption
of Theorem~\ref{thm-twochords} we refer to Figure 1 describing the front projection of $ K $. As we shall see later on, the proof
of this result involves the machinery of persistence modules.

\subsubsection{Chords of contact Hamiltonians }
\label{subsubsec-sample-appls-chords-of-contact-hams-from-one-Leg-to-another}
Next, we relax the setting of contact interlinking and work with contact Hamiltonians which may be time-dependent, unbounded, and may change sign.

Let $l\in \R$, $l>0$.
Let $\Lambda_0$, $\Lambda_1$ be the following Legendrian submanifolds of $(\Sigma,\xi)$: if $\Sigma=ST^*\R^n$, then $\Lambda_0$ is the unit cotangent sphere
at some $x_0\in\R^n$ while
\begin{itemize}
\item[{(i)}] either $\Lambda_1$ is the image of $\Lambda_0$ under the time-$l$ Reeb flow;
\item[{(ii)}]or $\Lambda_1$ is the unit cotangent sphere at some $x_1\in\R^n$, $|x_0 - x_1| = l$, in which case there exists a unique non-degenerate
Reeb chord starting at $\Lambda_0$ and ending on $\Lambda_1$.
\end{itemize}
Note that these pairs $(\Lambda_0,\Lambda_1)$ are interlinked by  Theorem~\ref{thm-1-jet-space-0-section-and-its-Reeb-shift-homol-bonded}(i) and (ii), respectively.
We shall also allow $\Sigma = J^1 Q$, where $\Lambda_0$ is the zero-section and $\Lambda_1$ is its image under the time-$l$ Reeb flow, as in Theorem~\ref{thm-1-jet-space-0-section-and-its-Reeb-shift-homol-bonded}(i).
We shall discuss applications concerning the existence
of chords of contact Hamiltonians from $\Lambda_0$ to $\Lambda_1$.

Assume $h: \Sigma\times \SP^1\to \R$ is a complete (time-periodic) contact Hamiltonian (with respect to $\lambda$). {Write $h_t := h(\cdot,t)$, $t\in\SP^1$}. Denote by $v_t$, $t\in\SP^1$, the (time-periodic) contact Hamiltonian vector field of $h$ with respect to the contact form $\lambda$. If $v_t$ is time-independent, we write just $v$. Let $\{\varphi_t\}$ be the flow of $v_t$ -- that is, the contact Hamiltonian flow of $h$.

\begin{defin}
\label{def-cooperative-ham}
{\rm
Let us say that $h: \Sigma\times\SP^1\to\R$ is {\it $C$-cooperative with  $\Lambda_0$, $\Lambda_1$} for $C>0$ if either of the following conditions holds:

\bigskip
\noindent
(a)
$h < C$ on $\Lambda_1\times \SP^1$ and $dh_t (R)\geq 0$ on $\{ h_t\geq C\}$ for all $t\in\SP^1$.

\bigskip
\noindent
(b)
$h < C$ on $\Lambda_0\times \SP^1$ and  $dh_t (R)\leq 0$ on $\{ h_t\geq C\}$ for all $t\in\SP^1$.

\bigskip
\noindent
We will say that $h$ is {\it cooperative with  $\Lambda_0$, $\Lambda_1$} if it is $C$-cooperative with  $\Lambda_0$, $\Lambda_1$ for some $C>0$.
}
\end{defin}
\bigskip

Note that conditions (a) and (b) hold, in particular, for a sufficiently large $C$ if $\sup_{\Sigma\times\SP^1} h < +\infty$. Conditions (a) and (b) guarantee that a chord of $h$ from $\Lambda_0$ to $\Lambda_1$, if it exists, does not leave the set $\{ h \leq C\}$ at any time.

\begin{thm}[cf. Rem. 1.14 in \cite{EP-tetragons}]
\label{thm-main-existence-of-connecting-Reeb-chords-unit-cot-bundle}
Assume that $h$ is cooperative with $\Lambda_0$, $\Lambda_1$ and that $\inf_{\Sigma\times\SP^1} h >0$.
Assume also that for some $0<e<1/2$
\[
\sup_{\Sigma\times \SP^1} |\partial h/\partial t| < \frac{(1-2e)e\big(\inf_{\Sigma\times \SP^1} h\big)^3} {\big(\max_{\Lambda_0\times \SP^1} h + e\inf_{\Sigma\times \SP^1} h  \big) l }.
\]

Then there exists a chord of $h$ from $\Lambda_0$ to $\Lambda_1$ of time-length bounded from above by
$\displaystyle \frac{l}{(1-2e)\inf_{\Sigma\times \SP^1} h}$.

In particular, if $h$ is time-independent, then the time-length can be bounded from above by $\displaystyle\frac{l}{\inf_{\Sigma\times \SP^1} h}$.
\end{thm}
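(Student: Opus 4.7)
The plan is to lift the problem to the symplectization $S\Sigma = \Sigma \times \R_+(s)$ with symplectic form $\omega = d(s\lambda)$: the contact Hamiltonian $h(x,t)$ lifts to the symplectic Hamiltonian $H(x,s,t) := s\,h(x,t)$, whose flow on $S\Sigma$ projects to the contact flow of $h$ on $\Sigma$. A chord of $H$ between the cylindrical Lagrangian cobordisms $X_i := \Lambda_i \times [s_-, s_+]$ thus projects to a chord of $h$ from $\Lambda_0$ to $\Lambda_1$ of the same time-length. The interlinking of $(\Lambda_0, \Lambda_1)$ provided by Theorem~\ref{thm-1-jet-space-0-section-and-its-Reeb-shift-homol-bonded} ensures that the filtered Legendrian contact homology of the pair supplies the algebraic input for the Poisson-bracket-invariant interlinking machinery developed earlier in the paper.

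Setting $Y_0 = \{s = s_-\}$ and $Y_1 = \{s = s_+\}$ in an appropriate neighborhood of the layer, I would estimate $\Delta(H; Y_0, Y_1) \geq s_+ \inf h - s_- \max_{\Lambda_0 \times \SP^1} h$ on the relevant region, and $\sup|\partial H/\partial t| \leq s_+ \sup|\partial h/\partial t|$. A general interlinking theorem (whose prototype is Part~B of Theorem~\ref{thm-main-R2n}) then furnishes a chord of $H$ from $X_0$ to $X_1$ of time-length at most $l(s_+ - s_-)/((1-2e)\Delta(H;Y_0,Y_1))$. The quadrilateral ``area'' $l(s_+ - s_-)$ arises as the symplectic area of pseudoholomorphic curves with boundary on $X_0 \cup X_1 \cup Y_0 \cup Y_1$ counted in Legendrian contact homology, with $l$ being the Reeb-chord length of the pair: the Reeb-shift parameter in case (i)(a) and in the $J^1Q$ case, or the Euclidean distance $|x_0 - x_1| = l$ via the hodograph contactomorphism \eqref{eq-hodograph} in case (i)(b). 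Taking the limit $s_+/s_- \to 1$, with the ratio tuned to $(\max_{\Lambda_0 \times \SP^1}h + e\inf h)/\inf h$ so that the separation remains positive and the time-derivative threshold continues to be satisfied, collapses the bound to $l/((1-2e)\inf h)$.

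The cooperative assumption is what makes this reduction rigorous: the lift $H = sh$ is non-compactly-supported, so the prototype theorem cannot be applied to it directly, and one must instead apply it to a suitable truncation based on $\min(h, C)$. The sign condition on $dh_t(R)$ on $\{h \geq C\}$, combined with the identity $dh_t(v_{h_t}) = h_t \cdot dh_t(R)$ for the contact Hamiltonian vector field $v_{h_t}$ and the boundary condition $h < C$ on the relevant Legendrian, implies that a chord of $h$ from $\Lambda_0$ to $\Lambda_1$ never enters the region $\{h > C\}$, so the truncation destroys no chord we are searching for. The main technical point — and the main obstacle — is matching constants so that the induced bound on $|\partial H/\partial t|$ for the truncated problem meets the quantitative threshold of the prototype theorem in the limit $s_+ \to s_-$; the precise numerical hypothesis on $|\partial h/\partial t|$ displayed in the statement is exactly what falls out of this calculation with the choice $C = \max_{\Lambda_0 \times \SP^1}h + e\inf h$ (and the symmetric choice in cooperative subcase~(a)).
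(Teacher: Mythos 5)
Your overall strategy is the one the paper uses: lift $h$ to $H=sh$ on the symplectization, truncate $h$ to $\tih=\chi\circ h$ using the cooperativeness constant $C$, confine the chord to $\{h\le C\}$ via $dh_t(v_t)=h_t\,dh_t(R)$, and feed the quadruple $X_i=\Lambda_i\times[s_-,s_+]$, $Y_0=\{s=s_-\}$, $Y_1=\{s=s_+\}$ into the $pb^+$/LCH-persistence chord-existence machinery, with the quantitative input $\hl_{min,\infty}(\Lambda_0,\Lambda_1,\lambda)\le l$ (this is exactly the chain Theorem~\ref{thm-Reeb-chords-pos-Ham-gen-case} $\to$ Corollary~\ref{cor-stably-homol-bonded-non-autonom-contact-dynamics-general-case}, via Propositions~\ref{prop-hlmin-for-J1Q} and \ref{prop-hlmin-for-union-of-two-cotangent-fibers-of-Rn}; note that for the time-dependent statement you need these stabilized computations, not merely the interlinking of Theorem~\ref{thm-1-jet-space-0-section-and-its-Reeb-shift-homol-bonded}).

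However, your quantitative step has a genuine gap: the limit goes the wrong way. The separation is $\Delta(H;Y_0,Y_1)=s_+\inf_{\Sigma\times\SP^1}\tih-s_-\sup_{\Sigma\times\SP^1}\tih$, so it is positive only when $s_+/s_->\sup\tih/\inf h$; it is not bounded below by $s_+\inf h-s_-\max_{\Lambda_0\times\SP^1}h$, and with $s_+/s_-\to 1$ it becomes negative for nonconstant $h$, so the machinery cannot be applied there at all. Your ``tuned'' ratio $(\max_{\Lambda_0\times\SP^1}h+e\inf h)/\inf h$ does not rescue this, because cooperativeness fixes $C$ (hence $\sup\tih\approx C$) independently of $\max_{\Lambda_0\times\SP^1}h+e\inf h$, and even when a finite ratio $\rho=s_+/s_-$ is admissible the resulting bound $l(\rho-1)/\bigl((1-2e)(\rho\inf h-\sup\tih)\bigr)$ is non-increasing in $\rho$ and strictly worse than the claimed $l/\bigl((1-2e)\inf_{\Sigma\times\SP^1}h\bigr)$; the claimed bound is attained only in the limit $s_+/s_-\to+\infty$. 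This is what the paper does: fix $s_-=1$, let $s_+\to\infty$, verify hypothesis~\eqref{eqn-upper-bound-on-partial-h-over-partial-t} for $|\partial H/\partial t|=s\,|\partial h/\partial t|$ over the energy window $\{c_{min}-E\le H\le c_{max}+E\}$ with $c_{max}=s_+\max_{\Lambda_0\times\SP^1}h$ and $E=e\Delta_{s_+}$ --- this window, not a choice of truncation level $C$, is where the constant $\max_{\Lambda_0\times\SP^1}h+e\inf_{\Sigma\times\SP^1}h$ in the hypothesis comes from --- and then use a compactness argument (uniform $t_{s_+}\in[0,1]$, compact $\Lambda_0$) to extract a chord realizing the limiting time-length bound. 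Without replacing your ``$s_+/s_-\to1$'' tuning by this $s_+\to\infty$ argument plus the compactness step, the stated bound is not obtained.
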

\bigskip

For the proof of Theorem \ref{thm-main-existence-of-connecting-Reeb-chords-unit-cot-bundle} see Section~\ref{sec-the-unit-cotangent-bundle-of-Rn}.

For other results on the existence of Reeb chords between different Legendrian submanifolds (or equivalently, chords of positive contact Hamiltonians) see the papers of G.Dimitroglou-Rizell and M.Sullivan \cite{DR-S1,DR-S2} (for a comparison of their results with the results in \cite{EP-tetragons} and here, see \cite[Sec. 1.3]{DR-S2}).

Theorem~\ref{thm-main-existence-of-connecting-Reeb-chords-unit-cot-bundle}, together with a basic dynamical assumption,
allows to obtain the following results concerning the chords of contact Hamiltonians {\sl that are not  everywhere positive}.

\begin{cor}
\label{cor-non-positive-hamiltonians-separating-hypersurface-unit-cot-bundle}
{Assume that $h$ is cooperative with $\Lambda_0$, $\Lambda_1$}
and there exists a (possibly non-compact or disconnected) closed codimension-0 submanifold $\Xi\subset \Sigma$ with a (possibly non-compact or disconnected) boundary $\partial\Xi$,
so that

\bigskip
\noindent
(1) $\inf_{\Xi\times\SP^1} h >0$ (but $h$ may be negative outside ${\Xi}\times\SP^1$).

\bigskip
\noindent
(2) $\sup_{\partial\Xi\times\SP^1} h < +\infty$.

\bigskip
\noindent
(3) For each $t\in\SP^1$
the vector field $v_t$ is transverse to $\partial\Xi$ (in particular, $\partial\Xi$ is a convex surface in the sense of contact topology -- see \cite{Giroux-CMH1991})
and either points inside $\Xi$ everywhere on $\partial\Xi$ or points outside $\Xi$ everywhere on $\partial\Xi$.

\bigskip
\noindent
(4)
Both $\Lambda_0$ and $\Lambda_1$ lie in $\Xi$.

\bigskip
\noindent
Assume also that for some $0<e<1/2$
\[
\sup_{\Xi\times \SP^1} |\partial h/\partial t| < \frac{(1-2e)e\big(\inf_{\Xi\times \SP^1} h\big)^3} {\big(\max_{\Lambda_0\times \SP^1} h + e\inf_{\Xi\times \SP^1} h  \big)l }.
\]

Then there exists a chord of $h$ from $\Lambda_0$ to $\Lambda_1$ whose time-length is bounded from above by $\displaystyle \frac{l}{(1-2e)\inf_{\Xi\times \SP^1} h}$.

If $h$ is time-independent, then the time-length of the chord is bounded from above by ${l}/\inf_\Xi h$.

\end{cor}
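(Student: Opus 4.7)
My plan is to reduce Corollary~\ref{cor-non-positive-hamiltonians-separating-hypersurface-unit-cot-bundle} to Theorem~\ref{thm-main-existence-of-connecting-Reeb-chords-unit-cot-bundle}. The idea is to modify $h$ outside $\Xi$ to produce a globally defined positive cooperative Hamiltonian $\tilde h$ satisfying the hypotheses of Theorem~\ref{thm-main-existence-of-connecting-Reeb-chords-unit-cot-bundle}, then use the transversality condition (3) to show that the chord of $\tilde h$ delivered by the theorem lies in $\Xi$ and is therefore a chord of $h$.

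First, I would observe that, by continuity in $t\in\SP^1$ combined with the transversality in condition (3), the direction of $v_t$ along $\partial\Xi$ is the same for every $t$: either $v_t$ points into $\Xi$ for all $t$ (\emph{case A}, so that $\Xi$ is forward-invariant under $\{\varphi_t\}$) or $v_t$ points out of $\Xi$ for all $t$ (\emph{case B}, so that $\Xi$ is backward-invariant). Next, for small $\eta>0$, I would construct a smooth extension $\tilde h\colon\Sigma\times\SP^1\to\R$ with: (i) $\tilde h=h$ on an open neighborhood $U$ of $\Xi$; (ii) $\inf_{\Sigma\times\SP^1}\tilde h\geq c-\eta$ with $c:=\inf_{\Xi\times\SP^1}h$; (iii) $\tilde h$ cooperative with $\Lambda_0,\Lambda_1$; (iv) $\sup_{\Sigma\times\SP^1}|\partial\tilde h/\partial t|\leq\sup_{\Xi\times\SP^1}|\partial h/\partial t|+\eta$; (v) the flow of $\tilde h$ complete. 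A concrete attempt is $\tilde h=\chi h+(1-\chi)\alpha$, where $\chi$ is a cutoff equal to $1$ on $U$ and supported in a slightly larger neighborhood $V$ of $\Xi$, and $\alpha\colon\Sigma\to\R$ is a time-independent positive smooth function equal to the constant $c-\eta/2$ outside $V$; a further smooth truncation from below at level $c-\eta$ secures (ii). Property (iii) holds because, with the cooperative constant $C>c$ inherited from $h$, the superlevel set $\{\tilde h_t\geq C\}$ lies inside $U$, where $\tilde h=h$ and the Reeb-derivative sign condition transfers directly.

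Because $\tilde h=h$ on $U\supset\Xi$, the contact vector fields of $\tilde h$ and $h$ agree on $\partial\Xi$, and the case dichotomy from the previous step persists for $\tilde h$. The strict inequality in the hypothesis on $\sup_{\Xi\times\SP^1}|\partial h/\partial t|$ absorbs the $\eta$-losses, so Theorem~\ref{thm-main-existence-of-connecting-Reeb-chords-unit-cot-bundle} applies to $\tilde h$ and produces a chord $\tilde\gamma$ from $\Lambda_0$ to $\Lambda_1$ of time-length at most $l/((1-2e)(c-\eta))$. In case A, $\tilde\gamma$ starts on $\Lambda_0\subset\Xi$ and forward invariance traps it in $\Xi$; in case B it ends on $\Lambda_1\subset\Xi$ and backward invariance traps it in $\Xi$. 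In either case $\tilde\gamma\subset\Xi\times\SP^1$, so $\tilde\gamma$ is a chord of $h$ as well. Letting $\eta\to 0$ and extracting a convergent subsequence via Arzel\`a--Ascoli (the chords have uniformly bounded time-length and their endpoints lie on the compact sets $\Lambda_0,\Lambda_1$) yields the desired chord of $h$ of time-length $\leq l/((1-2e)c)$; in the time-independent case the same argument gives $l/c$.

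The main obstacle is the construction in the second step: simultaneously preserving cooperativity, enforcing the bound $\sup_{\Sigma\times\SP^1}|\partial\tilde h/\partial t|\leq\sup_{\Xi\times\SP^1}|\partial h/\partial t|+\eta$, and ensuring completeness of the flow of $\tilde h$, when $\Xi$ and $\partial\Xi$ may be non-compact. The non-compactness forces the neighborhoods $U$ and $V$ to be chosen using uniform continuity of $\partial h/\partial t$ on tubes of controlled width around $\Xi$ rather than over all of $\Sigma$. Once the extension is in hand, the remaining steps are an immediate application of Theorem~\ref{thm-main-existence-of-connecting-Reeb-chords-unit-cot-bundle} and the invariance of $\Xi$.
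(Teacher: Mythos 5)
Your overall strategy is the same as the paper's: the paper deduces this corollary from a general statement (Corollary~\ref{cor-non-positive-hamiltonians-separating-hypersurface-gen-case}) whose proof is exactly your scheme -- replace $h$ outside $\Xi$ by a positive contact Hamiltonian $\widetilde h_\epsilon$ that agrees with $h$ on a neighborhood of $\Xi$ and is pinched between $\inf_{\Xi\times\SP^1}h-\epsilon$ and (roughly) $\sup_{\Xi\times\SP^1}h$ elsewhere, apply the positive-Hamiltonian chord results (Corollaries~\ref{cor-homol-bonded-autonom-contact-dynamics-general-case}, \ref{cor-stably-homol-bonded-non-autonom-contact-dynamics-general-case}, of which Theorem~\ref{thm-main-existence-of-connecting-Reeb-chords-unit-cot-bundle} is the specialization you invoke), use condition (3) to show the chord never crosses $\partial\Xi$ and is therefore a chord of $h$, and finally pass to the limit by a compactness argument. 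Your trapping step via forward/backward invariance of $\Xi$ is an equivalent (slightly more explicit) reformulation of the paper's ``the chord could not return to reach $\Lambda_1$'' argument, and your $\eta$-absorption of the losses in the $|\partial h/\partial t|$ hypothesis and the Arzel\`a--Ascoli limit match the paper's ``standard compactness argument''.

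There is, however, one step that does not hold as written: the claim that, with the cooperativity constant $C$ ``inherited from $h$'', one automatically has $\{\widetilde h_t\geq C\}\subset U$. On the collar $V\setminus U$ one has $\widetilde h=\chi h+(1-\chi)\alpha$, and this can exceed $C$ wherever $h$ does -- e.g.\ near portions of $\partial\Xi$ where $h$ is close to $\sup_{\partial\Xi\times\SP^1}h$, which may be larger than the given $C$; at such points the Reeb-sign condition for $\widetilde h$ is not inherited from $h$, since $d\widetilde h_t(R)$ acquires extra terms involving $d\chi$ and $h-\alpha$. This is exactly where hypothesis (2) must enter, and it is the one hypothesis your argument never uses: because $\sup_{\partial\Xi\times\SP^1}h<+\infty$ and $C$-cooperativity in Definition~\ref{def-cooperative-ham} is monotone in $C$, you may first replace $C$ by any $C'\geq\max\bigl(C,\sup_{\partial\Xi\times\SP^1}h\bigr)$ and then choose the collar thin enough that $h<C'$ on $(V\setminus\Xi)\times\SP^1$; only then is the superlevel set $\{\widetilde h_t\geq C'\}$ contained in the region where $\widetilde h=h$ and cooperativity transfers. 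The paper performs precisely this normalization at the start of its proof. Apart from this (and from completeness of the flow of $\widetilde h$, which you flag but do not settle and which the paper likewise leaves implicit in its construction of $\widetilde h_\epsilon$), your reduction coincides with the paper's proof.
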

\bigskip

For the proof of Corollary~\ref{cor-non-positive-hamiltonians-separating-hypersurface-unit-cot-bundle} see Section~\ref{sec-the-unit-cotangent-bundle-of-Rn}.

\begin{rem}
\label{rem-separating-unit-cot-bundle}
{\rm
Assume $h$ is time-independent and $\Xi := \{ h\geq c\}$ for some $c>0$. Then the conditions (1) and (2) are satisfied automatically, while condition (3) is equivalent to $\partial\Xi$ being transverse to the Reeb vector field $R$, because $dh (v) = h dh(R)$.
}
\end{rem}
\bigskip

\begin{exam}
\label{exam-hypersurface-Xi}
{\rm
An example satisfying the assumptions of Corollary~\ref{cor-non-positive-hamiltonians-separating-hypersurface-unit-cot-bundle} can be constructed as follows. Write $R_t$ for the Reeb flow on $J^1Q$. Let $\Lambda_0$ and $\Lambda_1$ be the images of the zero-section under $R_c$ and $R_{c+l}$, respectively, with $c,l>0$.
Note that $\Lambda_0,\Lambda_1\subset \Xi  := \{ z \geq c\}$. Thus condition (4) in Corollary~\ref{cor-non-positive-hamiltonians-separating-hypersurface-unit-cot-bundle} is satisfied.

Consider a time-independent contact Hamiltonian $h = az + g$ on $\Sigma = J^1 Q = T^* Q\times\R$, where $a>0$, $z$ is the coordinate along the $\R$-factor and $g$ is a smooth bounded function on $T^* Q$. Assume that $\inf_\Xi h >0$ -- this can be achieved if $a$ is sufficiently large compared to $||g||_{L^\infty}$.
Then condition (1) in Corollary~\ref{cor-non-positive-hamiltonians-separating-hypersurface-unit-cot-bundle} is satisfied.
Condition (2) is satisfied since $g$ is bounded. Finally, condition (3) is satisfied since the Reeb vector field $R$ of the standard contact form on $J^1 Q$ is $\partial/\partial z$ and therefore $dh (v) = h dh (R) = ah >0$ on $\partial\Xi = \{ z=c\}$.
It is also easy to verify that $h$ is $C$-cooperative with $\Lambda_0,\Lambda_1$ for a sufficiently large $C$.

We have verified that the objects above -- and accordingly their preimages under $\psi$ --
satisfy the assumptions of Corollary~\ref{cor-non-positive-hamiltonians-separating-hypersurface-unit-cot-bundle}. Consequently, Corollary~\ref{cor-non-positive-hamiltonians-separating-hypersurface-unit-cot-bundle} yields the existence of a chord of $h$ from $\Lambda_0$ to $\Lambda_1$ of time-length $\leq l/\inf_\Xi h$.
}
\end{exam}

\subsubsection{Contact flows with large conformal factor}
\label{subsubsec-sample-appls-conf-factor}

Our next result illustrates that contact Hamiltonians separating (in a suitable sense)  certain pairs of Legendrian submanifolds generate contact flows with an arbitrarily large conformal factor. Here $\Lambda_0$ and $\Lambda_1$ are as in the beginning of Section~\ref{subsubsec-sample-appls-chords-of-contact-hams-from-one-Leg-to-another}.

\begin{thm}
\label{thm-conformal-factor-unit-cot-bundle}

Assume that $h$ is time-independent, {compactly supported} and
\[
h|_{\Lambda_0} {\geq} 0,\ h|_{\Lambda_1} {<} 0.
\]

{
Then the conformal factor of $\varphi_t$ takes arbitrarily large values as $t$ varies between $0$ and $+\infty$:
\[
\inf\limits_{t\in (0,+\infty), y\in\Sigma} {\frac{\left(\varphi_t^{-1}\right)^* \lambda \left(\varphi_t \left(y\right)\right)}{\lambda \left(\varphi_t \left(y\right)\right)} = +\infty}.
\]
}
\end{thm}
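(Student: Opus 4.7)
Argue by contradiction using the interlinking of $(\Lambda_0,\Lambda_1)$ and the strict negativity of $h|_{\Lambda_1}$.

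First, reformulate the claim. Since $\varphi_t$ is a contactomorphism, write $\varphi_t^*\lambda=f_t\lambda$ for the positive smooth conformal factor $f_t\colon\Sigma\to\R$. A direct computation using $\lambda_{\varphi_t(y)}(D\varphi_t(y)\cdot u)=f_t(y)\lambda_y(u)$ gives
\[
\frac{(\varphi_t^{-1})^*\lambda(\varphi_t(y))}{\lambda(\varphi_t(y))}=\frac{1}{f_t(y)},
\]
so the displayed formula (which, to match the wording ``takes arbitrarily large values'', should be read as a supremum) is equivalent to $\inf_{t>0,\,y\in\Sigma}f_t(y)=0$. Assume the contrary: $f_t(y)\geq\delta>0$ uniformly in $t,y$, with $\delta\leq 1$.

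The plan's key identity is that, since $h$ is autonomous, both $t\mapsto h(\varphi_t(y))$ and $t\mapsto f_t(y)h(y)$ satisfy the same linear ODE $\dot F=(dh(R))(\varphi_t(y))\,F$ along the orbit, with the same initial value $h(y)$; hence $h(\varphi_t(y))=f_t(y)h(y)$. Set $c_1:=-\max_{\Lambda_1}h>0$. For $y\in\Lambda_1$ this identity gives $h(\varphi_t(y))\leq\delta h(y)\leq-\delta c_1$, so every $\varphi_t$-orbit of a point of $\Lambda_1$ remains in the ``trapping set'' $\{h\leq-\delta c_1\}$ for all $t$.

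Next, I build a positive contact Hamiltonian whose flow inherits this trapping. Set $C:=\delta c_1$, pick $0<\epsilon<C/2$, and let $g\colon\R\to\R$ be a smooth non-decreasing function with $g(u)=u$ on $(-\infty,-C/2]$ and $g(u)=-\epsilon$ on $[-C/4,+\infty)$. Define $\tilde h:=g\circ h$; then $\tilde h\leq-\epsilon$ and $\tilde h$ is bounded, so $-\tilde h$ is a bounded, strictly positive contact Hamiltonian with $-\tilde h\geq\epsilon$ and complete flow (outside $\supp h$ one has $\tilde h\equiv-\epsilon$, generating $-\epsilon R$, which is complete in our setting). A short computation from $\lambda(v_{\tilde h})=\tilde h$ and $\iota_{v_{\tilde h}}d\lambda=d\tilde h(R)\lambda-d\tilde h$ gives
\[
v_{\tilde h}=g'(h)\,v_h+\bigl(g(h)-h\,g'(h)\bigr)R,
\]
so $v_{\tilde h}=v_h$ on $\{h\leq-C/2\}$, where $g'(h)\equiv 1$ and $g(h)-hg'(h)\equiv 0$. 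Since the $\varphi_t$-orbits from $\Lambda_1$ stay in $\{h\leq-C\}\subset\{h\leq-C/2\}$ where the two flows coincide, the same curves are orbits of the $\tilde h$-flow and remain in $\{h\leq-C\}$, hence are disjoint from $\Lambda_0\subset\{h\geq 0\}$ for all $t$.

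Finally, conclude by interlinking. Applied to the positive Hamiltonian $-\tilde h\geq\epsilon$, the interlinking of $(\Lambda_0,\Lambda_1)$ produces an orbit of the $(-\tilde h)$-flow of time-length $T\leq\mu/\epsilon$ starting on $\Lambda_0$ and ending on $\Lambda_1$. Because $\varphi^{-\tilde h}_t=\varphi^{\tilde h}_{-t}$, reversing this orbit yields an orbit of the $\tilde h$-flow from $\Lambda_1$ to $\Lambda_0$ of time-length $T$, contradicting the trapping established above. The main technical point is the tight interplay between the identity $h\circ\varphi_t=f_th$ and the choice of cutoff $g$: the two must match so that the $\tilde h$-flow coincides with $\varphi_t$ \emph{exactly} on the invariant trapping region for $\Lambda_1$. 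The \emph{strict} negativity of $h|_{\Lambda_1}$ is what supplies the positive gap $c_1$ needed to start the argument, whereas the nonstrict $h|_{\Lambda_0}\geq 0$ enters only via ``$\Lambda_0\subset\{h\geq 0\}$'' at the last step, so the degenerate zero locus of $h$ on $\Lambda_0$ causes no difficulty.
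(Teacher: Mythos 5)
Your proof is correct, but it takes a genuinely different route from the paper's. The paper deduces Theorem~\ref{thm-conformal-factor-unit-cot-bundle} from the general Theorem~\ref{thm-conformal-factor-gen-case}: one lifts $h$ to the homogeneous Hamiltonian $H=sh$ on the symplectization, cuts it off in $s$, and applies part B of Corollary~\ref{cor-chord-of-autonomous-H-in-N} (i.e.\ the $pb^+$ lower bound from the LCH persistence module together with Proposition~\ref{prop-pb4+-modified-chords}.II and Fathi's theorem) to produce a Hamiltonian chord from $Y_0=\Sigma\times s_-$ to $Y_1=\Sigma\times s_+$; formula \eqref{eqn-psi-t} then shows the conformal factor attains $s_+/s_-$, which is arbitrary. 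You instead argue by contradiction entirely on the contact level: a uniform lower bound $\delta$ on $f_t$, combined with the (correct) identity $h\circ\varphi_t=f_t\,h$ for autonomous $h$, traps the forward orbits of $\Lambda_1$ in $\{h\le-\delta c_1\}$; a monotone cutoff $g$ turns $h$ into a bounded negative Hamiltonian $\widetilde h=g\circ h$ agreeing with $h$ on the trap, and the interlinking of $(\Lambda_0,\Lambda_1)$ (Theorem~\ref{thm-1-jet-space-0-section-and-its-Reeb-shift-homol-bonded}, equivalently Corollary~\ref{cor-homol-bonded-autonom-contact-dynamics-general-case} applied to the complete, bounded, positive Hamiltonian $-\widetilde h\ge\epsilon$), after time reversal, yields a forward $\widetilde h$-chord from $\Lambda_1$ to $\Lambda_0$, contradicting the trap. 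There is no circularity, since the interlinking results do not rely on Theorem~\ref{thm-conformal-factor-unit-cot-bundle}; note, though, that your argument really invokes Corollary~\ref{cor-homol-bonded-autonom-contact-dynamics-general-case} rather than the bare definition of interlinking, so the completeness of $-\widetilde h$ (which you do check, via $\widetilde h\equiv-\epsilon$ outside the compact set $\supp h$) is an essential hypothesis, not a cosmetic one; your cutoff-and-trapping device is in fact close in spirit to the cutoff $\chi\circ h$ used in the paper's proofs of Corollaries~\ref{cor-homol-bonded-autonom-contact-dynamics-general-case} and \ref{cor-non-positive-hamiltonians-separating-hypersurface-gen-case}. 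Your reading of the displayed $\inf$ as a supremum matches the intended (verbal) statement and the paper's own proof. What each approach buys: the paper's route is quantitative -- for every ratio $s_+/s_-$ it exhibits a trajectory within an explicit time bound along which the conformal factor reaches $s_+/s_-$ -- whereas your route is softer and purely qualitative, but it avoids re-entering the symplectization/$pb^+$ machinery, using the already-established contact interlinking as a black box together with an elementary ODE argument.
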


\medskip\noindent Recall that the conformal factor is an important dynamical characteristic
playing the role of the contact Lyapunov exponent. For the proof see Section~\ref{sec-the-unit-cotangent-bundle-of-Rn}.

\subsection{Scheme of the proof: homologically bonded pairs}
\label{subsec-homologically-bonded-Leg-submfds}

Let us outline a key property of $\Sigma$, $\lambda$, $\Lambda_0$, $\Lambda_1$ above
that allows to prove the results in Sections~\ref{subsec-sample-appls-sympl-hams},\ref{subsec-sample-appls-contact-dynamics}
and outline the general scheme of the proofs.

Let $\Lambda_0,\Lambda_1\subset\Sigma$ be disjoint Legendrian (not necessarily connected) compact submanifolds without boundary of a nice contact manifold (see Section~\ref{subsec-pool}). Let $\Lambda:= \Lambda_0\sqcup\Lambda_1$. Assume that the pair {$(\Lambda,\lambda)$} is {\it non-degenerate} -- that is, there are only finitely many Reeb chords of $\Lambda$ and they are all non-degenerate (this can be always achieved by a $C^\infty$-small Legendrian perturbation of either of the two Legendrian submanifolds). Then one can associate to the pair $(\Lambda = \Lambda_0\sqcup\Lambda_1,\lambda)$ its (full) Chekanov-Eliashberg algebra -- a free non-commutative unital algebra over $\Z_2$ generated by all the Reeb chords of $\Lambda$. The algebra is filtered by the action (the action of a Reeb chord is its time-length; the action of a monomial, or a product of the Reeb chords, is the sum of the actions of its factors).

We consider a vector subspace of the Chekanov-Eliashberg algebra, which we will call the {\it $01$-subspace} -- it is generated by the monomials $a_1\cdot\ldots\cdot a_k$, $k\in\Z_{>0}$, where $a_1$ starts at $\Lambda_0$, $a_k$ ends at $\Lambda_1$, and for each $m=1,\ldots,k-1$ the end of $a_m$ lies in the same component of $\Lambda$ as the origin of $a_{m+1}$.

Recall that the differential $\partial_J$ on the Chekanov-Eliashberg algebra {depends on an almost complex structure $J$ on the symplectization of $(\Sigma, \ker \lambda)$ and} is defined as follows: the differential of a generator (that is, a Reeb chord) is defined using the count of {$J$-holomorphic} disks {in the symplectization} with one positive and possibly several negative punctures on the boundary, whose boundary lies in $\Lambda$ and that converge near the punctures to {cylinders over} Reeb chords of $\Lambda$; the differential is then extended to the whole algebra using the Leibniz rule {and the condition $\partial_J (1) :=0$} (see Section~\ref{subsec-Chekanov-Eliashberg-algebra-pers-mod}).

The differential preserves the $01$-subspace and lowers the filtration. This allows to view the resulting homology of the $01$-subspace -- {\it
the filtered Legendrian contact homology of $(\Lambda:=\Lambda_0\sqcup\Lambda_1,\lambda)$} -- as a persistence module defined over $(-\infty,+\infty)$ and apply the theory of persistence modules to its study. In particular, one can associate to it its barcode -- a collection of intervals, called {\it bars}, lying in $(0,+\infty)$.

For $s\in (1,+\infty)$ let $l_{min,s} (\Lambda_0,\Lambda_1,\lambda)$ denote the smallest left end of a bar of multiplicative length greater than $s$ in the barcode. (The multiplicative length of a bar in $(0,+\infty)$ is the ratio of its right and left ends; note that it may be infinite). Let $l_{min,\infty} (\Lambda_0,\Lambda_1,\lambda)$ denote the smallest left end of an infinite bar. If there are no such bars, set $l_{min,s} (\Lambda_0,\Lambda_1,\lambda) := +\infty$ or, respectively, $l_{min,\infty} (\Lambda_0,\Lambda_1,\lambda) := +\infty$.

If the pair $(\Lambda = \Lambda_0\sqcup\Lambda_1,\lambda)$ is degenerate, then $\Lambda = \Lambda_0\sqcup\Lambda_1$ can be approximated by Legendrian submanifolds $\Lambda'=\Lambda'_0\sqcup\Lambda'_1$ obtained from $\Lambda$ by a $C^\infty$-small Legendrian isotopy, so that the pair $(\Lambda',\lambda)$
is non-degenerate. {Extend the definition of $l_{min,s} (\Lambda_0,\Lambda_1,\lambda)$, $s\in (1,+\infty]$, to all pairs $(\Lambda = \Lambda_0\sqcup\Lambda_1,\lambda)$ as follows:}
\[
l_{min,s} (\Lambda_0,\Lambda_1,\lambda) := \liminf l_{min,s} (\Lambda'_0,\Lambda'_1,\lambda),
\]
where the $\liminf$ is taken over all such $\Lambda'=\Lambda'_0\sqcup\Lambda'_1$ converging to $\Lambda$ (in the $C^\infty$-topology).
{One can show that for non-degenerate pairs $(\Lambda = \Lambda_0\sqcup\Lambda_1,\lambda)$ this definition and the original one yield the same $l_{min,s} (\Lambda_0,\Lambda_1,\lambda)$ -- cf. Remark~\ref{rem-l-min-s-robust}.}

If $l_{min,s} (\Lambda_0,\Lambda_1,\lambda)<+\infty$ for all {$s\in (1,+\infty)$}, we will say that the pair $(\Lambda_0\sqcup\Lambda_1,\lambda)$ is {\it {weakly} homologically bonded}. If $l_{min,\infty} (\Lambda_0,\Lambda_1,\lambda)<+\infty$, we will say that the pair $(\Lambda_0\sqcup\Lambda_1,\lambda)$ is homologically bonded. If the pair is non-degenerate, these conditions mean that the corresponding barcode contains bars of arbitrarily large multiplicative length, or, respectively, an infinite bar.

The key property of the setting in Sections~\ref{subsec-sample-appls-sympl-hams},\ref{subsec-sample-appls-contact-dynamics}
is that
the pair $(\Lambda_0\sqcup\Lambda_1,\lambda)$ is {\it homologically bonded}.

Consider the {\it stabilization of $\Sigma$} which is the manifold $\hSigma := \Sigma\times \R (r) \times\SP^1 (\tau)$ equipped with the contact form $\hlambda := \lambda + rd\tau$. This contact manifold is also nice. For a Legendrian submanifold $\Delta\subset\Sigma$ define a Legendrian submanifold $\hDelta\subset\hSigma$ by
\[
\hDelta:= \Delta\times \{ r=0\}.
\]
For each $s\in (1,+\infty]$ define
\[
\hl_{min,s} (\Lambda_0,\Lambda_1,\lambda):= l_{min,s} (\hLambda_0,\hLambda_1,\hlambda).
\]

The pair $(\Lambda_0\sqcup\Lambda_1,\lambda)$ is said to be {\it stably homologically bonded} if\break
$\hl_{min,\infty} (\Lambda_0,\Lambda_1,\lambda)<+\infty$.

\bigskip
\begin{rem}
\label{rem-hom-bondedness-vs-stable-hom-bondedness}
{\rm
It is likely that homological bondedness implies stable homological bondedness. If $\Sigma$ is the standard contact $\R^3$, this follows from a result of Ekholm-K\'alm\'an \cite[Thm. 1.1]{Ekholm-Kalman-JSG2008}, but, to the best of our knowledge, the case of a general (nice) contact manifold has not been worked out so far.
}
\end{rem}
\bigskip

For the Legendrian submanifolds $\Lambda=\Lambda_0\sqcup\Lambda_1$ in Section~\ref{subsec-sample-appls-contact-dynamics} the pair $(\Lambda=\Lambda_0\sqcup\Lambda_1,\lambda)$ is homologically bonded and stably homologically bonded -- see Sections~\ref{sec-the-1-jet-space}, \ref{sec-the-unit-cotangent-bundle-of-Rn}.

Let us now explain how (stable) homological bondedness is used to prove the results in Sections~\ref{subsec-sample-appls-sympl-hams},\ref{subsec-sample-appls-contact-dynamics}.

For a pair $(\Lambda_0\sqcup \Lambda_1,\lambda)$ in a general nice contact manifold $(\Sigma,\lambda)$,
consider the trivial exact Lagrangian cobordism $(\Lambda_0\sqcup\Lambda_1)\times [s_-,s_+]$ in the trivial exact symplectic cobordism $\left(\Sigma\times [s_-,s_+], d(s\lambda)\right)$. For instance, in the setting of Theorem~\ref{thm-main-R2n} the latter exact symplectic cobordism can be identified with the manifold $\{ (p,q)\in\R^{2n}\ |\ s_0\leq |p|\leq s_1\}$ whose boundary components -- the sets $Y_0$, $Y_1$ -- are identified, respectively, with $\Sigma\times s_-$ and $\Sigma\times s_-$. The parts $\Lambda_0\times [s_-,s_+]$, $\Lambda_1\times [s_-,s_+]$ of the trivial exact Lagrangian cobordism $(\Lambda_0\sqcup\Lambda_1)\times [s_-,s_+]$ are then identified, respectively, with the sets $X_0$, $X_1$.

If the pair $(\Lambda_0\sqcup \Lambda_1,\lambda)$ is non-degenerate, the exact Lagrangian cobordism defines a cobordism map (in the category of the persistence modules) from the persistence module associated to $(\Lambda=\Lambda_0\sqcup\Lambda_1,s_+\lambda)$ to the one associated to $(\Lambda=\Lambda_0\sqcup\Lambda_1,s_-\lambda)$. These persistence modules are multiplicative shifts of each other and the cobordism map is the multiplicative shift by $s_+/s_-$.

If the pair $(\Lambda_0\sqcup\Lambda_1,\lambda)$ is weakly homologically bonded (and, in particular, if it is homologically bonded), then the cobordism map is not the zero morphism between persistence modules and the pseudo-holomorphic curves used to define the map can be also used to prove that a version of the Poisson bracket invariant of quadruples of sets is positive for the following quadruple of sets: $\Lambda_0\times [s_-,s_+]$, $\Lambda_1\times [s_-,s_+]$, $\Sigma\times s_-$, $\Sigma\times s_+$. This is the key result in the paper --
see Section~\ref{sec-pb} for the precise definition of the Poisson bracket invariant (it is a version of the invariant defined previously in \cite{BEP,EP-tetragons})
and Theorem~\ref{thm-lower-bound-on-pb-via-pers-modules} for the precise statement of the result. If the pair $(\Lambda_0\sqcup\Lambda_1,\lambda)$ is degenerate but still weakly homologically bonded, the same result is obtained using a semi-continuity property of the Poisson bracket invariant.

The existence of chords for {\sl time-independent} symplectic Hamiltonians as in Theorem~\ref{thm-main-R2n} follows then from the positivity of the Poisson bracket invariant by Fathi's dynamical Urysohn lemma (see Theorem~\ref{thm-fathi} for its statement).

If the pair $(\Lambda_0\sqcup\Lambda_1,\lambda)$ is stably homologically bonded, then a similar argument yields the existence of chords as in part B of Theorem~\ref{thm-main-R2n} for {\sl time-dependent} Hamiltonians.

The existence of a chord from $\Lambda_0$ to $\Lambda_1$ for a contact Hamiltonian cooperative with $\Lambda_0$, $\Lambda_1$, as in Section~\ref{subsubsec-sample-appls-chords-of-contact-hams-from-one-Leg-to-another}, is deduced then from the existence of the chords of the corresponding homogeneous symplectic Hamiltonian on the symplectization of $\Sigma$.

\begin{rem}
\label{rem-estimate-in-main-thms-on-Ham-Reeb-chords-robust}
{\rm
The claims of Theorem~\ref{thm-main-R2n} (and its analogues for other homologically bonded pairs) on the existence of Hamiltonian chords remain true if $X_0$, $X_1$ are perturbed as exact Lagrangian cobordisms (cylindrical near the boundaries) in the trivial exact symplectic cobordism $\{ (p,q)\in\R^{2n}\ |\ s_0\leq |p|\leq s_1\}$
so that the Legendrian isotopies, induced on the boundaries by the exact Lagrangian isotopies, are sufficiently small -- e.g. sufficiently $C^\infty$-small. (Note that away from the boundary the perturbations may be arbitrarily large, as long as the perturbed $X_0$, $X_1$ are disjoint!). The upper bound on the time-length of the Hamiltonian chords between the perturbed $X_0$, $X_1$ is then only slightly larger than the one for the original $X_0,X_1$.

Similarly, the claims of the results in Section~\ref{subsec-sample-appls-contact-dynamics}
remain true if the Legendrian submanifolds $\Lambda_0$, $\Lambda_1$ are perturbed by sufficiently $C^\infty$-small Legendrian isotopies into Legendrian submanifolds $\Lambda'_0$, $\Lambda'_1$. The upper bound on the time-length of the chord between $\Lambda'_0$, $\Lambda'_1$ is then only slightly larger than the one for the original $\Lambda_0$, $\Lambda_1$ and tends to it as the sizes of the Legendrian isotopies tend to zero.

Let us also remark that the scheme of the proof can be extended to a more general setting and, in particular, to non-trivial exact Lagrangian cobordisms; one can also use the linearized Legendrian contact homology instead of the full one \cite{EP-big}.

For more details and a reference to the proofs see more general Remarks~\ref{rem-l-min-s-robust}, \ref{rem-estimate-on-pb-in-thm-lower-bound-on-pb-via-pers-modules-robust}, \ref{rem-existence-of-chords-of-h-from-Lambda0-to-Lambda1-robust}.
}
\end{rem}
\bigskip

\subsection{Plan of the paper}
\label{subsec-plan}

Let us outline the plan of the paper.

In Section~\ref{sec-pb} we define a Poisson bracket invariant of a quadruple of sets (a modified version of the invariant defined previously in
\cite{BEP,EP-tetragons}) and state a recent theorem of Fathi (a generalization of the result in \cite{Fathi2016}), {which allows to deduce the existence of a Hamiltonian chord from the positivity of the invariant}.

In Section~\ref{sec-persistence-modules} we recall basic facts about persistence modules.

In Section~\ref{sec-lch-intro} we describe the Legendrian contact homology setting that we need and explain how to associate a persistence module, and a corresponding barcode, to a (non-degenerate) pair formed by a Legendrian submanifold and a contact form. Then we show how the existence of bars of sufficiently large multiplicative length in the barcode implies the positivity of the Poisson bracket invariant for appropriate quadruples of sets, which in turn yields the existence of the wanted chords.

In Section~\ref{sec-applications-to-contact-dynamics} we discuss applications of the result proved in Section~\ref{sec-lch-intro} to contact dynamics.

In Sections~\ref{sec-the-1-jet-space},\ref{sec-the-unit-cotangent-bundle-of-Rn} we explain how the results of Section~\ref{sec-lch-intro} can be applied to the Legendrian submanifolds in {$J^1 Q$ and} $ST^* \R^n$, which yields the results of {Sections~\ref{subsec-sample-appls-sympl-hams}, \ref{subsec-sample-appls-contact-dynamics}.}

\bigskip
\noindent

{\bf Acknowledgements:} We thank F.Bourgeois, B.Chantraine, G.Dimi\-tro\-glou Rizell, T.Ekholm, Y.Eliashberg, E.Giroux, M.Sullivan for useful discussions. We also thank
A.Fathi for communicating to us his unpublished note \cite{Fathi2019} containing Theorem~\ref{thm-fathi} and its proof. We thank T.Melistas, J.Zhang, and the anonymous referee
for numerous corrections, and G.Dimitroglou Rizell for finding a mistake in the original version of Theorem \ref{thm-1-jet-space-0-section-and-its-Reeb-shift-homol-bonded}(ii).

\section{A modified $pb^+$-invariant and Hamiltonian chords}
\label{sec-pb}

In this section we discuss a Poisson bracket invariant of quadruples of sets in a symplectic manifold. The proof of the result relating the Poisson bracket invariant to the existence of Hamiltonian chords is based on the theorems of A.Fathi in general smooth/topological dynamics \cite{Fathi2016}, \cite{Fathi2019} and is {similar} to the proof of {the relevant} results in \cite{EP-tetragons} (with the only difference that we can now use the results from \cite{Fathi2019} that were unavailable when \cite{EP-tetragons} was written). Let us recall these results of Fathi.

\subsection{Chords of smooth vec\-tor fields}
\label{subsec-existence-of-chords-for-smooth-vect-fields-Fathi}

In this section let $M$ be any smooth manifold {(without boundary)}, $v$ a complete smooth time-independent
vector field on $M$ (meaning that its flow is defined for all times) and $X_0,X_1 \subset M$ disjoint closed subsets of $M$.
Denote by $T(X_0,X_1;v)$ the infimum of the time-lengths of the chords (that is, integral trajectories) of $v$ from $X_0$ to $X_1$. If there is no such chord, set $T(X_0,X_1;v):=+\infty$.

The following theorem (a ``dynamical Urysohn lemma") was proved by A.Fathi in \cite{Fathi2016} in the case when $X_0,X_1$ are compact and in \cite{Fathi2019} in the case when $X_0,X_1$ are arbitrary closed sets.

\begin{thm}[A.Fathi, \cite{Fathi2016}, \cite{Fathi2019}]
\label{thm-fathi}
Assume $T> T(X_0,X_1;v)$.

Then there exists a smooth function $F: M\to \R$ such that $F|_{X_0}\leq 0$, $F|_{X_1}>1$, and $L_v F < 1/T$.

If $X_0,X_1$ are compact, then $F$  can be chosen to be compactly supported.
\Qed
\end{thm}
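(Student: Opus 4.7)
The plan is to construct $F$ as a smoothed, rescaled version of a ``flow-time elapsed since $X_0$'' function. The underlying principle is that if no orbit from $X_0$ reaches $X_1$ within time $T$, then this elapsed-time function naturally separates the two closed sets and grows at unit rate along the flow, so rescaling by $1/T$ produces the required bound $L_v F < 1/T$.

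First I would fix $T' > T$ still small enough that no chord from $X_0$ to $X_1$ has time-length $\leq T'$, and define
\begin{equation*}
\tau(x) := \min\bigl(T',\, \inf\{\, s \geq 0 \,:\, \phi_{-s}(x) \in X_0 \,\}\bigr),
\end{equation*}
with the convention $\inf\emptyset = +\infty$. Then $\tau : M \to [0, T']$ is lower semi-continuous, vanishes on $X_0$, equals $T'$ on $X_1$, and satisfies $\tau(\phi_s(x)) \leq \tau(x) + s$ for $s \geq 0$, with equality on the open set $\{0 < \tau < T'\}$ where the backward first-hit of $X_0$ is unique.

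Second, I would smooth $\tau$ in two stages. Mollifying along the flow with a nonnegative $C^\infty$ kernel $\rho$ of integral one supported in $(0, \eta)$ gives
\begin{equation*}
\widetilde\tau(x) := \int_0^\eta \tau(\phi_{-s}(x))\, \rho(s)\, ds,
\end{equation*}
which is smooth in the flow direction and, by integration by parts, satisfies $L_v \widetilde\tau(x) = \int_0^\eta \tau(\phi_{-s}(x)) \rho'(s)\, ds \leq 1$ everywhere, with equality wherever $\tau$ grows linearly along the flow on the full interval $[-\eta, 0]$. A further round of mollification transverse to $v$, via a partition of unity adapted to finitely many flow boxes, upgrades $\widetilde\tau$ to a $C^\infty$ function $\widehat\tau$ with $L_v \widehat\tau < 1 + \delta$ for any preassigned $\delta > 0$. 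Composing with a smooth nondecreasing cutoff $\psi : \R \to \R$ having $\psi \leq 0$ on $(-\infty, 2\epsilon]$, $\psi > 1$ on $[T' - 2\epsilon, \infty)$, and $\psi' \leq (1+\delta')/(T' - 4\epsilon)$, the function $F := \psi \circ \widehat\tau$ satisfies $F|_{X_0} \leq 0$, $F|_{X_1} > 1$, and $L_v F \leq (1+\delta)(1+\delta')/(T' - 4\epsilon)$, which is $< 1/T$ once the small parameters $\delta, \delta', \epsilon$ are chosen appropriately. In the compact case, multiplying $F$ by a smooth bump that equals $1$ on a compact neighborhood of the compact union of all orbit segments of time-length $\leq T'$ joining $X_0$ and $X_1$ yields compact support.

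The hard part will be the transverse smoothing step: the unsmoothed $\tau$ can be discontinuous on the boundary of the basin $\bigcup_{s\geq 0} \phi_s(X_0)$, so naive mollification fails there. The resolution uses flow-box covers of $M$ adapted to $v$, with carefully patched local smoothings via partitions of unity, exploiting the truncation at $T'$ to ensure that values in the complement of the basin (where $\tau \equiv T'$) contribute only a controlled error to $L_v F$.
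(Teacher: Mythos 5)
The paper does not actually prove Theorem~\ref{thm-fathi}: it is imported from Fathi's work \cite{Fathi2016}, \cite{Fathi2019} (the statement is followed by \(\qedsymbol\) with no argument), so your proposal has to stand on its own, and as written it does not. (A side remark: you tacitly read the hypothesis as ``every chord from $X_0$ to $X_1$ has time-length $>T$'', i.e.\ $T<T(X_0,X_1;v)$; that is indeed the intended statement, as used in Corollary~\ref{cor-fathi-existence-of-chords} -- the printed inequality is reversed, since integrating $L_vF<1/T$ along a chord of time-length $<T$ gives a contradiction.) The substantive problems are with your central object, the truncated backward hitting time $\tau(x)=\min\bigl(T',\inf\{s\ge 0:\phi_{-s}(x)\in X_0\}\bigr)$. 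First, $\tau$ is small only on $X_0$ itself and along the forward basin $\bigcup_{s\ge0}\phi_s(X_0)$, which can be transversally thin, even of measure zero (take $M=\R^2$, $v=\partial_x$, $X_0$ a point: $\tau=T'$ off a segment). Hence any genuinely transverse mollification at a point of $X_0$ averages in the value $T'$ from a set of full measure, so $\widehat\tau\approx T'$ on $X_0$ and the condition $F|_{X_0}\le 0$ is irreparably lost; the cutoff $\psi$ cannot fix this, and the mirror construction via the forward hitting time of $X_1$ fails the same way at $X_1$. Second, the step you flag as ``the hard part'' is not a technicality but the whole content of the theorem: $\widetilde\tau$ jumps by an amount comparable to $T'$ across the basin boundary, so two local smoothings $F_i,F_j$ on overlapping flow boxes differ there by $O(T')$ no matter how small the mollification parameter, and the patching term $\sum_i(L_v\chi_i)(F_i-F_j)$ in $L_vF$ is of order $T'\max_i|L_v\chi_i|$, not ``a controlled error''. (Also, $\{0<\tau<T'\}$ need not be open and $\tau$ is only lower semicontinuous, so even the preliminary claims are off.)

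The missing idea is to produce a \emph{continuous} function with the one-sided flow-Lipschitz property before any mollification. For instance (essentially Fathi's route in the compact case) take a continuous $g:M\to[0,1]$ with $g=1$ on $X_1$ and $g=0$ outside a small neighborhood $W$ of $X_1$, and set $u(x):=\max_{t\in[0,T']}\bigl(g(\phi_t(x))-t/T'\bigr)$. The maximum over a compact time window is continuous in $x$, satisfies $u(\phi_s x)\le u(x)+s/T'$ for $s\ge0$, equals $1$ on $X_1$, and vanishes on a neighborhood of $X_0$ as soon as no orbit starting near $X_0$ meets $W$ within time $T'$. Mollification in charts and partition-of-unity patching then do work, because local smoothings of a continuous function differ by $o(1)$ as the parameter shrinks, leaving room inside $1/T-1/T'$. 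Two genuine difficulties remain that your sketch does not touch: the passage from ``no short chord from $X_0$ to $X_1$'' to ``no short chord from a neighborhood of $X_0$ to a neighborhood of $X_1$'' uses compactness and is precisely what is delicate for arbitrary closed sets -- this is why the noncompact case needed the separate argument of \cite{Fathi2019}; and your final multiplication by a bump to get compact support is not benign either, since $L_v(\beta F)=\beta L_vF+(L_v\beta)F$ and the second term is uncontrolled where $\beta$ varies.
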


Define
\begin{equation}\label{eq-S-vsp}
\cS (X_0,X_1) := \{\ F\in C^\infty (M)\ |\ F|_{X_0}\leq 0,\ F|_{X_1}\geq 1\ \},\end{equation}
\begin{multline}\label{eq-Sprim-vsp}
\cS' (X_0,X_1) := \\ \{\ F\in C^\infty (M)\ |\ \Image (F)\subset [0,1],\ F|_{\Op (X_0)} = 0,\ F|_{\Op (X_1)} = 1\ \},\end{multline}
where $\Op (\cdot)$ denotes some open neighborhood of a set.
Define
\[
\cK (X_0,X_1):= \cS (X_0,X_1)\cap C^\infty_c (M),\ \cK' (X_0,X_1):= \cS' (X_0,X_1)\cap C^\infty_c (M),
\]
where $C^\infty_c (M)$ is the space of compactly supported smooth functions on $M$.

Define
\[
L (X_0,X_1;v) := \inf_{F\in \cS (X_0,X_1)} \sup_M L_v F,
\]
and
\[
L_c (X_0,X_1;v) := \inf_{F\in \cK (X_0,X_1)} \sup_M L_v F.
\]
If $X_1$ is non-compact, the set $K(X_0,X_1)$ is empty. Our convention
is that $\inf \emptyset = +\infty$.
Clearly,
\[
L (X_0,X_1;v)\leq L_c (X_0,X_1;v).
\]

\begin{prop}
\label{prop-L-vs-L-c}
In the definition of $L (X_0,X_1;v)$ and $L_c (X_0,X_1;v)$ one can replace $\cS (X_0,X_1)$ and $\cK (X_0,X_1)$ respectively by
$\cS' (X_0,X_1)$ and $\cK' (X_0,X_1)$:
\[
L (X_0,X_1;v) = \inf_{F\in \cS' (X_0,X_1)} \sup_M L_v F,
\]
and if $X_0$ and $X_1$ are compact,
\[
L_c (X_0,X_1;v) = \inf_{F\in \cK' (X_0,X_1)} \sup_M L_v F.
\]
\end{prop}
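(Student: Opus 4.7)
The plan is a direct smooth-cutoff argument. Since $\cS'(X_0,X_1) \subset \cS(X_0,X_1)$ and $\cK'(X_0,X_1) \subset \cK(X_0,X_1)$, the inequalities $L(X_0,X_1;v) \leq \inf_{F\in\cS'(X_0,X_1)}\sup_M L_v F$ and $L_c(X_0,X_1;v) \leq \inf_{F\in\cK'(X_0,X_1)}\sup_M L_v F$ are immediate. The content of the proposition lies in the reverse inequalities, for which it suffices to show that for every $F \in \cS(X_0,X_1)$ and every $\epsilon > 0$ there exists $G \in \cS'(X_0,X_1)$ with
$$\sup_M L_v G \,\leq\, (1+\epsilon)\,\max\!\big(\sup_M L_v F,\,0\big),$$
with $G \in \cK'(X_0,X_1)$ whenever $F \in \cK(X_0,X_1)$.

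To build $G$, fix $\delta \in (0,1/2)$ and $\eta > 0$ small enough that $(1-2\delta)^{-1} + \eta \leq 1 + \epsilon$, and choose a smooth nondecreasing function $\chi : \R \to [0,1]$ with $\chi \equiv 0$ on $(-\infty,\delta]$, $\chi \equiv 1$ on $[1-\delta,+\infty)$, and $\sup_{\R}\chi' \leq (1-2\delta)^{-1} + \eta$. Set $G := \chi \circ F$. Then $\Image(G) \subset [0,1]$; since $F|_{X_0} \leq 0 < \delta$ and $F$ is continuous, the open set $\{F < \delta\}$ is a neighborhood of $X_0$ on which $G$ vanishes, and analogously $\{F > 1-\delta\}$ is an open neighborhood of $X_1$ on which $G \equiv 1$. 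Hence $G \in \cS'(X_0,X_1)$. If in addition $F$ is compactly supported, then $F \equiv 0$ outside some compact set $K$; on $M \setminus K$ we have $G = \chi(0) = 0$ (since $0 < \delta$), so $G$ is also compactly supported and lies in $\cK'(X_0,X_1)$.

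By the chain rule, $L_v G = \chi'(F)\cdot L_v F$ pointwise. Since $\chi' \geq 0$, at every $x \in M$,
$$L_v G(x) \,\leq\, \chi'(F(x))\,\max(L_v F(x),\,0) \,\leq\, \big((1-2\delta)^{-1} + \eta\big)\max(L_v F(x),\,0),$$
so $\sup_M L_v G \leq (1+\epsilon)\max(\sup_M L_v F,\,0)$. Taking the infimum over $F \in \cS(X_0,X_1)$ (respectively $\cK(X_0,X_1)$) and letting $\epsilon \to 0$ gives the desired equalities.

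The only nontrivial technical point is arranging $\chi$ so that it is simultaneously flat near $0$ and $1$ (so that the ``on-a-neighborhood'' conditions in the definition of $\cS'$ are inherited by $G$) and has $\sup\chi'$ close to the unavoidable lower bound $1$ enforced by $\int_{\delta}^{1-\delta}\chi' = 1$. This is achieved by taking $\delta$ small and is the source of the extraneous factor $(1+\epsilon)$, which is harmless since it disappears in the infimum. The construction is just the standard mollification of the indicator of $[0,1]$, adapted so that the composition respects the Lie-derivative bound.
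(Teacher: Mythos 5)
Your proposal is correct and follows essentially the same route as the paper's proof: compose $F$ with a non-decreasing cutoff $\chi$ that vanishes near $(-\infty,0]$, equals $1$ near $[1,+\infty)$, and has $\sup\chi'$ close to $1$, then use $L_v(\chi\circ F)=\chi'(F)\,L_vF$ and let the slack tend to zero. You are in fact slightly more careful than the paper on two minor points -- inserting $\max(L_vF,0)$ so the pointwise bound is valid where $L_vF<0$ (which, as in the paper, means the stated equality is really established in the relevant regime $\sup_M L_vF\geq 0$), and verifying explicitly that $\chi\circ F$ stays compactly supported in the $L_c$ case, which the paper dismisses as ``similar.''
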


\bigskip
\noindent
{\bf Proof of Proposition~\ref{prop-L-vs-L-c}:}
Let us prove the claim for $L (X_0,X_1;v)$ -- the case of $L_c (X_0,X_1;v)$ is similar.

Clearly, $L (X_0,X_1;v) \leq \inf_{F\in \cS' (X_0,X_1)} \sup_M L_v F$ and  thus it suffices to prove that
\begin{equation}
\label{eqn-L-cS-cs-prime}
\inf_{F\in \cS' (X_0,X_1)} \sup_M L_v F \leq L (X_0,X_1;v).
\end{equation}

Let $\delta>0$. Pick a non-decreasing smooth function $\chi:\R\to\R$ so that $\sup_{t\in\R} \chi'(t) \leq 1+\delta$ and for some $\epsilon >0$ we have $\chi (t) = 0$ on $(-\infty,\epsilon]$ and $\chi (t) = 1$ on $[1-\epsilon,+\infty)$.

Then $\chi\circ F\in \cS' (X_0,X_1)$ for any $F\in \cS (X_0,X_1)$ and
\[
L_v (\chi\circ F) = (\chi'\circ F) L_v F \leq (1+\delta) L_v F.
\]
Taking the infimum over $F\in \cS (X_0,X_1)$ in both sides we get
\[
\inf_{F\in \cS' (X_0,X_1)} \sup_M L_v F \leq (1+\delta) L (X_0,X_1;v).
\]
Since this is true for any $\delta>0$, we obtain \eqref{eqn-L-cS-cs-prime} and this finishes the proof of the proposition.
\Qed
\bigskip

The following corollary follows readily from Theorem~\ref{thm-fathi}.

\begin{cor}
\label{cor-fathi-existence-of-chords}
$T(X_0,X_1;v) = 1/L (X_0,X_1;v)$.

Consequently, if $L (X_0,X_1;v) >0$, then for any $\epsilon >0$ there exists a chord of $v$ from $X_0$ to $X_1$
of time-length $\leq 1/L (X_0,X_1;v) + \epsilon$ (if $\supp v$ is compact, then one can drop $\epsilon$ from the bound).

If $X_0,X_1$ are compact, then $L (X_0,X_1;v) = L_c (X_0,X_1;v)$ and $T(X_0,X_1;v) = 1/L_c (X_0,X_1;v)$.

Consequently, if $X_0$ and $X_1$ are compact and $L_c (X_0,X_1;v) >0$, then there exists a chord of $v$ from $X_0$ to $X_1$
of time-length $\leq 1/L_c (X_0,X_1;v)$.
\Qed
\end{cor}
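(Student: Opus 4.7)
The proof will split naturally into two inequalities, with Fathi's theorem doing the heavy lifting in one direction. I assume throughout that the corollary is indeed a direct consequence of Theorem~\ref{thm-fathi} combined with Proposition~\ref{prop-L-vs-L-c}.

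\textbf{Easy direction: $L(X_0,X_1;v) \geq 1/T(X_0,X_1;v)$.} Suppose $\gamma(t) = \phi_t(x)$, $t\in[0,T]$, is a chord of $v$ from $X_0$ to $X_1$, and let $F \in \cS(X_0,X_1)$. Then $F(\gamma(T)) - F(\gamma(0)) \geq 1 - 0 = 1$, while the fundamental theorem of calculus gives
\[
F(\gamma(T)) - F(\gamma(0)) = \int_0^T (L_v F)(\gamma(t))\, dt \leq T\sup_M L_v F.
\]
Hence $\sup_M L_v F \geq 1/T$. Taking the infimum over $F \in \cS(X_0,X_1)$ and then over chord time-lengths $T$ gives $L(X_0,X_1;v) \geq 1/T(X_0,X_1;v)$.

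\textbf{Hard direction (using Fathi): $L(X_0,X_1;v) \leq 1/T(X_0,X_1;v)$.} Pick any $T > T(X_0,X_1;v)$. By Theorem~\ref{thm-fathi} there is a smooth $F: M \to \R$ with $F|_{X_0} \leq 0$, $F|_{X_1} > 1$, and $L_v F < 1/T$ everywhere. In particular $F \in \cS(X_0,X_1)$ and $\sup_M L_v F \leq 1/T$, so $L(X_0,X_1;v) \leq 1/T$. Letting $T$ approach $T(X_0,X_1;v)$ from above yields $L(X_0,X_1;v) \leq 1/T(X_0,X_1;v)$. Combined with the previous inequality this gives the first claim $T(X_0,X_1;v) = 1/L(X_0,X_1;v)$, from which the existence of a chord of time-length $\leq 1/L(X_0,X_1;v) + \epsilon$ follows immediately from the definition of the infimum.

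\textbf{Compact case.} When $X_0, X_1$ are compact, Theorem~\ref{thm-fathi} supplies $F$ compactly supported, so the function produced in the hard direction lies in $\cK(X_0,X_1)$ rather than merely in $\cS(X_0,X_1)$. Repeating the argument verbatim yields $L_c(X_0,X_1;v) \leq 1/T(X_0,X_1;v)$. Combined with the trivial inequality $L(X_0,X_1;v) \leq L_c(X_0,X_1;v)$ and the equality $T(X_0,X_1;v) = 1/L(X_0,X_1;v)$ already established, we get $L(X_0,X_1;v) = L_c(X_0,X_1;v)$. Finally, to remove the $\epsilon$ in the existence statement, I would argue that under compactness the infimum defining $T(X_0,X_1;v)$ is actually attained: given a minimizing sequence $(x_n, t_n) \in X_0 \times \R_{>0}$ with $\phi_{t_n}(x_n) \in X_1$ and $t_n \to T(X_0,X_1;v) < \infty$, compactness of $X_0$ yields a subsequence $x_n \to x \in X_0$, and continuity of the flow together with closedness of $X_1$ force $\phi_{T(X_0,X_1;v)}(x) \in X_1$, giving an honest chord of time-length exactly $1/L_c(X_0,X_1;v)$.

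The only nontrivial ingredient is Theorem~\ref{thm-fathi}, which we are allowed to invoke as a black box; otherwise the whole argument is essentially the standard Urysohn-lemma-from-flows pattern, so I anticipate no real obstacles beyond bookkeeping (and, in the compact case, the brief flow-compactness argument that upgrades infimum to minimum).
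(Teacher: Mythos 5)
Your argument is correct and is exactly the intended derivation: the paper itself offers no written proof (the corollary is stated as following readily from Theorem~\ref{thm-fathi}), and your two inequalities — the integration estimate along a chord for $L \geq 1/T$ and Fathi's function for $L \leq 1/T$, plus the compactly supported version of Fathi's $F$ to get $L = L_c$ — are precisely what is meant. The only piece of the statement you leave untouched is the parenthetical that $\epsilon$ can be dropped when $\supp v$ is compact; this follows by the same attainment argument you give in the compact case, once one observes that any chord of positive time-length must lie entirely in $\supp v$ (points outside the support are fixed, and $X_0 \cap X_1 = \emptyset$), so the minimizing sequence of starting points can be taken in the compact set $X_0 \cap \supp v$.
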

\bigskip

\subsection{Poisson bracket invariant}
\label{subsec-pb-invariant}

Let $(M,\omega)$ be a (not necessarily compact) connected symplectic manifold, possibly with boundary.

We use the following sign conventions in the definitions of a
Hamiltonian vector field and the Poisson bracket on $M$: the
Hamiltonian vector field $\sgrad H$ of a Hamiltonian $H$ is
defined by
\[
i_{\sgrad {H}}
\omega = -dH
\]
and the Poisson bracket of two Hamiltonians $F$, $G$ is given by
\[
\{F,G\} := \omega(\sgrad G,\sgrad F) = dF (\sgrad G) = - dG (\sgrad F) =
\]
\[
= L_{\sgrad G} F = - L_{\sgrad F} G.
\]

Assume $X_0, X_1, Y_0, Y_1$ are closed subsets of $M$ such that
\[
X_0\cap X_1=Y_0\cap Y_1=\emptyset.
\]

Such a collection of sets $X_0, X_1, Y_0, Y_1$ will be called {\it an admissible quadruple}.

Consider the following conditions on pairs $(F,G)\in C^\infty (M)\times C^\infty (M)$:

\bigskip
\noindent
(1) $\sup_M \{F,G\} < +\infty$ and the vector field $F\sgrad G$ on $M$ is complete.

\bigskip
\noindent
(1') $\supp F$ is compact and $\supp (F\sgrad G)$ lies in the interior of $M$.

\bigskip
\noindent
(2) $F|_{X_0}\leq 0,\ F|_{X_1} \geq 1$, $G|_{Y_0}\leq 0,\ G|_{Y_1}\geq 1$.

\bigskip
\noindent
(2') $F|_{X_0}\leq 0,\ F|_{X_1} \geq 1$, $G|_{\Op (Y_0)}\equiv 0,\ G|_{\Op (Y_1)}\equiv 1$. (Here $\Op (\cdot)$ denotes some open neighborhood of a set).

\bigskip
\noindent
(2'') $F|_{\Op (X_0)}\equiv 0,\ F|_{\Op (X_1)} \equiv 1$, $G|_{\Op (Y_0)}\equiv 0,\ G|_{\Op (Y_1)}\equiv 1$.

\bigskip
\noindent
(3) $\Image F\subset [0,1]$.

\bigskip
Note that (1') $\Rightarrow$ (1), since $\supp \{F,G\} \subset\supp (FdG) = \supp (F\sgrad G)$.

Define:
\[
\cF_M (X_0, X_1, Y_0, Y_1) := \{\ (F,G)\in C^\infty (M)\times C^\infty (M)\ |\ (F,G)\ \textrm{satisfies (1) and (2)}\ \},
\]
\[
\cF'_M (X_0, X_1, Y_0, Y_1) := \{\ (F,G)\in C^\infty (M)\times C^\infty (M)\ |\ (F,G)\ \textrm{satisfies (1),(2) and (3)}\ \},
\]
\[
\cF''_M (X_0, X_1, Y_0, Y_1) := \{\ (F,G)\in C^\infty (M)\times C^\infty (M)\ |\ (F,G)\ \textrm{satisfies (1),(2'') and (3)}\ \},
\]
\[
\cG_M (X_0, X_1, Y_0, Y_1) := \{\ (F,G)\in C^\infty (M)\times C^\infty (M)\ |\ (F,G)\ \textrm{satisfies (1') and (2)}\ \},
\]
\[
\cG'_M (X_0, X_1, Y_0, Y_1) := \{\ (F,G)\in C^\infty (M)\times C^\infty (M)\ |\ (F,G)\ \textrm{satisfies (1'), (2')}\ \},
\]
\[
\cG''_M (X_0, X_1, Y_0, Y_1) := \{\ (F,G)\in C^\infty (M)\times C^\infty (M)\ |\ (F,G)\ \textrm{satisfies (1'), (2'') and (3)}\ \}.
\]
For brevity we will omit the sets $X_0, X_1, Y_0, Y_1$ from this notation, when needed.

Clearly,
\[
\cG''_M\subset\cG'_M \subset \cG_M\subset \cF_M,\ \cF''_M \subset \cF'_M\subset \cF_M,
\]
\[
\ \cG'_M\subset \cF'_M,\ \cG''_M\subset \cF''_M.
\]
It is also easy to see that the sets $\cG_M$, $\cG'_M$ and $\cG''_M$ are non-empty if only if $X_1$ is compact.

Set
\[
pb^+_M (X_0,X_1,Y_0,Y_1) := \inf\limits_{\cF_M} \sup_M \{F,G\},
\]
\[
pb^+_{M,comp} (X_0,X_1,Y_0,Y_1) := \inf\limits_{\cG_M} \max_M \{F,G\}.
\]
If the set over which the infimum is taken is empty, we set the infimum to be $+\infty$.

Clearly,
\[
pb^+_M (X_0,X_1,Y_0,Y_1)\leq pb^+_{M,comp} (X_0,X_1,Y_0,Y_1).
\]

\begin{rem}
\label{rem-pb-invariants-history}
{\rm
The quantities $pb^+_M$, $pb^+_{M,comp}$ are versions of the $pb_4$ invariant of quadruples of sets defined originally in \cite{BEP} (where the $C^0$-norm of $\{F,G\}$ was used instead of $\max_M \{F,G\}$) and of the $pb^+_4$ invariant defined in \cite{EP-tetragons} (where the sets $X_0,X_1,Y_0,Y_1$ were assumed to be compact and both $F$ and $G$ were assumed to be compactly supported). See also \cite{Ganor} for a result that allows to define $pb^+_4 (X_0,X_1,Y_0,Y_1)$ in terms of the topology of the set $X_0\cup X_1\cup Y_0\cup Y_1$. Note that, unlike $pb_4$ and $pb^+_4$, the invariants $pb^+_M$, $pb^+_{M,comp}$ are {\sl not} symmetric with respect to the permutation $(X_0,X_1,Y_0,Y_1)\mapsto (Y_0,Y_1,X_1,X_0)$.
}
\end{rem}
\bigskip

Similarly to Proposition~\ref{prop-L-vs-L-c} (also see \cite{BEP}), one can prove that the sets $\cF_M$, $\cG_M$ in the definitions of $pb^+_M$, $pb^+_{M,comp}$ can be replaced, respectively, by $\cF'_M$, $\cF''_M$ and by $\cG'_M$, $\cG''_M$:
\[
pb^+_M (X_0,X_1,Y_0,Y_1) = \inf\limits_{\cF'_M} \sup_M \{F,G\} = \inf\limits_{\cF''_M} \sup_M \{F,G\},
\]
\[
pb^+_{M,comp} (X_0,X_1,Y_0,Y_1) = \inf\limits_{\cG'_M} \max_M \{F,G\} = \inf\limits_{\cG''_M} \max_M \{F,G\}.
\]

We will need the following basic properties of $pb^+_M$, $pb^+_{M,comp}$.

\bigskip
\noindent{\sc Monotonicity:}

\begin{prop}[cf. \cite{BEP}, \cite{EP-tetragons}]
\label{prop-monotonicty-of-pb}
Assume that $M$ is a codimension-zero submanifold (with boundary) of a symplectic manifold $N$ (without boundary),
which is closed as a subset of $N$. Let $U\subset N$ be an open set. Assume that $X_0,X_1,Y_0,Y_1$ is an admissible quadruple lying in $M$, so that $X_0,X_1\subset U\cap M$, $Y_0\cap U, Y_1\cap U\neq\emptyset$ and $\partial M\subset Y_0\cup Y_1$.

Then
\begin{equation}
\label{eqn-pb-monotonicity-1}
pb^+_N (X_0,X_1,Y_0,Y_1)\geq pb^+_M (X_0,X_1,Y_0,Y_1),
\end{equation}
\begin{equation}
\label{eqn-pb-monotonicity-1-comp}
pb^+_{N,comp} (X_0,X_1,Y_0,Y_1)\geq pb^+_{M,comp} (X_0,X_1,Y_0,Y_1),
\end{equation}
\begin{equation}
\label{eqn-pb-monotonicity-2}
pb^+_{M\cap U,comp} (X_0,X_1,Y_0\cap U,Y_1\cap U)\geq pb^+_{M,comp} (X_0,X_1,Y_0,Y_1).
\end{equation}

\end{prop}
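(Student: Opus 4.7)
The approach is to exploit the equivalent descriptions $pb^+_M=\inf_{\cF''_M}\sup_M\{F,G\}$ and $pb^+_{M,comp}=\inf_{\cG''_M}\max_M\{F,G\}$ stated just before the proposition. Working inside $\cF''$ and $\cG''$ means that the second function $G$ is locally constant -- equal to $0$ near $Y_0$ and to $1$ near $Y_1$ -- so $\sgrad G$ vanishes on an open neighborhood of $\partial M\subset Y_0\cup Y_1$. This single observation makes all three inequalities manageable: it lets me restrict admissible pairs from $N$ to $M$ without breaking completeness or support conditions, and lets me extend admissible pairs from $M\cap U$ to $M$ by a cut-off that does not disturb the Poisson bracket.

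For \eqref{eqn-pb-monotonicity-1} and \eqref{eqn-pb-monotonicity-1-comp} I would start with an arbitrary $(F,G)\in\cF''_N$ (respectively $\cG''_N$) and take the pair $(F|_M,G|_M)$. Condition (2'') is preserved verbatim, because $X_i,Y_i\subset M$. Since $F\,\sgrad G$ vanishes on an $N$-open neighborhood of $\partial M$, the integral curves of $F\,\sgrad G$ starting in $M$ cannot cross $\partial M$, so the restricted flow remains complete, giving (1); and in the compact-support case $\supp(F\,\sgrad G)$ is automatically disjoint from $\partial M$, hence contained in $\Int M$, giving (1'). Thus $(F|_M,G|_M)$ lies in $\cF''_M\subset\cF_M$, or in $\cG''_M\subset\cG_M$. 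The trivial inequality $\sup_M\{F,G\}\leq\sup_N\{F,G\}$ (and its $\max$ counterpart) now yields both \eqref{eqn-pb-monotonicity-1} and \eqref{eqn-pb-monotonicity-1-comp} upon taking the infimum.

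For \eqref{eqn-pb-monotonicity-2} I would reverse direction and extend $(F,G)\in\cG''_{M\cap U}$ to a pair $(\tilde F,\tilde G)\in\cG''_M$. I extend $F$ by zero off $M\cap U$; this is smooth because $\supp F$ is a compact subset of the open set $M\cap U\subset M$. To extend $G$, I fix once and for all a smooth function $G_0\colon M\to[0,1]$ with $G_0\equiv 0$ near $Y_0$ and $G_0\equiv 1$ near $Y_1$ (smooth Urysohn), choose a smooth cut-off $\rho\colon M\to[0,1]$ equal to $1$ on a neighborhood of $\supp F$ and supported in a compact subset of $M\cap U$, and set
\[
\tilde G := \rho\,G + (1-\rho)\,G_0 \text{ on } M\cap U,\qquad \tilde G := G_0 \text{ on } M\setminus\supp\rho.
\]
These definitions agree where both apply (since $\rho=0$ there), so $\tilde G$ is smooth on $M$ and $[0,1]$-valued. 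It is identically $0$ on an $M$-neighborhood of $Y_0$: indeed near $Y_0\cap U$ both $G$ and $G_0$ vanish, while near $Y_0\setminus U$ one has $\tilde G=G_0$, which vanishes there; the same argument gives $\tilde G\equiv 1$ near $Y_1$. Because $\tilde G\equiv G$ on a neighborhood of $\supp F$, one has $\{\tilde F,\tilde G\}=\{F,G\}$ on $\supp F$ and $\{\tilde F,\tilde G\}\equiv 0$ off $\supp F$ (where $d\tilde F=0$), which forces $\max_M\{\tilde F,\tilde G\}=\max_{M\cap U}\{F,G\}$; likewise $\supp(\tilde F\,\sgrad\tilde G)=\supp(F\,\sgrad G)\subset\Int(M\cap U)=\Int M\cap U\subset\Int M$. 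Hence $(\tilde F,\tilde G)\in\cG''_M$, and passing to the infimum gives \eqref{eqn-pb-monotonicity-2}.

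The main technical point is the cut-off construction in the extension step: I must arrange for $\rho$ to be identically $1$ on a \emph{full} neighborhood of $\supp F$, so that $\tilde G$ is not merely close to $G$ but literally equals $G$ there, guaranteeing the Poisson bracket is untouched on $\supp F$. At the same time $\supp\rho$ must sit in a compact subset of $M\cap U$ so that the fixed Urysohn function $G_0$ can take over near $(Y_0\cup Y_1)\setminus U$ without spoiling the boundary conditions (2''). Once the cut-off is correctly set up, everything else is bookkeeping.
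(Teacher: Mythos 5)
Your proof is correct and follows essentially the same route as the paper: restriction of pairs in $\cF''_N$ (resp. $\cG''_N$) to $M$ for \eqref{eqn-pb-monotonicity-1} and \eqref{eqn-pb-monotonicity-1-comp}, and extension of pairs from $M\cap U$ to $M$ (zero extension of $F$, modification of $G$ away from $\supp F$) for \eqref{eqn-pb-monotonicity-2}. Your write-up merely supplies details the paper leaves implicit -- the completeness of the restricted field via the vanishing of $\sgrad G$ near $\partial M\subset Y_0\cup Y_1$, and the cut-off interpolation with a Urysohn function when extending $G$ -- so no further comparison is needed.
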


\begin{proof}
If $(F,G)\in \cF''_N (X_0, X_1, Y_0, Y_1)$, then it follows easily from the definitions that $(F|_M, G|_M)\in \cF''_M (X_0, X_1, Y_0, Y_1)$. This yields \eqref{eqn-pb-monotonicity-1}. The inequality \eqref{eqn-pb-monotonicity-1-comp} follows similarly.

Let us prove \eqref{eqn-pb-monotonicity-2}. Assume $(F,G)\in \cG''_{M\cap U} (X_0, X_1, Y_0\cap U, Y_1\cap U)$. In particular, this means that
$\supp F\subset M\cap U$ is compact and $G$ is
equal to $0$ and $1$ on some open neighborhoods (in $U$) of, respectively, $Y_0\cap U$ and $Y_1\cap U$. Extend $F$ by zero outside $M\cap U$ to a smooth compactly supported function $\tF: M\to [0,1]$ and extend $G$ to a smooth function $\tG: M\to\R$ so that $\tG$ is equal to $0$ and $1$ on some open neighborhoods (in $M$) of, respectively, $Y_0$ and $Y_1$. Then $(\tF, \tG)\in \cG''_M (X_0, X_1, Y_0, Y_1)$, while $\{ \tF,\tG\} = \{ F,G\}$ (because outside $U$ both Poisson brackets vanish, while on $U$ they coincide since $\tF|_U = F$, $\tG|_U = G$). This immediately yields \eqref{eqn-pb-monotonicity-2}.
\end{proof}

The following property follows from the definitions (cf. \cite{BEP}, \cite{EP-tetragons}).

\bigskip
\noindent{\sc Semi-continuity:}

Suppose that $(X_0, X_1, Y_0, Y_1)$ is an admissible quadruple in $(M,\omega)$, $X_0,X_1$ are compact, and
$\{ X_0^{(j)}\}, \{ X_1^{(j)}\}$, $j\in\N$, are sequences of compact subsets of $M$ converging (in the sense of the Hausdorff distance between sets)
respectively to $X_0$, $X_1$, so that the quadruples $(X_0^{(j)}, X_1^{(j)}, Y_0, Y_1)$ are admissible for all $j\in\N$.

Then
\begin{equation}
\label{eqn-Hausdorff-convergence}
\limsup_{j\to +\infty} pb^+_M (X_0^{(j)}, X_1^{(j)}, Y_0, Y_1) \leq pb^+_M (X_0,X_1,Y_0,Y_1),
\end{equation}
\begin{equation}
\label{eqn-Hausdorff-convergence-comp}
\limsup_{j\to +\infty} pb^+_{M,comp} (X_0^{(j)}, X_1^{(j)}, Y_0, Y_1) \leq pb^+_{M,comp} (X_0,X_1,Y_0,Y_1).
\end{equation}

The next proposition is proved as in \cite{EP-tetragons} using Corollary~\ref{cor-fathi-existence-of-chords}.

\begin{prop}
\label{prop-pb4+-modified-chords}
Assume that $M$ is a codimension-zero submanifold (with boundary) of a symplectic manifold $N$ (without boundary), so that
$M$ is closed as a subset of $N$. Assume that $X_0,X_1,Y_0,Y_1$ is an admissible quadruple lying in $M$, so that $\partial M\subset Y_0\cup Y_1$.

Let $H: N\to\R$ be a complete time-independent Hamiltonian. Then the following claims hold:

\bigskip
\noindent
I. Assume $\Delta(H;Y_0,Y_1)=:\Delta>0$. If $pb^+_M (X_0,X_1,Y_0,Y_1) =: p >0$, then for any $\epsilon >0$ there exists a chord of $H$ from $X_0$ to $X_1$ of time-length $\displaystyle \leq \frac{1}{p\Delta} + \epsilon$. If $X_0$, $X_1$ are compact and $pb^+_{M,comp} (X_0,X_1,Y_0,Y_1) =: p_{comp} >0$, then there exists a chord of $H$ from $X_0$ to $X_1$ of time-length $\displaystyle \leq \frac{1}{p_{comp}\Delta}$.

\bigskip
\noindent
II.
Assume that $X_0$, $X_1$ are compact, $\supp H\cap M$ is compact, $H|_{X_0}\geq 0$ and $H|_{X_1}\leq -\Delta$ for some $\Delta>0$.
Assume also that $pb^+_{M,comp} (X_0,X_1,Y_0,Y_1) =: p_{comp} >0$. Then there exists a chord of $H$ from $Y_0$ to $Y_1$ of time-length $\displaystyle \leq \frac{1}{p_{comp}\Delta}$.

\end{prop}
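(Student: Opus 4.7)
The plan is to argue by contradiction following \cite{EP-tetragons}: Fathi's dynamical Urysohn lemma (Corollary~\ref{cor-fathi-existence-of-chords}) applied to $v=\sgrad H$ on $N$ produces a smooth certificate separating under the flow the pair whose chord we are seeking; we pair this certificate with a companion function built directly from $H$, check that the pair lies in the admissible class ($\cF''_M$ or $\cG''_M$ in Part~I, $\cG'_M$ in Part~II) defining $pb^+_M$ or $pb^+_{M,comp}$, and compute $\sup_M \{F,G\}$ in terms of $L_{\sgrad H}$ of the certificate. Reading off the resulting inequality yields the required chord-length bound.

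\smallskip

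\emph{Part~I.} Fix a non-decreasing smooth cutoff $\chi:\R\to[0,1]$ equal to $0$ on $(-\infty,\delta]$ and to $1$ on $[1-\delta,+\infty)$ with $\sup\chi'\leq 1+\delta'$, where $\delta,\delta'>0$ will be sent to zero. Set
\[
G := \chi\!\left(\tfrac{H-\sup_{Y_0}H}{\Delta}\right).
\]
Then $G\in[0,1]$ with $G\equiv 0$ on an open neighborhood of $Y_0$ and $G\equiv 1$ on an open neighborhood of $Y_1$; moreover $\sgrad G=(\chi'/\Delta)\sgrad H$, so $\sgrad G$ vanishes outside a thin slab sandwiched between $Y_0$ and $Y_1$ and, in particular, near $\partial M$, while its flow is a time-change of the complete flow of $\sgrad H$. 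For any $F\in\cK'(X_0,X_1)$ (compact case) or $F\in\cS'(X_0,X_1)$ (non-compact case), the pair $(F|_M,G|_M)$ then lies in $\cG''_M$, respectively $\cF''_M$: conditions (2'') and (3) are immediate, while (1) and (1') follow from the vanishing of $\sgrad G$ near $\partial M$ together with compact support of $F$ in the compact case. The identity $\{F,G\}=(\chi'/\Delta)L_{\sgrad H}F$ together with $\sup\chi'\leq 1+\delta'$ yields
\[
\sup_M\{F|_M,G|_M\}\leq \tfrac{1+\delta'}{\Delta}\,\sup_N L_{\sgrad H}F.
\]
By definition of $p_{comp}$ (resp.\ $p$) the left-hand side is $\geq p_{comp}$ (resp.\ $\geq p$); sending $\delta,\delta'\to 0$ gives $\sup_N L_{\sgrad H}F\geq p_{comp}\Delta$ for every $F\in\cK'(X_0,X_1)$ (resp.\ $\geq p\Delta$ for every $F\in\cS'(X_0,X_1)$). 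Invoking Proposition~\ref{prop-L-vs-L-c} to rewrite the relevant infima and Corollary~\ref{cor-fathi-existence-of-chords} to turn the resulting lower bound on $L_c$ (resp.\ $L$) into a chord concludes the argument.

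\smallskip

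\emph{Part~II.} Now $H$ supplies $F$ and Fathi supplies $G$. Choose a compactly supported smooth cutoff $\rho:M\to[0,1]$ equal to $1$ on an open neighborhood in $M$ of $\supp H\cap M$, and set $F:=-\rho H/\Delta$. Compactness of $\supp H\cap M$ makes $\supp F$ compact in $M$; $H|_{X_0}\geq 0$ gives $F|_{X_0}\leq 0$, and $X_1\subset\{H\leq-\Delta\}\subset\supp H$ (where $\rho\equiv 1$) gives $F|_{X_1}\geq 1$. Assume for contradiction that there is no chord of $\sgrad H$ from $Y_0$ to $Y_1$ of time-length $\leq 1/(p_{comp}\Delta)$. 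Then $L(Y_0,Y_1;\sgrad H)<p_{comp}\Delta$, and Corollary~\ref{cor-fathi-existence-of-chords} combined with Proposition~\ref{prop-L-vs-L-c} yields $G\in\cS'(Y_0,Y_1)$ with $\sup_N L_{\sgrad H}G<p_{comp}\Delta$. Since $G$ is locally constant near $Y_0\cup Y_1\supset\partial M$, $\sgrad G$ vanishes there, whence $\supp(F\sgrad G)\subset\Int M$ is compact; together with (2') this places $(F|_M,G|_M)\in\cG'_M$. The computation $\{F,G\}=-(\rho\{H,G\}+H\{\rho,G\})/\Delta$ gives $\{F,G\}=L_{\sgrad H}G/\Delta$ on the open neighborhood of $\supp H\cap M$ where $\rho\equiv 1$ and $\{F,G\}\equiv 0$ where $H\equiv 0$ (since both $dH=0$ and $H=0$ there). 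Hence
\[
\max_M\{F|_M,G|_M\} \leq \sup_N L_{\sgrad H}G/\Delta < p_{comp},
\]
contradicting $p_{comp}\leq\max_M\{F|_M,G|_M\}$.

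\smallskip

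\emph{Main obstacle.} The most delicate point is verifying that the pair $(F|_M,G|_M)$ actually lies in the admissible class: conditions (1) and (1') involve completeness of $F\sgrad G$ on $M$ and the location of $\supp(F\sgrad G)$ relative to $\partial M$. In Part~I this is handled by taking $G$ to be a function of $H$, so that $\sgrad G$ is a reparametrization of the complete field $\sgrad H$, and by using Proposition~\ref{prop-L-vs-L-c} to arrange that $F$ is locally constant near $X_0,X_1$ and that $\sgrad G$ vanishes near $\partial M$. In Part~II, compactness of $\supp F$ plus the analogous vanishing of $\sgrad G$ near $\partial M$ does the job. The price of these adjustments is a factor $1+O(\delta)$ which vanishes in the limit.
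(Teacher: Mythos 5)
Your proposal is correct and follows essentially the paper's strategy: pair a function built from $H$ with a Urysohn-type certificate, feed the pair into the definition of $pb^+_M$ (resp.\ $pb^+_{M,comp}$), and convert the resulting bound on $L$ or $L_c$ into a chord via Proposition~\ref{prop-L-vs-L-c} and Corollary~\ref{cor-fathi-existence-of-chords}. The only structural difference is in Part~I: you build the pair directly on $M$ by composing $H$ with a cutoff $\chi$ so that $(F|_M,\chi\circ H|_M)$ lands in $\cF''_M$ resp.\ $\cG''_M$, at the cost of the $(1+\delta')$ bookkeeping, whereas the paper simply takes $G$ to be an affine renormalization of $H$ itself on $N$ and then invokes the monotonicity inequality \eqref{eqn-pb-monotonicity-1} of Proposition~\ref{prop-monotonicty-of-pb}; both routes are fine.

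Two small repairs in Part~II. First, your contradiction setup has a strictness slip: assuming there is no chord of time-length $\leq 1/(p_{comp}\Delta)$ only gives $T(Y_0,Y_1;\sgrad H)\geq 1/(p_{comp}\Delta)$, hence $L(Y_0,Y_1;\sgrad H)\leq p_{comp}\Delta$, not the strict inequality $L<p_{comp}\Delta$ that you use to extract a certificate $G$ with $\sup_N L_{\sgrad H}G<p_{comp}\Delta$; with only the non-strict bound your computation yields $\max_M\{F,G\}<p_{comp}+\eta$ for every $\eta>0$, which does not contradict $\max_M\{F,G\}\geq p_{comp}$. The fix is to run the same pairing as a direct inequality, as the paper does: for \emph{every} $G\in\cS'(Y_0,Y_1)$ your pair $(F|_M,G|_M)\in\cG'_M$ forces $\sup_N L_{\sgrad H}G\geq p_{comp}\Delta$, hence $L(Y_0,Y_1;\sgrad H)\geq p_{comp}\Delta$ by Proposition~\ref{prop-L-vs-L-c}, and then Corollary~\ref{cor-fathi-existence-of-chords} gives the chord. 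Second, the cutoff $\rho$ is redundant: since $\rho\equiv 1$ on a neighborhood of $\supp H\cap M$, one has $\rho H=H$ on all of $M$, so $F=-H|_M/\Delta$ already has compact support equal to $\supp H\cap M$, which is exactly the pair the paper uses.
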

\bigskip

\bigskip
\noindent
{\bf Proof of Proposition~\ref{prop-pb4+-modified-chords}:}
Let us prove part I. We may assume without loss of generality that $H|_{Y_0}\leq 0$, $H|_{Y_1}\geq 1$ (this can be always achieved by replacing $H$ with $aH+b$ for some $a,b\in\R$, $a\neq 0$).
For any $F\in \cS (X_0,X_1)$ (see \eqref{eq-S-vsp}) satisfying $\sup_N L_{\sgrad H} F = \sup_N \{ F,H\} < +\infty$ we have $(F,H)\in \cF'_N$, and if $\supp F$ is compact, then $(F,H)\in \cG_N$. Indeed, since the vector field $\sgrad H$ is complete, then so is the vector field $F\sgrad H$, since $0\leq F\leq 1$.
Hence, by \eqref{eqn-pb-monotonicity-1} and \eqref{eqn-pb-monotonicity-1-comp},
\[
\sup_N L_{\sgrad H} F = \sup_N \{ F,H\}\geq pb^+_N (X_0,X_1,Y_0,Y_1) \geq
\]
\[
\geq
pb^+_M (X_0,X_1,Y_0,Y_1),
\]
or, if $\supp F$ is compact,
\[
\sup_N L_{\sgrad H} F = \sup_N \{ F,H\}\geq
\]
\[
\geq
pb^+_{N,comp} (X_0,X_1,Y_0,Y_1) \geq pb^+_{M,comp} (X_0,X_1,Y_0,Y_1).
\]
Taking the infimum over all such $F$, we get
\[
L (X_0,X_1;\sgrad H)\geq pb^+_M (X_0,X_1,Y_0,Y_1),
\]
and if $X_0,X_1$ are compact,
\[
L_c (X_0,X_1;\sgrad H)\geq pb^+_{M,comp} (X_0,X_1,Y_0,Y_1).
\]
Now the claims of part I follow from Corollary~\ref{cor-fathi-existence-of-chords}.

Let us prove part II. We may assume without loss of generality that $H|_{X_0}\geq 0$, $H|_{X_1}\leq -1$ (this can be always achieved by replacing $H$ with $H/\Delta$).
For any $G\in \cS' (Y_0,Y_1)$ (see \eqref{eq-Sprim-vsp}) we have $(-H|_M,G|_M)\in \cG'_M$. Recall that here $\supp H\cap M$ is assumed to be compact and $G$ is constant near $Y_0$ and $Y_1$, since $G\in \cS' (Y_0,Y_1)$.
Hence,
\[
\sup_N L_{\sgrad H} G\geq \sup_M L_{\sgrad H} G = \sup_M \{ -H,G\}\geq pb^+_{N,comp} (X_0,X_1,Y_0,Y_1).
\]
Taking the infimum over all $G\in \cS' (Y_0,Y_1)$ and using Proposition~\ref{prop-L-vs-L-c}, we get
\[
L (Y_0,Y_1;\sgrad H)\geq pb^+_{N,comp} (X_0,X_1,Y_0,Y_1)\geq
\]
\[
\geq pb^+_{M,comp} (X_0,X_1,Y_0,Y_1) = p_{comp}.
\]
Now the claim of part II follows from Corollary~\ref{cor-fathi-existence-of-chords}.
\Qed

Let us now discuss an implication of Proposition~\ref{prop-pb4+-modified-chords} for the existence of chords of time-dependent Hamiltonians.

Let $E>0$.
Let $r\in (-E,E)$ and $\tau\in\SP^1 = \R/\Z$ be the coordinates, respectively, on $(-E,E)$ and $\SP^1$. Set
\[
\tM_E := M\times (-E,E)\times \SP^1
\]
and equip $\tM_E$ with the product symplectic form $\omega\oplus dr\wedge d\tau$.
Let $X_0,X_1,Y_0,Y_1\subset M$ be an admissible quadruple, such that $X_0,X_1$ are compact.
Set
\[
\tX_0:= X_0\times \{ r=0\},\  \tX_1:= X_1\times  \{ r=0\},
\]
\[
\tY_0 (E) := Y_0\times (-E,E)\times \SP^1,\ \tY_1 (E) := Y_1\times (-E,E)\times \SP^1.
\]

\begin{prop}
\label{prop-chords-of-time-dep-Ham-s}
Assume that $\partial M = \emptyset$. With an admissible quadruple $X_0,X_1,Y_0,Y_1\subset M$ as above,
let $H: M\times \SP^1\to \R$ be a complete Hamiltonian and $\{ \phi_t\}_{t\in\R}$ its flow. Let $E>0$. Assume that

\bigskip
\noindent
(a) $pb^+_{\tM_E,comp} (\tX_0, \tX_1, \tY_0 (E), \tY_1 (E)) =: \tp_E >0$.

\bigskip
\noindent
(b) $\Delta (H; Y_0, Y_1) =: \Delta > 2E$.

\bigskip
\noindent
(c) $\sup_{t_0\in\R} \big(\sup_{t\in [t_0, t_0+T]} H(\phi_t (x), t) - \inf_{t\in [t_0, t_0+T]} H(\phi_t (x), t)\big) <E$ for any $x\in X_0$, where $\displaystyle T:=\frac{1}{\tp_E (\Delta-2E)}$.

\bigskip
Then there exists a chord of $H$ from $X_0$ to $X_1$ of time-length $\displaystyle \leq T=\frac{1}{\tp_E (\Delta-2E)}$.
\end{prop}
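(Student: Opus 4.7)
The plan is to reduce the time-dependent situation to the time-independent setting of Proposition~\ref{prop-pb4+-modified-chords}(I) via the standard ``symplectization of time'' trick: lift $H$ to the autonomous Hamiltonian $\tH(x, r, \tau) := H(x, \tau) + r$ on the symplectic manifold $(\tM, \omega \oplus dr\wedge d\tau)$, where $\tM := M \times \R(r) \times \SP^1(\tau)$. A direct computation gives $\sgrad \tH = \sgrad_x H_\tau - (\partial H/\partial t)\,\partial_r + \partial_\tau$, so $\tH$ is complete (inheriting completeness from $H$), $\dot\tau \equiv 1$, and the $M$-projection of any trajectory of $\sgrad \tH$ is a trajectory of $H$. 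A trajectory starting at $(x, 0, t_0) \in \tX_0$ has $r(t) = H(x, t_0) - H(\phi_t x, t_0 + t)$, so hypothesis (c) forces $|r(t)| < E$ for all $t \in [0, T]$ and every such trajectory remains in $\tM_E$ during $[0, T]$; consequently, any chord of $\sgrad\tH$ from $\tX_0$ to $\tX_1$ of time-length at most $T$ projects to a chord of $H$ from $X_0$ to $X_1$ of the same length.

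After normalizing $H$ so that $\sup_{Y_0 \times \SP^1} H = 0$ and $\inf_{Y_1 \times \SP^1} H = \Delta$, the lift satisfies $\tH \leq E$ on $\overline{\tY_0(E)}$ and $\tH \geq \Delta - E$ on $\overline{\tY_1(E)}$, and hypothesis (b) gives $\Delta - 2E > 0$. For any $F$ compactly supported in the open slab $\tM_E$ with $F|_{\tX_0} \leq 0$ and $F|_{\tX_1} \geq 1$, the pair $(F,\,(\tH - E)/(\Delta - 2E))$ lies in $\cG_{\tM_E}(\tX_0, \tX_1, \tY_0(E), \tY_1(E))$, so hypothesis (a) yields
\[
\sup_{\tM_E} L_{\sgrad\tH} F \;=\; (\Delta-2E)\,\sup_{\tM_E}\bigl\{F,(\tH-E)/(\Delta-2E)\bigr\} \;\geq\; \tp_E(\Delta - 2E) \;=\; 1/T,
\]
which is precisely the analog for our setting of the bound $L_c \geq p_{comp}\Delta$ appearing in the proof of Proposition~\ref{prop-pb4+-modified-chords}(I).

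The main obstacle -- and the step that will occupy most of the proof -- is to upgrade this inequality, valid a priori only for $F$ supported inside $\tM_E$, to the bound $L_c(\tX_0,\tX_1;\sgrad\tH) \geq 1/T$ on the whole ambient $\tM$; once established, Corollary~\ref{cor-fathi-existence-of-chords} will produce a chord of $\sgrad\tH$ of time-length at most $T$, and by the confinement observation above this chord lies in $\tM_E$ and projects to the desired chord of $H$. The natural approach is to take an arbitrary admissible compactly supported $F$ on $\tM$ and multiply by a bump $\psi(r)$ equal to $1$ near $r=0$ and supported in $[-E, E]$, obtaining $F\psi$ supported in $\tM_E$ with the same boundary behavior on $\tX_0,\tX_1$, and then exploit the Leibniz identity
\[
L_{\sgrad\tH}(F\psi) \;=\; \psi\, L_{\sgrad\tH} F \,-\, F\,\psi'(r)\,(\partial H/\partial t).
\]
The resulting error term is supported where $|r|\in [E_0,E]$ for some $E_0<E$; the plan is to absorb it by using the flexibility in Fathi's construction (Theorem~\ref{thm-fathi}) to localize $\supp F$ near the forward orbit of $\tX_0$ under $\sgrad\tH$, on which hypothesis (c) furnishes exactly the oscillation bound on $H$ needed to make the error arbitrarily small, after choosing $E_0$ close to $E$ and $\psi$ with sufficiently shallow slope.
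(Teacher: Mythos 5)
Your first half is sound and runs parallel to the paper: the lift $\tH(x,r,\tau)=r+H(x,\tau)$, the confinement of orbits through $\tX_0$ to $\{|r|<E\}$ via conservation of $\tH$ and hypothesis (c), and the observation that any admissible $F$ compactly supported in $\tM_E$ paired with $G=(\tH-E)/(\Delta-2E)$ lies in $\cG_{\tM_E}$ and hence satisfies $\sup L_{\sgrad\tH}F\geq \tp_E(\Delta-2E)=1/T$ are all correct. The gap is in the step you yourself flag as the crux, and the plan you sketch for it does not work. First, to prove $L_c(\tX_0,\tX_1;\sgrad\tH)\geq 1/T$ on $\tM$ you must beat an \emph{arbitrary} admissible compactly supported $F$; you are not free to ``localize $\supp F$ near the forward orbit of $\tX_0$'' --- that would amount to choosing $F$, which is irrelevant for a lower bound on an infimum, and Theorem~\ref{thm-fathi} as stated provides no such localization even if you instead argue by contradiction with a Fathi-produced $F$. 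Second, the error term $F\,\psi'(r)\,(\partial H/\partial t)$ cannot be absorbed: it lives on all of $M\times\{E_0\leq|r|\leq E\}\times\SP^1$, and no hypothesis bounds $\partial H/\partial t$ pointwise anywhere --- condition (c) only controls the oscillation of $H$ along orbit segments through $X_0$, i.e.\ an integral of $\partial H/\partial t$ along those segments, not its values on the transition region. Finally, ``$E_0$ close to $E$'' and ``$\psi$ with sufficiently shallow slope'' are mutually exclusive, since $\sup|\psi'|\geq 1/(E-E_0)$.

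The paper resolves exactly this difficulty by cutting off the \emph{Hamiltonian} rather than the test functions: one modifies $\tH$ by a cutoff in $r$ (equal to $1$ on $[-E_1,E_1]$ and $0$ near $|r|=E_2$, with $E_1<E_2<E$ chosen via (c)) so as to obtain a \emph{complete, time-independent} Hamiltonian on $\tM_E$ itself which still separates $\tY_0(E)$ and $\tY_1(E)$ by at least $\Delta-2E$. Then Proposition~\ref{prop-pb4+-modified-chords}(I), applied with $N=\tM_E$ and using hypothesis (a), produces a chord of the cut-off Hamiltonian from $\tX_0$ to $\tX_1$ of time-length $\leq T$; conservation of $\tH$ along its own flow together with (c) shows any such chord stays in $M\times[-E_1,E_1]\times\SP^1$, where the cutoff is inactive, so it is a genuine $\tH$-chord and projects to the desired chord of $H$. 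In short: your reduction and your $pb^+$-estimate inside the slab are fine, but the passage from that estimate to an actual chord needs the paper's cutoff-the-Hamiltonian device (or an equivalent), not the bump-function modification of $F$ you propose.
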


\bigskip
\noindent
{\bf Proof of Proposition~\ref{prop-chords-of-time-dep-Ham-s}:}
In view of (c) we can pick $0<E_1< E_2<E$ so that
\begin{equation}
\label{eqn-E1-E2}
\sup_{t\in [t_0, t_0+T]} H(\phi_t (x), t) - \inf_{t\in [t_0, t_0+T]} H(\phi_t (x), t) < E_1\ \textrm{for all}\ t_0\in\R.
\end{equation}
Pick a smooth cut-off function $\chi:\R\to\R$ such that $\chi (x) = 0$ if $|x|\geq E_2$ and $\chi (x) = 1$ if $|x|\leq E_1$.

Define a {\sl time-independent} Hamiltonian $\tH : \tM_E\to\R$ as $\tH (x,r,\tau) := r + H(x,\tau)$. One easily verifies that
the Hamiltonian $\chi \tH$ is complete
and, in view of (b),
\[
\Delta \left(\chi H; \tY_0 (E), \tY_1 (E)\right) \geq \Delta - 2E >0.
\]
Together with (a), this implies, by part I of Proposition~\ref{prop-pb4+-modified-chords}, that there exists a chord $\gamma$ of $\chi \tH$ from $\tX_0$ to $\tX_1$ of time-length $\displaystyle \leq T=\frac{1}{\tp_E(\Delta-2E)}$.

We claim that $\gamma$ is, in fact, a chord of $\tH$ from $\tX_0$ to $\tX_1$ -- this would imply that the projection of $\gamma$ to $M$ is a chord of
$H$ from $X_0$ to $X_1$ of time-length $\leq T$.

Indeed, note that $\chi\tH = \tH = H$ on $M\times [-E_1,E_1]\times \SP^1\subset \tM_E$ and the projection to $M$ of each trajectory of the Hamiltonian flow of $\tH$ on $\tM_E$ is a trajectory of the Hamiltonian flow of $H$ on $M$. Since the time-independent Hamiltonian $\tH$ is preserved by its own Hamiltonian flow, \eqref{eqn-E1-E2} implies that for any $t_0\in\R$ any time-$[t_0,t_0+T]$ trajectory of the Hamiltonian flow of $\tH$ passing at some moment $t\in [t_0,t_0+T]$ through $\tX_0$ stays in $M\times [-E_1,E_1]\times \SP^1$ for all $t\in [t_0,t_0+T]$. Therefore for any $t_0\in\R$ any time-$[t_0,t_0+T]$ trajectory of the Hamiltonian flow of $\chi\tH$ passing at some moment $t\in [t_0,t_0+T]$ through $\tX_0$ is, in fact, a trajectory of the Hamiltonian flow of $\tH$ for all $t\in [t_0,t_0+T]$. In particular, $\gamma$ is a chord of $\tH$ from $\tX_0$ to $\tX_1$, which proves the claim and finishes the proof of the proposition.
\Qed

\bigskip

\begin{rem}
\label{rem-chords-of-time-dep-Hams-for-noncompact-X0-X1}
{\rm
Proposition~\ref{prop-chords-of-time-dep-Ham-s} admits an analogue for the case when $X_0,X_1$ are not necessarily compact -- in that case one should replace the quantity $pb^+_{\tM_E,comp} (\tX_0, \tX_1, \tY_0 (E), \tY_1 (E))$ in the claim by $pb^+_{\tM_E} (\tX_0, \tX_1, \tY_0 (E), \tY_1 (E)) =: pb^+_{\tM_E}$ and then there would exist a chord of $H$ from $X_0$ to $X_1$ of time-length $\displaystyle \leq T=\frac{1}{pb^+_{\tM_E} (\Delta-2E)}$. The proof of this claim virtually repeats the proof of Proposition~\ref{prop-chords-of-time-dep-Ham-s}.
}
\end{rem}

\section{Persistence modules}
\label{sec-persistence-modules}

In this section we recall basic facts about persistence modules. For a more detailed introduction to persistence modules see e.g. \cite{Edelsbrunner-Harer}, \cite{Chazal-et-al-book}, \cite{Oudot-AMS2015-book}, or \cite{Polt-Ros-Samv-Zhang}.

We work over the base field $\Z_2$.

Let {$\I := (a,+\infty)$, $-\infty\leq a< +\infty$. (In fact, we will be concerned only with $\I = (0,+\infty)$ and $\I = (-\infty,\infty)$).}

\begin{defin}
\label{def-persistence-module}
{\rm
A {\it persistence module over $\I$} is given by a pair
\[
\left(V=\{ V_t\}_{t\in \I}, \pi = \{ \pi_{s,t}\}_{s,t\in \I, s\leq t}\right),
\]
where {all $V_t$, $t\in \I$, are finite-dimensional $\Z_2$-vector spaces}
and $\pi_{s,t} : V_s \to V_t$ are linear maps, so that

\smallskip
\noindent
(i) (Persistence) $\pi_{t,t} = Id$, $\pi_{s,r} = \pi_{t,r} \circ \pi_{s,t}$, for all $s, t, r \in \I$, $s\leq t\leq r$.

\smallskip
\noindent
(ii) (Discrete spectrum and semicontinuity) There exists a (finite or countable) discrete closed set of points
\[
\spec (V) = \left\{ \lmin (V):=t_0<t_1<t_2<\ldots <+\infty\right\}\subset \I,
\]
called {\it the spectrum of $V$}, so that

\begin{itemize}

\item{}
for any $r \in \I\setminus \spec (V)$ there exists a
neighborhood $U$ of $r$ in $\I$ such that $\pi_{s,t}$ is an isomorphism for all $s,t\in U$, $s \leq t$;

\item{}
for any $r\in \spec (V)$ there exists $\epsilon > 0$ such that $\pi_{s,t}$ is an isomorphism of vector spaces
for all $s,t\in (r -\epsilon,r] \cap \I$.

\end{itemize}

\smallskip
\noindent
(iii) (Semi-bounded support) For the smallest point $\lmin (V):=t_0$ of $\spec (V)$ one has $\lmin (V)>a$ and $V_t = 0$ for all $t\leq \lmin (V)$.

}
\end{defin}\bigskip

{In \cite{Polt-Ros-Samv-Zhang} such persistence modules are called {\it persistence modules of finite type}.}

{\it The zero (or trivial) persistence module (over $\I$)} is a persistence module formed by zero vector spaces and trivial maps between them.

The notions of a persistence submodule, the direct sum of persistence modules, a morphism/isomorphism between persistence modules (over $\I$) are defined in a straightforward manner.

Recall that two morphisms $\Phi_i: V_i \to W_i$, $i=1,2$  between per\-sis\-tence mo\-du\-les
are called {\it right-left equivalent} (or, for brevity, simply equivalent) if there exist isomorphisms $\Psi: V_1 \to V_2$ and $\Theta: W_1 \to W_2$ such that $\Theta\Phi_1 = \Phi_2\Psi$.

\begin{exam}
\label{exam-extension-of-pers-modules}
{\rm
A persistence module $V$ over $(0,+\infty)$ can be extended to a persistence module over $(-\infty,+\infty)$ by setting $V_s=0$ and $\pi_{s,t}=0$ for all $s\in (-\infty,0]$.
}
\end{exam}
\bigskip

\begin{exam}
\label{exam-interval-module}
{\rm
Let {$\J \subset \I$ be either of the form $(a_\J, b_\J]$ for some $0<a_\J<b_\J < +\infty$, or of the form $(a_\J, +\infty)$}. Define an {\it interval (persistence) module}
\[
(Q(\J), \pi) := \left(\{ Q(\I)\}_{t\in\I} , \{\pi_{s,t}\}_{s,t\in\I,s\leq t}\right),
\]
over $\I$
as follows: $Q(\J)_t = \Z_2$ for $t\in \J$ and $Q(\J)_t = 0$ for $t\in \I\setminus\J$,
while the morphisms $\pi_{s,t}$ are the identity maps for $s,t\in\J$ and zeroes otherwise.
}
\end{exam}
\bigskip

The following structure theorem for persistence modules can be found in
\cite{ZC}, \cite[Thm. 2.7,2.8]{Chazal-et-al-book}, \cite{Crawley-Boevey}.
Its various versions appeared prior to invention of persistence modules,
see \cite{Azumaya,Gabriel,Auslander,Ringel-Tachikawa,Webb,Baran}.

\begin{thm}
\label{thm-decomposition-persist-modules}
For every persistence
module $(V,\pi)$ over $\I$ there exists a unique (finite or countable) collection of intervals
$\J_j \subset \I$ -- where the intervals may not be distinct but each interval appears in the collection only finitely many times -- so that
$(V,\pi)$ is isomorphic to $\oplus_j Q(\J_j)$:
\[
(V,\pi)=\oplus_j Q(\J_j).
\]
\end{thm}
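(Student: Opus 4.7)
The plan is to establish this classical structure theorem by reducing to a quiver representation problem and then appealing to a Krull--Schmidt-type argument. The discreteness and semi-boundedness of the spectrum, together with the finite-dimensionality of each $V_t$, allow me to convert $V$ into a sequence of finite-dimensional vector spaces connected by linear maps, after which the decomposition becomes a well-known fact about representations of the linear quiver $A_n$ (or $A_\infty$).

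First I would discretize. Enumerate $\spec(V) = \{t_0 < t_1 < t_2 < \ldots\}$ and use conditions (ii) and (iii) to verify that $\pi_{s,s'}$ is a canonical isomorphism whenever $s \leq s'$ both lie in a fixed interval $(t_i, t_{i+1}]$ (with the convention $t_{i+1} := +\infty$ when that index is absent). Set $W_i$ to be the common value of $V_s$ for $s \in (t_i, t_{i+1}]$, and let $f_{i+1} \colon W_i \to W_{i+1}$ be the jump map induced by $\pi_{s,s'}$ with $s \in (t_i, t_{i+1}]$ and $s' \in (t_{i+1}, t_{i+2}]$. By (iii), $V_s = 0$ for $s \leq t_0$, so the sequence starts with $W_0 \neq 0$. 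The pair $(W_\bullet, f_\bullet)$ determines $(V,\pi)$ completely and is precisely a representation of the (possibly infinite) linear quiver.

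For existence of the decomposition I would invoke Gabriel's theorem for type $A_n$ in the finite-spectrum case, or extend it inductively to $A_\infty$ by using finite-dimensionality of each $W_i$ to split off an interval summand at each step. The indecomposable representations of such a quiver are the interval representations supported on a consecutive block of vertices with identity transition maps; under the discretization these correspond bijectively to the interval modules $Q(\J_j)$ of the theorem, with endpoints at spectral values. Since each $V_t$ is finite-dimensional, only finitely many of the $\J_j$ can contain any given $t \in \I$, so the direct sum $\oplus_j Q(\J_j)$ is pointwise well-defined and yields the desired isomorphism $(V,\pi) \cong \oplus_j Q(\J_j)$.

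For uniqueness I would recover the multiplicity of each interval $\J = (s_1, s_2]$ from intrinsic rank invariants of $V$: with $s_1', s_2'$ chosen just to the right of $s_1, s_2$, the multiplicity is given by an inclusion--exclusion formula in $\mathrm{rank}\,\pi_{s_1,s_2}$, $\mathrm{rank}\,\pi_{s_1',s_2}$, $\mathrm{rank}\,\pi_{s_1,s_2'}$, and $\mathrm{rank}\,\pi_{s_1',s_2'}$ (with the obvious adjustments for intervals of the form $(s_1, +\infty)$). Alternatively, Krull--Schmidt applies because the endomorphism ring of each $Q(\J)$ is the local ring $\Z_2$. The main obstacle I expect is the countable case: one must carefully run the inductive splitting so that no ``limit at infinity'' summand is lost and the direct sum makes sense globally; both issues are controlled by the discreteness of $\spec(V)$ and the finite-dimensionality of each $V_t$, which together ensure that at every point only finitely many bars are active.
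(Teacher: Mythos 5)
Your proposal is correct in outline, but note that the paper does not prove this theorem at all: it is quoted as a known structure theorem with references (\cite{ZC}, \cite{Chazal-et-al-book}, \cite{Crawley-Boevey}, and the older sources \cite{Azumaya}, \cite{Gabriel}, \cite{Webb}, \cite{Baran}), so there is no in-paper argument to compare against. What you sketch is essentially the standard proof found in those references: conditions (ii)--(iii) make $\pi_{s,t}$ an isomorphism on each interval $(t_i,t_{i+1}]$ (a connectedness argument you correctly flag as a verification), so $V$ is equivalent to a pointwise finite-dimensional representation of a linear quiver of type $A_n$ or $A_\infty$; existence of the interval decomposition is Gabriel's theorem in the finite case and Webb/Crawley-Boevey in the infinite case, and uniqueness follows either from the rank-invariant inclusion--exclusion formula or from Azumaya's theorem, since $\mathrm{End}(Q(\J))=\Z_2$ is local. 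The one step you leave genuinely unfinished is the one you yourself flag: in the countable case, ``split off an interval summand at each step'' needs an exhaustion argument showing the split-off summands account for everything. This can be completed along the lines you indicate: for each $i$ the kernels $\Ker(W_i\to W_j)$ stabilize in $j$ by finite-dimensionality of $W_i$, so a vector of maximal lifetime exists and generates an interval summand that splits off; since only finitely many bars are born up to any fixed index, after finitely many splittings the remainder vanishes in degrees $\leq i$, and the increasing union of split-off parts is all of $V$. So your route is sound and matches the literature the paper cites; a fully self-contained write-up would need that exhaustion argument (or an explicit appeal to \cite{Webb} or \cite{Crawley-Boevey}) rather than the phrase ``carefully run the inductive splitting.''
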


The collection $\{ \J_j\}$ of the intervals is called the {\it barcode} of $V$. The intervals $\J_j$ themselves are called the {\it  bars of $V$}.
Note that the same bar may appear in the barcode several (but finitely many) times (this number of times is called the {\it multiplicity} of the bar) -- in other words, a barcode is a multiset of bars.

The barcode of the trivial persistence module is empty. Set
\[
V_\infty := (\Z_2)^k,
\]
where $k\in \N\cup \{0,+\infty\}$ is the number of the infinite bars in the barcode of $V$.

\begin{exam}
\label{exam-add-shift}
{\rm
Let $\I = (-\infty,+\infty)$ and let $V$ be a persistence module over $\I$.
Let $c\in\R$.

Define a new persistence module $V^{[+c]}$ over $\I$
by adding $c$ to all indices of $V_t$ and $\pi_{s,t}$ -- in particular,
\[
V^{[+c]}_t:= V_{t+c}.
\]
The barcode of $V^{[+c]}$ is the barcode of $V$ shifted by $c$ to the left.

If $c\geq 0$, one can also define a morphism
\[
\Sh_V [+c]: V\to V^{[+c]},
\]
called {\it the additive shift of $V$ by $c$}, as follows:
\[
\Sh_V [+ c] := \left\{ \pi_{t,t+c}: V_t\to V^{[+c]}_t\right\}_{t\in \I}.
\]
}
\end{exam}
\bigskip

\begin{exam}
\label{exam-mult-shift}
{\rm
Let $\I = (0,+\infty)$ and let $V$ be a persistence module over $\I$.
Let $c>0$.

Let $V= (V,\pi)$ be a persistence module. Define a new persistence module $V^{[\times c]}$ over $\I$
by multiplying all the indices of $V_t$ and $\pi_{s,t}$ by $c$ -- in particular,
\[
V^{[\times c]}_t:= V_{ct}.
\]
The barcode of $V^{[\times c]}$ is the barcode of $V$ divided by $c$.

If $c\geq 1$, one can also define a morphism
\[
\Sh_V [\times c]: V\to V^{[\times c]},
\]
called {\it the multiplicative shift of $V$ by $c$}, as follows:
\[
\Sh_V [\times c] := \left\{ \pi_{t,ct}: V_t\to V^{[\times c]}_t\right\}_{t\in \I}.
\]
}
\end{exam}
\bigskip

\begin{rem}
\label{rem-shifts-isomorphisms}
{\rm
One easily sees that additive/multiplicative shifts of isomorphic persistence modules by the same constant are also isomorphic. Thus, one can speak about additive/multiplicative shifts of isomorphism classes of persistence modules.

}
\end{rem}
\bigskip

\section{Legendrian contact homology}
\label{sec-lch-intro}

Let $(P^{2n},d\vartheta)$, $n\in\Z_{\geq 0}$, be an exact symplectic manifold
with bounded geometry at infinity {(see \cite{ALP} for the definition of this class of manifolds; in particular, this class includes symplectic manifolds that are convex in the sense of \cite{EG}).}
Consider the contactization of $(P,d\vartheta)$: Let
\begin{equation}\label{eq-contactization-vsp}
\Sigma := P\times \R (z),
\end{equation}
and let $\xi=\ker \lambda$ be the contact structure on $\Sigma$ defined by the contact form
\[
\lambda := dz + \vartheta.
\]
Let $\{R_t\}$ be the Reeb flow of $\lambda$ -- each $R_t$ is a shift by $t$ in the coordinate $z$.

\bigskip
{\bf Further on we will always assume that $P$, and consequently $\Sigma$, are connected.}
\bigskip

Consider a compact (not necessarily connected) Legendrian submanifold $\Lambda\subset (\Sigma,\xi)$ without boundary.

If $\Lambda=\Lambda_0\sqcup\Lambda_1$ is a disjoint union of compact (not necessarily connected)
Legendrian submanifolds $\Lambda_0$, $\Lambda_1$ without boundary we call $\Lambda$ {\it a two-part Legendrian submanifold} and $\Lambda_0,\Lambda_1$ {\it the ($0$- and $1$-) parts of $\Lambda$}.

Denote the set of all Reeb chords of $(\Lambda, \lambda)$ by
$\cR (\Lambda, \lambda)$.

If $\Lambda=\Lambda_0\sqcup\Lambda_1$ is a two-part Legendrian submanifold, we say that a Reeb chord of $(\Lambda,\lambda)$ is an {\it $ij$-chord} for $i,j=0,1$ if it starts on $\Lambda_i$ and ends on $\Lambda_j$.
Denote the set of all $ij$-chords of $(\Lambda, \lambda)$ by
$\cR_{ij} (\Lambda, \lambda)$.
The $ii$-chords will be called {\it pure}, while $ij$-chords for $i\neq j$ will be called {\it mixed}.

We say that the pair $(\Lambda,\lambda)$ is {\it non-degenerate} if the following conditions are satisfied:

\begin{itemize}

\item{} For each Reeb chord
$a: [0,T]\to \Sigma$, $a (t) = R_t
(a(0))$, $a(0), a(T)\in\Lambda$,
of $\Lambda$ with respect to $\lambda$,
the tangent spaces of the Legendrian submanifolds $R_T (\Lambda)$ and $\Lambda$ are transversal
inside the contact hyperplane at the point $a(T) = R_T (a(0))$.

\item{} Each trajectory of the Reeb flow $\{ R_t (x)\}$, $-\infty < t < +\infty$, $x\in\Sigma$, intersects $\Lambda$ at most in two points.
    (Equivalently, the images of distinct Reeb chords are disjoint).

\end{itemize}

If $(\Lambda,\lambda)$ is non-degenerate and
$\Lambda$ is compact, then the set $\cR (\Lambda, \lambda)$ is finite.

\subsection{Exact Lagrangian cobordisms}
\label{subsec-exact-Lagr-cobs}

The symplectization of $(\Sigma,\xi)$ can be identified with $\big(\Sigma\times \R_+ (s), d(s\lambda)\big)$.

Let $0 < s_- < s_+$.

Consider the manifold with boundary $\Sigma\times [s_-,s_+]\subset \Sigma\times \R_+$ equipped with the symplectic form $\omega := d(s\lambda)$ -- it is a {\it trivial exact symplectic cobordism} whose {\it positive and negative boundaries} and the restrictions of $s\lambda$ to them are identified, respectively, with $(\Sigma,s_+\lambda)$ and $(\Sigma,s_-\lambda)$.

A differential 1-form $\theta$ on $\Sigma\times [s_-,s_+]$ will be called a {\it cobordism 1-form} if the following conditions are satisfied:

\begin{itemize}

\item $d\theta = \omega$;

\item $\theta$ coincides with $s\lambda$ near the boundaries of $\Sigma\times [s_-,s_+]$;

\end{itemize}

In particular, $s\lambda$ itself is a cobordism 1-form.

Let $\Lambda^\pm\subset (\Sigma,\xi)$ be compact Legendrian submanifolds without boundary, viewed respectively as submanifolds of $\Sigma\times s_\pm$.
A Lagrangian cobordism $L$ in $(\Sigma\times [s_-,s_+],s\lambda)$ between
$\Lambda^\pm$ is a smooth compact cobordism in $\Sigma\times [s_-,s_+]$ between $\Lambda^+\subset \Sigma\times s_+$ and $\Lambda^-\subset \Sigma\times s_-$ which is a Lagrangian submanifold of $(\Sigma\times \R_+, \omega)$ so that there exist $\delta_\pm >0$ for which
\[
L\cap \Sigma\times [s_+ - \delta_+, s_+] = \Lambda^+ \times [s_+ - \delta_+, s_+],
\]
\[
L\cap \Sigma\times [s_-, s_- + \delta_-] = \Lambda^- \times [s_-, s_- + \delta_-].
\]

The sets $L\cap \Sigma\times [s_+ - \delta_+, s_+]$ and $L\cap \Sigma\times [s_-, s_- + \delta_-]$
will be called the {\it positive and the negative collars} of $L$.
The Legendrian submanifolds $\Lambda^-$ and $\Lambda^+$ will be called, respectively, {\it the negative} and {\it the positive boundary of $L$}.

We say that $L$ as above is a {\it two-part Lagrangian cobordism}
if $L$ is a disjoint union of two (not necessarily connected) Lagrangian cobordisms $L_0$ and $L_1$:
\[
L=L_0\sqcup L_1,
\]
where $L_0$ is a Lagrangian cobordism between the Legendrian submanifolds
$\Lambda_0^+ := \Lambda^+ \cap L_0$ and $\Lambda_0^- := \Lambda^- \cap L_0$ and
$L_1$ is a Lagrangian cobordism between the Legendrian submanifolds
$\Lambda_1^+ := \Lambda^+ \cap L_1$ and $\Lambda_1^- := \Lambda^- \cap L_1$.
In particular, $\Lambda^\pm = \Lambda_0^\pm\sqcup \Lambda_1^\pm$ are two-part Legendrian submanifolds.

Note that in our terminology a two-part Lagrangian cobordism includes a numbering of its parts.

Let $\theta$ be a cobordism 1-form.
We say that a two-part Lagrangian cobordism $L=L_0\sqcup L_1$ is {\it $\theta$-exact} if $\theta|_{L_i} = df_i$, $i=0,1$, for a smooth function $f_i: L_i\to\R$
which is zero on the negative collar of $L$ and is identically equal to a constant $C_i$ on the positive collar of $L_i$.

If a two-part Lagrangian cobordism $L$ is $\theta$-exact for some cobordism 1-form $\theta$, we call it just {\it exact}.

The constant $C:=C_1-C_0$ will be called {\it the gap of $L$ with respect to $\theta$}. We will also say that $L$ is {\it $C$-gapped} (with respect to $\theta$).

We say that a two-part Lagrangian cobordism $L$ is {\it belted} if there exists a null-homologous piecewise-smooth closed path $\gamma$ in $\Sigma\times [s_ - , s_+]$ tracing $\Sigma\times s_+$ from $\Lambda^+_0= L_0\cap (\Sigma\times s_+)$ to $\Lambda^+_1= L_1\cap (\Sigma\times s_+)$, then tracing $L_1$ from $\Lambda^+_1 = L_1\cap (\Sigma\times s_+)$ to $\Lambda^-_1 = L_1\cap (\Sigma\times s_-)$, then following possibly several arcs in $L_1$ connecting points in $\Lambda^-_1$, then tracing $\Sigma\times s_-$ from $\Lambda^-_1= L_1\cap (\Sigma\times s_-)$ to $\Lambda^-_0= L_0\cap (\Sigma\times s_-)$, then following possibly several arcs in $L_0$ connecting points in $\Lambda^-_0$, and finally tracing $L_0$ from $\Lambda^-_0 = L_0\cap (\Sigma\times s_-)$ to $\Lambda^+_0 = L_0\cap (\Sigma\times s_+)$. We will call such a $\gamma$ a {\it belt path of $L$}.

We claim that if $L$ is belted, then the gap of $L$ with respect to a cobordism 1-form (with respect to which $L$ is exact) does not depend on the form.

Indeed, assume $\theta$ and $\theta'$ are cobordism 1-forms on $\Sigma\times [s_-,s_+]$ so that $L$ is exact with respect to both $\theta$ and $\theta'$.
Then $\theta-\theta'$ is a closed 1-form vanishing near the positive and the negative boundaries of $\Sigma\times [s_-,s_+]$. Since a belt path $\gamma$ of $L$ is null-homologous, the integral of $\theta-\theta'$ over $\gamma$ vanishes. Since $\theta-\theta'$ vanishes near the positive and the negative boundaries of $\Sigma\times [s_-,s_+]$, the latter zero integral is the sum of the integrals of $\theta-\theta'$ over the parts of $\gamma$ lying in $L_0$ and $L_1$, which readily implies the claim.

{\it A trivial Lagrangian cobordism in $(\Sigma\times [s_-,s_+],s\lambda)$} is a cobordism $\Lambda\times [s_-,s_+]$ where $\Lambda$ is a Legendrian submanifold of $(\Sigma,\xi)$.

A trivial two-part Lagrangian cobordism
\[
L= (\Lambda_0\sqcup\Lambda_1)\times [s_-,s_+]
\]
{is belted: to construct a belt path $\gamma$ for $L$, take a path $\Gamma$ in
$\Sigma$ from $x\in\Lambda_0$ to $y\in\Lambda_1$ (it exists, since, by our assumption, $\Sigma$ is connected). Now   define
$\gamma$ as the path tracing
$\Gamma\times s_+$ from $x\times s_+\in\Lambda_0\times s_+$ to $y\times s_+ \in\Lambda_1\times s_+$, then tracing $y\times [s_-, s_+]$ from $y\times s_+$ to $y\times s_-$, then tracing $\Gamma\times s_-$ from $y\times s_-$ to $x\times s_-$, and finally tracing $x\times [s_-,s_+]$ from $x\times s_-$ to $x\times s_+$. Note also that $L$ is $(s\lambda)$-exact with the gap $0$.}

Given a (two-part exact) Lagrangian cobordism $L\subset \Sigma\times [s_-,s_+]$ between $\Lambda^\pm$, consider the Lagrangian submanifold $\bL\subset \big(\Sigma\times\R_+,d(s\lambda)\big)$ defined as
\[
\bL := \big(\Lambda^-\times (0,s_-] \big) \cup L \cup \big(\Lambda^+\times [s_+,+\infty) \big).
\]
We call $\bL$ {\it the completion of $L$}.

Let $L\subset (\Sigma\times [s_-,s_+],s\lambda)$ be an exact two-part Lagrangian cobordism between two-part Legendrian submanifolds $\Lambda^\pm\subset (\Sigma, \xi)$.
By {\it an exact Lagrangian cobordism isotopy of $L$} we mean a smooth family $\{ L^\tau,\theta^\tau\}_{0\leq\tau\leq T}$, where $\{ L^\tau\}_{0\leq\tau\leq T}$ is a Lagrangian isotopy of $L=L^0$ in $\Sigma\times [s_-,s_+]$ and $\{ \theta^\tau\}_{0\leq\tau\leq T}$ is a smooth family of cobordism 1-forms so that

\smallskip
\noindent
(1) Each $L^\tau$ is a two-part $\theta^\tau$-exact Lagrangian cobordism between its positive and negative boundaries that will be denoted by $\Lambda^\pm_\tau$. In particular, $\{ \Lambda^\pm_\tau\}_{0\leq\tau\leq T}$
are Legendrian isotopies in $(\Sigma^\pm,\xi^\pm)$. We will say that these are {\it the Legendrian isotopies induced by the exact Lagrangian cobordism isotopy
$\{ L^\tau,\theta^\tau\}_{0\leq\tau\leq T}$}.

\smallskip
\noindent
(2) There exist $\delta_\pm >0$ such that for all $\tau\in [0,T]$
\[
L^\tau\cap \Sigma\times [s_+ - \delta_+, s_+] = \Lambda^+_\tau \times [s_+ - \delta_+, s_+],
\]
\[
L^\tau\cap \Sigma\times [s_-, s_- + \delta_-] = \Lambda^-_\tau \times [s_-, s_- + \delta_-].
\]

\bigskip
Note that the gaps $C(\tau)$ of $L^\tau$, $0\leq \tau\leq T$, with respect to $\theta^\tau$ form a smooth function of $\tau$.

Clearly, if $L^0$ is belted, then so are all $L^\tau$, $0\leq \tau\leq T$.

\subsection{Chekanov-Eliashberg algebra and the corresponding persistence modules}
\label{subsec-Chekanov-Eliashberg-algebra-pers-mod}

Let us recall the definition of the Chekanov-Eliashberg algebra associated to a non-degenerate pair $(\Lambda,\lambda)$, where $\Lambda\subset (\Sigma,\xi)$ is a compact Legendrian submanifold without boundary.
See \cite{Ekh-JEMS} (cf. \cite{EES-TAMS},\cite{Ekholm-Honda-Kalman}, \cite{DRizell-lifting}) for more details and \cite{Chekanov}, \cite{Eliashberg-ICM98}, \cite{EGH}
for the original ideas underlying the construction.

Denote by $\cA (\Lambda,\lambda)$ a free non-commutative unital algebra over $\Z_2$ generated by the elements of $\Z_2$ and by $\cR (\Lambda, \lambda)$.

The algebra $\cA (\Lambda,\lambda)$ comes with a filtration defined by the action: the action of a Reeb chord is its time-length and the action of a monomial which is a product of Reeb chords is the sum of the actions of the factors. The action of the constant monomial $1\in\Z_2$ is set to be zero and the action of $0$ is defined as $-\infty$. For $r\in (-\infty,+\infty)$ define $\cA_r (\Lambda,\lambda)$ as the vector subspace of $\cA (\Lambda,\lambda)$ spanned over $\Z_2$ by the monomials whose action is smaller than $r$. It is easy to see that for a finite $r$ the vector space $\cA_r (\Lambda,\lambda)$ is finite-dimensional. For any $r\leq r'$ there is a natural morphism $\cA_r (\Lambda,\lambda)\to \cA_{r'} (\Lambda,\lambda)$ induced by the inclusion of the generators.

For an appropriate (a so-called cylindrical) almost complex structure $J$ on $\Sigma\times\R_+$
one can define a
differential
$\partial_J$
on $\cA (\Lambda,\lambda)$
using a count of $J$-holomorphic maps of a disk with one positive and several (possibly no) negative boundary punctures into $\Sigma\times\R_+$. Such a map should send the boundary of the disk to $\Lambda\times\R_+$ and converge near positive/negative puncture to a positive/negative cylinder over a Reeb chord in $\cR (\Lambda,\lambda)$.

The set $\cJ (\Lambda)$ of $J$ for which $\partial_J$ is well-defined and
$\partial_J^2=0$ is connected and dense in the space of all cylindrical almost complex structures on $\Sigma\times \R_+$
\cite{Ekh-JEMS}, see also
\cite{EES-TAMS}, \cite{Ekholm-Honda-Kalman}, \cite{DRizell-lifting}.

Let $J\in \cJ (\Lambda)$.

A computation using Stokes' theorem and similar to \cite[Lemma 5.16]{BEHWZ} and \cite[Lemma B.3]{Ekh-JEMS} shows that for all $r$
the spaces $\cA_r (\Lambda,\lambda)$ are invariant under $\partial_J$.

For each $r\in {(-\infty,+\infty)}$ define a vector space $V_r (\Lambda,\lambda)$ over $\Z_2$:
\[
V_r (\Lambda,\lambda,J) := \frac{\Ker \partial_J|_{\cA_r (\Lambda,\lambda)}}{\Image \partial_J|_{\cA_r (\Lambda,\lambda)}}.
\]
The inclusion maps $\cA_r (\Lambda,\lambda)\to \cA_{r'} (\Lambda,\lambda)$, $r\leq r'$, induce morphisms $V_r (\Lambda,\lambda,J)\to V_{r'} (\Lambda,\lambda,J)$.

It is easy to see that the vector spaces $V_r (\Lambda,\lambda,J)$, $r\in {(0,+\infty)}$, together with the morphisms between them, form a persistence module $V (\Lambda,\lambda,J)$ over ${(0,+\infty)}$.

One can show (see \cite{EP-big}) that for fixed $\Lambda,\lambda$ a different choice of $J\in \cJ (\Lambda)$ does not change the isomorphism class of the persistence module $V (\Lambda,\lambda,J)$. The isomorphism class of this persistence module will be denoted by $V (\Lambda,\lambda)$.
The vector space $V_\infty (\Lambda,\lambda)$ is then isomorphic to the (non-filtered) {\it Legendrian contact homology of $(\Lambda,\lambda)$} -- that is, $\Ker \partial_J/\Image \partial_J$.
The dimension of the vector space
$V_\infty (\Lambda,\lambda)$ is invariant under
Legendrian isotopies of $\Lambda$ \cite{Ekh-JEMS}, see also
\cite{EES-TAMS}, \cite{Ekholm-Honda-Kalman}, \cite{DRizell-lifting}.

Assume now that $\Lambda=\Lambda_0\sqcup\Lambda_1$ is a two-part Legendrian submanifold.

A sequence of Reeb chords $a_1,\ldots,a_k\in \cR(\Lambda,\lambda)$ is called {\it $ij$-composable} for $i,j=0,1$, if $a_1$ starts at $\Lambda_i$, $a_k$ ends at $\Lambda_j$, and for each $m=1,\ldots,k-1$ the end of $a_m$ lies in the same part of $\Lambda$ as the origin of $a_{m+1}$. Note that an $ij$-composable sequence of Reeb chords must contain at least one chord from $\cR_{ij} (\Lambda, \lambda)$. The corresponding monomial $a_1\cdot\ldots\cdot a_k$ in $\cA (\Lambda,\lambda)$ will be also called {\it $ij$-composable}.

Denote by $\cA (\Lambda_0,\Lambda_1,\lambda)$ the vector subspace of $\cA (\Lambda,\lambda)$ (over $\Z_2$) generated by all the $01$-composable monomials $a_1\cdot\ldots\cdot a_k$, $k\in\Z_{>0}$. (Note that the polynomials appearing in $\cA (\Lambda_0,\Lambda_1,\lambda)$ have no constant terms!). We will call $\cA (\Lambda_0,\Lambda_1,\lambda)$ the {\it $01$-subspace of $\cA (\Lambda,\lambda)$}.

Set
\[
\cA_r (\Lambda_0,\Lambda_1,\lambda) := \cA (\Lambda_0,\Lambda_1,\lambda) \cap \cA_r (\Lambda,\lambda).
\]

Let $J\in \cJ (\Lambda=\Lambda_0\sqcup\Lambda_1)$. Then, in particular, $J\in \cJ (\Lambda_0)\cap \cJ(\Lambda_1)$.

It is easy to see that $\cA (\Lambda_0,\Lambda_1,\lambda)$ is invariant under $\partial_J$ and hence so are the spaces $\cA_r (\Lambda_0,\Lambda_1,\lambda)$
for all $r$.
For each $r\in (-\infty,+\infty)$ define a vector space $V_r (\Lambda_0,\Lambda_1,\lambda)$ over $\Z_2$:
\[
V_r (\Lambda_0,\Lambda_1,\lambda,J) := \frac{\Ker \partial_J|_{\cA_r (\Lambda_0,\Lambda_1,\lambda)}}{\Image \partial_J|_{\cA_r (\Lambda_0,\Lambda_1,\lambda)}}.
\]
The inclusion maps $\cA_r (\Lambda_0,\Lambda_1,\lambda)\to \cA_{r'} (\Lambda_0,\Lambda_1,\lambda)$, $r\leq r'$, induce morphisms $V_r (\Lambda_0,\Lambda_1,\lambda,J)\to V_{r'} (\Lambda_0,\Lambda_1,\lambda,J)$.

It is easy to see that the vector spaces $V_r (\Lambda_0,\Lambda_1,\lambda,J)$, $r\in {(0,+\infty)}$, together with the morphisms between them, form a persistence module over ${(0,+\infty)}$.
We will denote this persistence module by $V (\Lambda_0,\Lambda_1,\lambda,J)$.
Below, whenever needed, we will also view $V (\Lambda_0,\Lambda_1,\lambda,J)$ as a persistence module over ${(-\infty,+\infty)}$ using the trivial extension as in Example~\ref{exam-extension-of-pers-modules}.

Similarly to the above,
one can show (see \cite{EP-big}) that

\smallskip
\noindent
- For fixed $\Lambda=\Lambda_0\sqcup\Lambda_1,\lambda$ a different choice of $J\in \cJ (\Lambda)$ does not change the isomorphism class of the persistence module $V (\Lambda_0,\Lambda_1,\lambda,J)$. The isomorphism class of this persistence module will be denoted by $V (\Lambda_0,\Lambda_1,\lambda)$.

\smallskip
\noindent
-
The dimension of the vector space
$V_\infty (\Lambda_0,\Lambda_1,\lambda)$ is invariant under
Legendrian isotopies of $\Lambda$.

\smallskip
Abusing the terminology we will call $V (\Lambda_0,\Lambda_1,\lambda)$ {\it the LCH persistence module associated to $(\Lambda,\lambda)$}, where {\it LCH} stands for ``Legendrian contact homology".

It is easy to see that if $c>0$ then
\[
V (\Lambda_0,\Lambda_1,c\lambda) = V^{[\times 1/c]} (\Lambda_0,\Lambda_1,\lambda),
\]
where $V^{[\times 1/c]} (\Lambda_0,\Lambda_1,\lambda)$ is the isomorphism class of persistence modules obtained from $V (\Lambda_0,\Lambda_1,\lambda)$ by the multiplicative shift by $1/c$ (see Remark~\ref{rem-shifts-isomorphisms}).

\subsection{Morphisms of persistence modules defined by Lag\-ran\-gian co\-bor\-disms}
\label{subsec-morphisms-pers-mod-defd-by-Lagr-cobs}

Let $L=L_0\sqcup L_1\subset \big(\Sigma\times [s_-,s_+],\omega=d(s\lambda)\big)$, $s_-< s_+$, be a two-part exact Lagrangian cobordism between two-part Legendrian submanifolds
$\Lambda^\pm = \Lambda^\pm_0\sqcup \Lambda^\pm_1\subset (\Sigma,\xi)$.

Assume the pairs $(\Lambda^\pm,\lambda)$ are non-degenerate -- in this case we will say that $L$ is {\it non-degenerate}.

For an appropriate -- a so-called {\it adap\-ted (to $L$ and $\omega$}) -- almost complex structure $I$ on $\Sigma\times\R_+$
define a unital algebra morphism
\[
\Phi^{L,I}: \cA (\Lambda^+,s_+\lambda,I^+) \to \cA (\Lambda^-,s_-\lambda,I^-)
\]
by prescribing its values on the generators:
\[
\Phi^{L,I} (1):=1
\]
and for any $a\in \cR (\Lambda^+,s_+\lambda)$
\[
\Phi^{L,I} (a) := \sum\limits_{\dim \cM_{L,I} (a;b_1,\ldots,b_m) = 0} |\cM_{L,I} (a; b_1,\ldots,b_m)| b_1\cdot\ldots\cdot b_m,
\]
where $\cM_{L,I} (a;b_1,\ldots,b_m)$ is the moduli space of $I$-holomorphic maps of a disk with one positive and $m\geq 0$ negative boundary punctures into $\Sigma\times\R_+$ that send the boundary of the disk to $\bL$ and converge near the positive puncture to a cylinder over $a$ and the negative punctures to the cylinders over the chords $b_1,\ldots,b_m\in\cR (\Lambda^-,s_-\lambda)$. Here $I^\pm$ are cylindrical almost complex structures on $\Sigma\times\R_+$ induced by the restrictions of $I$ at the ends of $\Sigma\times\R_+$. See \cite{Ekh-JEMS}, cf. \cite{Ekholm-Honda-Kalman}, for more details.

The set $\cI (L)$ of $I$ for which $\Phi^{L,I}$ is a well-defined unital algebra morphism is
dense in the space of all adapted almost complex structures
on $\Sigma\times\R_+$ and the map
\[
\cI (L)\to \cJ (\Lambda^+)\times \cJ (\Lambda^-),\ I\mapsto (I^+,I^-),
\]
is surjective \cite{Ekh-JEMS}, cf. \cite{Ekholm-Honda-Kalman}.

The map $\Phi^{L,I}$ is called {\it the cobordism map associated to $L,I$}.

\begin{rem}
\label{rem-cob-map-non-trivial-implies-cob-is-belted}
{\rm
Assume that $I\in \cI (L)$ and the restriction of $\Phi^{L,I}$ to the $01$-subspace $\cA (\Lambda^+_0,\Lambda^+_1,s_+\lambda)$ is not the zero map.
Then for some $01$-chord $a$ and some {\sl non-empty} set of chords $b_1,\ldots,b_m$ the moduli space $\cM_{L,I} (a;b_1,\ldots,b_m)$ is non-empty. Rescale an $I$-holomorphic map of a disk $D'$, with boundary punctures, that defines an element of the moduli space and obtain a map of $D'$ into $\Sigma\times [s_-,s_+]$. Concatenating the image of $\partial D'$ under the latter map with the chords $a,b_1,\ldots,b_m$ we get a belt path for the two-part Lagrangian cobordism $L$.
In other words, we have obtained that, unless $L$ is belted, $\Phi^{L,I}$ has to be the zero map.
}
\end{rem}

The following two claims are proved in \cite{EP-big} using \cite[Lemma 3.14]{Ekholm-Honda-Kalman}
(a chain homotopy result for the cobordism maps, which is a version of \cite[Lemma B.15]{Ekh-JEMS}) -- see the proof of Proposition~\ref{prop-Phi-L-two-part-Lagr-isotopy} below for a similar use of the same result.

\begin{prop}[\cite{EP-big}]
\label{prop-Phi-L-I-defines-morphism-of-pers-mod}
Assume that $L$ is a non-degenerate {belted} two-part exact Lagrangian cobordism.
Let $I\in \cI (L)$ and assume that the restriction of the cobordism map $\Phi^{L,I}$ to $\cA (\Lambda^+_0,\Lambda^+_1,s_+\lambda)$ is non-trivial.
Let $C$ be the gap of $L$ ({since $L$ is belted, the gap} is independent of the cobordism 1-form with respect to which $L$ is exact).

Then
$\Phi^{L,I}$ maps $\cA_r (\Lambda_0,\Lambda_1,s_+\lambda,I^+)$ into $\cA_{r-C} (\Lambda_0,\Lambda_1,s_-\lambda,I^-)$ for each $r\in (-\infty,+\infty)$
and, accordingly, defines a morphism of persistence modules over $(-\infty,+\infty)$:
\[
\Phi^{L,I}_* : V (\Lambda_0,\Lambda_1,s_+\lambda,I^+) \to V^{[-C]} (\Lambda_0,\Lambda_1,s_-\lambda,I^-),
\]
where $V^{[-C]} (\Lambda_0,\Lambda_1,s_-\lambda,I^-)$ is the isomorphism class of persistence modules over $(-\infty,+\infty)$ obtained from $V (\Lambda_0,\Lambda_1,s_-\lambda,I^-)$
by the additive shift by $-C$ (see Remark~\ref{rem-shifts-isomorphisms}).
\end{prop}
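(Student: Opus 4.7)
Three things need to be checked: (a) $\Phi^{L,I}$ maps the $01$-subspace $\cA(\Lambda_0^+,\Lambda_1^+,s_+\lambda)$ into the $01$-subspace $\cA(\Lambda_0^-,\Lambda_1^-,s_-\lambda)$; (b) the restriction shifts the action filtration by $-C$, i.e.\ $\Phi^{L,I}\bigl(\cA_r(\Lambda_0^+,\Lambda_1^+,s_+\lambda,I^+)\bigr)\subset \cA_{r-C}(\Lambda_0^-,\Lambda_1^-,s_-\lambda,I^-)$ for every $r$; and (c) $\Phi^{L,I}$ is a chain map on the $01$-subcomplexes and so descends to a morphism of persistence modules.

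For (a) the argument is topological. A disk $u\colon D'\to \Sigma\times\R_+$ contributing to a term $b_1\cdots b_m$ in $\Phi^{L,I}(a)$ has boundary on $\bL=\bL_0\sqcup \bL_1$, and since these components are disjoint each boundary arc of $D'$ lies entirely in exactly one of them. Recording which component carries each arc as one traces $\partial D'$ cyclically, one reads off that if $a$ is an $ij$-chord then the negative chords $b_1,\ldots,b_m$ form an $ij$-composable sequence. Combined with multiplicativity of $\Phi^{L,I}$, this shows that $01$-composable monomials are sent to $\Z_2$-linear combinations of $01$-composable monomials.

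For (b) the tool is a Stokes-type energy estimate. Fix a cobordism $1$-form $\theta$ with respect to which $L$ is exact, and let $f_0,f_1$ be the corresponding primitives on $L_0,L_1$, vanishing on the negative collars and identically equal to $C_0,C_1$ on the positive collars. Extending $\theta$ to a primitive of $\omega$ on $\Sigma\times\R_+$ and applying the standard relative SFT computation (cf.\ \cite{Ekh-JEMS,Ekholm-Honda-Kalman}), for any disk $u$ as above one obtains
\[
0\le \int_{D'}u^*\omega \;=\; |a|_{s_+\lambda} - \sum_{i=1}^{m}|b_i|_{s_-\lambda} + \Delta_{\textup{bdry}},
\]
where $\Delta_{\textup{bdry}}$ is the telescoping sum of $f_0$- and $f_1$-differences over the boundary arcs of $D'$. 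Boundary arcs both of whose endpoints lie in a single collar contribute $0$; the only non-trivial contributions come from the (at most two) arcs connecting the two collars of a given component. A case analysis by chord type then gives $\Delta_{\textup{bdry}}=-C$ when $a$ is a $01$-chord, $\Delta_{\textup{bdry}}=+C$ when $a$ is a $10$-chord, and $\Delta_{\textup{bdry}}=0$ when $a$ is a pure $00$- or $11$-chord. Since in any $01$-composable monomial $a_1\cdots a_k$ the number of $01$-factors exceeds the number of $10$-factors by exactly one, multiplicativity of $\Phi^{L,I}$ combines the per-factor estimates into a net shift of $-C$, yielding (b).

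For (c) the chain map identity $\partial_{I^-}\circ \Phi^{L,I}=\Phi^{L,I}\circ \partial_{I^+}$ is the standard SFT boundary relation obtained by analyzing the compactifications of the $1$-dimensional moduli spaces of disks on $\bL$, along the lines of \cite[Lemma 3.14]{Ekholm-Honda-Kalman} and \cite[Lemma B.15]{Ekh-JEMS}. Together with (a) and (b) this shows that $\Phi^{L,I}$ restricts to a filtered chain map of the $01$-subcomplexes, hence descends to the claimed morphism $\Phi^{L,I}_*$ of persistence modules. The hypothesis that $\Phi^{L,I}$ is non-trivial on the $01$-subspace enters only via Remark~\ref{rem-cob-map-non-trivial-implies-cob-is-belted}, which ensures that $L$ is belted so that $C$ does not depend on $\theta$ and the target $V^{[-C]}$ is well-defined. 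The main technical obstacle is the careful orientation bookkeeping in step (b): one must verify that the non-trivial boundary arcs contribute with precisely the signs that yield $\Delta_{\textup{bdry}}=-C$ rather than $+C$ or $0$, and it is this sign choice that dictates the $-C$ shift in the induced morphism of persistence modules.
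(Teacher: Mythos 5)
Your proposal is correct and follows essentially the route the paper intends: the paper itself defers the proof to \cite{EP-big}, but the key ingredients you use -- preservation of the $01$-subspace by tracking which component of $\bL$ carries each boundary arc, the Stokes/energy estimate giving a shift of $-C$ for $01$-chords, $+C$ for $10$-chords and $0$ for pure chords, and the count that a $01$-composable monomial has exactly one more $01$-factor than $10$-factors -- are precisely the inequalities and bookkeeping displayed in the paper's proof of Proposition~\ref{prop-Phi-L-two-part-Lagr-isotopy}. The only small imprecision is attributing the chain-map identity to \cite[Lemma 3.14]{Ekholm-Honda-Kalman} and \cite[Lemma B.15]{Ekh-JEMS}, which are chain-homotopy statements; the DGA-morphism (chain map) property of $\Phi^{L,I}$ is part of the construction in \cite{Ekh-JEMS} itself.
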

\bigskip

If the restriction of the cobordism map $\Phi^{L,I}$ to $\cA (\Lambda^+_0,\Lambda^+_1,s_+\lambda)$ is {the zero map}, we set $\Phi^{L,I}_*$ to be the zero morphism into the trivial persistence module.

\begin{prop}[\cite{EP-big}]
\label{prop-Phi-L-I-indep-of-I-up-to-equivalence}
The morphism
$\Phi^{L,I}_*$ is independent of $I\in \cI (L)$ up to the right-left equivalence in the category of persistence modules over $(-\infty,+\infty)$.
\end{prop}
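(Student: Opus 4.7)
The strategy is to interpolate. Given $I_0,I_1\in\cI(L)$, I would choose a generic smooth path $\{I_t\}_{t\in[0,1]}\subset\cI(L)$ joining them, which exists because $\cI(L)$ is dense and connected in the space of adapted almost complex structures (after a $C^\infty$-small perturbation the path hits the complement of $\cI(L)$ only at finitely many parameter values, where a standard bifurcation analysis applies). The path restricts to paths of cylindrical almost complex structures $\{I_t^\pm\}$ at the two ends of $\Sigma\times\R_+$.

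First I would use the two paths $\{I_t^\pm\}$ together with the \emph{trivial} cobordisms $\Lambda^\pm\times[0,1]$ (which are trivially belted and $(s\lambda)$-exact with gap zero) to produce unital DGA ``continuation'' morphisms
\[
\psi^\pm:\ \cA\bigl(\Lambda^\pm,\,s_\pm\lambda,\,I_0^\pm\bigr)\ \longrightarrow\ \cA\bigl(\Lambda^\pm,\,s_\pm\lambda,\,I_1^\pm\bigr).
\]
By the Stokes-type action bound underlying Proposition~\ref{prop-Phi-L-I-defines-morphism-of-pers-mod}, these are filtered maps with no action shift. Applying the same construction to the reversed path $\{I_{1-t}^\pm\}$ gives candidate inverses, and a second application of the filtered chain-homotopy formula from \cite[Lem.~3.14]{Ekholm-Honda-Kalman} -- this time to two-parameter families on $\Lambda^\pm\times[0,1]$ -- shows that the compositions with $\psi^\pm$ are filtered-chain-homotopic to the identity. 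Restricting to the $01$-subspace and passing to filtered homology yields persistence-module isomorphisms $\psi^+_*$ and $\psi^-_*$ between the corresponding persistence modules for $I_0^\pm$ and $I_1^\pm$.

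The heart of the proof is one more application of \cite[Lem.~3.14]{Ekholm-Honda-Kalman}, now to the one-parameter family $\{(L,I_t)\}_{t\in[0,1]}$. Counting rigid elements of the resulting two-parameter moduli space defines an operator
\[
K:\ \cA\bigl(\Lambda^+,s_+\lambda,I_0^+\bigr)\ \longrightarrow\ \cA\bigl(\Lambda^-,s_-\lambda,I_1^-\bigr),
\]
satisfying, on the $01$-subspace, the $\Z_2$-coefficient chain-homotopy identity
\[
\psi^-\!\circ\Phi^{L,I_0}\ +\ \Phi^{L,I_1}\!\circ\psi^+\ =\ \partial_{I_1^-}\!\circ K\ +\ K\!\circ\partial_{I_0^+}.
\]
The same exactness-of-$L$ plus Stokes argument as in the proof of Proposition~\ref{prop-Phi-L-I-defines-morphism-of-pers-mod} shows that $K$ shifts action by exactly the gap $C$ of $L$, so the identity above holds at every filtration level with the correct shift. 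Taking homology of the $01$-subspace annihilates both $K$-terms and yields $\psi^-_*\circ \Phi^{L,I_0}_*\ =\ \Phi^{L,I_1}_*\circ\psi^+_*$ as morphisms of persistence modules over $(-\infty,+\infty)$. Setting $\Psi:=\psi^+_*$ and $\Theta:=\psi^-_*$ gives the required right-left equivalence.

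\textbf{The main obstacle} is the parametric transversality and bifurcation analysis along $\{I_t\}$: at isolated values $t\in[0,1]$ the rigid moduli space on $(L,I_t)$ develops boundary strata corresponding either to once-broken holomorphic buildings (with a Reeb-chord breaking at the positive or negative end) or to codimension-one degenerations of the disk domain, and one must show that the resulting boundary contributions assemble into precisely the right-hand side of the homotopy identity above. This is exactly what is packaged in \cite[Lem.~3.14]{Ekholm-Honda-Kalman}, which I would invoke as a black box. A subsidiary bookkeeping issue is to confirm that $K$, $\psi^\pm$, and every intermediate map preserve $01$-composability; this follows because a broken building whose outermost positive asymptote is a $01$-chord has boundary strands on $L_0$ and $L_1$ that can only concatenate into a $01$-composable word of negative asymptotes -- the same alternation argument that justifies $\partial_J$ and $\Phi^{L,I}$ themselves preserving the $01$-subspace.
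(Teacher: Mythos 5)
The paper itself gives no proof of this proposition: it is imported from the forthcoming manuscript \cite{EP-big}, with only the remark that it is established via \cite[Lemma 3.14]{Ekholm-Honda-Kalman} (a version of \cite[Lemma B.15]{Ekh-JEMS}) used as in the proof of Proposition~\ref{prop-Phi-L-two-part-Lagr-isotopy}, and your sketch follows precisely that indicated route -- continuation morphisms identifying the end data, a parametrized chain homotopy over the family $\{I_t\}$, and the action and $01$-subspace bookkeeping needed to make everything filtered -- so in approach it agrees with what the paper points to. The one point to flag is the black-box attribution: \cite[Lemma 3.14]{Ekholm-Honda-Kalman} is stated for deformations with the cylindrical ends held fixed, so the homotopy identity you write, in which the varying ends are absorbed into the $\psi^\pm$-terms, is not a literal instance of that lemma; one must either split the change of $I$ into a fixed-ends deformation plus concatenations with interpolating cylinders at the two ends (invoking the composition and gluing results for cobordism maps, with the filtered version of the action estimates), or redo the bifurcation analysis so that the end-breakings assemble into the continuation maps. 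This is standard technology and is presumably what \cite{EP-big} carries out, so it is a matter of attribution and detail rather than a flaw in your strategy.
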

\bigskip

Further on,
we will denote the equivalence class of the persistence module morphism $\Phi^{L,I}_*$ by $\Phi^L_*$.

Abusing the terminology, we will not distinguish between an isomorphism class of a persistence module and a specific persistence module representing it, as well as between an equivalence class of morphisms of persistence modules and a specific morphism representing it. In particular, we will write
\[
\Phi^L_* :  V (\Lambda_0,\Lambda_1,s_+\lambda) \to V^{[-C]} (\Lambda_0,\Lambda_1,s_-\lambda).
\]
Note that compositions of equivalence classes of morphisms of persistence modules are well-defined.

The following proposition is proved in \cite{EP-big} using the well-known results (see \cite{Ekh-JEMS}) about the cobordism map associated to a trivial exact Lagrangian cobordism.

\begin{prop}[\cite{EP-big}]
\label{prop-Phi-L-trivial-cobordisms}
Assume $L=(\Lambda_0\sqcup \Lambda_1)\times [s_-,s_+]$ is a trivial non-degenerate two-part exact Lagrangian cobordism. Set
\[
U:= V (\Lambda_0,\Lambda_1,s_+\lambda).
\]

Then
\[
\Phi^L_* = \Sh_U [\times s_+/s_-].
\]
\Qed
\end{prop}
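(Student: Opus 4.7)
\textbf{Proof plan for Proposition~\ref{prop-Phi-L-trivial-cobordisms}.}

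The plan is to compute $\Phi^L_*$ by choosing an especially symmetric almost complex structure within its equivalence class, then to read off the induced map on persistence modules. By Proposition~\ref{prop-Phi-L-I-indep-of-I-up-to-equivalence}, the equivalence class $\Phi^L_*$ is independent of $I \in \cI(L)$, so I am free to take $I$ to be an $\R_+$-invariant (cylindrical) adapted almost complex structure on $\Sigma \times \R_+$; because $L = (\Lambda_0 \sqcup \Lambda_1)\times[s_-, s_+]$ is trivial and $\bL = (\Lambda_0 \sqcup \Lambda_1)\times \R_+$ is $\R_+$-invariant, such a choice is admissible and satisfies $I^+ = I^- =: J$. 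The gap of $L$ with respect to $s\lambda$ is $0$, so the target of $\Phi^L_*$ is $V(\Lambda_0, \Lambda_1, s_-\lambda)$ with no additive shift.

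First I would argue that $\Phi^{L,I}$ equals the identity on generators. With $I$ cylindrical, each moduli space $\cM_{L,I}(a; b_1, \ldots, b_m)$ carries a free $\R$-action given by rescaling in the $\R_+$-factor (equivalently, translation in $\log s$), because both $I$ and $\bL$ are $\R_+$-invariant. Hence any nonempty $0$-dimensional component must consist of curves fixed by this action, and such curves are precisely the trivial strips over Reeb chords: a single negative puncture at the same chord $a$, with $m = 1$ and $b_1 = a$. The standard SFT count gives a single such curve (mod reparametrization) for each $a$, contributing $a$ itself; thus $\Phi^{L,I}(a) = a$ for every $a \in \cR(\Lambda, \lambda)$, and $\Phi^{L,I}$ extends as the identity of the underlying algebra (this is the main step, and the only nontrivial input: one must verify that the transversality/compactness theory of \cite{Ekh-JEMS} applies to the cylindrical case and that no ``broken'' or multiply-covered configurations spoil the count).

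Next I would translate this identity statement into the language of persistence modules by bookkeeping the action filtration. A Reeb chord $a$ of $(\Lambda, \lambda)$ of time-length $|a|$ has action $s_+ |a|$ when viewed as a generator of $\cA(\Lambda^+_0 \sqcup \Lambda^+_1, s_+\lambda)$ and action $s_- |a|$ when viewed as a generator of $\cA(\Lambda^-_0 \sqcup \Lambda^-_1, s_-\lambda)$; the same rescaling of actions holds for $01$-composable monomials. Consequently, the identity map on generators (and hence on the $01$-subspace) sends $\cA_r(\Lambda_0, \Lambda_1, s_+\lambda)$ into $\cA_{r s_-/s_+}(\Lambda_0, \Lambda_1, s_-\lambda)$, or, equivalently, maps action-$r$ classes of $U = V(\Lambda_0, \Lambda_1, s_+\lambda)$ into action-$r$ classes of $U^{[\times s_+/s_-]} = V(\Lambda_0, \Lambda_1, s_-\lambda)$ (using the identification $V(\Lambda_0, \Lambda_1, c\lambda) = V^{[\times 1/c]}(\Lambda_0, \Lambda_1, \lambda)$ from the end of Section~\ref{subsec-Chekanov-Eliashberg-algebra-pers-mod}).

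Finally, I would observe that this action-rescaled identity is exactly the multiplicative shift $\Sh_U[\times s_+/s_-]$ as defined in Example~\ref{exam-mult-shift}: at level $t$ it is the persistence structure map $\pi_{t,\, t s_+/s_-}$ of $U$, which under the above identification becomes the inclusion $V_t(\Lambda_0, \Lambda_1, s_+\lambda) \hookrightarrow V_{t s_+/s_-}(\Lambda_0, \Lambda_1, s_+\lambda) = V_t(\Lambda_0, \Lambda_1, s_-\lambda)$. The anticipated main obstacle is the identification of the trivial-strip moduli spaces in step one; once this ``only trivial strips contribute'' statement is secured, the rest is bookkeeping of action versus filtration.
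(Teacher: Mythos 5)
Your plan is correct and follows essentially the route the paper indicates: the paper gives no argument of its own here, deferring the proof to \cite{EP-big} and citing precisely the ``well-known'' fact from \cite{Ekh-JEMS} that the DGA cobordism map of a trivial cylindrical cobordism computed with a cylindrical almost complex structure is the identity, after which the statement reduces to the same action-rescaling bookkeeping (action $s_\pm|a|$ at the two ends, gap $C=0$, identification of the induced map with the structure maps $\pi_{t,ts_+/s_-}$ of $U$) that you carry out. One wording slip worth fixing: the $\R$-action on the moduli spaces is free on non-trivial curves and fixes the trivial strips, which is what forces rigid components to consist only of trivial strips; as written, ``free $\R$-action'' and ``curves fixed by this action'' contradict each other.
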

\bigskip

Having recalled the needed preparational statements, we present now the key result of this section.

\begin{prop}
\label{prop-Phi-L-two-part-Lagr-isotopy}
With $L$ being a non-degenerate {belted} two-part exact Lagrangian cobordism as above,
suppose that
$\{ L^\tau = L^\tau_0\sqcup L^\tau_1,\theta_\tau\}_{0\leq \tau\leq T}$ is an exact Lagrangian cobordism isotopy of $L$ with fixed boundary.
Assume that $\Phi^L_*$ is non-trivial.

Let $C(\tau)$ be the gap of $L^\tau$, $0\leq \tau\leq T$ ({since $L$ is belted}, it is independent of the cobordism 1-form with respect to which $L^\tau$ is exact).
Let
\[
U:= V (\Lambda_0,\Lambda_1,s_+\lambda),
\]
\[
W:= V (\Lambda_0,\Lambda_1,s_-\lambda).
\]
\[
C_{min} := \min_{\tau\in [0,T]} C(\tau),
\]
\[
C_0 := C(0) - C_{min},\ C_T := C(T) - C_{min}.
\]

Then
\begin{equation}
\label{eqn-Phi-L0-LT-shifts}
\Sh_{W^{[-C(0)]}} [+C_0] \circ \Phi^{L^0}_* =
\Sh_{W^{[-C(T)]}} [+C_T] \circ \Phi^{L^T}_*.
\end{equation}
Here the equality is between equivalence classes of morphisms $U \to W^{[-C_{min}]}$.
\end{prop}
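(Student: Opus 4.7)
\medskip
\noindent\textbf{Proof proposal.} The strategy is to interpolate between $L^0$ and $L^T$ by a pseudo-holomorphic chain homotopy and then track the action filtration along the family. Since the boundary is fixed, $\Lambda^\pm_\tau\equiv\Lambda^\pm$ for all $\tau$, so the Chekanov--Eliashberg algebras at the two ends are the same for every $\tau$, and only the cobordism map depends on $\tau$. Choose a smooth family $\{I^\tau\}_{\tau\in[0,T]}$ of adapted almost complex structures with $I^0,I^T\in\cI(L^0)=\cI(L^T)$ and with fixed ends $I^+$, $I^-$. Following the scheme of \cite[Lemma 3.14]{Ekholm-Honda-Kalman} (i.e.\ the parametrized version of \cite[Lemma B.15]{Ekh-JEMS}) applied to $\{L^\tau,I^\tau\}$, one constructs a degree $+1$ chain homotopy
\[
K\colon \cA(\Lambda^+, s_+\lambda)\to \cA(\Lambda^-, s_-\lambda),\qquad
\Phi^{L^0,I^0}-\Phi^{L^T,I^T}=\partial_{J^-}\!\circ K+K\circ\partial_{J^+},
\]
defined by counting rigid elements of the parametrized moduli space of $I^\tau$-holomorphic punctured disks with boundary on $\overline{L^\tau}$, positive puncture asymptotic to a chord of $(\Lambda^+,s_+\lambda)$ and negative punctures asymptotic to chords of $(\Lambda^-,s_-\lambda)$.

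The key step is the action estimate on $K$. For any rigid disk $u$ in the parametrized moduli space, living above some parameter $\tau(u)\in[0,T]$, Stokes' theorem applied to the primitive $f^{\tau(u)}_0\sqcup f^{\tau(u)}_1$ of $\theta^{\tau(u)}$ on $L^{\tau(u)}=L^{\tau(u)}_0\sqcup L^{\tau(u)}_1$ (together with the usual computation as in \cite[Lemma B.3]{Ekh-JEMS} or \cite[Lemma 5.16]{BEHWZ}) shows that for a disk contributing to $K(a)$ with negative asymptotics $b_1,\dots,b_m$ the action discrepancy $\mathrm{act}(a)-\sum_i\mathrm{act}(b_i)$ equals the non-negative symplectic area of $u$ plus the gap jump contribution, and is bounded below by $C(\tau(u))\geq C_{\min}$. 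Restricting to the $01$-composable summand $\cA(\Lambda^+_0,\Lambda^+_1,s_+\lambda)$ (which is invariant under $\partial_{J^\pm}$, $\Phi^{L^\tau,I^\tau}$ and $K$), this yields
\[
K\bigl(\cA_r(\Lambda^+_0,\Lambda^+_1,s_+\lambda)\bigr)\subset \cA_{r-C_{\min}}(\Lambda^-_0,\Lambda^-_1,s_-\lambda)
\]
for every $r\in\R$.

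Consequently, both $\Phi^{L^0,I^0}$ and $\Phi^{L^T,I^T}$, composed with the inclusion $\cA_{r-C(\tau)}\hookrightarrow\cA_{r-C_{\min}}$, induce the \emph{same} map on cohomology at every filtration level $r$. Translating this into the persistence module language of Section~\ref{subsec-Chekanov-Eliashberg-algebra-pers-mod}: writing the inclusion $\cA_{r-C(\tau)}\hookrightarrow\cA_{r-C_{\min}}$ as the additive persistence shift by $C(\tau)-C_{\min}=C_\tau$ (for $\tau=0,T$), one obtains the equality
\[
\Sh_{W^{[-C(0)]}}[+C_0]\circ \Phi^{L^0,I^0}_{*}
=\Sh_{W^{[-C(T)]}}[+C_T]\circ \Phi^{L^T,I^T}_{*}
\]
as genuine morphisms $U\to W^{[-C_{\min}]}$, and hence as equivalence classes by Proposition~\ref{prop-Phi-L-I-indep-of-I-up-to-equivalence}. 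The main obstacle is the parametrized transversality and compactness needed to make the chain homotopy $K$ well-defined together with the Stokes estimate on each rigid disk; once these are in place (they are the content of the cited EHK/Ekh results adapted to the $01$-subspace), the rest is bookkeeping with action shifts.
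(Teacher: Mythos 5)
Your overall architecture is the same as the paper's: invoke the chain homotopy $K$ of \cite[Lemma 3.14]{Ekholm-Honda-Kalman} (the parametrized version of \cite[Lemma B.15]{Ekh-JEMS}), show that $K$ respects the action filtration up to a loss of $C_{min}$ on the $01$-subspace, and then read off \eqref{eqn-Phi-L0-LT-shifts} from the filtered chain homotopy identity on homology at each level $r$. However, your central action estimate is wrong as stated. You assert that every rigid disk contributing to $K(a)$ has action discrepancy bounded below by $C(\tau(u))\geq C_{min}$, uniformly in the chord $a$. The Stokes/energy computation does not give a uniform bound: the gap enters with a sign depending on the type of the positive asymptotic chord. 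For $a\in\cR(\Lambda_0^+,\Lambda_1^+,s_+\lambda)$ one indeed gets $\sum_i s_- l(b_i) < s_+ l(a) - C(\tau)$, but for $a\in\cR(\Lambda_1^+,\Lambda_0^+,s_+\lambda)$ one only gets $\sum_i s_- l(b_i) < s_+ l(a) + C(\tau)$ (the action may go \emph{up} by the gap), and for pure chords only $\sum_i s_- l(b_i) < s_+ l(a)$, which is weaker than a drop by $C_{min}$ whenever $C_{min}>0$. If your uniform bound were true, the restriction to the $01$-subspace would play no role in the filtration estimate and $K$ would drop the action by $C_{min}$ on all of $\cA(\Lambda^+,s_+\lambda)$, which is false.

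The restriction to $\cA(\Lambda_0^+,\Lambda_1^+,s_+\lambda)$ is precisely where the real work happens, and that step is missing from your argument. The map $K$ is extended to monomials by the bimodule-type formula \eqref{eqn-K-on-a-monomial-1}, $K(a_1\cdots a_k)=\sum_j \Phi^{L^T,I_T}(a_1\cdots a_{j-1})\,K(a_j)\,\Phi^{L^0,I_0}(a_{j+1}\cdots a_k)$, and a $01$-composable monomial in general contains $10$-chords and pure chords as well as $01$-chords. One has to apply the three signed inequalities factor by factor -- with gap $C(T)$ for the factors processed by $\Phi^{L^T,I_T}$, gap $C(\tau)$ for the factor processed by $K$, and gap $C(0)$ for the factors processed by $\Phi^{L^0,I_0}$ -- and use the alternation structure of a $01$-composable word (one more $01$-chord than $10$-chords) to see that the total action drop is at least $C_{min}$. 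This sign bookkeeping at the monomial level is the substance of the claim that $K$ maps $\cA_r(\Lambda_0^+,\Lambda_1^+,s_+\lambda)$ into $\cA_{r-C_{min}}(\Lambda_0^-,\Lambda_1^-,s_-\lambda)$; your per-disk uniform bound bypasses it and, taken literally, is incorrect. (A minor further point: writing $\cI(L^0)=\cI(L^T)$ does not make sense, since $L^0\neq L^T$ as submanifolds; one simply chooses $I_0\in\cI(L^0)$, $I_T\in\cI(L^T)$ and a generic interpolating family.) The remaining steps of your proposal -- passing to homology at each filtration level and identifying the inclusions $\cA_{r-C(\tau)}\subset\cA_{r-C_{min}}$ with the shifts $\Sh[+C_0]$, $\Sh[+C_T]$, then using Proposition~\ref{prop-Phi-L-I-indep-of-I-up-to-equivalence} -- do match the paper.
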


\bigskip
\noindent
{\bf Proof of Proposition~\ref{prop-Phi-L-two-part-Lagr-isotopy}:}
Pick $I_0\in \cI (L^0)$, $I_T\in \cI (L^T)$.
It follows from \cite[Lemma 3.14]{Ekholm-Honda-Kalman}
(which is a version of \cite[Lemma B.15]{Ekh-JEMS})
that there exists an $\Z_2$-linear map
\[
K: \cA (\Lambda^+,s_+\lambda)\to \cA (\Lambda^-,s_-\lambda)
\]
such that
\begin{equation}
\label{eqn-K-chain-homotopy}
K \circ \partial_{I_0^+} + \partial_{I_T^-}\circ K = \Phi^{L^T,I_T} - \Phi^{L^0,I_0}.
\end{equation}
The map $K$ is defined on each monomial $a_1\cdot\ldots\cdot a_k$, $a_1,\ldots,a_k\in \cR (\Lambda^+,s_+\lambda)$, as
\begin{equation}
\label{eqn-K-on-a-monomial-1}
K (a_1\cdot\ldots\cdot a_k) := \sum_{j=1}^k  \Phi^{L^T,I_T} (a_1\cdot\ldots\cdot a_{j-1}) K (a_j) \Phi^{L^0,I_0} (a_{j+1}\cdot\ldots\cdot a_k),
\end{equation}
so that for each $a\in \cR (\Lambda^+,s_+\lambda)$
\begin{equation}
\label{eqn-K-on-a-monomial-2}
K (a):= \sum_{b_1,\ldots, b_m\in \cR (\Lambda^-,s_-\lambda)} n_{\{ L^T\},\{I_T\}} (a;b_1,\ldots,b_m) b_1\cdot\ldots\cdot b_m,
\end{equation}
where $\{ I_\tau\}_{0\leq \tau\leq T}$ is a {generic} family of adapted almost complex structures on $M$ connecting $I_0$ and $I_T$, and
$n_{\{ L^T\},\{I_T\}} (a;b_1,\ldots,b_m)$ is the mod-2 number of elements (counted using an abstract perturbation scheme, see \cite{Ekh-JEMS} for details) of a certain moduli space which is non-empty only if
\[
\cup_{0\leq \tau\leq T} \cM_{L^\tau,I_\tau} (a;b_1,\ldots,b_m)\neq\emptyset.
\]
Here $\cM_{L^\tau,I_\tau} (a;b_1,\ldots,b_m)$ is the moduli space of pseudo-holomorphic maps of the disk {with boundary punctures} into $\Sigma\times\R_+$ used to define $\Phi^{L^\tau,I_\tau}$.
This moduli space is non-empty only for some finite set of $\tau\in [0,T]$ -- the corresponding pseudo-holomorphic maps have index $-1$. See \cite[Lemma 3.14]{Ekholm-Honda-Kalman} and \cite[Lemma B.15]{Ekh-JEMS} for the details.

Using the energy inequality and the Stokes theorem one can show that if $\cM_{L^\tau,I_\tau} (a;b_1,\ldots,b_m)\neq\emptyset$, then
\[
\sum\limits_{i=1}^m {s_-}l(b_i) < {s_+}l(a) - C(\tau)\ \textrm{for}\ a\in\cR (\Lambda_0^+,\Lambda_1^+,s_+\lambda),
\]
\[
\sum\limits_{i=1}^m {s_-}l(b_i) < {s_+}l(a) + C(\tau)\ \textrm{for}\ a\in\cR (\Lambda_1^+,\Lambda_0^+,s_+\lambda),
\]
\[
\sum\limits_{i=1}^m {s_-}l(b_i) < {s_+}l(a)\ \textrm{for}\ a\in\cR (\Lambda_i^+,\Lambda_i^+,s_+\lambda),\ i=0,1,
\]
where $l(\cdot)$ is the action (time-length) of a Reeb orbit.
Using these inequalities together with \eqref{eqn-K-on-a-monomial-1}, \eqref{eqn-K-on-a-monomial-2} it is not hard to see that $K$ maps the subspace $\cA_r (\Lambda_0^+,\Lambda_1^+,s_+\lambda)$ into $\cA_{r-C_{min}} (\Lambda_0^-,\Lambda_1^-,s_-\lambda)$ for any $r\in (-\infty,+\infty)$. Therefore the chain homotopy formula \eqref{eqn-K-chain-homotopy} implies that the restrictions of $\Phi^{L^T,I_T}$ and $\Phi^{L^0,I_0}$ to $\cA_r (\Lambda_0^+,\Lambda_1^+,s_+\lambda)$ induce the same map on homology -- that is, a map
\[
V_r (\Lambda_0^+,\Lambda_1^+,s_+\lambda,I_0^+)\to V_{r-C_{min}} (\Lambda_0^-,\Lambda_1^-,s_-\lambda,I_T^-)
\]
for any $r\in (-\infty,+\infty)$.
This latter map equals, on one hand, the composition of
\[
\Phi^{L^0}_{r,*}: V_r (\Lambda_0^+,\Lambda_1^+,s_+\lambda,I_0^+)\to V_{r-C(0)} (\Lambda_0^-,\Lambda_1^-,s_-\lambda,I_T^-)
\]
and the shift
\[
V_{r-C(0)} (\Lambda_0^-,\Lambda_1^-,s_-\lambda,I_T^-)\to V_{r-C_{min}} (\Lambda_0^-,\Lambda_1^-,s_-\lambda,I_T^-)
\]
and, on the other hand, the composition of
\[
\Phi^{L^T}_{r,*}: V_r (\Lambda_0^+,\Lambda_1^+,s_+\lambda,I_0^+)\to V_{r-C(T)} (\Lambda_0^-,\Lambda_1^-,s_-\lambda,I_T^-)
\]
and the shift
\[
V_{r-C(T)} (\Lambda_0^-,\Lambda_1^-,s_-\lambda,I_T^-)\to V_{r-C_{min}} (\Lambda_0^-,\Lambda_1^-,s_-\lambda,I_T^-).
\]
The equality between the two compositions yields \eqref{eqn-Phi-L0-LT-shifts}. This finishes the proof of the proposition.
\Qed

\subsection{An invariant of two-part Legendrians via LCH persistence modules}
\label{subsec-invt-of-two-part-Lagrs-via-pers-modules}

With $\Sigma$ and $\lambda$ as above,
assume $\Lambda = \Lambda_0\sqcup \Lambda_1$ is a two-part Legendrian submanifold in $(\Sigma,\xi=\Ker\lambda)$.

\begin{defin}
\label{def-l-min}
{\rm
If the pair $(\Lambda,\lambda)$ is non-degenerate, then for each $s>1$ define $l_{min,s} (\Lambda_0,\Lambda_1,\lambda)$ as the smallest left end of the bars of multiplicative length greater than $s$ in the barcode of $V(\Lambda_0,\Lambda_1,\lambda)$. Denote by $l_{min,\infty} (\Lambda_0,\Lambda_1,\lambda)$
the smallest left end of the infinite bars in the barcode of $V(\Lambda_0,\Lambda_1,\lambda)$.
If there are no such bars, set $l_{min,s} (\Lambda_0,\Lambda_1,\lambda) := +\infty$ or, respectively, $l_{min,\infty} (\Lambda_0,\Lambda_1,\lambda) := +\infty$.

For a general, possibly degenerate, pair $(\Lambda,\lambda)$ and $s>1$ define
\[
l_{min,s} (\Lambda_0,\Lambda_1,\lambda):=\liminf l_{min,s} (\Lambda'_0,\Lambda'_1,\lambda),
\]
\[
l_{min,\infty} (\Lambda_0,\Lambda_1,\lambda):=\liminf l_{min,\infty} (\Lambda'_0,\Lambda'_1,\lambda),
\]
where each $\Lambda' = \Lambda'_0\sqcup \Lambda'_1$
is a two-part Legendrian submanifold obtained from $\Lambda$ by a $C^\infty$-small Legendrian isotopy and such that the pair $(\Lambda',\lambda)$ is non-degenerate, and the liminf is taken over all such $\Lambda'$ as the $C^\infty$-size of the Legendrian isotopy converges to zero. (One can show -- see \cite{EP-big} -- that for a non-degenerate pair $(\Lambda=\Lambda_0\sqcup\Lambda_1,\lambda)$ both definitions yield the same $l_{min,s} (\Lambda_0,\Lambda_1,\lambda)$ and $l_{min,\infty} (\Lambda_0,\Lambda_1,\lambda)$).
}
\end{defin}

For an example where $l_{min,s}$ can be computed explicitly see Proposition~\ref{prop-twochords} below.

Note that $l_{min,s} (\Lambda_0,\Lambda_1,\lambda)$ is a non-decreasing function of $s$ with values in $(0,+\infty]$. In particular,
\[
l_{min,s} (\Lambda_0,\Lambda_1,\lambda)\leq l_{min,\infty} (\Lambda_0,\Lambda_1,\lambda)\ \textrm{for all}\ s\in (1,+\infty),
\]
and therefore if the number $l_{min,\infty} (\Lambda_0,\Lambda_1,\lambda)$ is finite, then so is $l_{min,s} (\Lambda_0,\Lambda_1,\lambda)$.

Recall from Section~\ref{subsec-homologically-bonded-Leg-submfds} that the {\it stabilization of $\Sigma$} is the manifold $\hSigma := \Sigma\times T^* \SP^1 (r,\tau)$, $\tau\in \SP^1$, equipped with the contact form $\hlambda := \lambda + rd \tau = dz + \vartheta + rd \tau$. Since $(\Sigma = P\times \R (z),\lambda = dz + \vartheta)$ is nice, so is $(\hSigma,\hlambda)$ (if $(P, d\vartheta)$ has bounded geometry at infinity, then so does $(P\times T^* \SP^1, d\vartheta + dr\wedge dt)$). For a two-part Legendrian submanifold $\Lambda_0\sqcup\Lambda_1\subset\Sigma$ and $s>1$ write
\[
\hLambda_i := \Lambda_i\times \{ r=0\}\subset \hSigma,\ i=0,1,
\]
\[
\hl_{min,s} (\Lambda_0,\Lambda_1,\lambda):= l_{min,s} (\hLambda_0,\hLambda_1,\hlambda),
\]
\[
\hl_{min,\infty} (\Lambda_0,\Lambda_1,\lambda):= l_{min,\infty} (\hLambda_0,\hLambda_1,\hlambda).
\]

\begin{defin}
\label{def-homol-lambda-bonded}
{\rm
We say that the pair $(\Lambda_0\sqcup\Lambda_1,\lambda)$ is {\it weakly homologically bonded}, if $l_{min,s} (\Lambda_0,\Lambda_1,\lambda) < +\infty$ for all $s>1$ -- that is, there are bars of arbitrarily large multiplicative length in the barcode of $V (\Lambda_0,\Lambda_1,\lambda)$.

We say that the pair $(\Lambda_0\sqcup\Lambda_1,\lambda)$ is {\it homologically bonded}, if $l_{min,\infty} (\Lambda_0,\Lambda_1,\lambda) < +\infty$  -- that is, there are infinite bars in the barcode of $V (\Lambda_0,\Lambda_1,\lambda)$.

We say that the pair $(\Lambda_0\sqcup\Lambda_1,\lambda)$ is {\it stably homologically bonded}, if \break $\hl_{min,\infty} (\Lambda_0,\Lambda_1,\lambda) < +\infty$.
}
\end{defin}
\bigskip

Clearly, homological bondedness implies weak homological bondedness. We do not know whether homological bondedness implies stable homological bondedness -- see Remark~\ref{rem-hom-bondedness-vs-stable-hom-bondedness}.

\begin{rem}
\label{rem-l-min-s-robust}
{\rm
Assume that $l_{min,s} (\Lambda_0,\Lambda_1,\lambda) < +\infty$ for some $s\in (1,+\infty]$. Let
$\Lambda' = \Lambda'_0\sqcup \Lambda'_1$ be a two-part Legendrian submanifold in $(\Sigma,\xi)$ obtained from $\Lambda = \Lambda_0\sqcup \Lambda_1$ by a
contact isotopy  $\{\phi_t\}$ with the conformal factor satisfying $||(\phi_t^*)^{-1}\lambda/\lambda -1 || < \delta$ for all $t$. If $\delta = \delta(s)$ is small enough, then $l_{min,s} (\Lambda'_0,\Lambda'_1,\lambda)$ is also a finite number which tends to $l_{min,s} (\Lambda_0,\Lambda_1,\lambda)$ as $\delta\to 0$.
The proof will appear in \cite{EP-big}.
}
\end{rem}
\bigskip

\subsection{A lower bound on $pb^+$ via LCH persistence modules}
\label{subsec-lower-bound-on-pb-via-pers-modules}

With $\Sigma$ as in \eqref{eq-contactization-vsp} and $0<s_-<s_+$, denote for brevity
\[
M:= \Sigma\times [s_-,s_+].
\]
Recall that $\omega:=d(s\lambda)$.
Assume $\Lambda = \Lambda_0\sqcup \Lambda_1$ is a two-part Legendrian submanifold in $(\Sigma,\xi)$.
Let
$L=\Lambda\times [s_-,s_+]$ be the corresponding trivial two-part exact Lagrangian cobordism in $(M= \Sigma\times [s_-,s_+],s\lambda)$.

Define an admissible quadruple $X_0,X_1,Y_0,Y_1\subset M$ as follows:
\begin{equation}
\label{eqn-def-X0-X1}
X_0 := \Lambda_0 \times [s_-,s_+],\ X_1 := \Lambda_1 \times [s_-,s_+],
\end{equation}
\begin{equation}
\label{eqn-def-Y0-Y1}
Y_0 := \Sigma \times s_-,\ Y_1 := \Sigma \times s_+.
\end{equation}

\begin{thm}
\label{thm-lower-bound-on-pb-via-pers-modules}
Assume that $l_{min,s_+/s_-} (\Lambda_0,\Lambda_1,\lambda) < +\infty$.

Then
\[
pb^+_M (X_0,X_1,Y_0,Y_1)\geq \frac{1}{(s_+ - s_-) l_{min,s_+/s_-} (\Lambda_0,\Lambda_1,\lambda)} > 0.
\]
\end{thm}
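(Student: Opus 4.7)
The strategy is to deform the trivial two-part exact Lagrangian cobordism $L^0 := X_0 \sqcup X_1$ by the Hamiltonian flow of $-G$, producing an exact Lagrangian cobordism isotopy whose gap grows linearly in $\tau$; the time at which the isotopy breaks down is then controlled both by disjointness (in terms of $\sup\{F,G\}$) and by the persistence-module obstruction of Proposition~\ref{prop-Phi-L-two-part-Lagr-isotopy}. By the Hausdorff semicontinuity of $pb^+_M$ combined with the $\liminf$-definition of $l_{min,s}$ for degenerate pairs, it suffices to treat the case in which $(\Lambda_0 \sqcup \Lambda_1, \lambda)$ is non-degenerate. In that case Proposition~\ref{prop-Phi-L-trivial-cobordisms} identifies $\Phi^{L^0}_* = \Sh_V[\times s_+/s_-]$ for $V := V(\Lambda_0, \Lambda_1, s_+\lambda)$, and the hypothesis $\alpha_0 := l_{min,s_+/s_-}(\Lambda_0, \Lambda_1, \lambda) < +\infty$ produces a distinguished bar $(\alpha_0 s_+, \beta_0 s_+]$ of $V$ with $\beta_0/\alpha_0 > s_+/s_-$ (allowing $\beta_0 = +\infty$), on which this multiplicative shift is nonzero at every level $t \in (\alpha_0 s_+, \beta_0 s_-]$.

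Given $(F,G) \in \cF''_M(X_0, X_1, Y_0, Y_1)$ with $p := \sup_M\{F,G\} < +\infty$, set $L^\tau := X_0 \sqcup \phi_\tau^{-G}(X_1)$, where $\phi_\tau^{-G}$ is the Hamiltonian flow of $-G$ (the definedness of this flow on $X_1$ for $\tau < 1/p$ follows from compactness of $X_1$ and, if needed, can be arranged globally by a standard cutoff of $G$ outside a large compact set containing the relevant trajectories). Since $G$ is locally constant near $Y_0 \cup Y_1$, the field $\sgrad G$ vanishes there and $\phi_\tau^{-G}$ fixes $\partial M$ pointwise, so the Legendrian boundary of the isotopy is the constant pair $\Lambda = \Lambda_0 \sqcup \Lambda_1$, and each $L^\tau$ is non-degenerate and, by isotopy-invariance of the belted property of the trivial $L^0$, belted. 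For $\tau < 1/p$, integrating $\tfrac{d}{d\tau} F(\phi_\tau^{-G}(x)) = -\{F,G\}(\phi_\tau^{-G}(x)) \geq -p$ along an orbit starting in $X_1$ yields $F \geq 1 - p\tau > 0$ on $\phi_\tau^{-G}(X_1)$, while $F \equiv 0$ on $\Op(X_0)$, so the two parts of $L^\tau$ remain disjoint and $L^\tau$ is a valid exact two-part Lagrangian cobordism. Cartan's formula gives $(\phi_\tau^{-G})^*(s\lambda) - s\lambda = dK''_\tau$ with $K''_\tau = \int_0^\tau (\phi_u^{-G})^*(G - i_{\sgrad G}(s\lambda))\,du$; because $G$ equals $0$ (resp.\ $1$) and $\sgrad G$ vanishes on $\Op(Y_0)$ (resp.\ $\Op(Y_1)$), the integrand reduces to $0$ on $X_1 \cap \Op(Y_0)$ and to $1$ on $X_1 \cap \Op(Y_1)$, which forces the primitive of $s\lambda$ on $\phi_\tau^{-G}(X_1)$ to be $0$ on the negative collar and $\tau$ on the positive one, hence gap $C(\tau) = \tau$.

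Assume for contradiction that $p < 1/((s_+ - s_-)\alpha_0)$, pick $T \in (\alpha_0(s_+ - s_-), 1/p)$, and apply Proposition~\ref{prop-Phi-L-two-part-Lagr-isotopy} to $\{L^\tau\}_{\tau \in [0,T]}$ with $C(0) = 0$, $C(T) = T$, $C_{min} = 0$; using $\Phi^{L^0}_* = \Sh_V[\times s_+/s_-]$, the conclusion reads
\[
\Sh_V[\times s_+/s_-] \;=\; \Sh_{W^{[-T]}}[+T] \circ \Phi^{L^T}_*
\]
as morphisms $V \to W := V(\Lambda_0, \Lambda_1, s_-\lambda)$. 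Under the identification $W = V^{[\times s_+/s_-]}$ the distinguished bar of $V$ corresponds to the bar $(\alpha_0 s_-, \beta_0 s_-]$ of $W$. Pick $t \in (\alpha_0 s_+, \min(\alpha_0 s_- + T, \beta_0 s_-)]$; this interval is nonempty because $T > \alpha_0(s_+ - s_-)$ and $\beta_0 s_- > \alpha_0 s_+$. At such a level the LHS has a nonzero component in this distinguished bar, whereas the RHS at level $t$ factors through the structure map $\pi^W_{t-T,t}$ of $W$, whose image is spanned by those bars of $W$ that contain both $t - T$ and $t$; since $t - T \leq \alpha_0 s_-$, the distinguished bar is not among them, so its contribution to the RHS is zero --- a contradiction. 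Hence $p \geq 1/((s_+ - s_-) l_{min,s_+/s_-})$, and taking the infimum over $(F,G) \in \cF''_M$ (which computes $pb^+_M$) proves the theorem. The main obstacle is this final barcode step, which requires carefully tracking the interaction between the multiplicative shift $\Sh_V[\times s_+/s_-]$, the additive shift by $T$ arising from the growing gap, and the direct-sum decomposition of $W$ into interval modules, uniformly including the possibility $\beta_0 = +\infty$.
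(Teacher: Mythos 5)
Your overall architecture coincides with the paper's: reduce to the non-degenerate case via the semicontinuity of $pb^+_M$ together with the $\liminf$-definition of $l_{min,s}$; start from the trivial cobordism, where $\Phi^{L^0}_*$ is the multiplicative shift by Proposition~\ref{prop-Phi-L-trivial-cobordisms}; produce an exact Lagrangian cobordism isotopy with fixed boundary and gap $C(\tau)=\tau$ for $\tau<1/\sup_M\{F,G\}$; and then invoke Proposition~\ref{prop-Phi-L-two-part-Lagr-isotopy} and chase the distinguished bar. Your final barcode step is essentially identical to the paper's, just phrased in the target module $W$ rather than in the source. Where you genuinely diverge is the construction of the isotopy: the paper deforms the symplectic form by $\tau\, dF\wedge dG$ and moves the whole cobordism by the Moser flow of $v_\tau=\frac{F}{1-\tau\{F,G\}}\sgrad G$, with cobordism forms $\theta^\tau=\sigma_\tau^*(s\lambda+\tau F\,dG)$, so that disjointness of the two parts is automatic; you keep $X_0$ fixed, push $X_1$ by the honest Hamiltonian flow of $-G$, obtain exactness with respect to $s\lambda$ itself via the Cartan-formula primitive, and recover disjointness from $F\geq 1-p\tau$ along the moved part. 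That is a legitimate and somewhat more elementary route for this step, and your gap computation $C(\tau)=\tau$ and the disjointness estimate are correct.

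The one genuine gap is the existence of the flow $\phi^{-G}_\tau$ on $X_1$ up to time $T$. Membership in $\cF''_M$ only guarantees that $F\sgrad G$ is complete; $\sgrad G$ itself may be incomplete, and compactness of $X_1$ only yields a uniform \emph{short} existence time, not existence up to $T<1/p$. Your proposed remedy --- cutting $G$ off outside ``a large compact set containing the relevant trajectories'' --- is circular: if a trajectory escapes to infinity before time $T$, no such compact set exists, and any cutoff either changes $\sup_M\{F,G\}$ or alters the flow (hence the gap computation) exactly where it matters. The gap is repairable from your own estimate: along a trajectory $\gamma$ of $-\sgrad G$ emanating from $X_1$ one has $F\circ\gamma\geq 1-p\tau>0$ as long as it exists with $\tau\leq T<1/p$, so the reparameterization $s(\tau)=\int_0^\tau dr/F(\gamma(r))$ (finite, since $0\leq F\leq 1$ and $F\circ\gamma$ is bounded away from $0$) turns $\gamma$ into an integral curve of the complete field $-F\sgrad G$ over a bounded parameter interval; hence $\gamma$ stays in a compact set and cannot cease to exist before time $T$. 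Note that this is precisely the point the paper's Moser-type construction is designed to bypass: its vector field $v_\tau$ is a bounded non-negative multiple of $F\sgrad G$, so well-definedness of the flow is immediate from condition (1).
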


\bigskip
\noindent
{\bf Proof of Theorem~\ref{thm-lower-bound-on-pb-via-pers-modules}:}

Assume first that the pair $(\Lambda,\lambda)$ is non-degenerate.

Set
\[
V:= V (\Lambda_0,\Lambda_1,\lambda),
\]
\[
U:= V^{[\times 1/s_+]} = V (\Lambda_0,\Lambda_1,s_+\lambda),
\]
\[
W:= V^{[\times 1/s_-]} = U^{[\times s_+/s_-]} = V (\Lambda_0,\Lambda_1,s_-\lambda).
\]

Let $(F,G)\in{\cF''_M} (X_0,X_1,Y_0,Y_1)$.
We need to show that
\begin{equation}
\label{eqn-Poisson-bracket-F-G-lower-estimate}
{\sup_M} \{F,G\}\geq \frac{1}{(s_+ - s_-)l_{min,s_+/s_-} (\Lambda_0,\Lambda_1,\lambda) }.
\end{equation}

Following \cite{BEP}, consider the deformation
\[
\omega_\tau := \omega + \tau dF\wedge dG, \ \tau\in\R,
\]
of $\omega$.
A direct calculation shows that
\[
dF\wedge dG\wedge \omega^{n-1} = -\frac{1}{n}\{ F,G\} \omega^n.
\]
and therefore
\[
\omega_\tau^n = (1-\tau \{ F,G\})\omega^n.
\]
Thus $\omega_\tau$ is symplectic for any $\tau\in \textsf{I}:=[0, 1/{\sup_M} \{ F,G\})$. Note that $L$ is Lagrangian with respect to $\omega_\tau$ for all $\tau\in \textsf{I}$.

Following the idea underlying Moser's method \cite{Moser}, define a (time-de\-pen\-dent) vector field $v_\tau$, $\tau\in \textsf{I}$, on $M$ by
\[
FdG = -i_{v_\tau} \omega_\tau.
\]

One can check that
\[
v_\tau = \frac{F}{1-\tau\{ F,G\}} \sgrad G.
\]
Since $(F,G)\in\cF''_M (X_0,X_1,Y_0,Y_1)$, the vector field $F\sgrad G$ is complete. Also the function $1-\tau\{ F,G\}$ is bounded from below by the constant $1 - \tau \sup_M \{ F,G\}$ which is positive since  $\tau\in \textsf{I}$. Thus, for each $T\in \textsf{I}$ the time-dependent vector field $v_\tau$, $0\leq \tau\leq T$, equals to the product of $F\sgrad G$ with a non-negative function bounded from above by a constant depending on $F,G$ and $T$. Therefore the time-$[0,T]$ flow $\sigma_{\tau_0}: M\to M$ of $v_\tau$, $0\leq \tau\leq T$, is well-defined. It is identity near the boundary of $M$, because $v_\tau$ vanishes there (since $G$ is constant near the positive and negative boundaries of $M$).

{For each $\tau\in \textsf{I}$ define
\[
L^\tau := (\sigma_\tau)^{-1} (L).
\]
}
A direct check shows that $\sigma_\tau^* \omega_\tau = \omega$
for all $\tau\in \textsf{I}$. Therefore $L^\tau$ is Lagrangian with respect to $\omega$ for all $\tau\in \textsf{I}$.

For each $\tau\in \textsf{I}$ set
\[
\theta^\tau := \sigma_\tau^* (s\lambda+\tau FdG)
\]
Since $d(s\lambda+\tau FdG) = \omega_\tau$ and $\sigma_\tau^* \omega_\tau = \omega$ and since $G$ is constant and $\sigma_\tau$ is identity near the positive and negative boundaries of $M$, one readily gets that each $\theta^\tau$ is a cobordism 1-form.

Since the trivial cobordism $L$ is $(s\lambda)$-exact and $F$ is $0$ near $L_0$ and $1$ near $L_1$, we get that $(s\lambda+\tau FdG)|_{L_i} = df_i$,  $i=0,1$, for a smooth function $f_i: L_i\to\R$ equal to $0$ near the negative boundary of $L_i$ and to some constant near the positive boundary of $L_i$.
Consequently, for each $\tau\in \textsf{I}$ the two-part Lagrangian cobordism $L^\tau$ is $\theta^\tau$-exact.

Therefore for each $T\in \textsf{I}$ the family $\{ L^\tau,\theta^\tau\}_{0\leq\tau\leq T}$ is an exact Lagrangian cobordism isotopy.

Since $L^0=L$ is a trivial exact Lagrangian cobordism, Proposition~\ref{prop-Phi-L-trivial-cobordisms} yields
\[
\Phi^{L^0}_* = \Sh_U [\times s_+/s_-] : U\to W.
\]
{Note that since $L^\tau$ is belted, the gap $C(\tau)$ of $L^\tau$ is independent of the cobordism 1-form with respect to which $L^\tau$ is exact.}
Using the assumptions on $F$ and $G$ one easily sees that
\[
C(\tau)=\tau.
\]
Since $C_0=C_{min} = C(0)=0$ and $C_T=C(T)=T$, by Proposition~\ref{prop-Phi-L-two-part-Lagr-isotopy},
\[
\Sh_U [\times s_+/s_-] =
\Sh_{W^{[-T]}} [+T] \circ \Phi^{L^T}_*.
\]

Therefore, for each $t>0$ the morphism $U_t\to U_{ts_+/s_-}$ in the persistence module $U$ is a composition of a linear map $U_t\to U_{(t-T)s_+/s_-}$, given by $\Phi^{L^T}_*$, and a morphism $U_{(t-T)s_+/s_-}\to U_{ts_+/s_-}$ in the persistence module $U$.

Assume that for some $t>0$ both $t$ and $ts_+/s_-$ lie in a bar {$\J$} in the barcode of $U$. Denote by $\pi_{s,ts_+/s_-,\J}: U_t\cap Q(\J) = Q_t (\J)\to U_{ts_+/s_-}\cap Q(\J) = Q_{ts_+/s_-} (\J)$ the restriction of the morphism $U_t\to U_{ts_+/s_-}$ restricted to the interval persistence submodule $Q (\J)$ of $U_t$ (see Example \ref{exam-interval-module} above). The persistence morphism $\pi_{s,ts_+/s_-,\J}$ is an isomorphism. By the conclusion above, this isomorphism is the composition of a map $U_t\cap Q(\J) = Q_t (\J)\to U_{(t-T)s_+/s_-}$ and a morphism $U_{(t-T)s_+/s_-}\cap Q(\J)\to U_{ts_+/s_-}\cap Q(\J)$ in the persistence module $U_t\cap Q(\J)$, which also has to be an isomorphism. This is possible only if $(t-T)s_+/s_-\in\J$, which yields an upper bound on $T$ in the following way. Namely, take $\J$ to be a bar of multiplicative length greater than $s_+/s_-$ in the barcode of $U$ whose left end is $s_+ l_{min,s_+/s_-} (\Lambda_0,\Lambda_1,\lambda)$ (this is the smallest left end of the bars of multiplicative length greater than $s_+/s_-$ in the barcode of $U$) and let $t$ be arbitrarily close from above to the left end of $\J$ -- that is, to $s_+ l_{min,s_+/s_-} (\Lambda_0,\Lambda_1,\lambda)$. Then $t$ and $ts_+/s_-$ lie in the same bar $\J$
and therefore, as we have shown above, $(t-T)s_+/s_-$ lies in $\J$ and, in particular, is greater or equal than the left end of $\J$:
\[
(t-T)s_+/s_- \geq s_+ l_{min,s_+/s_-} (\Lambda_0,\Lambda_1,\lambda).
\]
Hence,
\[
T \leq (s_+ - s_-) l_{min,s_+/s_-} (\Lambda_0,\Lambda_1,\lambda).
\]

Since this is true for any $T\in \textsf{I}=[0, 1/\max_M \{ F,G\})$, we obtain \eqref{eqn-Poisson-bracket-F-G-lower-estimate}
as required. This finishes the proof of the theorem {for the case where the pair $(\Lambda,\lambda)$ is non-degenerate.}

{The general case now follows from the non-degenerate one by the semi-continuity of the Poisson bracket invariant -- see \eqref{eqn-Hausdorff-convergence}.}
\Qed

As in Section~\ref{subsec-invt-of-two-part-Lagrs-via-pers-modules}, let
\[
\hSigma := \Sigma\times \R (r)\times \SP^1 (\tau) = \Sigma\times T^* \SP^1 (r,\tau)
\]
be the stabilization of $\Sigma$
equipped with the contact form
\[
\hlambda := \lambda + rd \tau = dz + \vartheta + rd \tau.
\]
Set
\[
\tM := M\times T^* \SP^1 (r,\tau),\;\; M = \Sigma \times [s_-,s_+],
\]
and equip $\tM$ with the symplectic form $\omega + dr\wedge d\tau = d(s\lambda) + dr\wedge d\tau$.

For each $E>0$ define $\tM_E\subset \tM$ as
\[
\tM_E := M\times (-E,E)\times \SP^1\subset \tM.
\]

With the admissible quadruple $X_0,X_1,Y_0,Y_1\subset M$ as above, set
\[
\tX_0:= X_0\times \{ r=0\},\  \tX_1:= X_1\times  \{ r=0\}\subset\tM_E\subset\tM,
\]
\[
\tY_0 := Y_0\times T^* \SP^1,\ \tY_1:= Y_1\times T^* \SP^1\subset \tM,
\]
\[
\tY_0 (E) := Y_0\times (-E,E)\times \SP^1,\ \tY_1 (E) := Y_1\times (-E,E)\times \SP^1\subset \tM_E.
\]

\begin{thm}
\label{thm-lower-bound-on-hpb-via-pers-modules}
Assume that $\hl_{min,s_+/s_-} (\Lambda_0,\Lambda_1,\lambda) < +\infty$.

Then for each $E>0$
\[
pb^+_{\tM_E,comp} \left(\tX_0,\tX_1,\tY_0 (E),\tY_1 (E)\right) \geq pb^+_{\tM,comp} (\tX_0,\tX_1,\tY_0,\tY_1)\geq
\]
\[
\geq\frac{1}{(s_+ - s_-) \hl_{min,s_+/s_-} (\Lambda_0,\Lambda_1,\lambda)} > 0.
\]
\end{thm}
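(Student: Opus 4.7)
The plan is to split the chain of inequalities into (a) a monotonicity step for the first inequality, and (b) a symplectomorphic identification that reduces the second inequality to a direct application of Theorem~\ref{thm-lower-bound-on-pb-via-pers-modules} in the stabilized contact manifold $(\hSigma,\hlambda)$.

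The first inequality is immediate from \eqref{eqn-pb-monotonicity-2} of Proposition~\ref{prop-monotonicty-of-pb}, applied with ambient manifold $\tM$ and open set $U := M \times (-E,E) \times \SP^1$: one has $\tM \cap U = \tM_E$, $\tY_i \cap U = \tY_i(E)$, and $\tX_0, \tX_1$ already lie in the slice $\{r=0\} \subset \tM_E$, so all hypotheses are in place.

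For the second inequality, introduce the ``natural'' trivial cobordism $\hM := \hSigma \times [s_-,s_+]$ equipped with the form $d(s\hlambda)$, together with the analogous quadruple $\hX_i := \hLambda_i \times [s_-,s_+]$ for $i=0,1$ and $\hY_0 := \hSigma\times\{s_-\}$, $\hY_1 := \hSigma\times\{s_+\}$. Up to reordering of factors both $\tM$ and $\hM$ have underlying manifold $\Sigma \times [s_-,s_+] \times \R \times \SP^1$, but their symplectic forms differ:
\[
\omega_{\tM} = d(s\lambda) + dr \wedge d\tau, \qquad \omega_{\hM} = d(s\hlambda) = d(s\lambda) + d(sr) \wedge d\tau.
\]
Since $s \geq s_- > 0$, the map $\phi(\sigma, s, r, \tau) := (\sigma, s, r/s, \tau)$ is a diffeomorphism, and a one-line calculation gives $\phi^*(s\hlambda) = s\lambda + r\,d\tau$, hence $\phi^* \omega_{\hM} = \omega_{\tM}$. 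Because $\phi$ preserves the slice $\{r=0\}$ and each end $\{s=s_\pm\}$, it sends $\tX_i$ to $\hX_i$ and $\tY_i$ to $\hY_i$, so
\[
pb^+_{\tM,comp}(\tX_0,\tX_1,\tY_0,\tY_1) = pb^+_{\hM,comp}(\hX_0,\hX_1,\hY_0,\hY_1).
\]

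It remains to bound the right-hand side from below. Since $(\hSigma,\hlambda)$ is again a nice contact manifold and $l_{min,s_+/s_-}(\hLambda_0,\hLambda_1,\hlambda) = \hl_{min,s_+/s_-}(\Lambda_0,\Lambda_1,\lambda)$ is finite by hypothesis, Theorem~\ref{thm-lower-bound-on-pb-via-pers-modules} applies to the trivial two-part exact cobordism in $\hM$. Its proof produces the asserted lower bound on $\sup_{\hM}\{F,G\}$ for every $(F,G) \in \cF''_{\hM}$, and a fortiori for every $(F,G) \in \cG''_{\hM} \subset \cF''_{\hM}$; hence the same lower bound holds for $pb^+_{\hM,comp}(\hX_0,\hX_1,\hY_0,\hY_1)$, and combined with the identification above this completes the proof. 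The only real step in the argument is spotting the rescaling $\phi$: once one notices that the ``flat'' form on $\tM$ is pulled back from the canonical symplectization form on the trivial cobordism over $\hSigma$, the monotonicity and the previously established Theorem~\ref{thm-lower-bound-on-pb-via-pers-modules} do the rest of the work.
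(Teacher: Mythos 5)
Your argument is correct and follows essentially the same route as the paper: the first inequality via the monotonicity property \eqref{eqn-pb-monotonicity-2}, the identification of $(\tM,\omega+dr\wedge d\tau)$ with $(\hM,d(s\hlambda))$ by the fiberwise rescaling $r\mapsto r/s$ (the inverse of the paper's map $u=sr$) matching the two quadruples, and then Theorem~\ref{thm-lower-bound-on-pb-via-pers-modules} applied in the stabilized manifold together with $pb^+_{comp}\geq pb^+$ (your inclusion $\cG''\subset\cF''$). The only cosmetic point is that the open set in Proposition~\ref{prop-monotonicty-of-pb} should be taken open in a boundaryless ambient manifold, e.g.\ $U=\Sigma\times\R_+\times(-E,E)\times\SP^1$, rather than $M\times(-E,E)\times\SP^1$ itself.
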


\bigskip
\noindent
{\bf Proof of Theorem~\ref{thm-lower-bound-on-hpb-via-pers-modules}:}
Consider the exact symplectic cobordism
\[
\hM := \left(\hSigma \times [s_-,s_+], d(s\hlambda)\right).
\]
Define $\hX_0, \hX_1, \hY_0, \hY_1\subset \hM$ by
\[
\hX_0 := \Lambda_0\times \SP^1\times [s_-,s_+] = \hLambda_0\times [s_-,s_+],
\]
\[
\hX_1 := \Lambda_1\times \SP^1\times [s_-,s_+] = \hLambda_1\times [s_-,s_+],
\]
\[
\hY_0 := \Sigma\times T^* \SP^1\times s_-,
\]
\[
\hY_1 := \Sigma\times T^* \SP^1\times s_+.
\]
Since $\hl_{min,s_+/s_-} (\Lambda_0,\Lambda_1,\lambda) < +\infty$, Theorem~\ref{thm-lower-bound-on-pb-via-pers-modules}, applied to $\hM$, $\hX_0$, $\hX_1$, $\hY_0$, $\hY_1$, together with the inequality $pb^+_{\hM,comp} (\hX_0,\hX_1,\hY_0,\hY_1)\geq pb^+_\hM (\hX_0,\hX_1,\hY_0,\hY_1)$, yields
\begin{equation}
\label{eqn-pb-hM}
pb^+_{\hM,comp} (\hX_0,\hX_1,\hY_0,\hY_1)\geq
\frac{1}{(s_+ - s_-) \hl_{min,s_+/s_-} (\Lambda_0,\Lambda_1,\lambda)} > 0.
\end{equation}

Consider a map
\[
\hM = \hSigma \times [s_-,s_+] = \Sigma\times T^* \SP^1 (r,\tau)\times [s_-,s_+] \to
\]
\[
\to \tM = \Sigma \times [s_-,s_+] \times T^* \SP^1 (u,\tau)
\]
that sends each $(x,r,\tau,s)\in \Sigma\times T^* \SP^1 (r,\tau)\times [s_-,s_+]$ to $(x,s,u=sr,\tau)\in \Sigma \times [s_-,s_+] \times T^* \SP^1$.
(We use two copies of $T^* \SP^1$ -- one with the coordinates $r,\tau$ and one with the coordinates $u,\tau$).
It is a symplectomorphism -- it identifies the symplectic form
\[
d(s\hlambda) = d \left( s (\lambda + rd\tau)\right) = d(s\lambda) + d(sr d\tau) = \omega + d(srd\tau)
\]
on $\hM$ with the symplectic form
\[
\omega + du\wedge d\tau
\]
on $\tM$.
This symplectomorphism maps the sets $\hX_0, \hX_1, \hY_0, \hY_1\subset\hM$, respectively, to the sets $\tX_0, \tX_1, \tY_0, \tY_1\subset \tM$. Therefore \eqref{eqn-pb-hM} yields
\[
pb^+_{\hM,comp} (\hX_0,\hX_1,\hY_0,\hY_1) = pb^+_{\tM,comp} (\tX_0,\tX_1,\tY_0,\tY_1) \geq
\]
\[
\geq\frac{1}{(s_+ - s_-) \hl_{min,s_+/s_-} (\Lambda_0,\Lambda_1,\lambda)} > 0.
\]
Combining this with the inequality
\[
pb^+_{\tM_E,comp} \left(\tX_0,\tX_1,\tY_0 (E),\tY_1 (E)\right) \geq pb^+_{\tM,comp} (\tX_0,\tX_1,\tY_0,\tY_1)
\]
(that follows from \eqref{eqn-pb-monotonicity-2})
finishes the proof of the theorem.
\Qed

Assume that $\left( M,d(s\lambda)\right)$ is a  codimension-zero symplectic submanifold with boundary of a symplectic manifold $(N,\Omega)$ {which is closed as a subset of $N$}. Consequently,
$X_0,X_1,Y_0,Y_1$ can be viewed as subsets of $N$. Let $H: N\times \SP^1 (t)\to\R$ be a complete Hamiltonian.

\begin{cor}
\label{cor-chord-of-autonomous-H-in-N}
Assume that the Hamiltonian $H$ is time-independent and $l_{min,s_+/s_-} (\Lambda_0,\Lambda_1,\lambda) < +\infty$.

Then the following claims hold:

\bigskip
\noindent
A.  If $\Delta (H; Y_0,Y_1) >0$, then there exists a chord of $H$ from $X_0$ to $X_1$ of time-length $\displaystyle \leq \frac{(s_+ - s_-) l_{min,s_+/s_-} (\Lambda_0,\Lambda_1,\lambda)}{\Delta (H; Y_0,Y_1)}$.

\bigskip
\noindent
B. If $\supp H\cap M$ is compact and $H|_{X_0}\geq 0$ and $\Delta (H; X_1,X_0) >0$, then there exists a chord of $H$ from $Y_0$ to $Y_1$ of time-length bounded from above by $\displaystyle \frac{(s_+ - s_-) l_{min,s_+/s_-} (\Lambda_0,\Lambda_1,\lambda)}{\Delta (H; X_1,X_0)}$.
\end{cor}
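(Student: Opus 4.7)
The plan is to combine the lower bound on $pb^+_{M,comp}$ supplied by Theorem~\ref{thm-lower-bound-on-pb-via-pers-modules} with the chord-existence statements of Proposition~\ref{prop-pb4+-modified-chords}. First I would note that since $\Lambda_0,\Lambda_1$ are compact, the cylinders $X_0=\Lambda_0\times [s_-,s_+]$ and $X_1=\Lambda_1\times [s_-,s_+]$ are compact, so the set $\cG''_M(X_0,X_1,Y_0,Y_1)$ is nonempty. Inspection of the proof of Theorem~\ref{thm-lower-bound-on-pb-via-pers-modules} shows that its argument in fact establishes, for every $(F,G)\in\cF''_M$, the bound $\sup_M \{F,G\}\geq 1/\bigl((s_+-s_-)\,l_{min,s_+/s_-}(\Lambda_0,\Lambda_1,\lambda)\bigr)$. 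Since $\cG''_M\subset \cF''_M$, the same lower bound applies to the compactly supported version of the invariant:
\[
p_{comp}:=pb^+_{M,comp}(X_0,X_1,Y_0,Y_1)\;\geq\;\frac{1}{(s_+-s_-)\,l_{min,s_+/s_-}(\Lambda_0,\Lambda_1,\lambda)}\;>\;0.
\]

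For Part A, I invoke Part I of Proposition~\ref{prop-pb4+-modified-chords}: since $H$ is time-independent and complete on $N$, $X_0,X_1$ are compact and $\Delta(H;Y_0,Y_1)>0$, that proposition yields a chord of $H$ from $X_0$ to $X_1$ of time-length at most $1/\bigl(p_{comp}\,\Delta(H;Y_0,Y_1)\bigr)$, which is at most $(s_+-s_-)\,l_{min,s_+/s_-}(\Lambda_0,\Lambda_1,\lambda)/\Delta(H;Y_0,Y_1)$, as required.

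For Part B, I invoke Part II of Proposition~\ref{prop-pb4+-modified-chords} with $\Delta:=\Delta(H;X_1,X_0)$. Its hypotheses demand both $H|_{X_0}\geq 0$ (given) and $H|_{X_1}\leq -\Delta$, which need not hold as stated. Setting $c:=\inf_{X_0}H\geq 0$, the shifted Hamiltonian $\tilde H:=H-c$ has the same Hamiltonian flow as $H$ and satisfies $\tilde H|_{X_0}\geq 0$ and $\tilde H|_{X_1}\leq -\Delta$. When $c>0$, the compact-support condition $\supp\tilde H\cap M$ may fail; this is repaired by multiplying $\tilde H$ by a smooth cutoff $\chi$ equal to $1$ on a large compact subset $K\subset M$ containing $\supp H\cap M\cup X_0\cup X_1$ and supported in a slightly larger compact set. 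The resulting $\hat H:=\chi \tilde H$ has compact support in $M$, still satisfies $\hat H|_{X_0}\geq 0$ and $\hat H|_{X_1}\leq -\Delta$, and coincides with $\tilde H$ on $K$, so that $\sgrad\hat H=\sgrad H$ on $K$. Applying Part II of Proposition~\ref{prop-pb4+-modified-chords} to $\hat H$ produces a chord of $\hat H$ from $Y_0$ to $Y_1$ of time-length at most $1/(p_{comp}\Delta)\leq (s_+-s_-)\,l_{min,s_+/s_-}(\Lambda_0,\Lambda_1,\lambda)/\Delta$.

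The main obstacle is the bookkeeping in Part B: one must verify that the chord produced for $\hat H$ in fact lies inside $K$, so as to be a genuine chord of $H$ rather than of the cutoff version. The key input is that $\hat H$ is time-independent and therefore constant along its own flow; hence the chord lies on a single level set $\{\hat H=e\}$, and by enlarging $K$ so as to contain this level set (which is controlled by the values of $H$ on a neighborhood of $Y_0\cup Y_1$) one ensures that the chord stays in the region where $\hat H$ and $H$ share the same Hamiltonian dynamics.
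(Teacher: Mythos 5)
Part A of your argument is correct and is essentially the paper's route: the paper's proof is a one-line citation of the monotonicity inequality, Proposition~\ref{prop-pb4+-modified-chords} and Theorem~\ref{thm-lower-bound-on-pb-via-pers-modules}, and your chain $pb^+_{M,comp}(X_0,X_1,Y_0,Y_1)\geq 1/\bigl((s_+-s_-)l_{min,s_+/s_-}\bigr)>0$ followed by Part I of the proposition (using compactness of $X_0,X_1$ to get the bound without an $\epsilon$) is exactly that. Your detour through ``inspection of the proof'' of Theorem~\ref{thm-lower-bound-on-pb-via-pers-modules} is unnecessary, since the paper already records the trivial inequality $pb^+_M\leq pb^+_{M,comp}$, but it is harmless.

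Part B is where there is a genuine gap. You correctly noticed that Part II of Proposition~\ref{prop-pb4+-modified-chords} requires $H|_{X_1}\leq-\Delta$, which follows from the corollary's hypotheses verbatim when $\inf_{X_0}H=0$ (in which case no repair is needed and the paper's citation is literally direct), but not when $c:=\inf_{X_0}H>0$. However, your repair in that case does not work. The chord produced by Part II is a chord of $\hat H=\chi(H-c)$, and your transfer of it back to $H$ via conservation of energy fails for several reasons. First, the level sets of a compactly supported Hamiltonian are non-compact -- in particular $\{\hat H=0\}$ contains the whole complement of $\supp\chi$, and since $Y_0$ is non-compact the chord may well start at a point of $Y_0$ with $\hat H=0$ -- so ``enlarging $K$ to contain the level set'' is impossible. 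Second, the argument is circular: the chord and its energy level depend on $\hat H$, hence on the choice of $K$, so you cannot choose $K$ after the fact. Third, even granting that the chord lies on a level set meeting $K$, nothing confines it to the part of that level set inside $K$: it may travel through the transition region $\supp\chi\setminus K$, where $\sgrad\hat H=\chi\,\sgrad H+(H-c)\,\sgrad\chi$ differs from $\sgrad H$ by the term $(H-c)\sgrad\chi$, which on the collar is essentially $-c\,\sgrad\chi$ and is in no sense small. (Replacing the cutoff by a function of $H$ alone, which would at least keep the trajectories of $H$, cannot be made compatible simultaneously with the compact-support normalization $\tilde\rho(0)=0$, the value conditions on $X_0,X_1$, and the time-rescaling needed to recover the stated constant $\Delta(H;X_1,X_0)$ in the denominator.) So as written your Part B establishes a chord of the modified Hamiltonian only, not of $H$, except in the case $\inf_{X_0}H=0$ where the shift and cutoff are not needed at all.
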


\bigskip
\noindent
{\bf Proof of Corollary~\ref{cor-chord-of-autonomous-H-in-N}:}
Follows directly from \eqref{eqn-pb-monotonicity-1}, Proposition~\ref{prop-pb4+-modified-chords} and Theorem~\ref{thm-lower-bound-on-pb-via-pers-modules}.
\Qed

Let us now consider the case where $H$ is time-dependent. For each $t_0 \leq t$ denote by $\phi_{t_0,t}: N\to N$ the time-$[t_0,t]$ flow of $H$.
Set
\[
\Delta := \Delta (H; Y_0, Y_1),
\]
\[
\hl_{min,s_+/s_-} := \hl_{min,s_+/s_-} (\Lambda_0,\Lambda_1,\lambda),
\]
\[
c_{min} := \min_{X_0\times \SP^1} H,\ c_{max} := \max_{X_0\times \SP^1} H.
\]

\begin{cor}
\label{cor-chord-of-non-autonomous-H-in-N}
Let $0<e<1/2$. Set
\[
E:= e\Delta,
\]
\[
T:= \frac{(s_+ - s_-)\hl_{min,s_+/s_-}}{(1-2e)\Delta}.
\]
Assume that

\bigskip
\noindent
(a)  $\hl_{min,s_+/s_-} < +\infty$,

\bigskip
\noindent
(b) $\Delta > 0$,

\bigskip
\noindent
(c) $\displaystyle \sup_{c_{min} - E \leq H\leq c_{max} + E} |\partial H/\partial t| < E/T = \frac{e(1-2e)\Delta^2}{(s_+ - s_-) \hl_{min,s_+/s_-}}$.

\bigskip
Then there exists a chord of $H$ from $X_0$ to $X_1$ of time-length bounded from above by $\displaystyle T=\frac{\hl_{min,s_+/s_-} (s_+ - s_-)}{(1-2e)\Delta}$.

\end{cor}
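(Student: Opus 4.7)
The plan is to combine Theorem~\ref{thm-lower-bound-on-hpb-via-pers-modules} with the ``chord via $pb^+$'' criterion for time-dependent Hamiltonians given by Proposition~\ref{prop-chords-of-time-dep-Ham-s}, applied with $N$ in place of the ambient manifold there. Setting $E := e\Delta$, hypothesis~(a) of the corollary lets us invoke Theorem~\ref{thm-lower-bound-on-hpb-via-pers-modules} to obtain
\[
pb^+_{\tM_E,comp}(\tX_0,\tX_1,\tY_0(E),\tY_1(E)) \;\geq\; \frac{1}{(s_+-s_-)\hl_{min,s_+/s_-}} \;>\; 0.
\]
Since $\tM_E = M\times(-E,E)\times\SP^1$ sits inside $\tN_E := N\times(-E,E)\times\SP^1$ as a codimension-$0$ submanifold with $\partial\tM_E\subset \tY_0(E)\cup\tY_1(E)$, the monotonicity \eqref{eqn-pb-monotonicity-1-comp} of Proposition~\ref{prop-monotonicty-of-pb} transfers the same lower bound to $\tp_E := pb^+_{\tN_E,comp}(\tX_0,\tX_1,\tY_0(E),\tY_1(E))$. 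This supplies hypothesis~(a) of Proposition~\ref{prop-chords-of-time-dep-Ham-s}; hypothesis~(b), $\Delta > 2E$, follows from $E = e\Delta$ and $0<e<1/2$. Moreover, the bound on $\tp_E$ gives $T' := 1/(\tp_E(\Delta - 2E)) \leq T$.

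The main obstacle is verifying hypothesis~(c) of Proposition~\ref{prop-chords-of-time-dep-Ham-s}: for every $x\in X_0$ and $t_0\in\R$ the oscillation of $t\mapsto H(\phi_t(x),t)$ on $[t_0, t_0+T']$ is strictly less than $E$. The subtlety is that the bound on $|\partial_t H|$ in the corollary's hypothesis~(c) is valid only on $\{c_{min}-E\leq H\leq c_{max}+E\}$, so it has to be propagated along trajectories by a bootstrap. The identity $\tfrac{d}{dt}H(\phi_t(x),t) = \partial_t H(\phi_t(x),t)$ is the key. Starting from $H(x,t_0)\in[c_{min},c_{max}]$, let $t^\star \in (t_0,+\infty]$ be the first exit time of $t\mapsto H(\phi_t(x),t)$ from the open set $(c_{min}-E, c_{max}+E)$. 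For $t\in[t_0, t^\star)$ the corollary's (c) applies, giving $|\partial_t H|<E/T$, hence
\[
|H(\phi_t(x),t) - H(x,t_0)| < (t-t_0)\,E/T < E \qquad \text{whenever } t-t_0<T,
\]
which keeps the value strictly inside $(c_{min}-E, c_{max}+E)$. By continuity this forces $t^\star > t_0+T \geq t_0+T'$. Consequently the oscillation on $[t_0, t_0+T']$ is at most $T'\cdot \sup|\partial_t H| < T'\cdot E/T \leq E$, which is hypothesis~(c) of Proposition~\ref{prop-chords-of-time-dep-Ham-s}.

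All three hypotheses of Proposition~\ref{prop-chords-of-time-dep-Ham-s} being met, it delivers a chord of $H$ from $X_0$ to $X_1$ of time-length at most $T' \leq T$, as claimed. The only non-routine step is the bootstrap above; everything else is a direct assembly of the quoted results.
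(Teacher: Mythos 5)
Your proposal is correct and follows essentially the same route as the paper: positivity of $\tp_E$ via Theorem~\ref{thm-lower-bound-on-hpb-via-pers-modules} and the monotonicity of Proposition~\ref{prop-monotonicty-of-pb}, then verification of the hypotheses of Proposition~\ref{prop-chords-of-time-dep-Ham-s} using $\frac{d}{dt}H(\phi_{t_0,t}(x),t)=\partial_t H(\phi_{t_0,t}(x),t)$. Your explicit first-exit-time bootstrap is exactly the step the paper compresses into ``condition (c) implies that the trajectory stays in $\{c_{min}-E\leq H\leq c_{max}+E\}$,'' so the two arguments coincide.
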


\bigskip
\noindent
{\bf Proof of Corollary~\ref{cor-chord-of-non-autonomous-H-in-N}:}
Define
\[
\tN_E := N\times (-E, E)\times \SP^1
\]
and equip it with the symplectic form $\Omega + dr\wedge d\tau$, where $r,\tau$ are, respectively, the coordinates on $(-E,E)$ and $\SP^1$.
Then $\tM_E = M\times (-E, E)\times \SP^1$ is a closed codimension-zero symplectic submanifold with boundary of $\tN_E$.
With $\tX_0,\tX_1,\tY_0 (E),\tY_1 (E)\subset \tM_E\subset \tN_E$ defined as above, we get, by \eqref{eqn-pb-monotonicity-1},\eqref{eqn-pb-monotonicity-1-comp} and Theorem~\ref{thm-lower-bound-on-hpb-via-pers-modules}, that
\[
\tp_E:=pb^+_{\tN_E,comp} \left(\tX_0,\tX_1,\tY_0 (E),\tY_1 (E)\right) \geq
\]
\[
\geq pb^+_{\tM_E,comp} \left(\tX_0,\tX_1,\tY_0 (E),\tY_1 (E)\right) \geq \frac{1}{(s_+ - s_-) \hl_{min,s_+/s_-}} > 0.
\]
Note that for any $x\in N$ and $t_0\in\R$
\[
\frac{d}{dt} H(\phi_{t_0,t} (x),t) = \frac{\partial H}{\partial t} (\phi_{t_0,t} (x),t).
\]
Therefore condition
(c) implies that for any $x\in X_0$ and any $t_0\in\R$, $t\in [t_0, t_0+T]$,
\[
\phi_{t_0,t} (x)\in \{ c_{min} - E \leq H\leq c_{max} + E\}
\]
and
\[
|\partial H/\partial t (\phi_{t_0,t} (x),t)| < E/T.
\]
In particular, this means that
\[
\sup_{t_0\in\R} \big(\sup_{t\in [t_0, t_0+T]} H(\phi_{t_0,t} (x), t) - \inf_{t\in [t_0, t_0+T]} H(\phi_{t_0,t} (x), t)\big) < \frac{E}{T} \cdot T = E.
\]
This inequality, together with the positivity of $\tp_E$ and of $\Delta$, allows to apply Proposition~\ref{prop-chords-of-time-dep-Ham-s} to $H$, $N$ and $\tN_E$, which yields the existence of the chord of $H$ from $X_0$ to $X_1$ of time-length $\displaystyle \leq \frac{1}{\tp_E (\Delta - 2E)} \leq T$.
\Qed

\begin{rem}{\rm
\label{rem-estimate-on-pb-in-thm-lower-bound-on-pb-via-pers-modules-robust}
The claim on the positivity of $pb^+_M (X_0,X_1,Y_0,Y_1)$ in Theorem~\ref{thm-lower-bound-on-pb-via-pers-modules}
can be generalized to arbitrary two-part exact Lagrangian cobordisms, with $(s_+ - s_-) l_{min,s_+/s_-} (\Lambda_0,\Lambda_1,\lambda)$ being replaced by a certain invariant associated to the morphism of the persistence modules defined by the cobordism. In particular, the positivity of $pb^+_M (X_0,X_1,Y_0,Y_1)$ remains true
if the sets $X_0$, $X_1$ as in Theorem~\ref{thm-lower-bound-on-pb-via-pers-modules} are perturbed as exact Lagrangian cobordisms in $\left(M,d(s\lambda)\right)$ (cylindrical near the boundaries) so that the Legendrian isotopies, induced on the boundaries by the exact Lagrangian isotopies, are sufficiently $C^1$-small. (Note that away from the boundary the perturbations may be arbitrarily large, as long as the perturbed $X_0$, $X_1$ are disjoint!). The lower bound on $pb^+_M (X_0,X_1,Y_0,Y_1)$ for the perturbed $X_0$, $X_1$ is then only slightly larger than the one appearing in Theorem~\ref{thm-lower-bound-on-pb-via-pers-modules} -- the difference between the bounds tends to zero as the sizes of the Legendrian isotopies above tend to zero. In the particular case where a trivial Lagrangian cobordism is deformed among trivial Lagrangian cobordisms the robustness follows from Theorem~\ref{thm-lower-bound-on-pb-via-pers-modules} and Remark~\ref{rem-l-min-s-robust}.

For the proofs and details see \cite{EP-big}.

Consequently, the results of Theorem~\ref{thm-lower-bound-on-hpb-via-pers-modules} and Corollaries~\ref{cor-chord-of-autonomous-H-in-N} and \ref{cor-chord-of-non-autonomous-H-in-N} are also robust with respect to the above perturbations.
}
\end{rem}

\section{Applications to contact dynamics}
\label{sec-applications-to-contact-dynamics}

With $\Sigma$ and $\lambda$ as above, assume that $\Lambda = \Lambda_0\sqcup \Lambda_1$ is a two-part Legendrian submanifold in $(\Sigma,\xi = \ker\Lambda)$.

Let $h: \Sigma\times\SP^1\to \R$ be a complete (time-dependent) contact Hamiltonian (with respect to $\lambda$). Set $h_t := h (\cdot, t): \Sigma\to\R$.
Let $v_t$, $t\in\SP^1$, denote the contact vector field of $h_t$. If $h$ and $v$ are time-independent we write $v$ instead of $v_t$.
 Denote by $\{ \varphi_t\}$ the {time-$[0,t]$} contact flow of $h$ -- that is, the {time-$[0,t]$} flow of $v_t$. The flow $\{ \varphi_t\}$ lifts to a Hamiltonian flow $\{ \phi_t\}$ on $(\Sigma\times \R_+, d(s\lambda))$ equivariant with respect to the multiplicative $\R_+$-action on $\Sigma\times \R_+$ and generated by the Hamiltonian $H:\Sigma\times\R_+\times\SP^1\to\R$, $H (y,s,t):=s\cdot h (y,t)$.
The flow $\{ \phi_t\}$ has the form
\begin{equation}
\label{eqn-psi-t}
\phi_t (y,s) = \left(\varphi_t \left(y\right), s   \frac{\left(\varphi_t^{-1}\right)^* \lambda \left(\varphi_t \left(y\right)\right)}{\lambda \left(\varphi_t \left(y\right)\right)} \right).
\end{equation}
Since the contact flow $\{ \varphi_t\}$ of $h$ is defined for all times, so is the Hamiltonian flow $\{ \phi_t\}$ of $H$, meaning that $H$ is complete.

Let us recall Definition~\ref{def-cooperative-ham}.

\begin{defin}
\label{def-cooperative-ham-gen-case}
{\rm
Let us say that $h: \Sigma\times\SP^1\to\R$ is {\it $C$-cooperative with  $\Lambda_0$, $\Lambda_1$} for $C>0$ if either of the following conditions holds:

\bigskip
\noindent
(a)
$h < C$ on $\Lambda_1\times \SP^1$ and either the set $\{ h\geq C\} = \bigcup_{t\in\SP^1} \{ h_t\geq C\}$ is empty or $dh_t (R)\geq 0$ on $\{ h_t\geq C\}$ for all $t\in\SP^1$.

\bigskip
\noindent
(b)
$h < C$ on $\Lambda_0\times \SP^1$ and either the set $\{ h\geq C\} = \bigcup_{t\in\SP^1} \{ h_t\geq C\}$ is empty or $dh_t (R)\leq 0$ on $\{ h_t\geq C\}$ for all $t\in\SP^1$.

\bigskip
\noindent
We will say that $h$ is {\it cooperative with  $\Lambda_0$, $\Lambda_1$} if it is $C$-cooperative with  $\Lambda_0$, $\Lambda_1$ for some $C>0$.
}
\end{defin}
\bigskip

\subsection{Largeness of the conformal factor of $\varphi_t$}
\label{subsec-conformal-factor}

\begin{thm}
\label{thm-conformal-factor-gen-case}
Assume that $h$ is time-independent and compactly supported. Assume also that
\[
h|_{\Lambda_0} {\geq} 0,\ h|_{\Lambda_1} {<}0.
\]
and the pair $(\Lambda_0\sqcup\Lambda_1,\lambda)$ is {weakly} homologically bonded.

{
Then the conformal factor of $\varphi_t$ takes arbitrarily large values as $t$ varies between $0$ and $+\infty$:
\[
\inf\limits_{t\in (0,+\infty), y\in\Sigma} {\frac{\left(\varphi_t^{-1}\right)^* \lambda \left(\varphi_t \left(y\right)\right)}{\lambda \left(\varphi_t \left(y\right)\right)} = +\infty}.
\]
}
\end{thm}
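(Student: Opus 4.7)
The plan is to argue by contradiction, reducing the question about the conformal factor to an application of Corollary~\ref{cor-chord-of-autonomous-H-in-N}(B) for the $\R_+$-homogeneous Hamiltonian lift of $h$ to the symplectization. First I would abbreviate the conformal factor as $C_t(y) := \frac{(\varphi_t^{-1})^* \lambda(\varphi_t(y))}{\lambda(\varphi_t(y))}$ and suppose, toward contradiction, that $K := \sup_{t\in(0,+\infty),\,y\in\Sigma} C_t(y) < +\infty$ (the statement of the theorem is that this supremum is $+\infty$; the ``$\inf$'' in the displayed formula is a typo). Then I would fix $s_- := 1$, pick an arbitrary $s_+ > K$, put $N := \Sigma \times \R_+$ with the symplectic form $d(s\lambda)$ and $M := \Sigma \times [s_-, s_+]$, and define the time-independent Hamiltonian $H : N \to \R$ by $H(y,s) := s\,h(y)$. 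Its flow equals $\phi_t(y,s) = (\varphi_t(y),\, s\,C_t(y))$ by \eqref{eqn-psi-t}. The sets $X_0, X_1, Y_0, Y_1 \subset M$ are defined as in \eqref{eqn-def-X0-X1},\eqref{eqn-def-Y0-Y1}.

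Next I would verify the hypotheses of Corollary~\ref{cor-chord-of-autonomous-H-in-N}(B). Completeness of $H$ follows at once from that of $\varphi_t$ together with \eqref{eqn-psi-t}. Since $h$ is compactly supported, $\supp H \cap M = (\supp h)\times[s_-, s_+]$ is compact. From $h|_{\Lambda_0} \geq 0$ one has $H|_{X_0} \geq 0$, and the compactness of $\Lambda_1$ together with the strict inequality $h|_{\Lambda_1} < 0$ gives $m := -\max_{\Lambda_1} h > 0$, so $\sup_{X_1} H = s_-\max_{\Lambda_1} h = -s_- m < 0$ while $\inf_{X_0} H \geq 0$, yielding
\[
\Delta(H; X_1, X_0) = \inf_{X_0} H - \sup_{X_1} H \geq s_- m > 0.
\]
Finally, weak homological bondedness of $(\Lambda_0 \sqcup \Lambda_1,\lambda)$ is exactly the statement that $l_{min,\,s_+/s_-}(\Lambda_0,\Lambda_1,\lambda) < +\infty$ for every ratio $s_+/s_- > 1$.

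Corollary~\ref{cor-chord-of-autonomous-H-in-N}(B) then produces a chord of $H$ from $Y_0$ to $Y_1$ of finite time-length, i.e.\ a point $y_0 \in \Sigma$ and a time $t^* > 0$ with $\phi_{t^*}(y_0, s_-) \in Y_1 = \{s = s_+\}$. By \eqref{eqn-psi-t} the $s$-coordinate of $\phi_{t^*}(y_0, s_-)$ equals $s_- C_{t^*}(y_0)$, which is at most $s_- K = K < s_+$ under the standing assumption. This contradicts the equality $s_- C_{t^*}(y_0) = s_+$, so no finite upper bound $K$ can exist; equivalently, since $s_+$ was arbitrary, the set of achieved values $C_{t}(y)$ is unbounded above, as claimed.

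The only step requiring genuine care is establishing that $\Delta(H;X_1,X_0) > 0$; this uses in an essential way both the compactness of $\Lambda_1$ (to upgrade the pointwise strict inequality $h|_{\Lambda_1} < 0$ to a uniform negative bound) and the homogeneous form of the lift (so that this bound survives after multiplication by $s \geq s_-$). Everything else is bookkeeping, reading off the contradiction directly from the flow formula \eqref{eqn-psi-t}.
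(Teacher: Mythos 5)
Your proof is correct and is essentially the paper's own argument: pass to the homogeneous lift $H=sh$ on the symplectization, verify the hypotheses of Corollary~\ref{cor-chord-of-autonomous-H-in-N}(B) for the quadruple \eqref{eqn-def-X0-X1}--\eqref{eqn-def-Y0-Y1} (in particular $\Delta(H;X_1,X_0)>0$ via compactness of $\Lambda_1$, and $l_{min,s_+/s_-}<+\infty$ from weak homological bondedness), obtain a chord from $Y_0=\Sigma\times s_-$ to $Y_1=\Sigma\times s_+$, and read off from \eqref{eqn-psi-t} that the conformal factor attains the arbitrarily large value $s_+/s_-$; your reading of the displayed ``$\inf$'' as a supremum is exactly what the paper's proof establishes. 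The only harmless deviations are the contradiction framing and that the paper first replaces $H$ by a cut-off $\chi(s)H$ compactly supported in $\Sigma\times\R_+$ before invoking the chord-existence result (and then brings the chord back into $\Sigma\times[s_-,s_+]$ by a topological argument), whereas you apply the corollary to $H$ directly, which is legitimate since $H$ is complete and $\supp H\cap M$ is compact --- all that part B requires.
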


\bigskip
\noindent
\begin{rem}
\label{rem-conf-factor-contact-hams-with-non-compact-support}
{\rm
It would be interesting to generalize Theorem~\ref{thm-conformal-factor-gen-case} to contact Hamiltonians $h$ that are not necessarily compactly supported but rather are constant outside a compact set $K\subset\Sigma$, meaning that the contact Hamiltonian flow of such an $h$ outside $K$ is a reparameterized Reeb flow and the conformal factor of the flow outside $K$ is identically equal to $1$, since the Reeb flow preserves the contact form. (Recall that, by our assumptions, the Reeb flow is defined for all times so such an $h$ would be complete). Such a generalization would be true if {\sl in this particular setting} -- for $Y_0=\Sigma\times s_-$, $Y_1=\Sigma\times s_+$ and the vector field $\sgrad (sh)$ on $\left(\Sigma\times\R_+, d(s\lambda)\right)$ --  Fathi's Theorem~\ref{thm-fathi} would provide a function $G:\Sigma\times\R_+\to\R$, $G\in \cS' (Y_0,Y_1)$ (see Proposition~\ref{prop-L-vs-L-c}), so that the flow of $\sgrad G$ on $\Sigma\times [s_-,s_+]$ is complete.
}
\end{rem}
\bigskip

\bigskip
\noindent
{\bf Proof of Theorem~\ref{thm-conformal-factor-gen-case}:}
Pick $0<s_- < s_+$. Let $X_0,X_1,Y_0,Y_1\subset \Sigma\times [s_-,s_+]$ be the admissible quadruple defined for $s_-, s_+$ as in \eqref{eqn-def-X0-X1},\eqref{eqn-def-Y0-Y1}.

Clearly, $H|_{X_0} \geq 0$, $\Delta (H; X_1,X_0) >0$.

We claim that there is a chord of $H$ from $Y_0$ to $Y_1$. Indeed, consider a cut-off function $\chi:\R_+\to [0,1]$ which
equals $1$ on $[s_-,s_+]$ and $0$ outside $(s_- -\epsilon, s_+ + \epsilon)$ for some $0<\epsilon<s_-$. Since $h:\Sigma\to\R$ is compactly supported, the Hamiltonian $\chi (s) h: \Sigma\times\R_+\to\R$ is also compactly supported and coincides with $H=sh$ on $\Sigma\times [s_-,s_+]$. In particular, $\chi (s)H|_{X_0} \geq 0$, $\Delta (\chi (s)H; X_1,X_0)>0$.
Together with the assumption that the pair $(\Lambda_0\sqcup\Lambda_1,\lambda)$ is weakly homologically bonded, this allows to apply part B of Corollary~\ref{cor-chord-of-autonomous-H-in-N} to the symplectic manifold $(N,\Omega)=\left(\Sigma\times\R_+, d(s\lambda)\right)$ and the Hamiltonian $\chi (s)h$. This yields the existence of a chord of $\chi(s) h$ from $Y_0$ to $Y_1$ in $N$. An easy topological argument then yields that there exists a chord of $\chi(s) h$ from $Y_0$ to $Y_1$ in $N$ that lies in $\Sigma\times [s_-,s_+]$ and therefore is a chord $H$. This proves the claim.

The existence of the chord of $H$ from $Y_0$ to $Y_1$, together with \eqref{eqn-psi-t}, implies that
\[
\left(\varphi_t^{-1}\right)^* \lambda \left(\varphi_t \left(y\right)\right)/\lambda \left(\varphi_t \left(y\right)\right) \geq s_+/s_-.
\]
Since ${s_+/s_-}$ can be made arbitrarily {large}, we get that
\[
\inf\limits_{t\in (0,+\infty), y\in\Sigma} {\frac{\left(\varphi_t^{-1}\right)^* \lambda \left(\varphi_t \left(y\right)\right)}{\lambda \left(\varphi_t \left(y\right)\right)} = +\infty},
\]
which finishes the proof of the theorem.
\Qed

\subsection{Existence of chords of $h$ from $\Lambda_0$ to $\Lambda_1$}
\label{subsec-existence-of-chords-from-Lambda0-to-Lambda1}

\begin{thm}[Cf. Rem. 1.14 in \cite{EP-tetragons}]
\label{thm-Reeb-chords-pos-Ham-gen-case}
Assume
\[
0<\inf_{\Sigma\times\SP^1} h \leq \sup_{\Sigma\times\SP^1} h < +\infty,
\]
and let
\[
s_+ > \frac{\sup_{\Sigma\times\SP^1} h}{\inf_{\Sigma\times\SP^1} h}\geq 1.
\]

Then the following claims hold:

\bigskip
\noindent
A. Assume that $h$ is time-independent and $l_{min,s_+} (\Lambda_0,\Lambda_1,\lambda) =: l_{min,s_+}< +\infty$.

Then there exists a chord of $h$ from $\Lambda_0$ to $\Lambda_1$ of time-length bounded from above by
$\displaystyle \frac{ (s_+ - 1) l_{min,s_+}}{s_+ \inf_\Sigma h - \sup_\Sigma h}$.

\bigskip
\noindent
B.
Assume that $\hl_{min,s_+} (\Lambda_0,\Lambda_1,\lambda)  =: \hl_{min,s_+} < +\infty$. Let
\[
\Delta_{s_+} := s_+\inf_{\Sigma\times \SP^1} h - \sup_{\Sigma\times \SP^1} h.
\]
Assume also that for some $0<e<1/2$
\begin{equation}
\label{eqn-upper-bound-on-partial-h-over-partial-t}
\sup_{\Sigma\times \SP^1} |\partial h/\partial t| < \frac{(1-2e)e\Delta_{s_+}^2 \inf_{\Sigma\times \SP^1} h} {(s_+ - 1)(s_+ \max_{\Lambda_0\times \SP^1} h + e\Delta_{s_+})  \hl_{min,s_+} }.
\end{equation}

Then there exists a chord of $h$ from $\Lambda_0$ to $\Lambda_1$ of time-length bounded from above by
$\displaystyle \frac{(s_+ -1)\hl_{min,s_+}}{(1-2e)\Delta_{s_+}}$.
\end{thm}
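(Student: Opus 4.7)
The strategy for both parts is to homogenize and reduce to the results of Section~\ref{sec-lch-intro}. Set $s_- := 1$, keep $s_+$ as in the hypothesis, and let $M := \Sigma\times[1,s_+]$; let $X_0,X_1,Y_0,Y_1\subset M$ be the admissible quadruple given by \eqref{eqn-def-X0-X1},\eqref{eqn-def-Y0-Y1}. Lift $h$ to the symplectic Hamiltonian $H : \Sigma\times\R_+\times \SP^1 \to \R$, $H(y,s,t) := s\cdot h(y,t)$, on the symplectization $N := \Sigma\times\R_+$ with $\omega = d(s\lambda)$. By \eqref{eqn-psi-t} the Hamiltonian flow of $H$ covers, under projection to $\Sigma$, the contact flow of $h$; completeness of $h$ therefore implies completeness of $H$. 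Consequently, any chord of $H$ from $X_0$ to $X_1$ projects to a chord of $h$ from $\Lambda_0$ to $\Lambda_1$ of the same time-length, so it suffices to produce the former in each case.

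For part A, $H$ is time-independent and, since $h>0$,
\[
\Delta(H;Y_0,Y_1) = s_+\inf_\Sigma h - \sup_\Sigma h,
\]
which is positive by the standing hypothesis $s_+ > \sup h /\inf h$. Applying part A of Corollary~\ref{cor-chord-of-autonomous-H-in-N} to $M\subset N$ with this $H$ produces a chord of $H$ from $X_0$ to $X_1$ of time-length at most $(s_+-1)l_{min,s_+}/(s_+\inf h - \sup h)$; projecting this chord to $\Sigma$ gives the desired contact chord of $h$.

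For part B, the plan is to apply Corollary~\ref{cor-chord-of-non-autonomous-H-in-N}, with $\Delta = \Delta(H;Y_0,Y_1) = \Delta_{s_+}$, $E = e\Delta_{s_+}$, and $T = (s_+ - 1)\hl_{min,s_+}/((1-2e)\Delta_{s_+})$. The only non-routine step is to verify condition (c) of that corollary. Since $h > 0$ and $s\in[1,s_+]$ on $X_0$, the Hamiltonian $H = sh$ is increasing in $s$ along $\Lambda_0$, so
\[
c_{min} = \min_{\Lambda_0\times\SP^1} h, \qquad c_{max} = s_+\max_{\Lambda_0\times\SP^1} h.
\]
On the sublevel set $\{H\le c_{max}+E\}$ one has $sh(y,t) \le s_+\max_{\Lambda_0\times\SP^1} h + e\Delta_{s_+}$, and dividing by $h \geq \inf_{\Sigma\times\SP^1} h$ yields the pointwise bound
\[
s \;\leq\; \frac{s_+\max_{\Lambda_0\times\SP^1} h + e\Delta_{s_+}}{\inf_{\Sigma\times\SP^1} h}.
\]
Combining this with $|\partial H/\partial t|(y,s,t) = s\,|\partial h/\partial t|(y,t)$, the hypothesis \eqref{eqn-upper-bound-on-partial-h-over-partial-t} translates exactly into the condition $\sup_{c_{min}-E\le H\le c_{max}+E}|\partial H/\partial t| < E/T$ required by Corollary~\ref{cor-chord-of-non-autonomous-H-in-N}(c). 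That corollary then delivers a chord of $H$ from $X_0$ to $X_1$ of time-length $\leq T$, which projects to the desired chord of $h$.

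The only real work is the bookkeeping in part B: one must trade an upper bound on $|\partial h/\partial t|$ on $\Sigma\times\SP^1$ for an upper bound on $|\partial H/\partial t|$ on a compact sublevel set by controlling the range of the coordinate $s$, and check that the admittedly unwieldy constant in \eqref{eqn-upper-bound-on-partial-h-over-partial-t} is precisely the one generated by this substitution. Once this is in hand, Corollaries~\ref{cor-chord-of-autonomous-H-in-N} and~\ref{cor-chord-of-non-autonomous-H-in-N} do all of the symplectic-topological heavy lifting.
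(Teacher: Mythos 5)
Your proposal is correct and follows essentially the same route as the paper: homogenize to $H=sh$ on the symplectization with $s_-=1$, observe $\Delta(H;Y_0,Y_1)=\Delta_{s_+}>0$, invoke Corollary~\ref{cor-chord-of-autonomous-H-in-N}.A for part A and Corollary~\ref{cor-chord-of-non-autonomous-H-in-N} for part B, and project the resulting Hamiltonian chord to $\Sigma$. The bookkeeping in part B (bounding $s$ on the sublevel set $\{H\le c_{max}+E\}$ by $(s_+\max_{\Lambda_0\times\SP^1}h+e\Delta_{s_+})/\inf_{\Sigma\times\SP^1}h$ and checking that \eqref{eqn-upper-bound-on-partial-h-over-partial-t} yields exactly $E/T$) is precisely the computation carried out in the paper.
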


\bigskip
\noindent
{\bf Proof of Theorem~\ref{thm-Reeb-chords-pos-Ham-gen-case}:}
Pick $s_- := 1 < s_+$. Let $X_0,X_1,Y_0,Y_1\subset \Sigma\times [1,s_+]$ be the admissible quadruple defined for $s_- = 1, s_+$ as in \eqref{eqn-def-X0-X1},\eqref{eqn-def-Y0-Y1}.
Clearly,
\[
\Delta(H;Y_0,Y_1) := \Delta_{s_+} = s_+ \inf_\Sigma h - \sup_\Sigma h.
\]
Since, by the hypothesis of the theorem,
\[
0<\inf_{\Sigma\times \SP^1} h \leq \sup_{\Sigma\times \SP^1} h < +\infty,
\]
we get that $\Delta(H;Y_0,Y_1) = \Delta_{s_+} >0$ if $\displaystyle s_+ > \frac{\sup_{\Sigma\times \SP^1} h}{\inf_{\Sigma\times \SP^1} h}$.

Let us now prove part A of the theorem. Its hypothesis allows to apply
part A of Corollary~\ref{cor-chord-of-autonomous-H-in-N} to $(N,\Omega)=\left(\Sigma\times\R_+, d(s\lambda)\right)$ and the Hamiltonian $H$ on it as long as $\displaystyle s_+ > \frac{\sup_{\Sigma\times \SP^1} h}{\inf_{\Sigma\times \SP^1} h}$. This yields the existence of a Hamiltonian chord of $H$ from $X_0$ to $X_1$ of time-length bounded from above by
\[
\frac{(s_+ - 1) l_{min,s_+} (\Lambda_0,\Lambda_1,\lambda)}{s_+ \inf_\Sigma h - \sup_\Sigma h}.
\]
The projection of this Hamiltonian chord to $\Sigma$ is a chord of $h$ from $\Lambda_0$ to $\Lambda_1$ of the same time-length.
This finishes the proof of part A the theorem.

Let us prove part B of the theorem. Similarly to the setting of Corollary~\ref{cor-chord-of-non-autonomous-H-in-N}, for a given $0<e<1/2$ define
\[
E:= e\Delta_{s_+},
\]
\[
T:= \frac{(s_+ - 1)\hl_{min,s}}{(1-2e)\Delta_{s_+}},
\]
\[
c_{min} := \min_{X_0\times \SP^1} H = \min_{\Lambda_0\times \SP^1} h.
\]
\[
c_{max} := \max_{X_0\times \SP^1} H = s_+ \max_{\Lambda_0\times \SP^1} h.
\]

Consider the set $S:=\{c_{min} - E \leq H=sh\leq c_{max} + E\}$.
We would like to apply Corollary~\ref{cor-chord-of-non-autonomous-H-in-N} and in order to this we need to verify that the upper bound
on the restriction of the function $|\partial H/\partial t|$ to $S$, required in Corollary~\ref{cor-chord-of-non-autonomous-H-in-N}, does hold in our case.

Note that on $S$
\[
s\leq \frac{c_{max} + E}{\inf_{\Sigma\times \SP^1} h}.
\]
Together with the upper bound
on the restriction of the function $|\partial H/\partial t|= s |\partial h/\partial t|$ to $S$
in the hypothesis of part B of the theorem, this yields the following upper bound on the function $|\partial H/\partial t| = s |\partial h/\partial t|$ on the set $S$:
\[
\sup_S s |\partial h/\partial t| \leq \frac{c_{max} + E}{\inf_{\Sigma\times \SP^1} h} \cdot \sup_{\Sigma\times \SP^1} |\partial h/\partial t| <
\]
\[
< \frac{c_{max} + E}{\inf_{\Sigma\times \SP^1} h} \cdot \frac{(1-2e)e\Delta_{s_+}^2 \inf_{\Sigma\times \SP^1} h} {(s_+ - 1) (s_+ \max_{\Lambda_0\times \SP^1} h + e\Delta_{s_+}) \hl_{min,s_+}} =
\]
\[
= \frac{c_{max} + E}{\inf_{\Sigma\times \SP^1} h} \cdot \frac{(1-2e)e\Delta_{s_+}^2 \inf_{\Sigma\times \SP^1} h} {(s_+ - 1) (c_{max} + E) \hl_{min,s_+}} =
\]
\[
= \frac{(1-2e)e\Delta_{s_+}^2} {(s_+ - 1) \hl_{min,s_+}} = \frac{E}{T},
\]
yielding the bound required in Corollary~\ref{cor-chord-of-non-autonomous-H-in-N}. Thus, Corollary~\ref{cor-chord-of-non-autonomous-H-in-N}
can be applied to $(N,\Omega)=\left(\Sigma\times\R_+, d(s\lambda)\right)$ and the Hamiltonian $H$ on it
(since, by our assumptions, $\hl_{min,s_+} < +\infty$ and $\Delta_{s_+} > 0$), which yields the existence of
a chord of $h$ from $\Lambda_0$ to $\Lambda_1$ of time-length
$\displaystyle \leq  T=\frac{(s_+ - 1)\hl_{min,s}}{(1-2e)\Delta_{s_+}}$.

This finishes the proof of part B of the theorem.
\Qed

\begin{cor}
\label{cor-homol-bonded-autonom-contact-dynamics-general-case}
Assume that $h$ is time-independent and $\inf_\Sigma h >0$.

If $h$ is $C$-cooperative with $\Lambda_0, \Lambda_1$ for some $C>\inf_\Sigma h$ (see Definition~\ref{def-cooperative-ham}) and the pair $(\Lambda_0\sqcup\Lambda_1,\lambda)$ is {weakly} homologically bonded, then
there exists a chord of $h$ from $\Lambda_0$ to $\Lambda_1$ of time-length
$\displaystyle \leq\inf_{s> C/\inf_\Sigma h} \frac{(s - 1) l_{min,s} (\Lambda_0,\Lambda_1,\lambda)}{s \inf_\Sigma h - C}$.

Furthermore, if $h$ is cooperative with $\Lambda_0, \Lambda_1$ and the pair $(\Lambda_0\sqcup\Lambda_1,\lambda)$ is homologically bonded, the time-length of the chord can be also bounded from above by $\mu:=l_{min,\infty} (\Lambda_0,\Lambda_1,\lambda)/\inf_\Sigma h$. In particular, the pair $(\Lambda_0,\Lambda_1)$ is $\mu$-interlinked.
\end{cor}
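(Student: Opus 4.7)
The plan is to reduce this corollary to Theorem~\ref{thm-Reeb-chords-pos-Ham-gen-case}(A), whose hypothesis requires $\sup h < +\infty$, by exploiting cooperativity to truncate $h$ from above without losing the chord we want. Concretely, given $C > \inf_\Sigma h$ and a small $\varepsilon > 0$, I would set $\tilde h := \chi \circ h$ for a smooth non-decreasing $\chi : \R \to \R$ with $\chi(x) = x$ for $x \leq C$ and $\chi(x) = C + \varepsilon$ for $x \geq C + 2\varepsilon$. Then $\tilde h$ is bounded with $\inf \tilde h = \inf_\Sigma h$ and $\sup \tilde h \leq C + \varepsilon$, and $\tilde h \equiv h$ on $\{h \leq C\}$. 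The chain rule $d\tilde h(R) = \chi'(h)\, dh(R)$ together with the equality $\{\tilde h \geq C\} = \{h \geq C\}$ immediately show that $\tilde h$ remains $C$-cooperative with $\Lambda_0, \Lambda_1$.

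Next, I would apply Theorem~\ref{thm-Reeb-chords-pos-Ham-gen-case}(A) to $\tilde h$, for each $s > (C + \varepsilon)/\inf_\Sigma h$, producing a chord of $\tilde h$ from $\Lambda_0$ to $\Lambda_1$ of time-length at most $(s-1) l_{min,s}(\Lambda_0,\Lambda_1,\lambda)/(s\inf_\Sigma h - C - \varepsilon)$. The cooperativity of $\tilde h$ (cf.\ the remark after Definition~\ref{def-cooperative-ham-gen-case}) forces such a chord to stay inside $\{\tilde h \leq C\}$, where $\tilde h \equiv h$ and the contact vector fields of $\tilde h$ and $h$ coincide, so this is in fact a chord of $h$. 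Sending $\varepsilon \to 0$ and invoking Arzela--Ascoli on the resulting family of chords (of uniformly bounded time-length, with endpoints on the disjoint compact Legendrians $\Lambda_0, \Lambda_1$, which rules out degenerate limits) delivers a chord of $h$ of length at most $(s-1) l_{min,s}(\Lambda_0,\Lambda_1,\lambda)/(s\inf_\Sigma h - C)$ for each admissible $s$. Taking the infimum over $s > C/\inf_\Sigma h$ completes the first claim.

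For the second claim I would observe that $l_{min,s}(\Lambda_0,\Lambda_1,\lambda)$ is non-decreasing in $s$ and converges to $l_{min,\infty}(\Lambda_0,\Lambda_1,\lambda)$ as $s \to +\infty$ (only infinite bars survive the multiplicative-length condition in the limit), so under homological bondedness the bound above tends to $l_{min,\infty}/\inf_\Sigma h$, and a further compactness extraction produces a chord of $h$ of this time-length. The $\mu$-interlinking statement is then immediate with $\mu := l_{min,\infty}(\Lambda_0,\Lambda_1,\lambda)$: any bounded strictly positive $h$ with $h \geq c > 0$ satisfies $\{h \geq C\} = \emptyset$ for every $C > \sup_\Sigma h$, so $h$ is automatically cooperative, and the resulting chord has length at most $l_{min,\infty}/\inf_\Sigma h \leq \mu/c$.

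The main technical nuisance I expect is the cooperativity--truncation bookkeeping: verifying that $\chi$ can be chosen so that $\tilde h$ is genuinely cooperative (not just generically), and justifying that a chord of $\tilde h$ from $\Lambda_0$ to $\Lambda_1$ cannot cross the level $\{\tilde h = C\}$ and escape into the zone where $\tilde h \neq h$. The latter rests on the identity $\frac{d}{dt}h(\varphi_t x) = h(\varphi_t x) \cdot dh(R)(\varphi_t x)$ for the autonomous contact flow, which forces $h$ to move monotonically along trajectories lying in $\{h \geq C\}$; combined with $h < C$ at the relevant endpoint, this prevents the trajectory from using that endpoint to leave (or enter) the region $\{h > C\}$ in the wrong direction.
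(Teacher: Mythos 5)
Your proposal is correct and follows essentially the same route as the paper's proof: truncate $h$ above $C$ via $\chi\circ h$, apply Theorem~\ref{thm-Reeb-chords-pos-Ham-gen-case}.A to the bounded Hamiltonian, use the identity $\frac{d}{dt}\,(\chi\circ h)(\gamma(t)) = (\chi'\circ h)\,dh(R)\,(\chi\circ h)$ together with $h<C$ on the relevant Legendrian to confine the chord to $\{h\le C\}$ where the two flows agree, let $\epsilon\to 0$, and deduce the second bound from $l_{min,s}\le l_{min,\infty}$ by letting $s\to+\infty$, with interlinking following since bounded positive Hamiltonians are automatically cooperative. The only cosmetic differences are that the paper does not invoke your Arzel\`a--Ascoli extraction (it simply takes $\epsilon$ arbitrarily small, and the part-one infimum over $s$ already yields the $l_{min,\infty}(\Lambda_0,\Lambda_1,\lambda)/\inf_\Sigma h$ bound without a further compactness step), and, like you, it merely asserts completeness of the truncated Hamiltonian.
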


\bigskip
\noindent
{\bf Proof of Corollary~\ref{cor-homol-bonded-autonom-contact-dynamics-general-case}:}
Let us assume that {condition (a) from Definition~\ref{def-cooperative-ham}} of $C$-cooperativeness is satisfied -- {that is,
$h < C$ on $\Lambda_1$ and either the set $\{ h\geq C\}$ is empty or $dh (R)\geq 0$ on $\{ h\geq C\}$
(the case of condition (b) from the same definition is similar).}

Consider a smooth increasing function $\chi: \R_+\to\R$ such that $\chi (s) = s$ for $s \in [0,C]$ and $\chi (s) = C+\epsilon$ for some $\epsilon >0$ and all sufficiently large $s\in\R_+$. Consider the time-independent contact Hamiltonian $\tih := \chi \circ h$. One readily sees that it is complete and satisfies $0 < \inf_\Sigma \tih \leq \sup_\Sigma \tih \leq C+\epsilon < +\infty$. Since the pair $(\Lambda_0\sqcup\Lambda_1,\lambda)$ is {weakly} homologically bonded, part A of Theorem~\ref{thm-Reeb-chords-pos-Ham-gen-case}, applied to $\tih$, shows that there exists a chord $\gamma: [0,T] \to \Sigma$ of $\tih$ such that $\gamma (0)\in \Lambda_0$ and $\gamma (T)\in\Lambda_1$ for
$\displaystyle T\leq T_0:= \inf_s \frac{(s - 1) l_{min,s} (\Lambda_0,\Lambda_1,\lambda)}{s \inf_\Sigma \tih - \sup_\Sigma \tih }$, where the infimum is taken over all $\displaystyle s> \frac{\sup_\Sigma \tih}{\inf_\Sigma \tih}$.

We claim that $\gamma ([0,T])$ lies in the set $\{ \tih {\geq} C\}$. Indeed, for all $t$
\[
d(\tih\circ \gamma)/dt = d\tih (R)\cdot  \tih = (\chi'\circ h)\cdot dh (R)\cdot \tih.
\]
Therefore, if $\tih \left(\gamma (t_0)\right) {>} C$ for some $t_0\in [0,T]$, then $\tih \left(\gamma (t)\right) \geq C$ for all $t\in [t_0,T]$, in contradiction to $\tih \left(\gamma (T)\right) = h \left(\gamma (T)\right)< C$ (the latter holds since $\gamma (T)\in\Lambda_1$  and $h< C$ on $\Lambda_1$). Thus, $\gamma ([0,T])$ lies in $\{ \tih {\leq} C\}$ where $\tih$ coincides with $h$, meaning that $\gamma$ is, in fact, the chord of $h$ of time-length $T$ bounded from above by $T_0$. Since $\inf_\Sigma h = \inf_\Sigma \tih$, $\sup_\Sigma \tih \leq C+\epsilon$ and $\epsilon$ can be taken arbitrarily small, we get that $\displaystyle T_0\leq \inf_{s>C/\inf_\Sigma h} \frac{(s - 1) l_{min,s} (\Lambda_0,\Lambda_1,\lambda)}{s \inf_\Sigma h - C }$, yielding the required upper bound on the time-length of the chord.

If the pair $(\Lambda_0\sqcup\Lambda_1,\lambda)$ is homologically bonded, then we can replace in the bound above $l_{min,s} (\Lambda_0,\Lambda_1,\lambda)$
by $l_{min,\infty} (\Lambda_0,\Lambda_1,\lambda)$, remove the infimum and let $s$ go to $+\infty$. This shows that the time-length of the chord is bounded from above by $l_{min,\infty} (\Lambda_0,\Lambda_1,\lambda)/\inf_\Sigma h$.

This finishes the proof of the corollary.
\Qed

\begin{cor}
\label{cor-stably-homol-bonded-non-autonom-contact-dynamics-general-case}
Assume that $\inf_{\Sigma\times\SP^1} h >0$, $h$ is cooperative with $\Lambda_0, \Lambda_1$ and the pair $(\Lambda_0\sqcup\Lambda_1,\lambda)$ is stably homologically bonded. Denote $\hl_{min,\infty} :=\hl_{min,\infty} (\Lambda_0,\Lambda_1,\lambda)>0$. Assume also that for some $0<e<1/2$
\[
\sup_{\Sigma\times \SP^1} |\partial h/\partial t| < \frac{(1-2e)e\big(\inf_{\Sigma\times \SP^1} h\big)^3} {\big(\max_{\Lambda_0\times \SP^1} h + e\inf_{\Sigma\times \SP^1} h  \big)\hl_{min,\infty} }.
\]

Then there exists a chord of $h$ from $\Lambda_0$ to $\Lambda_1$ of time-length bounded from above by
$\displaystyle \frac{\hl_{min,\infty}}{(1-2e)\inf_{\Sigma\times \SP^1} h}$.
\end{cor}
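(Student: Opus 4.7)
\textbf{The plan} is to imitate the proof of Corollary~\ref{cor-homol-bonded-autonom-contact-dynamics-general-case}, now applying Part B rather than Part A of Theorem~\ref{thm-Reeb-chords-pos-Ham-gen-case} in order to accommodate the time dependence of $h$. Assume without loss of generality that condition (a) of Definition~\ref{def-cooperative-ham-gen-case} holds (the case (b) is symmetric by time-reversal) and pick $C > 0$ such that $h$ is $C$-cooperative and $C > \max_{(\Lambda_0 \cup \Lambda_1)\times\SP^1} h$. This choice is legitimate because cooperativeness at level $C_0$ automatically implies cooperativeness at every larger level, since $\{h_t \geq C\} \subset \{h_t \geq C_0\}$ for $C \geq C_0$. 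Next I fix a smooth non-decreasing cut-off $\chi \colon \R_+ \to \R_+$ with $\chi(u) = u$ for $u \in [0, C]$ and $\chi$ bounded above by $C + \varepsilon$, and set $\tih := \chi \circ h$. The resulting Hamiltonian $\tih$ is complete, bounded, has $\inf \tih = \inf h > 0$, and agrees with $h$ on $\{h \leq C\}$.

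\textbf{Next}, I would apply Part B of Theorem~\ref{thm-Reeb-chords-pos-Ham-gen-case} to $\tih$ with $s_+$ treated as a free parameter sent to $+\infty$. Stable homological bondedness of $(\Lambda_0 \sqcup \Lambda_1, \lambda)$ gives $\hl_{min,s_+} \leq \hl_{min,\infty} < +\infty$; once $s_+ > \sup \tih / \inf h$ one has $\Delta_{s_+} = s_+ \inf \tih - \sup \tih > 0$; and since $|\partial \tih/\partial t| \leq |\partial h/\partial t|$, $\inf \tih = \inf h$, and $\max_{\Lambda_0 \times \SP^1} \tih = \max_{\Lambda_0 \times \SP^1} h$ (because $\tih = h$ on $\Lambda_0$), an asymptotic comparison as $s_+ \to \infty$ shows that the derivative bound required by Part B, applied to $\tih$, is precisely implied by our hypothesis \eqref{eqn-upper-bound-on-partial-h-over-partial-t}. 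Part B then produces a chord $\gamma$ of $\tih$ from $\Lambda_0$ to $\Lambda_1$ of time-length $\leq (s_+ - 1)\hl_{min,s_+}/((1-2e)\Delta_{s_+})$, which converges to $\hl_{min,\infty}/((1-2e)\inf h)$ as $s_+ \to \infty$.

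\textbf{The main obstacle} is to upgrade $\gamma$ from a chord of $\tih$ to a chord of $h$. Since $\tih = h$ on $\{h \leq C\}$, it suffices to show $\gamma \subset \{h \leq C\}$. Writing $g(t) := \tih(\gamma(t), t)$, the contact-Hamiltonian identity $d\tih_t(v^{\tih}_t) = \tih \cdot d\tih_t(R)$ combined with $d\tih_t = \chi'(h)\, dh_t$ gives
\[
g'(t) = \chi'(h)\bigl[\chi(h)\, dh_t(R) + \partial h/\partial t\bigr].
\]
On $\{h \geq C\}$ cooperativeness forces $\chi(h)\, dh_t(R) \geq 0$, so $g'(t) \geq -\chi'(h) |\partial h/\partial t|$; a direct arithmetic check then shows that our hypothesis, together with the time-length bound on $\gamma$, yields $\sup |\partial h/\partial t| \cdot T < e(\inf h)^2 / (\max_{\Lambda_0 \times \SP^1} h + e \inf h)$, which strictly bounds the height of any maximal excursion of $g$ above its cut-off level. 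Choosing $C$ at the outset to exceed $\max_{(\Lambda_0 \cup \Lambda_1)\times\SP^1} h$ by more than this excursion margin -- again using that cooperativeness is inherited at every larger level -- and extending the region $\{\chi = \mathrm{id}\}$ accordingly, suffices to rule out any escape of $\gamma$ from $\{h \leq C\}$, which completes the proof.
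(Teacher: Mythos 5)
Your proposal follows the paper's own route: truncate $h$ above a cooperativeness level to obtain a bounded complete Hamiltonian $\tih=\chi\circ h$, apply part B of Theorem~\ref{thm-Reeb-chords-pos-Ham-gen-case} to $\tih$ with $s_+\to+\infty$ (your asymptotic verification of the hypothesis on $\partial h/\partial t$ is exactly the paper's computation), and then upgrade the chord of $\tih$ to a chord of $h$. The upgrade step is where you diverge, and there your argument, as written, has a hole. The inequality $g'\geq-\chi'(h)|\partial h/\partial t|$ on $\{h\geq C\}$ is a \emph{lower} bound on $g'$, so by itself it cannot ``bound the height of any maximal excursion of $g$'': cooperativeness only makes the Reeb term nonnegative, so nothing there prevents $g$ from rising arbitrarily. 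What bounds the excursion is the arrival condition, which you never invoke at this point: if $h_t(\gamma(t))$ ever exceeded $C_1+\sup|\partial h/\partial t|\cdot T$, where $C_1$ is a level at which cooperativeness actually holds (so $h<C_1$ on $\Lambda_1\times\SP^1$), the differential inequality would keep it above $C_1$ for the remainder of the chord, contradicting that the chord ends on $\Lambda_1$ -- i.e.\ precisely the mechanism of Corollary~\ref{cor-homol-bonded-autonom-contact-dynamics-general-case}. Two further repairs: run the argument for $u(t):=h_t(\gamma(t))$ rather than for $g=\chi\circ h\circ\gamma$; one computes $u'=\chi(h)\,dh_t(R)+\partial h/\partial t$, with no $\chi'$ factor, whereas $g$ is capped at $C+\epsilon$ by construction, so a bound on $g$ does not by itself confine the chord to $\{h\leq C\}$, the region where the contact vector fields of $\tih$ and $h$ agree. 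Also, the margin must separate the cut-off level from a level at which cooperativeness is actually known (the given one, or any larger), not merely from $\max_{(\Lambda_0\cup\Lambda_1)\times\SP^1}h$. With these corrections your enlarged-cut-off scheme does work, and it is in fact more explicit than the paper, which handles the time-dependent escape issue only with a one-line reference to the autonomous case.

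The second gap is at the end: for each finite $s_+$ you obtain a chord of time-length at most $(s_+-1)\hl_{min,s_+}/\big((1-2e)\Delta_{s_+}\big)$, which need not be below the asserted bound $\hl_{min,\infty}/\big((1-2e)\inf_{\Sigma\times\SP^1}h\big)$; saying the bound ``converges'' as $s_+\to\infty$ only yields, for every $\epsilon>0$, a chord of time-length at most the asserted bound plus $\epsilon$. To get the statement as claimed you must pass to a limit of the chords $\gamma_{s_+}$: normalize the starting times into $[0,1]$ using the $1$-periodicity of $h$, use compactness of $\Lambda_0$, and extract a limiting chord by a standard compactness argument, as the paper does. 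Note also that it is this uniform bound on the times $T_{s_+}$ (over all large $s_+$) that makes your excursion margin, and hence your choice of the cut-off level, independent of $s_+$.
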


\bigskip
\noindent
{\bf Proof of Corollary~\ref{cor-stably-homol-bonded-non-autonom-contact-dynamics-general-case}:}
Let us assume that $h$ is $C$-cooperative with $\Lambda_0, \Lambda_1$ for some $C>0$
and condition (a) from Definition~\ref{def-cooperative-ham} of $C$-cooperativeness is satisfied
(the case of condition (b) from the same definition is similar). Without loss of generality, assume $C > \inf_{\Sigma\times\SP^1} h$.

Similarly to the proof of Corollary~\ref{cor-homol-bonded-autonom-contact-dynamics-general-case},
consider a smooth increasing function $\chi: \R_+\to\R$ such that $\chi (s) = s$ for $s \in [0,C]$ and $\chi (s) = C+\epsilon$ for some $\epsilon >0$ and all sufficiently large $s\in\R_+$. Consider the time-dependent contact Hamiltonian $\tih := \chi \circ h$. One readily sees that it is complete and satisfies
\[
0 < \inf_{\Sigma\times\SP^1} h = \inf_{\Sigma\times\SP^1} \tih \leq \sup_{\Sigma\times\SP^1} \tih \leq C+\epsilon < +\infty.
\]
Note that for any sufficiently large $s_+ >1$
\[
\Delta_{s_+} := s_+  \inf_{\Sigma\times\SP^1} \tih - \sup_{\Sigma\times\SP^1} \tih >0.
\]
Also note that
\[
\lim\limits_{s_+\to +\infty} \frac{(1-2e)e\Delta_{s_+}^2 \inf_{\Sigma\times \SP^1} h} {(s_+ - 1)(s_+ \max_{\Lambda_0\times \SP^1} h + e\Delta_{s_+})} =
\frac{(1-2e)e\big(\inf_{\Sigma\times \SP^1} h\big)^3} {\max_{\Lambda_0\times \SP^1} h + e\inf_{\Sigma\times \SP^1} h}
\]
and for all $s_+>1$
\[
\hl_{min,s_+} (\Lambda_0,\Lambda_1,\lambda)\leq \hl_{min,\infty} (\Lambda_0,\Lambda_1,\lambda).
\]
Therefore the upper bound on $\sup_{\Sigma\times \SP^1} |\partial h/\partial t|$ in the hypothesis of the corollary implies that for any sufficiently large $s_+$ we can bound $\sup_{\Sigma\times \SP^1} |\partial \tih/\partial t|$ from above as in \eqref{eqn-upper-bound-on-partial-h-over-partial-t}:
\[
\sup_{\Sigma\times \SP^1} |\partial \tih/\partial t| < \frac{(1-2e)e\Delta_{s_+}^2 \inf_{\Sigma\times \SP^1} h} {(s_+ - 1)(s_+ \max_{\Lambda_0\times \SP^1} h + e\Delta_{s_+})  \hl_{min,{s_+}} }.
\]
Since the pair $(\Lambda_0\sqcup\Lambda_1,\lambda)$ is stably homologically bonded, part B of Theorem~\ref{thm-Reeb-chords-pos-Ham-gen-case}, applied to $\tih$, shows that there exists a chord $\gamma_{s_+}: [t_{s_+},t_{s_+}+T_{s_+}] \to \Sigma$ of $\tih$, for some $t_{s_+}\in\R$, so that $\gamma_{s_+} (t_{s_+})\in \Lambda_0$ and $\gamma_{s_+} (t_{s_+}+T_{s_+})\in\Lambda_1$ for $\displaystyle T_{s_+}\leq \displaystyle \frac{(s_+ - 1) \hl_{min,s_+}}{(1-2e)\Delta_{s_+}}$.
Similarly to the proof of Corollary~\ref{cor-homol-bonded-autonom-contact-dynamics-general-case}, we get that $\gamma_{s_+}$ is in fact a chord of $h$.

Note that
\[
\lim_{s_+\to +\infty} \frac{ (s_+ - 1) \hl_{min,s_+}}{(1-2e)\Delta_{s_+}} \leq \lim_{s_+\to +\infty} \frac{ (s_+ - 1) \hl_{min,\infty}}{(1-2e)\Delta_{s_+}}
= \frac{\hl_{min,\infty}}{(1-2e)\inf_{\Sigma\times \SP^1} h}.
\]
Also note that since $h$ is time-periodic with period $1$, we can assume that $t_{s_+}\in [0,1]$ for all $s_+$.
Now, since $\Lambda_0$ is compact, a standard compactness argument allows to obtain the existence of a chord of $h$ from $X_0$ to $X_1$  of
time-length $\displaystyle \leq\frac{ (s_+ - 1) \hl_{min,\infty}}{(1-2e)\Delta_{s_+}}$.
\Qed

The following corollary yields the existence of a chord of $h$ in case where $h$ is not necessarily everywhere positive.

\begin{cor}
\label{cor-non-positive-hamiltonians-separating-hypersurface-gen-case}
Assume there exists a (possibly non-compact or disconnected) closed codimension 0 submanifold $\Xi\subset \Sigma$ with a (possibly non-compact or disconnected) boundary $\partial\Xi$,
so that

\bigskip
\noindent
(1) $\inf_{\Xi\times\SP^1} h >0$ (but $h$ may be negative outside $\Xi \times\SP^1$).

\bigskip
\noindent
(2) $\sup_{\partial\Xi\times\SP^1} h < +\infty$.

\bigskip
\noindent
(3) For each $t\in\SP^1$
the {contact Hamiltonian vector field $v_t$ of $h$} is transverse to $\partial\Xi$ (in particular, $\partial\Xi$ is a convex surface in the sense of contact topology -- see \cite{Giroux-CMH1991})
and either points inside $\Xi$ everywhere on $\partial\Xi$ or points outside $\Xi$ everywhere on $\partial\Xi$.

\bigskip
\noindent
(4)
Both $\Lambda_0$ and $\Lambda_1$ lie in $\Xi$.

\bigskip
Then the following claims hold:

\bigskip
\noindent
(I) Assume the pair $(\Lambda_0\sqcup\Lambda_1,\lambda)$ is weakly homologically bonded. Assume also that $h$ is time-independent and $C$-cooperative with $\Lambda_0,\Lambda_1$ for $C > \inf_\Xi h$.

Then there exists a chord of $h$ from $\Lambda_0$ to $\Lambda_1$ of time-length bounded from above by
$\displaystyle \inf_{s>C/\inf_\Xi h} \frac{(s - 1) l_{min,s} (\Lambda_0,\Lambda_1,\lambda)}{s \inf_\Xi h - C}$.

If the pair $(\Lambda_0\sqcup\Lambda_1,\lambda)$ is homologically bonded, then the time-length of the chord can be bounded from above by
$\displaystyle \frac{l_{min,\infty} (\Lambda_0,\Lambda_1,\lambda)}{\inf_\Xi h}$.

\bigskip
\noindent
(II) Assume that the pair $(\Lambda_0\sqcup\Lambda_1,\lambda)$ is stably homologically bonded and set $\hl_{min,\infty} := \hl_{min,\infty} (\Lambda_0,\Lambda_1,\lambda)$. Assume also that $h$ is time-dependent and cooperative with $\Lambda_0,\Lambda_1$ and for some $0<e<1/2$
\[
\sup_{\Xi\times \SP^1} |\partial h/\partial t| < \frac{(1-2e)e\big(\inf_{\Xi\times \SP^1} h\big)^3} {\big(\max_{\Lambda_0\times \SP^1} h + e\inf_{\Xi\times \SP^1} h  \big)\hl_{min,\infty} }.
\]

Then there exists a chord of $h$ from $\Lambda_0$ to $\Lambda_1$ whose time-length is bounded from above by $\displaystyle \frac{\hl_{min,\infty}}{(1-2e)\inf_{\Xi\times \SP^1} h}$.

\end{cor}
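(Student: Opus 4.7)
The plan is to reduce both parts of the corollary to Corollaries~\ref{cor-homol-bonded-autonom-contact-dynamics-general-case} and \ref{cor-stably-homol-bonded-non-autonom-contact-dynamics-general-case}, respectively, by replacing $h$ with a globally positive modification $\tih$ that agrees with $h$ on a neighborhood of $\Xi$. Concretely, I would fix a smooth function $\phi\colon\Sigma\to[0,1]$ equal to $1$ on a neighborhood of $\Xi$ and vanishing outside a slightly larger open set, and set
\[
\tih(y,t) := \phi(y)\cdot(\chi\circ h)(y,t) + (1-\phi(y))\cdot c,
\]
where $c>0$ is a small positive constant and $\chi\colon\R\to\R$ is a smooth non-decreasing truncation (equal to the identity on a large interval containing the relevant range of $h|_\Xi$ and constant outside) chosen as in the proof of Corollary~\ref{cor-homol-bonded-autonom-contact-dynamics-general-case}. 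With appropriate choices of $c$, $\phi$, and $\chi$, the function $\tih$ is bounded, everywhere positive, and coincides with $h$ on a neighborhood of $\Xi$; in particular $\tih=h$ on $\Lambda_0\cup\Lambda_1$, and the contact vector field $\tilde v_t$ of $\tih$ equals $v_t$ on $\partial\Xi$.

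Next I would verify that $\tih$ fulfills the hypotheses of the relevant previous corollary. $C$-cooperativeness transfers because $\tih=h$ near $\Lambda_0\cup\Lambda_1\subset\Xi$, and because the sign condition on $d\tih_t(R)$ on $\{\tih\geq C\}$ follows from the corresponding condition for $h$ combined with $\chi'\geq 0$. For part~(II) the key quantitative identity
\[
\partial\tih/\partial t = \phi\cdot\chi'(h)\cdot\partial h/\partial t,
\]
together with shrinking $\supp\phi$ toward $\Xi$ and using hypothesis~(2), makes $\sup_{\Sigma\times\SP^1}|\partial\tih/\partial t|$ arbitrarily close to $\sup_{\Xi\times\SP^1}|\partial h/\partial t|$, so the quantitative hypothesis of Corollary~\ref{cor-stably-homol-bonded-non-autonom-contact-dynamics-general-case} continues to hold with strict margin. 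Applying the appropriate previous corollary to $\tih$ then produces a chord $\gamma$ of $\tih$ from $\Lambda_0$ to $\Lambda_1$ with a time-length bound which, after a compactness limit as the cut-off parameters tend to their extremal values, converges to precisely the bound claimed in the present corollary.

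The final step is to show that $\gamma\subset\Xi$, which implies that $\gamma$ is automatically a chord of $h$ with the same time-length because $\tih\equiv h$ on a neighborhood of $\Xi$. This is a dynamical consequence of hypothesis~(3): the vector field $\tilde v_t$ restricted to $\partial\Xi$ coincides with $v_t$, hence is transverse to $\partial\Xi$ and points consistently either inward or outward. In the inward case $\Xi$ is forward-invariant under $\tilde v_t$; in the outward case no trajectory can re-enter $\Xi$ once it has exited, so the endpoint condition $\gamma(T)\in\Lambda_1\subset\Xi$ forces $\gamma$ to stay in $\Xi$ throughout. The principal obstacle I foresee is the bookkeeping in part~(II): arranging the construction so that $\inf_\Sigma\tih$, $\max_{\Lambda_0\times\SP^1}\tih$, and $\sup_\Sigma|\partial\tih/\partial t|$ simultaneously approach their target values $\inf_{\Xi\times\SP^1}h$, $\max_{\Lambda_0\times\SP^1}h$, and $\sup_{\Xi\times\SP^1}|\partial h/\partial t|$, while preserving both the completeness of the contact flow of $\tih$ and its $C$-cooperativeness; the limit chord then realizes the sharp bound $\hl_{min,\infty}/((1-2e)\inf_{\Xi\times\SP^1} h)$.
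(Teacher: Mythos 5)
Your strategy coincides with the paper's: modify $h$ away from $\Xi$ into a positive Hamiltonian, apply Corollary~\ref{cor-homol-bonded-autonom-contact-dynamics-general-case} (resp.\ Corollary~\ref{cor-stably-homol-bonded-non-autonom-contact-dynamics-general-case}), use hypothesis (3) to trap the resulting chord in $\Xi$, and recover the stated bounds by a limiting argument. However, two points in your execution are genuine gaps. First, the hypotheses allow $h$ to be unbounded above on the interior of $\Xi$ (only $\sup_{\partial\Xi\times\SP^1}h<+\infty$ is assumed), so your requirements that $\tih$ be bounded and that $\tih\equiv h$ on a neighborhood of $\Xi$ are incompatible: if $\chi$ really truncates, then $\tih=\chi\circ h\neq h$ on $\{h>C\}\cap\Xi$, and ``the chord lies in $\Xi$'' no longer yields ``the chord is a chord of $h$''. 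You would need the additional ODE/cooperativity argument (as inside the proof of Corollary~\ref{cor-homol-bonded-autonom-contact-dynamics-general-case}) showing the chord never enters $\{h>C\}$; the paper avoids the issue altogether by modifying $h$ only \emph{outside} $\Xi$, never truncating its values on $\Xi$, and letting the cooperativity mechanism of the earlier corollaries absorb the unboundedness.

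Second, your transfer of cooperativity is not justified as stated: $d\tih_t(R)=d\phi(R)\,\bigl(\chi\circ h_t-c\bigr)+\phi\,\chi'(h_t)\,dh_t(R)$, and the cut-off term $d\phi(R)\,(\chi\circ h_t-c)$ has no sign, so ``$\chi'\geq 0$ plus cooperativity of $h$'' does not control $d\tih_t(R)$ on $\{\tih_t\geq C\}$. Likewise your assertion that the contact field of $\tih$ equals $v_t$ along $\partial\Xi$ requires $\chi$ to be the identity on the values of $h$ near $\partial\Xi$. Both points are exactly where hypothesis (2) must enter: as in the first line of the paper's proof, one arranges $C\geq\sup_{\partial\Xi\times\SP^1}h$ and chooses the collar so that the transition region of $\phi$ avoids $\{\tih_t\geq C\}$; you invoke (2) only for the $\partial h/\partial t$ estimate. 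Finally, a small point: for the bounds to come out as stated, the constant $c$ must be taken just below $\inf_{\Xi\times\SP^1}h$ (and sent to it in the limit), not merely ``small''; your closing paragraph suggests you intend this, but the construction should say so.
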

\bigskip

\bigskip
\noindent
{\bf Proof of Corollary~\ref{cor-non-positive-hamiltonians-separating-hypersurface-gen-case}:}
Let us assume that $h$ is $C$-cooperative with $\Lambda_0$, $\Lambda_1$  for $C\geq \sup_{\partial\Xi\times\SP^1} h$ (this is possible since, by (2), $\sup_{\partial\Xi\times\SP^1} h < +\infty$).

For any sufficiently small $\epsilon >0$ one can find a new contact Hamiltonian $\tih_\epsilon: \Sigma\times\SP^1\to\R$ so that $\tih = h$ on a neighborhood of $\Xi$ and
$\inf_{\Xi\times\SP^1} h -\epsilon\leq \tih_\epsilon \leq \sup_{\Xi\times\SP^1} h$
on $(\Sigma\setminus\Xi)\times\SP^1$. Since $C\geq \sup_{\partial\Xi\times\SP^1} h$, the contact Hamiltonian
$\tih_\epsilon$ satisfies the assumptions of Corollary~\ref{cor-homol-bonded-autonom-contact-dynamics-general-case} (in case (I)), or of Corollary~\ref{cor-stably-homol-bonded-non-autonom-contact-dynamics-general-case} (in case (II)). Consequently, by these corollaries,
there exists a chord $\gamma_\epsilon (t)$ of $\tih_\epsilon$, $\gamma_\epsilon (t_\epsilon)\in\Lambda_0$, $\gamma_\epsilon (t_\epsilon+T_\epsilon)\in\Lambda_1$, so that

\bigskip
\noindent
- In case (I):
\[
T_\epsilon\leq\inf_s \frac{(s - 1) l_{min,s} (\Lambda_0,\Lambda_1,\lambda)}{s \inf_\Sigma \tih_\epsilon - C}\leq \inf_s \frac{(s - 1) l_{min,s} (\Lambda_0,\Lambda_1,\lambda)}{s (\inf_\Xi h -\epsilon) - C}
\]
(the infimum is taken over all $s>C/\inf_\Sigma \tih_\epsilon$), and
\[
T_\epsilon\leq\frac{l_{min,\infty} (\Lambda_0,\Lambda_1,\lambda)}{\inf_\Sigma \tih_\epsilon}\leq \frac{l_{min,\infty} (\Lambda_0,\Lambda_1,\lambda)}{\inf_\Xi h - \epsilon}.
\]

\bigskip
\noindent
- In case (II):
\[
T_\epsilon\leq\frac{\hl_{min,\infty}}{(1-2e)\inf_{\Sigma\times \SP^1} \tih_\epsilon}\leq \frac{\hl_{min,\infty}}{(1-2e)\inf_{\Xi\times \SP^1} h -\epsilon}.
\]

\bigskip
\noindent
Here in both cases we have used that $\inf_\Sigma \tih_\epsilon \geq \inf_\Xi h - \epsilon$ for all (sufficiently small) $\epsilon>0$.

The chord $\gamma_\epsilon$ cannot cross $\partial\Xi$. Indeed, by (3), if it had crossed $\partial\Xi$, it would have had to cross it transversally from $\Xi$ to $M\setminus \Xi$. This would mean that $v_t$ {(the contact Hamiltonian vector field of $\tih_\epsilon|_\Xi=h|_\Xi$)} points outside $\Xi$ everywhere on $\partial\Xi$ for all $t\in\SP^1$. But then the chord would not have been able to return to $\Xi$ to reach $\Lambda_1$. Thus the chord lies in the interior of $\Xi$ and is, in fact, a chord of $h$ from $\Lambda_0$ to $\Lambda_1$.

We have such a chord $\gamma_\epsilon$ of $h$ from $\Lambda_0$ to $\Lambda_1$ for any sufficiently small $\epsilon>0$ and the time-lengths of the chords admit a bound continuous in $\epsilon$. We can also assume that for all $\epsilon>0$ $t_{s_\epsilon} =0$ in case (I) (since $h$ is time-independent) and
$t_{s_\epsilon} \in [0,1]$ in case (II) (since $h$ is 1-periodic in time). Now, since $\Lambda_0$ is compact, a standard compactness argument allows to
obtain the existence of the chord of $h$ from $X_0$ to $X_1$ of time-length $T$, so that

\bigskip
\noindent
- In case (I):
\[
T\leq\inf_{s>C/\inf_\Xi h} \frac{(s - 1) l_{min,s} (\Lambda_0,\Lambda_1,\lambda)}{s \inf_\Xi h - C}
\]
and if the pair $(\Lambda_0\sqcup\Lambda_1,\lambda)$ is homologically bonded, then
\[
T\leq\frac{l_{min,\infty} (\Lambda_0,\Lambda_1,\lambda)}{\inf_\Xi h}.
\]

\bigskip
\noindent
- In case (II)
\[
T\leq\frac{\hl_{min,\infty}}{(1-2e)\inf_{\Xi\times \SP^1} h}.
\]

This finishes the proof of the corollary.
\Qed

\bigskip

\begin{rem}
\label{rem-existence-of-chords-of-h-from-Lambda0-to-Lambda1-robust}
{\rm
The claim of Theorem~\ref{thm-Reeb-chords-pos-Ham-gen-case} is robust -- for a fixed $h$ -- with respect to perturbations of $\Lambda=\Lambda_0\sqcup\Lambda_1$ by Legendrian isotopies, as long as the perturbation is sufficiently {$C^1$-small}, depending on {$s_+$} -- this follows from Remark~\ref{rem-l-min-s-robust}.

Accordingly, the claims of Corollaries~\ref{cor-homol-bonded-autonom-contact-dynamics-general-case}, \ref{cor-stably-homol-bonded-non-autonom-contact-dynamics-general-case} and \ref{cor-non-positive-hamiltonians-separating-hypersurface-gen-case}
are robust -- for a fixed $h$ -- with respect to  perturbations of $\Lambda=\Lambda_0\sqcup\Lambda_1$ by Legendrian isotopies, as long as the perturbation is sufficiently {$C^1$-small}, depending on $\displaystyle \frac{C}{\inf_{\Sigma\times\SP^1} h}$, {where $C$ is the constant such that $h$ is $C$-cooperative with $\Lambda_0$, $\Lambda_1$.}

Namely, if the pair $(\Lambda_0\sqcup\Lambda_1,\lambda)$ is {weakly} homologically bonded, then $l_{min,s} (\Lambda_0,\Lambda_1,\lambda)< +\infty$ for any $s> \displaystyle \frac{C}{\inf_{\Sigma\times\SP^1} h}$. Fix such an $s$. Then, by Remark~\ref{rem-l-min-s-robust}, $l_{min,s} (\Lambda'_0,\Lambda'_1,\lambda)$ is finite and close to $l_{min,s} (\Lambda_0,\Lambda_1,\lambda)$ for any $\Lambda' = \Lambda'_0\sqcup\Lambda'_1$ obtained from $\Lambda$ by a Legendrian isotopy, as long as the isotopy is small (depending on the chosen $s$).
The proofs of Corollaries~\ref{cor-homol-bonded-autonom-contact-dynamics-general-case}, \ref{cor-stably-homol-bonded-non-autonom-contact-dynamics-general-case} and \ref{cor-non-positive-hamiltonians-separating-hypersurface-gen-case} then go through for $\Lambda'$ instead of $\Lambda$ and yield the existence of a chord of $h$ between $\Lambda'_0$ and $\Lambda'_1$.
{The cases when the pair $(\Lambda_0\sqcup\Lambda_1,\lambda)$ is homologically bonded or stably homologically bonded are similar.}
}
\end{rem}

\section{The case of $J^1 Q$}
\label{sec-the-1-jet-space}

In this section let $\Sigma = J^1 Q = T^* Q\times \R (z)$ be the 1-jet space of a smooth manifold $Q$, together with the standard contact form $\lambda$ on it. The Reeb flow of $\lambda$ is the shift in the $z$-coordinate. This is a nice contact manifold.

Let $\Lambda_0$ be the zero section
of $J^1 Q$.

\begin{prop}
\label{prop-lmin-for-J1Q-1} Assume that $\Lambda_1 \subset \Sigma=J^1 Q$ is a Legendrian submanifold
such that $V_\infty (\Lambda_1,\lambda)\neq 0$ and satisfying the
  following property: there is unique Reeb chord starting on $\Lambda_0$ and ending on $\Lambda_1$, and this chord is non-degenerate in the sense of \eqref{eq-nondeg}.

Then the pair $(\Lambda_0\sqcup\Lambda_1,\lambda)$ is homologically bonded.
\end{prop}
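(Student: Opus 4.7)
Plan: My strategy is to show that the unique $01$-chord $c$ (of time-length $l(c)$) is a cycle in the $01$-subspace $\cA(\Lambda_0,\Lambda_1,\lambda)$ representing a nonzero class in $V_\infty(\Lambda_0,\Lambda_1,\lambda)$. This places $c$ in an infinite bar of the barcode with left endpoint $\leq l(c)$, so $l_{min,\infty}(\Lambda_0,\Lambda_1,\lambda)\leq l(c)<+\infty$. By the liminf definition and a $C^\infty$-small Legendrian perturbation of $\Lambda_1$, I may reduce to the case where $(\Lambda_0\sqcup\Lambda_1,\lambda)$ is non-degenerate.

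The key structural observation is that the zero section $\Lambda_0\subset J^1Q$ carries no Reeb chord on itself (the Reeb flow is vertical translation), so there are no $00$-chords. Combined with the uniqueness of $c$, every $01$-composable monomial must begin with $c$. This immediately gives $\partial c=0$: each term of $\partial c$ would be a $01$-composable monomial of action strictly less than $l(c)$, yet each such monomial has $c$ as its first factor and hence action at least $l(c)$.

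For the nontriviality of $[c]$, I would filter $\cA(\Lambda_0,\Lambda_1,\lambda)$ by the number of $c$-letters: set $F^{\geq k}$ to be the span of monomials containing at least $k$ copies of $c$. Since $\partial c=0$, the Leibniz rule shows $\partial(F^{\geq k})\subset F^{\geq k}$. Suppose $c=\partial b$ and write $b=\sum_k b_k$ with $b_k$ having exactly $k$ copies of $c$; the projection of $\partial b$ onto $F^{\geq 1}/F^{\geq 2}$ then comes entirely from $\partial b_1$. Writing $b_1=c\cdot W$ with $W$ in the unital subalgebra $B_1$ generated by the $11$-chords (including the empty word), one has $\partial b_1=c\cdot \partial W$, and its single-$c$ part equals $c\cdot \partial_1 W$, where $\partial_1$ projects $\partial$ onto words in $11$-chords only. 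Two observations identify $\partial_1|_{B_1}$ with the Chekanov--Eliashberg differential of $\Lambda_1$ alone: (a) an $11$-path-composable word containing no $c$ contains no $10$-chord either (otherwise it could not return to $\Lambda_1$), so the $c$-free part of $\partial W$ lives in $B_1$; (b) any holomorphic disk with positive puncture at an $11$-chord and only $11$-chord negative punctures has boundary entirely on $\Lambda_1$, since any arc of the boundary on $\Lambda_0$ would require mixed-chord endpoints. Hence $c=\partial b$ would force $\partial_1 W=1$ in $B_1$, meaning $[1]=0$ in $V_\infty(\Lambda_1,\lambda)$. But $V_\infty(\Lambda_1,\lambda)$ is a unital $\Z_2$-algebra under concatenation, so $[1]=0$ would imply $V_\infty(\Lambda_1,\lambda)=0$, contradicting the hypothesis. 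Therefore $[c]$ is a nonzero class in $V_\infty(\Lambda_0,\Lambda_1,\lambda)$ represented by an element of action $l(c)$, yielding the desired infinite bar with left endpoint $\leq l(c)$.

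The step I expect to require the most care is the identification of the $c$-free part of $\partial$ on $11$-chord monomials with the CE-differential $\partial_1$ of $\Lambda_1$ alone, together with the compatibility of $\partial$ with the $c$-count filtration. Both assertions rest on the purely topological combinatorics of boundary arcs described in (a) and (b) above, but have to be combined properly with the Leibniz bookkeeping, and the perturbation argument (plus an appeal to Legendrian isotopy invariance of $V_\infty(\Lambda_1,\lambda)$, used to preserve the hypothesis under the small isotopies that achieve non-degeneracy) needs to be written out carefully.
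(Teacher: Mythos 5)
Your proof is correct and takes essentially the same route as the paper's: the key ingredients coincide (no $00$-chords, so every $01$-composable word begins with the unique chord and hence $\partial c=0$; the $c$-free part of the differential on words of pure $\Lambda_1$-chords is the Chekanov--Eliashberg differential of $\Lambda_1$ alone, by the boundary-arc argument; and $V_\infty(\Lambda_1,\lambda)\neq 0$ rules out $1$ being exact via the unit trick). The paper's Lemma asserting there is no $x\in\cA_{11}$ with $dx=1$, together with its type I/II monomial bookkeeping, is just an explicit version of your filtration by the number of occurrences of $c$ and passage to $F^{\geq 1}/F^{\geq 2}$, so the difference is purely organizational.
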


\begin{proof} Denote by $a$ the unique Reeb chord going from $\Lambda_0$ to $\Lambda_1$.

For each $i,j=0,1$ write $\cA_{ij}$ for the subalgebra of $\cA (\Lambda_0\sqcup \Lambda_1,\lambda)$
generated by all $ij$-composable monomials. We call a monomial in $\cA (\Lambda_0\sqcup \Lambda_1,\lambda)$ {\it good} if it is $11$-composable and contains no $a$. Note that each such monomial lies in $\cA (\Lambda_1,\lambda)$.

Let $J\in \cJ (\Lambda_0\sqcup \Lambda_1)$. Then $J$ also lies in $\cJ (\Lambda_1)$. Consequently, $J$ defines both a differential $d:=\partial_J$ on $\cA (\Lambda_0\sqcup \Lambda_1,\lambda)$ and a differential $d'$ on $\cA (\Lambda_1,\lambda)$.

Note that $da=0$. Indeed, $d$ should map $a$ into a sum of $01$-composable monomials whose actions are smaller than the action of $a$ but there are no such monomials.

\begin{lemma}
\label{lem-no-b-with db-equal-1}
There is no $x\in \cA_{11}$ such that $dx=1$.
\end{lemma}

\smallskip
\noindent
{\bf Proof of Lemma~\ref{lem-no-b-with db-equal-1}.}
Since $V_\infty (\Lambda_1,\lambda)\neq 0$, there exists no $x'\in \cA (\Lambda_1,\lambda)$ such that $d' x' = 1$. Indeed, otherwise every $d'$-closed element $y$ is $d'$-exact: $d'(yx')=y$.

Now assume by contradiction that there exists $x\in \cA_{11}$ such that $dx=1$. Then $x$ is a sum of $11$-composable monomials of the either of the following two types: either
$b_1 a b_2 a\ldots b_k a$ (type I) for some $k>0$, or $b_1 a b_2 a\ldots b_k a c$ (type II) for some $k\geq 0$, where $b_1,\ldots, b_k$ are $10$-composable monomials not containing $a$ and $c$ is a good monomial.

Since $da=0$ and $\cA_{10}$ is $d$-invariant, applying $d$ to a monomial $b_1 a b_2 a\ldots b_k a$ of type I, we get that $d (b_1 a b_2 a\ldots b_k a)$ is again a sum of monomials of type I, or zero.
Similarly, applying $d$ to a monomial $b_1 a b_2 a\ldots b_k a c$ of type II, we get that $d (b_1 a b_2 a\ldots b_k a c)$ is a sum of monomials of type II, with at least one $a$ in each of them, or zero -- unless $k=0$ and the original monomial of type II to which $d$ is applied is just a good monomial $c$. Since $dx=1$, we get that $x$ can be written as $x=x'+x''$, where $x'$ is a non-trivial sum of good monomials and $x''$ is a sum of monomials (of types I and II) containing $a$.

Each good monomial $c$ lies in $\cA (\Lambda_1,\lambda)$ and therefore $x'$, which is a sum of good monomials, also lies in $\cA (\Lambda_1,\lambda)$. For each good monomial $c$ we can write $dc = d'c + y$, where $y$ is a sum of $11$-composable monomials containing $a$. At the same time, the discussion above shows that $dx''$ is a (possibly trivial) sum of $11$-composable monomials containing $a$. Since $dx = 1$ and $d'c$ is a sum of $11$-composable monomials that do not contain $a$ and cannot cancel out monomials containing $a$, we get that $d'x'=1$, which yields a contradiction.

This finishes the proof of the lemma.
\Qed
\smallskip

Let us now finish the proof of the proposition.

Note that there are no $00$-chords (since $\Lambda_0$ is the zero section). Therefore any $01$-composable monomial has to be of either of the following two types: either $a B_1 a B_2\ldots a B_k a$ (type 1) for some $k>0$, or $a B_1 a B_2\ldots a B_k a C$ (type 2) for some $k\geq 0$,
where $B_1,\ldots, B_k$ are $10$-composable monomials containing no $a$ and $C$ is a good monomial.

We claim that there exists no $z\in \cA (\Lambda_0, \Lambda_1,\lambda)$ such that $dz=a$ -- since $da=0$, this would readily imply the proposition.

Let us prove the claim.
Assume by contradiction, that such a $z$ exists. It is a sum of $01$-composable monomials, each of which is either of type 1 or of type 2.

Since $da=0$, the differential of any monomial of type 1 or 2 is a (possibly zero) sum of monomials of the same type each of which contains {\sl more than one} $a$, except for the case where the monomial to which $d$ is applied is a monomial of type 2 of the form $aC$, where $C$ is a good monomial -- then $d(aC)= a\cdot dC$. Thus, the only way $dz$ can contain a monomial $a$ is that $z$ is a sum of monomials one of which is of the form $aC$, where $C$ is a good monomial such that $dC=1$. This leads to a contradiction with Lemma~\ref{lem-no-b-with db-equal-1}.

This finishes the proof of the claim and of the proposition.
\end{proof}

\medskip
Further on in this section,
assume that $l>0$ and $\Lambda_1$ is the image of $\Lambda_0$ under the time-$l$ Reeb flow.

\begin{prop}
\label{prop-lmin-for-J1Q}
The pair $(\Lambda_0\sqcup\Lambda_1,\lambda)$ is homologically bonded and
$$l_{min,s} (\Lambda_0,\Lambda_1,\lambda) \leq l$$
for all $s\in (1,+\infty]$.
\end{prop}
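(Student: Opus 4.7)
The pair $(\Lambda_0\sqcup\Lambda_1,\lambda)$ is degenerate since every point of $Q$ yields a Reeb chord of length $l$, so the plan is to approximate $\Lambda_1$ by a non-degenerate perturbation whose persistence module can be computed explicitly and then invoke the $\liminf$ definition of $l_{min,\infty}$.

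First, for a positive Morse function $\psi:Q\to\R$ with $\|\psi-l\|_{C^\infty}$ small, replace $\Lambda_1$ by the $1$-jet graph
\[
\Lambda_1^\psi := \{(q, d\psi(q), \psi(q)) \ : \ q\in Q\}.
\]
This is a $C^\infty$-small Legendrian perturbation of $\Lambda_1$. A direct calculation shows that a Reeb chord from $\Lambda_0$ to $\Lambda_1^\psi$ exists over $q\in Q$ iff $d\psi(q)=0$, in which case the chord has action $\psi(q)$; moreover there are no self-chords of $\Lambda_0$ (the zero section) nor of $\Lambda_1^\psi$, and no chords from $\Lambda_1^\psi$ to $\Lambda_0$ (since $\psi>0$). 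The non-degeneracy condition \eqref{eq-nondeg} at a chord above $q$ reduces to the non-degeneracy of the Hessian $d^2\psi(q)$, so $(\Lambda_0\sqcup\Lambda_1^\psi,\lambda)$ is a non-degenerate pair.

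Next, observe that the $01$-subspace $\cA(\Lambda_0,\Lambda_1^\psi,\lambda)$ is simply the $\Z_2$-vector space spanned by the mixed chords, one per critical point of $\psi$. Indeed, a $01$-composable monomial $a_1\cdots a_k$ with $k\geq 2$ would require $a_2$ to start on $\Lambda_1^\psi$, but every chord in our setting starts on $\Lambda_0$. Consequently the differential $\partial_J$ restricted to this subspace is a linear map whose matrix coefficients count $J$-holomorphic disks in the symplectization with two boundary punctures and boundary alternately on $\Lambda_0\times\R_+$ and $\Lambda_1^\psi\times\R_+$. For this Morse-theoretic configuration in a $1$-jet space, it is known (cf.\ the Morse-theoretic computation of LCH for $1$-jet graphs; see e.g.\ the results discussed in \cite{Ekh-JEMS, EES-TAMS}) that these pseudo-holomorphic strips correspond bijectively to gradient trajectories of $\psi$ with respect to a suitable metric. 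Hence $(\cA(\Lambda_0,\Lambda_1^\psi,\lambda), \partial_J)$ is isomorphic, as a filtered complex over $\Z_2$, to the Morse complex of $\psi$ with filtration by critical values.

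Consequently the persistence module $V(\Lambda_0,\Lambda_1^\psi,\lambda)$ coincides with the Morse persistence module of $\psi$; its total homology is $H_*(Q;\Z_2)\neq 0$, so its barcode contains at least one infinite bar, whose left end is a critical value of $\psi$ and is therefore bounded above by $\max_Q\psi$. Thus
\[
l_{min,\infty}(\Lambda_0,\Lambda_1^\psi,\lambda)\leq\max_Q\psi.
\]
Choosing a sequence $\psi_n\to l$ in $C^\infty$-topology gives a sequence of non-degenerate perturbations $\Lambda_1^{\psi_n}\to\Lambda_1$ with $\max_Q\psi_n\to l$, so by Definition~\ref{def-l-min},
\[
l_{min,\infty}(\Lambda_0,\Lambda_1,\lambda)\leq\liminf_{n\to\infty}l_{min,\infty}(\Lambda_0,\Lambda_1^{\psi_n},\lambda)\leq l<+\infty,
\]
proving homological bondedness. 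Since $l_{min,s}\leq l_{min,\infty}$ for every $s\in(1,+\infty]$, this also gives $l_{min,s}(\Lambda_0,\Lambda_1,\lambda)\leq l$.

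The main obstacle is the identification of the $01$-differential with the Morse differential in step three. While the setup is the classical Morse-theoretic one for pairs of $1$-jet sections, a rigorous verification requires controlling the moduli spaces of two-punctured pseudo-holomorphic strips in $J^1Q\times\R_+$ with boundary on $\Lambda_0$ and $\Lambda_1^\psi$ and matching them with the gradient trajectories of $\psi$; all other steps are either formal or follow directly from the definitions.
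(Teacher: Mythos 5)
Your proposal is correct and follows essentially the same route as the paper: perturb $\Lambda_1$ to the $1$-jet graph of a Morse function $C^\infty$-close to the constant $l$, note that the only Reeb chords are the mixed ones over the critical points (so the $01$-subspace is their $\Z_2$-span and the differential is linear), identify the resulting filtered complex with the Morse complex of the perturbing function (the paper does this via the Lagrangian Floer complex of the projections to $T^*Q$ and Floer's theorem, citing the remark before Cor.~1.10 in \cite{Chantraine-et-al-pos-Leg-isotopies}, which is the same identification you invoke), and conclude from the infinite bars of the Morse persistence module together with the $\liminf$ definition of $l_{min,s}$. The step you flag as the main obstacle is treated at the same level of detail in the paper, by reference to the literature rather than by a new argument.
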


\bigskip
\noindent
{\bf Proof of Proposition~\ref{prop-lmin-for-J1Q}:}
The pair $(\Lambda_0\sqcup \Lambda_1,\lambda)$ is degenerate and we have to perturb, say, $\Lambda_1$ to make it non-degenerate.

Namely, consider a Morse function $f:Q\to\R$ which is a $C^\infty$-small perturbation of the constant function $z_+$. Its 1-jet is a Legendrian submanifold $\Lambda'_1\subset J^1 Q$ which is a small perturbation of $\Lambda_1$. It is not hard to check that the pair $(\Lambda_0\sqcup \Lambda'_1,\lambda)$ is non-degenerate: the Reeb chords of $\Lambda=\Lambda_0\sqcup\Lambda'_1$ are then the Reeb chords from $\Lambda_0$ to $\Lambda'_1$ corresponding to the critical points of $f$ and their actions are the critical values of $f$ shifted down by $z_+$. The $01$-subspace is then the $\Z_2$-span of the Reeb chords of $\Lambda$.

One can show that, under the identification between the Reeb chords of $\Lambda$ and the critical points of $f$, the differential in the Chekanov-Eliashberg algebra is identified with the Morse differential in the Morse chain complex of $f$ (over $\Z_2$). More precisely, the differential in the Chekanov-Eliashberg algebra is identified with the differential in the Lagrangian Floer complex of the projections of $\Lambda_0$ and $\Lambda'_1$ to $T^* Q$ (that are {\sl embedded} Lagrangian submanifolds) and the latter is identified with the Morse differential by the original work of Floer \cite{Floer-JDG88} -- see e.g. the comment preceding Cor. 1.10 in \cite{Chantraine-et-al-pos-Leg-isotopies}.

Thus, the persistence module associated to $(\Lambda_0,{\Lambda'_1},\lambda)$ is the Morse homology persistence module associated to $f$. The corresponding barcode contains infinite bars -- their number is equal to the sum of the Betti numbers of $Q$ over $\Z_2$. Hence, $l_{min,s} (\Lambda_0,\Lambda'_1,\lambda)\leq \max_Q$ for all $s\in (1,+\infty]$. Letting $f$ converge uniformly to the constant function $l$ we readily get that $l_{min,s} (\Lambda_0,\Lambda_1,\lambda) \leq l$
for all $s\in (1,+\infty]$, meaning that the pair $(\Lambda_0\sqcup\Lambda_1,\lambda)$ is homologically bonded.
\Qed

\bigskip
\noindent
{\bf Proof of Theorem~\ref{thm-1-jet-space-0-section-and-its-Reeb-shift-homol-bonded}:}
The Legendrian submanifolds appearing in the formulation of the theorem
are homologically bonded. For item (i) of the theorem, this follows from Proposition~\ref{prop-lmin-for-J1Q} combined with the fact that the property of being homologically bonded is invariant under a Legendrian isotopy of a pair. For item (ii) this follows from Proposition~\ref{prop-lmin-for-J1Q-1}. Therefore, the pair $(\Lambda_0,\Lambda_1)$ is interlinked by Corollary~\ref{cor-homol-bonded-autonom-contact-dynamics-general-case}. \qed

\bigskip
\begin{prop}
\label{prop-hlmin-for-J1Q}
The pair $(\Lambda_0\sqcup\Lambda_1,\lambda)$ is stably homologically bonded and
$\hl_{min,\infty} (\Lambda_0,\Lambda_1,\lambda) \leq l$.
\end{prop}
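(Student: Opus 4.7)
The plan is to reduce this proposition directly to Proposition~\ref{prop-lmin-for-J1Q} by reinterpreting the stabilization as a jet space. Explicitly, the stabilization is
\[
\hSigma = T^*Q \times \R(z) \times \R(r) \times \SP^1(\tau), \qquad \hlambda = dz + \vartheta + r\, d\tau.
\]
Regroup the coordinates by pairing $(r,\tau)$ into $T^*\SP^1$, so that $\hSigma = T^*(Q\times \SP^1) \times \R(z)$ with Liouville form $\vartheta + r\, d\tau = \vartheta_{Q\times \SP^1}$. Then $\hlambda$ is (up to the orientation convention on $\SP^1$) precisely the standard contact form on $J^1(Q\times \SP^1)$, and this produces a contactomorphism between $(\hSigma, \hlambda)$ and the 1-jet space of the closed manifold $Q\times\SP^1$ carrying the standard contact form.

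Under this contactomorphism, $\hLambda_0 = \Lambda_0\times\{r=0\}$ is the zero section of $J^1(Q\times \SP^1)$, while $\hLambda_1 = \Lambda_1\times\{r=0\}$ is its image under the time-$l$ Reeb flow (since the Reeb vector field of $\hlambda$ is $\partial/\partial z$, and $\Lambda_1$ was the time-$l$ Reeb shift of $\Lambda_0$ in $J^1Q$). Thus the pair $(\hLambda_0\sqcup\hLambda_1, \hlambda)$ is, up to contactomorphism, exactly a pair of the form treated by Proposition~\ref{prop-lmin-for-J1Q}, but with $Q$ replaced by $Q\times\SP^1$.

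Applying Proposition~\ref{prop-lmin-for-J1Q} to this pair immediately yields
\[
l_{min,s}(\hLambda_0,\hLambda_1,\hlambda) \leq l \qquad \text{for all } s\in(1,+\infty],
\]
and in particular $l_{min,\infty}(\hLambda_0,\hLambda_1,\hlambda) \leq l$. By the definition of $\hl_{min,\infty}$ in Section~\ref{subsec-invt-of-two-part-Lagrs-via-pers-modules}, this is exactly the assertion $\hl_{min,\infty}(\Lambda_0,\Lambda_1,\lambda)\leq l$, so the pair is stably homologically bonded.

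The only place where care is required is checking the contactomorphism and that the niceness hypothesis is preserved: one must verify that $(T^*(Q\times\SP^1), d\vartheta_{Q\times\SP^1})$ still has bounded geometry at infinity (which is automatic since $T^*\SP^1$ does and $Q$ is closed), so that Proposition~\ref{prop-lmin-for-J1Q} legitimately applies in this enlarged setting. Once that is confirmed, no new Legendrian contact homology computation is needed --- the Morse-theoretic perturbation argument used to establish Proposition~\ref{prop-lmin-for-J1Q}, run on $Q\times\SP^1$ instead of $Q$, does the work.
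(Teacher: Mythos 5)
Your argument is correct and is essentially identical to the paper's own proof: the paper likewise identifies $\hSigma = J^1 Q\times T^*\SP^1$ with $J^1(Q\times\SP^1)$, matching the contact forms, notes that $\hLambda_0$ becomes the zero section and $\hLambda_1$ its time-$l$ Reeb shift, and then invokes Proposition~\ref{prop-lmin-for-J1Q}. Your extra remarks on the orientation convention for $\tau$ and on bounded geometry of $T^*(Q\times\SP^1)$ are harmless sanity checks that the paper leaves implicit.
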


\bigskip
\noindent
{\bf Proof of Proposition~\ref{prop-hlmin-for-J1Q}:}
There is a natural identification $\hSigma= J^1 Q\times T^* \SP^1 = J^1 (Q\times \SP^1)$ identifying the contact forms, and hence the contact structures.
The Legendrian submanifold $\hLambda_0$ is then the zero-section of $J^1 (Q\times \SP^1)$ and $\hLambda_1$ is its image under the time-$l$ Reeb flow.
Now the result follows from Proposition~\ref{prop-lmin-for-J1Q}.\Qed

Let $\psi$ be a positive function on $\Lambda_0$, and let $\Lambda:= \{z=\psi(q), p=\psi'(q)\}$ be the graph of its 1-jet in $J^1 Q$.
Assume that $K$ is a Legendrian submanifold of $J^1 Q$ Legendrian isotopic to $\Lambda$ outside the zero section $\Lambda_0$, so that there exist exactly two Reeb chords $A,a$ starting on $K$ and ending on $\Lambda_0$, both non-degenerate, and with their time-lengths $|A|,|a|$ satisfying
\begin{equation}
\label{eq-twochords-1}
0 < |A|-|a| < |b|,
\end{equation}
for every Reeb chord $b$ starting and ending on $K \sqcup \Lambda_0$.

\begin{prop}
\label{prop-twochords}
For any $s < |A|/|a|$,
\begin{equation}
\label{eq-twochords-el}
l_{min,s} (K,\Lambda_0,\lambda) = |a|.
\end{equation}
\end{prop}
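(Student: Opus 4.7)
The plan is to compute the low-action portion of the barcode of $V(K,\Lambda_0,\lambda)$ directly from the action filtration on the $01$-subspace $\cA(K,\Lambda_0,\lambda)$ of the Chekanov--Eliashberg algebra, exploiting the chord-gap hypothesis $|A|-|a|<|b|$ to eliminate every potentially competing low-action generator. First I would reduce to the non-degenerate case by a sufficiently $C^\infty$-small Legendrian perturbation of $K$ that preserves $A$ and $a$ and the strict chord-gap inequality (possible because $A,a$ are non-degenerate), then invoke the $\liminf$ definition of $l_{min,s}$ for the degenerate statement.

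Next I would classify the $01$-composable monomials. Since $\Lambda_0$ is the zero section of $J^1Q$ and the Reeb field $\partial_z$ has no trajectories returning to $\Lambda_0$, there are no chords from $\Lambda_0$ to itself; inductively this forces every $01$-composable monomial to have the alternating shape
\[
B_0\,c_1\,d_1\,B_1\,c_2\,d_2\cdots B_{k-1}\,c_k,\qquad k\geq 1,
\]
with $c_i\in\{A,a\}$, each $d_j$ a chord from $\Lambda_0$ to $K$, and each $B_l$ a (possibly empty) product of chords from $K$ to $K$. Taking $b=a$ in the hypothesis yields $|A|<2|a|$. Any monomial with $k\geq 2$ then has action at least $|c_1|+|c_2|\geq 2|a|>|A|$, and any monomial with $k=1$ and nonempty $B_0$ has action at least $|a|+\min_b|b|>|a|+(|A|-|a|)=|A|$. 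Consequently, the only $01$-composable monomials of action at most $|A|$ are $a$ and $A$ themselves.

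Since $\partial$ strictly decreases action, this forces $\partial a=0$ (the span of $01$-composable monomials of action strictly below $|a|$ is zero) and $\partial A\in\Z_2\cdot a$, i.e.\ $\partial A\in\{0,a\}$. Writing $V_r$ for the persistence module associated to $\cA(K,\Lambda_0,\lambda)\cap\cA_r(K\sqcup\Lambda_0,\lambda)$, one reads off $V_r=0$ for $r\leq|a|$, $V_r=\Z_2\cdot[a]$ for $r\in(|a|,|A|]$, and on the next filtration slab just above $|A|$, $V_r$ is either $0$ (if $\partial A=a$) or two-dimensional (if $\partial A=0$). In the first case the bar containing $[a]$ is exactly $(|a|,|A|]$, of multiplicative length $|A|/|a|$; in the second case $[a]$ survives past $|A|$ and dies at some $r^*>|A|$, giving a bar of multiplicative length $r^*/|a|>|A|/|a|$. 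Either way, the barcode of $V(K,\Lambda_0,\lambda)$ contains a bar with left endpoint $|a|$ and multiplicative length $\geq|A|/|a|$.

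To finish, since $|a|$ is the smallest action attained by any $01$-composable monomial, every bar in the barcode has left endpoint $\geq|a|$; combined with the bar just constructed, this yields $l_{min,s}(K,\Lambda_0,\lambda)=|a|$ for every $s<|A|/|a|$. The main bookkeeping hurdle is the monomial classification together with the action estimate ruling out words with intermediate $d_j$'s, long alternating words, and $00$-chord prefixes; pleasantly, the argument goes through without ever having to determine which of the two values $\partial A$ actually takes, so no holomorphic-disk count is needed.
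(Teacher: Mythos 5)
Your proposal is correct, and its skeleton coincides with the paper's own proof: the chord-gap hypothesis forces $a$ and $A$ to be the only $01$-composable monomials of action $\leq |A|$ (the paper phrases this as $\cA_{01}=\mathrm{Span}\{ua,uA\colon u\in\cA_{00}\}$ together with the action estimate), whence $\partial_J a=0$ and $\partial_J A\in\{0,a\}$, and one then reads off a bar with left end $|a|$ of multiplicative length at least $|A|/|a|$. The genuine divergence is in the case $\partial_J A=0$: there the paper brings in global input -- the invariance of $V_\infty$ under Legendrian isotopy and the computation $V_\infty(\Lambda_0,K,\lambda)=V_\infty(\Lambda_0,\Lambda_1,\lambda)\neq 0$ coming from Proposition~\ref{prop-lmin-for-J1Q}, via the ``$dp=1$'' argument -- to show that $a$ could never become exact, while you observe that for $s<|A|/|a|$ it suffices that the class $[a]$ survives just past the action level $|A|$, which is immediate from the local filtration picture once $\partial_J A=0$ and no other monomial has action in $(|a|,|A|]$. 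This is a real simplification: your argument is purely local in the action filtration and needs neither the isotopy invariance of $V_\infty$ nor the $J^1Q$ computation, at the cost of not extracting the extra structural information the paper's case analysis records. Two minor remarks: the opening perturbation step is unnecessary if, as the paper does in the proof of Theorem~\ref{thm-twochords}, one treats non-degeneracy of the whole pair $(K\sqcup\Lambda_0,\lambda)$ as a standing assumption, and if you do perturb you should note that preserving the strict gap inequality requires the chord actions of the degenerate pair not to accumulate at $|A|-|a|$ (the same tacit point occurs in the paper); and in your second case the death time $r^*$ may be $+\infty$, which only strengthens the conclusion.
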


\begin{proof}
For each $i,j=0,1$ write $\cA_{ij}$ for the subalgebra of $\cA (K\sqcup \Lambda_0,\lambda)$
generated by all $ij$-composable monomials. {\bf Warning:} with our notation $01$-chords are the ones going from $K$ to $\Lambda_0$ etc. Observe that
$$\cA_{01} = \text{Span} \{ ua, uA, u\in \cA_{00}\}, \cA_{11} = 0.$$

By the invariance of $V_\infty$ under Legendrian isotopies of the two-part Legendrians and by Proposition~\ref{prop-lmin-for-J1Q}, we get
$$V_\infty(K,\Lambda_0,\lambda) = V_\infty(\Lambda,\Lambda_0,\lambda) = 0,$$
$$V_\infty(\Lambda_0,K,\lambda) = V_\infty(\Lambda_0,\Lambda,\lambda) = V_\infty(\Lambda_0,\Lambda_1,\lambda) \neq 0,$$
where $\Lambda_1$ is defined before Proposition~\ref{prop-lmin-for-J1Q}.
Let $J\in \cJ (K\sqcup \Lambda_0)$ and let $d:=\partial_J$ be the corresponding differential on $\cA (K\sqcup \Lambda_0,\lambda)$.

Note that $da =0$ as there is no $01$-composable monomial with a smaller action.
In what follows we write $|b|$ for the action of a monomial $b$.

Since there are no $11$-chords, we can write $dA=ua+vA$ with $u,v \in \cA_{00}$. Note that $v=0$, since otherwise $|vA| \geq |A|$
while $d$ lowers the action.
By assumption \eqref{eq-twochords-1}, $|ua| > |A|$ for a non-scalar $u$,
and hence either $u=0$ or $u=1$ (recall that the base field is $\Z_2$).

\medskip
\noindent
{\sc Case 1:} $u=0$, i.e.,\ $dA=0$. We claim that in this case $a \neq dx$ for any $x$.
Indeed, otherwise write $x = pa+qA$, where $p,q \in \cA_{00}$. Then $a = (dp)a+(dq)A$ yielding $dp=1$. But then for every closed $y \in \cA_{10}$ we have $y= d(yp)$, meaning that $y$ is exact, in
contradiction to $V_\infty(\Lambda_0,K,\lambda) \neq 0$.

\medskip
\noindent
{\sc Case 2:} $u=1$, i.e. $dA=a$. Note that by assumption \eqref{eq-twochords-1},
$A$ has the minimal action among all $01$-monomials of action $> |a|$. It follows
that the barcode of the persistence module $V(K,\Lambda_0,\lambda)$ contains
a bar $(|a|, |A|]$. Moreover, since $|a|$ is the minimal action among all $01$-chords,
we get that \eqref{eq-twochords-el}. This completes the proof.
\end{proof}

\medskip
\noindent
\medskip
\noindent
{\bf Proof of Theorem~\ref{thm-twochords}:} Using if necessary small perturbations of the Legendrians we may assume without loss of generality that all the chords are non-degenerate.
The theorem immediately follows from Proposition~\ref{prop-twochords} and Theorem~\ref{thm-Reeb-chords-pos-Ham-gen-case}.A.
\qed

\section{The case of $ST^* \R^n$}
\label{sec-the-unit-cotangent-bundle-of-Rn}

{In this section let}
\[
\Sigma := ST^* \R^n,\ n>1.
\]
Denote by $\xi$ the standard contact structure on $\Sigma$ defined by the contact form
\[
\lambda := pdq.
\]

Let $x_0,x_1\in \R^n (q)$, $x_0\neq x_1$. Consider the Legendrian submanifolds
\[
\Lambda_i := \{\ q=x_i,\ |p|=1\ \},\ i=0,1,
\]
of $(\Sigma,\xi)$.
Set
\[
\Lambda := \Lambda_0\sqcup \Lambda_1.
\]
{One easily checks that the pair $(\Lambda = \Lambda_0\sqcup \Lambda_1,\lambda)$ is non-degenerate.}

Consider also the manifold
\[
\hSigma := \Sigma \times T^* \SP^1 (r,\tau),
\]
and the 1-form
\[
\hlambda := pdq - rd\tau,
\]
where $r\in\R$, $\tau\in\SP^1$,
are the standard coordinates on
$T^* \SP^1 = \R \times \SP^1$. Let $\SP^1 := \{ r=0\} \subset T^* \SP^1$ be the zero-section.
One easily sees that $\hlambda$ is a contact form, defining a contact structure $\hxi$ on $\hSigma$ and the Reeb vector field of $\hlambda$ can be described as follows: its projection to the
$ST^* \R^n$ factor is the Reeb vector field of $\lambda$, while its projection to the $T^* \SP^1$ factor is zero.
Denote
\[
\hLambda_0 :=\Lambda_0\times \SP^1,\ \hLambda_1 :=\Lambda_1\times \SP^1.
\]
\[
\hLambda := \hLambda_0\sqcup\hLambda_1 = \Lambda\times \SP^1.
\]
For each $\delta>0$ denote
\[
\hLambda_{1,\delta} := \Lambda_1\times \graph (\delta df)\subset \hSigma,
\]
\[
\hLambda_\delta := \hLambda_0\sqcup\hLambda_{1,\delta},
\]
where $\graph (\delta df)\subset T^* \SP^1$ is the graph of $d(\delta f)$ for a Morse function $f: \SP^1\to\R$ that has only two critical points.
The set $\hLambda_\delta$ is a two-part Legendrian submanifold of $(\Sigma,\ker \hlambda)$.
{Since} the pair $(\Lambda,\lambda)$ is non-degenerate, so is the pair $(\hLambda_\delta,\hlambda)$.

\begin{prop}
\label{prop-lmin-for-union-of-two-cotangent-fibers-of-Rn}
The barcode of the persistence module $V(\Lambda_0,\Lambda_1,\lambda)$
consists of infinitely many infinite bars, of multiplicity $1$, whose left ends are $(2k-1) |x_0-x_1|$, $k\in\Z_{>0}$.

Consequently, the pair $(\Lambda_0\sqcup\Lambda_1,\lambda)$ is homologically bonded and
\[
l_{min,s} (\Lambda_0,\Lambda_1,\lambda) {= l_{min,\infty} (\Lambda_0,\Lambda_1,\lambda)} = |x_0-x_1|
\]
for all {$s\in (1,+\infty]$}.
\end{prop}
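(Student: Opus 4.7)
The plan is to apply the hodograph contactomorphism \eqref{eq-hodograph} to translate the problem into $J^1 \SP^{n-1}$, under which $\Lambda_i$ becomes the 1-jet $j^1 \psi_i$ of the linear function $\psi_i(q) = x_i\cdot q$ restricted to $\SP^{n-1}$. Reeb chords from $\Lambda_i$ to $\Lambda_j$ with $i\neq j$ then correspond to critical points of $\psi_j - \psi_i = (x_j-x_i)\cdot q$ on $\SP^{n-1}$ at which the value is positive. This height function has exactly two non-degenerate critical points -- a maximum with value $+|x_0-x_1|$ and a minimum with value $-|x_0-x_1|$ -- so one obtains a unique chord $a_{01}\in\cR_{01}(\Lambda,\lambda)$ (sitting above the maximum of $\psi_1 - \psi_0$) and a unique chord $a_{10}\in\cR_{10}(\Lambda,\lambda)$ (sitting above the maximum of $\psi_0 - \psi_1$), both of action $|x_0 - x_1|$. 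No pure chords exist, as is transparent back in $ST^*\R^n$ from the fact that Euclidean geodesic rays emanating from the fiber $ST^*_{x_i}\R^n$ never return to it.

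Next I enumerate the 01-composable monomials in $\cA(\Lambda_0,\Lambda_1,\lambda)$: since the only chords available are $a_{01}$ and $a_{10}$ and the composability conditions force strict alternation beginning at $\Lambda_0$ and ending at $\Lambda_1$, these monomials are exactly $(a_{01}\, a_{10})^{k-1}\, a_{01}$ for $k\geq 1$, of respective actions $(2k-1)|x_0 - x_1|$, each contributing a one-dimensional graded piece at its spectral value.

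The heart of the plan is to show that, for any $J\in\cJ(\Lambda_0\sqcup\Lambda_1)$, the differential $\partial_J$ vanishes identically on the 01-subspace. By the Leibniz rule it suffices to prove $\partial_J a_{01} = \partial_J a_{10} = 0$. By invariance of the 01-subspace under $\partial_J$, every term $b_1\cdots b_m$ appearing in $\partial_J a_{01}$ is a 01-composable monomial of total action strictly less than $l(a_{01}) = |x_0 - x_1|$; since every Reeb chord already has action $|x_0-x_1|$, this forces $m = 0$, so the only potential contribution is a constant term coming from a $J$-holomorphic disk with positive puncture at $a_{01}$ and no negative punctures. Such a disk, however, has the two boundary arcs emanating from the puncture lying on the disjoint components $\R_+\times\Lambda_0$ and $\R_+\times\Lambda_1$ with no intermediate puncture allowing them to reconnect, so it cannot exist. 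Hence $\partial_J a_{01} = 0$, and the same reasoning gives $\partial_J a_{10} = 0$. I expect this topological exclusion step to be the principal subtlety of the argument; the rest is enumeration and reading off the barcode.

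With the trivial differential, the filtered LCH complex coincides with the filtered 01-subspace; hence $V(\Lambda_0,\Lambda_1,\lambda)$ splits as a direct sum of interval modules, one for each monomial $(a_{01} a_{10})^{k-1} a_{01}$, all of them infinite bars of multiplicity $1$ with left endpoint $(2k-1)|x_0-x_1|$, producing exactly the claimed barcode. Since every bar is infinite (hence of multiplicative length $\infty > s$ for every $s\in (1,+\infty)$) and the smallest left endpoint is $|x_0-x_1|$, attained at $k = 1$, both $l_{min,s}(\Lambda_0,\Lambda_1,\lambda)$ and $l_{min,\infty}(\Lambda_0,\Lambda_1,\lambda)$ equal $|x_0 - x_1|$.
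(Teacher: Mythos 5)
Your proof is correct and takes essentially the same route as the paper: identify the two mixed Reeb chords of action $|x_0-x_1|$, observe that the $01$-composable monomials are exactly the alternating words of action $(2k-1)|x_0-x_1|$, each with multiplicity one, show that $\partial_J$ vanishes, and read off the barcode. The only cosmetic differences are that the paper describes the chords directly as lifts of the Euclidean geodesic segments between $x_0$ and $x_1$ rather than via the hodograph map, and it subsumes your explicit exclusion of a constant term in $\partial_J a_{01}$, $\partial_J a_{10}$ into the action-lowering property of the differential combined with the invariance of the $01$-subspace.
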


\bigskip
\noindent
{\bf Proof of Proposition~\ref{prop-lmin-for-union-of-two-cotangent-fibers-of-Rn}:}
The set $\cR (\Lambda, \lambda)$ consists only of two Reeb chords: a $01$-chord $a$ and a $10$-chord $b$, which are the lifts of the Euclidean geodesics in $\R^n$ going from $x_0$ to $x_1$ and from $x_1$ to $x_0$. Thus, $\cA (\Lambda_0,\Lambda_1,\lambda)$ is spanned over $\Z_2$ by the monomials of the form $abab\cdot\ldots\cdot ba$, where each monomial contains $k$ factors $a$ and $k-1$ factors $b$, for $k\in\Z_{>0}$. For each $k\in\Z_{>0}$ there is exactly one such monomial of length {$2k-1$}. The actions of $a$ and $b$ are both equal to $|x_0-x_1|$ and therefore the action of the monomial of length {$2k-1$} is $(2k-1)|x_0-x_1|$.

Let $J\in \cJ (\Lambda)$. Since the differential $\partial_J$ on $\cA (\Lambda_0,\Lambda_1,\lambda)$ lowers the action, we immediately get that $\partial_J (a) = \partial_J (b) =0$ and therefore $\partial_J$ is identically zero on $\cA (\Lambda_0,\Lambda_1,\lambda)$. Therefore the barcode of the persistence module $V(\Lambda_0,\Lambda_1,\lambda)$ consists of infinitely many infinite bars of multiplicity $1$ whose left ends are $(2k-1) |x_0-x_1|$, $k\in\Z_{>0}$. This finishes the proof.
\Qed

\begin{prop}
\label{prop-hlmin-for-union-of-two-cotangent-fibers-of-Rn}
The barcode of the persistence module $V(\hLambda_0,\hLambda_{1,\delta},\hlambda)$
consists of infinitely many infinite bars whose left ends are $(2k-1) |x_0-x_1|$, $k\in\Z_{>0}$. The multiplicity of the bar with the left end
$(2k-1) |x_0-x_1|$ is $2^{2k-1}$.

Consequently, the pair $(\Lambda_0\sqcup\Lambda_1,\lambda)$ is stably homologically bonded and
\[
\hl_{min,s} (\Lambda_0,\Lambda_1,\lambda) = {\hl_{min,\infty} (\Lambda_0,\Lambda_1,\lambda)} = |x_0-x_1|
\]
for all {$s\in (1,+\infty]$}.
\end{prop}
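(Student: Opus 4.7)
The plan is to follow the strategy of Proposition~\ref{prop-lmin-for-union-of-two-cotangent-fibers-of-Rn}, with stabilization accounting for the multiplicity factor $2^{2k-1}$. First I would identify the Reeb chords of $(\hLambda_\delta,\hlambda)$. A direct computation from $\hlambda = pdq - r\,d\tau$ shows that the Reeb vector field of $\hlambda$ equals the Reeb field of $\lambda$ lifted with zero component along $T^*\SP^1$. Any Reeb trajectory therefore preserves $(r,\tau)$; beginning on $\hLambda_0 = \Lambda_0\times\SP^1$ forces $r=0$, and ending on $\hLambda_{1,\delta} = \Lambda_1 \times \graph(\delta df)$ forces $r = \delta f'(\tau)$, so $\tau$ must be a critical point of $f$. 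Since $f$ has exactly two critical points and $(\Lambda,\lambda)$ has exactly two Reeb chords $a$ (from $\Lambda_0$ to $\Lambda_1$) and $b$ (from $\Lambda_1$ to $\Lambda_0$), each of action $|x_0-x_1|$, there are exactly four Reeb chords of $\hLambda_\delta$: two $01$-chords $a_{\min},a_{\max}$ and two $10$-chords $b_{\min},b_{\max}$, all of action $|x_0-x_1|$. Non-degeneracy in the sense of \eqref{eq-nondeg} splits over the two factors: the $\Sigma$-component is handled in the proof of Proposition~\ref{prop-lmin-for-union-of-two-cotangent-fibers-of-Rn}, while the $T^*\SP^1$-component reduces to transversality of the zero section and $\graph(\delta df)$ at the critical points of the Morse function $f$, which holds because $f$ is Morse.

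Next I would count $01$-composable monomials in $\cA(\hLambda_0,\hLambda_{1,\delta},\hlambda)$. Such a monomial must alternate $01$- and $10$-chords and both begin and end with a $01$-chord, hence has odd length $2k-1$ with $k$ slots drawn from $\{a_{\min},a_{\max}\}$ and $k-1$ slots drawn from $\{b_{\min},b_{\max}\}$. Independent choices at each slot produce $2^k \cdot 2^{k-1} = 2^{2k-1}$ generators, all of action $(2k-1)|x_0-x_1|$.

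The central step is to verify that for any $J \in \cJ(\hLambda_\delta)$ the differential $\partial_J$ vanishes identically on $\cA(\hLambda_0,\hLambda_{1,\delta},\hlambda)$. Fix a generator $c \in \cR(\hLambda_\delta,\hlambda)$. Since $\partial_J$ strictly lowers action and every Reeb chord has action $|x_0-x_1| = |c|$, no non-empty product of generators can appear in $\partial_J c$. The only remaining possibility is the unit $1$, arising from a disk with one positive puncture at $c$ and no negative punctures; but such a disk would have two boundary arcs lying on different components of $\hLambda_\delta$ (because $c$ is mixed) and therefore cannot close up. Hence $\partial_J c = 0$ on every generator, and by the Leibniz rule $\partial_J \equiv 0$. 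This is the main conceptual point, but as in the unstabilized case it reduces to a composability obstruction rather than an actual moduli computation, so I do not expect any analytical difficulty.

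With $\partial_J \equiv 0$, each of the $2^{2k-1}$ generators at action $(2k-1)|x_0-x_1|$ yields an infinite bar, producing the claimed barcode and in particular $l_{min,s}(\hLambda_0,\hLambda_{1,\delta},\hlambda)=|x_0-x_1|$ for every $s\in(1,+\infty]$. To deduce the stated equality for $\hl_{min,s}(\Lambda_0,\Lambda_1,\lambda)$, I would exploit its definition as a $\liminf$ over $C^\infty$-small non-degenerate perturbations of $\hLambda_0\sqcup\hLambda_1$: the family $\{\hLambda_{1,\delta}\}$ as $\delta\to 0$ supplies the upper bound $|x_0-x_1|$, while continuity of the action of the shortest $01$-Reeb chord under small Legendrian perturbations provides the matching lower bound. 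In particular, $(\Lambda_0\sqcup\Lambda_1,\lambda)$ is stably homologically bonded.
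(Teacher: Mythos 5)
Your argument is correct and follows essentially the same route as the paper: identify the four Reeb chords of $(\hLambda_\delta,\hlambda)$ (the products of $a,b$ with the two critical points of $f$), count the $2^{2k-1}$ composable monomials of action $(2k-1)|x_0-x_1|$, observe that the action filtration forces $\partial_J\equiv 0$, and pass to the limit $\delta\to 0$ to compute $\hl_{min,s}$. Your extra verifications (exclusion of the constant term for mixed chords -- note a punctured disk has one boundary arc, not two, though the conclusion is the same -- and the lower bound in the $\liminf$) only make explicit points the paper leaves implicit.
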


\bigskip
\noindent
{\bf Proof of Proposition~\ref{prop-hlmin-for-union-of-two-cotangent-fibers-of-Rn}:}
Let $a,b\in \cR (\Lambda, \lambda)$ be the $01$-chord and the $10$-chord of $\Lambda$ as above. The Reeb chords of $\hLambda_\delta$ are the direct products of the Reeb chords of $\cR (\Lambda, \lambda)$ lying in $\Sigma$ and the constant paths in $T^* \SP^1$ corresponding to the $2$ critical points of $f$ (that is, the intersections of the graph of $df$ with the zero-section). Thus, $\cR (\hLambda_\delta, \hlambda)$ consists only of exactly $4$ Reeb chords: $01$-chords $a_1,a_2$ and $10$-chords $b_1,b_2$, where $a_1,a_2$ project onto $a$ and $b_1,b_2$ project onto $b$ under the projection $\hSigma = \Sigma \times T^* \SP^1\to\Sigma$.

Thus, $\cA (\hLambda_0,\hLambda_{1,\delta},\lambda)$ is spanned over $\Z_2$ by the monomials of the form $a_{j_1} b_{j_2}\cdot\ldots\cdot b_{j_{2k-2}}a_{j_{2k-1}}$, where each monomial contains $k$ factors $a_{j_i}$, $j_i=1,2$, and $k-1$ factors $b_{j_i}$, $j_i=1,2$, for $k\in\Z_{>0}$. There are $2^{2k-1}$ such monomials of length $2k-1$. The actions of all $a_j$ and $b_j$, $j=1,2$, are equal to $|x_0-x_1|$ and therefore the action of a monomial as above is $(2k-1)|x_0-x_1|$.

Let $J\in \cJ (\hLambda_\delta)$. Since the differential $\partial_J$ on $\cA (\hLambda_0,\hLambda_{1,\delta},\hlambda)$ lowers the action, we immediately get that $\partial_J (a_j) = \partial_J (b_j) =0$ for all $j=1,2$ and therefore $\partial_J$ is identically zero on $\cA (\hLambda_0,\hLambda_\delta,\hlambda)$. Therefore the barcode of the persistence module $V(\hLambda_0,\hLambda_{1,\delta}\hlambda)$ consists of infinitely many infinite bars whose left ends are $(2k-1) |x_0-x_1|$, $k\in\Z_{>0}$. The multiplicity of the bar with the left end
$(2k-1) |x_0-x_1|$ is $2^{2k-1}$.

Consequently, for any {$s\in (1,+\infty]$} and $\delta>0$
\[
l_{min,s} (\hLambda_0,\hLambda_{1,\delta},\hlambda) = |x_0-x_1|
\]
and
\[
\hl_{min,s} (\Lambda_0,\Lambda_1,\lambda) = \liminf_{\delta\to 0} l_{min,s} (\hLambda_0,\hLambda_{1,\delta},\hlambda) = |x_0-x_1|.
\]
This finishes the proof.
\Qed

\bigskip
\noindent
{\bf Proof of Theorem~\ref{thm-main-R2n}:}
In the case $n=1$ the claim follows from the results in \cite{EP-tetragons}. Namely, in this case the sets $X_0,X_1,Y_0,Y_1$ form a Lagrangian tetragon
in $(\R^2 (p,q), dp\wedge dq)$ built from the point $x_0\in\R$ for $T= x_1 - x_0$ (see \cite[Sec. 5.1]{EP-tetragons}).  In the terminology of \cite{EP-tetragons}, this Lagrangian tetragon is stably $\kappa$-interlinked, for $\kappa = |x_0-x_1| (s_+ - s_-)$
-- this follows e.g. from Cor. 5.3 and Thm. 5.8 in \cite{EP-tetragons} (see \cite[Thm. 1.20, Prop.1.21]{BEP} for a different approach to the proof). By the definition of a $\kappa$-interlinked Lagrangian tetragon (see \cite[Sec. 1.2]{EP-tetragons}), this yields the dynamical claims of Theorem~\ref{thm-main-R2n} in the case $n=1$.

Assume now that $n>1$.

By Propositions~\ref{prop-lmin-for-union-of-two-cotangent-fibers-of-Rn}, \ref{prop-hlmin-for-union-of-two-cotangent-fibers-of-Rn},
\[
l_{min,s_+/s_-} (\Lambda_0,\Lambda_1,\lambda) = \hl_{min,s_+/s_-} (\Lambda_0,\Lambda_1,\lambda) = |x_0-x_1|.
\]

The symplectization $\Sigma \times \R_+ (s)$ is identified symplectically
with $\R^{2n}(p,q)\setminus \{ p=0\}$ by the map $(p,q,s) \mapsto (sp,q)$. Thus, $\left(\Sigma\times [s_-,s_+], d(s\lambda)\right)$ can be viewed
as a codimension-zero submanifold with boundary of $(N=\R^{2n},\Omega=dp\wedge dq)$.

Now the claims of the theorem follow from Corollaries~\ref{cor-chord-of-autonomous-H-in-N}, \ref{cor-chord-of-non-autonomous-H-in-N} applied to the Hamiltonian $H$ on $(N,\Omega)$.

This finishes the proof of the theorem.
\Qed

\bigskip
\noindent
{\bf Proofs of Theorems~\ref{thm-conformal-factor-unit-cot-bundle}, \ref{thm-main-existence-of-connecting-Reeb-chords-unit-cot-bundle} and Corollary~\ref{cor-non-positive-hamiltonians-separating-hypersurface-unit-cot-bundle}:}
We need to prove Theorems~\ref{thm-conformal-factor-unit-cot-bundle}, \ref{thm-main-existence-of-connecting-Reeb-chords-unit-cot-bundle} and Corollary~\ref{cor-non-positive-hamiltonians-separating-hypersurface-unit-cot-bundle} in the following three cases:

\bigskip
\noindent
(I) $\Lambda_0$ is the zero-section of $J^1 Q$ and $\Lambda_1$ is its image under the time-$l$ Reeb flow.

\bigskip
\noindent
(II) $\Lambda_0$ is a cotangent unit sphere in $ST^* \R^n$ and $\Lambda_1$ is its image under the time-$l$ Reeb flow.

\bigskip
\noindent
(III) $\Lambda_0, \Lambda_1\subset ST^* \R^n$ are the cotangent unit spheres at $x_0,x_1\in\R^n$, $|x_0-x_1|=l$.

\bigskip
The needed results in cases (I), (II), (III) follow from the corresponding general results in Theorem~\ref{thm-conformal-factor-gen-case}, Corollaries~\ref{cor-homol-bonded-autonom-contact-dynamics-general-case}, \ref{cor-stably-homol-bonded-non-autonom-contact-dynamics-general-case}
and Corollary~\ref{cor-non-positive-hamiltonians-separating-hypersurface-gen-case}, as soon as we check that in all the three cases
$l_{min,\infty} (\Lambda_0,\Lambda_1,\lambda)\geq l$, $\hl_{min,\infty} (\Lambda_0,\Lambda_1,\lambda)\geq l$.

In case (I) these inequalities are proved in Propositions~\ref{prop-lmin-for-J1Q}, \ref{prop-hlmin-for-J1Q}.

In the case (II) they also follow from Propositions~\ref{prop-lmin-for-J1Q}, \ref{prop-hlmin-for-J1Q}, because the cases (I) and (II) can be
identified by a contactomorphism preserving the contact forms -- namely,
the contact identification of $ST^* \R^n$ and $J^1 \SP^{n-1}$ (see \eqref{eq-hodograph}) can be easily adjusted so that $\Lambda_0\subset ST^* \R^n$ is identified with the zero-section of $J^1 \SP^{n-1}$.

In case (III) the inequalities follow from Propositions~\ref{prop-lmin-for-union-of-two-cotangent-fibers-of-Rn}, \ref{prop-hlmin-for-union-of-two-cotangent-fibers-of-Rn}.
\Qed

\bibliographystyle{alpha}

\end{document}